\def\bl{\begin{lemma}}
\def\el{\end{lemma}}
\def\bth{\begin{theorem}}
\def\eth{\end{theorem}}
\def\bc{\begin{corollary}}
\def\ec{\end{corollary}}
\def\bcj{\begin{conjecture}}
\def\ecj{\end{conjecture}}
\def\bpr{\begin{proposition}}
\def\epr{\end{proposition}}
\def\bde{\begin{definition}}
\def\ede{\end{definition}}
\def\E{\mathbb{E}}
\newcommand{\be}{\begin{eqnarray}}
\newcommand{\ee}{\end{eqnarray}}
\newcommand{\eps}{{\mbox{$\epsilon$}}}
\newcommand{\R}{{\mathbb R}}
\newcommand{\Z}{{\mathbb Z}}
\newcommand{\T}{{\mathcal T}}
\newcommand{\A}{{\mathcal A}}
\newcommand{\M}{{\mathcal M}}
\newcommand{\B}{{\mathcal B}}
\newcommand{\with}{\hbox{ {\rm with} }}
\newcommand{\is}{\hbox{ {\rm is} }}
\newcommand{\Tm}{T_{{\rm mix}}}
\newcommand{\C}{{\mathcal{C}}}
\newcommand{\D}{{\mathcal{D}}}
\newcommand{\prob}{\mbox{\bf P}}
\newcommand{\pab}{\prob_{A,B}}
\newcommand{\poffa}{\prob_{{\rm \small off~} A}}
\newcommand{\eab}{\E_{A,B}}
\newcommand{\pa}{\prob_{A}}
\newcommand{\ea}{\E_{A}}
\newcommand{\pb}{\prob_{B}}
\newcommand{\p}{\mbox{\bf p}}
\newcommand{\lr}{\leftrightarrow}
\newtheorem{theorem}{Theorem}[section]
\newtheorem{definition}{Definition}[section]
\newtheorem{lemma}[theorem]{Lemma}
\newtheorem{prop}[theorem]{Proposition}
\newtheorem{corollary}[theorem]{Corollary}
\newtheorem{proposition}[theorem]{Proposition}
\newtheorem{claim}[theorem]{Claim}
\newtheorem{conjecture}[theorem]{Conjecture}
\theoremstyle{definition}
\numberwithin{equation}{section}
\def\arrowfillCS#1#2#3#4{%
   \thickmuskip0mu\medmuskip\thickmuskip\thinmuskip\thickmuskip
   \relax#4#1\mkern-7mu%
   \cleaders\hbox{$#4\mkern-2mu#2\mkern-2mu$}\hfill
   \mkern-7mu#3
}
\def\lrfill{\arrowfillCS\leftarrow\relbar\rightarrow\relax}
\newcommand{\mnot}{m_0}
\newcommand{\lrlong}{\longleftrightarrow}
\newcommand{\lrr}{\stackrel{r}{\lr}}
\newcommand{\lrs}{\stackrel{s}{\lr}}
\newcommand{\lref}{\stackrel{r_0}{\lrfill}}
\newcommand{\lrefmp}{\stackrel{P[2\mnot,r_0]}{\lrfill}}
\newcommand{\lrefmonep}{\stackrel{P[\mnot,r_0]}{\lrfill}}
\newcommand{\lrefmpx}{\stackrel{P[2\mnot,r_0],x}{\lrfill}}
\newcommand{\lrefmpy}{\stackrel{P[2\mnot,r_0],y}{\lrfill}}
\newcommand{\lrejx}{\stackrel{=j_x}{\lrfill}}
\newcommand{\lrer}{\stackrel{[r,2r]}{\lrfill}}
\renewcommand{\and}{\hbox{ {\rm and} }}
\newcommand{\off}{\hbox{ {\rm off} }}
\newcommand{\onlyon}{\hbox{ {\rm only on} }}
\newcommand{\inn}{\hbox{ {\rm in} }}
\newcommand{\vep}{\varepsilon}
\renewcommand{\eps}{\varepsilon}
\newcommand{\epsm}{\eps_{\m}}
\newcommand{\epsn}{\eps_{n}}
\newcommand{\km}{k_{\m}}
\newcommand{\Mm}{M_{\m}}
\newcommand{\rmm}{r_{\m}}
\newcommand{\whp}{whp}
\newcommand{\sss}{\scriptscriptstyle}
\newcommand{\e}{{\mathrm e}}
\newcommand {\convd}{\stackrel{d}{\longrightarrow}}
\newcommand {\convp}{\stackrel{\sss {\mathbb P}}{\longrightarrow}}
\newcommand{\nn}{\nonumber}
\newcommand{\Park}{P_{r,r_0}}
\newcommand{\conn}{\longleftrightarrow}
\newcommand{\lbeq}[1]  {\label{e:#1}}
\newcommand{\refeq}[1] {\eqref{e:#1}}
\newcommand{\eqn}[1]{\begin{equation} #1 \end{equation}}
\newcommand{\eqan}[1]{\begin{align} #1 \end{align}}
\newcommand{\Ccal}   {\mathcal{C}}
\newcommand{\Gcal}   {\mathcal{G}}
\newcommand{\cn}{\m}
\newcommand{\cluster}{{\mathcal C}}
\newcommand{\dbc}{\Leftrightarrow}
\newcommand{\bb}{\underline{b}}
\newcommand{\tb}{\overline{b}}
\newcommand{\bbe}{\underline{e}}
\newcommand{\bbf}{\underline{f}}
\newcommand{\te}{\overline{e}}
\newcommand{\tf}{\overline{f}}
\newcommand{\expec}{{\mathbb E}}
\newcommand{\Piv}{{\rm Piv}}
\newcommand{\alpham}{\alpha_m}
\newcommand{\omegam}{\omega_m}
\def\1{{\mathchoice {1\mskip-4mu\mathrm l}      
{1\mskip-4mu\mathrm l}
{1\mskip-4.5mu\mathrm l} {1\mskip-5mu\mathrm l}}}
\newcommand{\indic}[1]{\1_{\{#1\}}}
\newcommand{\m}{{m}}
\begin{document}

\title{Hypercube percolation}
\author{Remco van der Hofstad}
\address{Department of Mathematics and
	Computer Science, Eindhoven University of Technology, P.O.\ Box 513,
	5600 MB Eindhoven, The Netherlands.}
\email{rhofstad@win.tue.nl}
\author{Asaf Nachmias}
\address{Department of Mathematics, University of British Columbia, 121-1984 Mathematics Rd, Vancouver, BC, Canada V6T1Z2.}
\email{asafnach@math.ubc.ca}


\begin{abstract} We study bond percolation on the Hamming hypercube $\{0,1\}^\m$ around the critical probability $p_c$. It is known that if $p=p_c(1+O(2^{-\m/3}))$, then with high probability the largest connected component $\C_1$ is of size $\Theta(2^{2\m/3})$ and that this
quantity is non-concentrated. Here we show that for any sequence $\epsm$ such that
$\epsm =o(1)$ but $\epsm \gg 2^{-\m/3}$ percolation on the hypercube at $p_c(1+\epsm)$ has
    $$
    {|\C_1| = (2+o(1))\epsm 2^\m }\quad \and \quad |\C_2| = o(\epsm 2^\m) \, ,
    $$
with high probability, where $\C_2$ is the second largest component. This resolves a conjecture
of Borgs, Chayes, the first author, Slade and Spencer \cite{BCHSS3}.
\end{abstract}

\maketitle

\tableofcontents


\section{Introduction}
\label{sec-intro}
Percolation on the Hamming hypercube $\{0,1\}^\m$ is a combinatorial model proposed in 1979 by Erd\H{o}s and Spencer \cite{ES}. The study of its phase transition poses two inherent difficulties. Firstly, its non-trivial geometry makes the combinatorial ``subgraph count'' techniques unavailable. Secondly, the critical probability where the phase transition occurs is significantly larger than $1/(\m-1)$, making the method of stochastic domination by branching processes very limited. Unfortunately, these are the two prominent techniques for obtaining scaling windows (see e.g., \cite{Aldo97, B1, BCKW01, ER, HL, JKL93, Lu, MolReed95, Nach09, NP2}).

In light of the second difficulty, Borgs, Chayes, the first author, Slade and Spencer \cite{BCHSS1, BCHSS2, BCHSS3} suggested that the precise location $p_c$ of the phase transition is the unique solution to the equation
    \be
    \label{pcdef}
    \E_{p_c} |\C(0)| = \lambda 2^{\m/3} \, .
    \ee
where $\C(0)$ is the connected component containing the origin, $|\C(0)|$ denotes its size, and $\lambda\in(0,1)$ denotes an arbitrary constant. Later it will become clear how $\lambda$ is chosen. The lace expansion was then employed by the authors to show that at $p=p_c(1+O(2^{-\m/3}))$ the largest connected component $\C_1$ is of size $\Theta(2^{2\m/3})$ \whp\  --- the same asymptotics as in the critical Erd\H{o}s and R\'{e}nyi
random graph both with respect to the size of the cluster and the width of the scaling window
(see Section \ref{sec-gnp} for more details). However, this result does not rule out the possibility that this critical behavior proceeds beyond the $O(2^{-\m/3})$ window and does not give an upper bound on the width of the scaling window.

The authors conjectured that the giant component ``emerges'' just above this window (see \cite[Conjecture 3.2]{BCHSS3}). They were unable to prove this primarily because their combination of lace expansion and sprinkling methodology breaks for $p$ above the scaling window. In this paper we resolve their conjecture:
%
%
\begin{theorem} \label{mainthm} Consider bond percolation on the Hamming hypercube $\{0,1\}^\m$ with $p=p_c(1+\eps)$,
where $p_c=p_c(\lambda)$ with $\lambda\in(0,\infty)$ a fixed constant, and $\eps=\epsm=o(1)$ is a positive sequence with $\epsm \gg 2^{-\m/3}$. Then
    $$
    \frac{|\C_1|}{2\epsm 2^\m}\convp 1\, ,
    $$
where $\convp$ denotes convergence in probability, and
    $$
    \E |\C(0)| = (4+o(1))\epsm^2 2^\m \, .
    $$
Furthermore, the second largest component $\C_2$ satisfies
    $$
    \frac{|\C_2|}{\epsm 2^m}\convp 0 \, .
    $$
\end{theorem}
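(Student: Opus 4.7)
The strategy is a three-stage sprinkling argument: sharp control of the finite-cluster susceptibility at $p=p_c(1+\eps_m)$, a sprinkling construction that exhibits the giant by merging near-critical clusters, and a concentration/uniqueness step.

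\textbf{Expectation formula.} The first step is to reduce
\[
\E_p|\C(0)|\;=\;2^{-m}\,\E_p\sum_C|C|^2,
\]
with the sum taken over percolation clusters $C$. Granted the claimed $|\C_1|=(2+o(1))\eps_m 2^m$ and $|\C_2|=o(\eps_m 2^m)$ whp, the contribution of the giant yields $|\C_1|^2/2^m=(4+o(1))\eps_m^2 2^m$. The remainder $\sum_{C\neq\C_1}|C|^2=\sum_v |\C(v)|\,\1_{\{v\notin\C_1\}}$ is then bounded in expectation by a truncated susceptibility $\chi_f(p)=O(\eps_m^{-1})$ coming from the triangle condition of \cite{BCHSS1,BCHSS2} extended into the supercritical regime. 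This produces a remainder of order $2^m/\eps_m$, which is $o(\eps_m^2\, 2^{2m})$ precisely when $\eps_m\gg 2^{-m/3}$: the window condition is sharp at this step.

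\textbf{Pre-sprinkling structure.} Set $p_1:=p_c(1+\eps_m/3)$; since $\eps_m/3\gg 2^{-m/3}$ this is strictly above the BCHSS window of \cite{BCHSS3}. Using second-moment estimates for the susceptibility (again via the triangle condition), I would show at $p_1$ that the total number of vertices lying in clusters of size at least some threshold $T_m$ (say $T_m=\eps_m^{2/3}2^{2m/3}$) is of order $\eps_m 2^m$, while no single $p_1$-cluster carries more than a small constant fraction of this mass.

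\textbf{Sprinkling.} Couple $p$-percolation as the union of independent $p_1$- and $p_2$-configurations with $1-p=(1-p_1)(1-p_2)$, so $p_2\approx\tfrac{2}{3}p_c\eps_m$. Conditionally on the $p_1$-configuration, form the auxiliary multigraph whose vertices are the large $p_1$-clusters and whose edges record the $p_2$-open edges between them. Isoperimetry on $\{0,1\}^m$ gives $\Omega(km)$ outgoing edges from any cluster of size $k$, so for two large clusters the expected number of connecting $p_2$-edges is much larger than $1$. A Chernoff-plus-connectivity argument on this auxiliary graph then shows that whp the large clusters coalesce into a single giant of size $(2+o(1))\eps_m 2^m$.

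\textbf{Concentration and uniqueness.} Azuma's inequality applied to the edge-exposure martingale for $|\C_1|$ (which is $2$-Lipschitz in the edges) refines the cluster-size estimate to the claimed concentration. For $\C_2$, any surviving non-giant cluster at $p$ comes either from a small $p_1$-cluster, contributing $O(\chi_f(p))=O(\eps_m^{-1})$ vertices per root, or from a medium $p_1$-cluster that failed to merge; both are shown to contribute $o(\eps_m 2^m)$, yielding $|\C_2|/(\eps_m 2^m)\convp 0$.

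\textbf{Main obstacle.} The technically delicate step is the merger: proving that \emph{all} large $p_1$-clusters coalesce under sprinkling rather than leaving an unabsorbed sub-giant. The usual $G(n,p)$ arguments rest on subgraph counts that are unavailable on the hypercube. The workaround will combine hypercube isoperimetry with conditional second-moment estimates, very likely through an iterative multi-round sprinkling that absorbs clusters in stages of geometrically growing size.
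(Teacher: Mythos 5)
Your sprinkling step asserts that ``isoperimetry on $\{0,1\}^m$ gives $\Omega(km)$ outgoing edges from any cluster of size $k$, so for two large clusters the expected number of connecting $p_2$-edges is much larger than $1$.'' This is not what isoperimetry gives. The edge-isoperimetric inequality bounds the boundary of a \emph{single} set; it says nothing about the number of edges \emph{directly between} two prescribed large sets. Two disjoint sets of size $\Theta(\eps 2^m)$ can have \emph{zero} edges between them (e.g.\ Hamming balls around antipodal points), and what isoperimetry actually provides is $\sim 2^m/\mathrm{poly}(m)$ disjoint connecting \emph{paths of length $\Theta(\sqrt m)$}. Each such path is $p_2$-open with probability $(\theta\eps/m)^{C\sqrt m}$, so the per-partition failure probability is only $\exp\!\big(-2^m m^{-O(\sqrt m)}(\eps/m)^{C\sqrt m}\big)$. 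To beat the entropy $2^{\#\{\text{large clusters}\}}$ over partitions, this forces the cluster-size threshold to be exponentially large in $m$, which in turn forces $\eps \geq e^{-cm^{1/3}}$ --- precisely the barrier of Bollob\'as--Kohayakawa--\L uczak and of \cite{BCHSS3} that this theorem is designed to break. (Your specific threshold $T_m=\eps_m^{2/3}2^{2m/3}$ also fails the entropy count near $\eps\sim 2^{-m/3}$: it yields $\approx(\eps 2^m)^{1/3}$ clusters, and $2^{(\eps 2^m)^{1/3}}$ is not beaten by $e^{-O(\eps^3 2^m)}$ unless $\eps\gg 2^{-m/4}$.) ``Iterative multi-round sprinkling'' does not repair this, because the obstruction is per-round: each round still pays the $(\eps/m)^{\sqrt m}$ price for paths of length $\sqrt m$.

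The missing idea --- and the actual content of the paper --- is a proof that large clusters are \emph{not} worst-case sets for isoperimetry: with high probability, any two clusters of size $\geq k_0=\eps^{-2}(\eps^3 2^m)^{1/4}$ share $\Theta(\eps^2 m 2^m)$ closed edges of length exactly $1$, i.e.\ they behave like independent uniform random sets of density $\eps$. Then a single sprinkled edge (probability $\theta\eps/m$ each) connects them, the per-partition failure probability becomes $(1-\theta\eps/m)^{\eps^2 m 2^m}=e^{-\theta\eps^3 2^m}$, and this beats the entropy $2^{O((\eps^3 2^m)^{3/4})}$. Proving this ``uniform spread'' of clusters is where all the work lies: it requires uniform connection bounds $\prob(0\stackrel{[m_0,\infty)}{\longleftrightarrow}x)\leq(1+o(1))\E|\C(0)|/2^m$ derived from mixing of the non-backtracking walk, an intrinsic-metric regularity theorem controlling the conditional law of a cluster's ``future'' given its ``past,'' and a conditional second-moment argument for the number of shared boundary edges. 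None of this can be extracted from isoperimetry plus Chernoff. Your remaining steps (the susceptibility identity $\E|\C(0)|=2^{-m}\sum_C|C|^2$, the upper bound via $Z_{\geq k_0}$, and the duality bound on $|\C_2|$) are in line with the paper, but they all hinge on the lower bound for $|\C_1|$ that the flawed merger step was supposed to deliver.
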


The main novelty of our approach is showing that large percolation clusters behave in some sense like uniform random sets. We use this to deduce that two large clusters tend to ``clump'' together and form a giant component. This analysis replaces the appeal to the hypercube's isoperimetric inequality which is key in all the previous works on this problem (see further details in Section \ref{sec-intsprinkling}). It essentially rules out the possibility that two large percolation clusters are ``worst-case'' sets, that is, sets which saturate the isoperimetric inequality (e.g., two balls of radius $\m/2-\sqrt{\m}$ around the two poles of the hypercube). The precise behavior of the random walk on the  hypercube plays a key role in proving such statements. Our proof combines this idea with some combinatorial ideas (the ``sprinkling'' method of \cite{AKS82}, see Section \ref{sec-intsprinkling}),
and ideas originating in statistical physics (Aizenman and Barsky's \cite{AB87} differential
inequalities and variants of the triangle condition). Our proof methods are general and apply for other families of graphs such as various expanders of high degree and high girth, finite tori of dimension growing with the length and products of complete graphs of any dimension (answering a question asked in \cite{HL}). We state our most general theorem in Section \ref{sec-gen-thm} and illustrate its use with some examples. \\

The problem of establishing a phase transition for the appearance of a component of size order $2^m$ was solved in the breakthrough work of Ajtai, Koml\'{o}s and Szemer\'{e}di \cite{AKS82}. They proved that when the retention probability of an edge is scaled as $p=c/\m$ for a fixed constant $c>0$ the model exhibits a phase transition: if $c<1$, then the largest component has size of order $\m$ and if $c>1$, then the largest component has size linear in $2^\m$, with high probability.

At about the same time, Bollob\'as \cite{B1} initiated a study of zooming in onto the large scale properties of the phase transition on the Erd\H{o}s and R\'{e}nyi \cite{ER} random graph $G(n,p)$ (see Section \ref{sec-gnp} below). However, unlike $G(n,p)$, the phase transition in the hypercube does {\em not} occur around $p=1/(\mbox{deg}-1)$,
where deg denotes the degree of the graph. In fact, it was shown by the first author and Slade \cite{HS05, HS06} that $p_c$ of the hypercube $\{0,1\}^{\m}$ satisfies
    \be
    \label{pcest}
    p_c = {1 \over \m-1} + {7/2 \over \m^3} + O(\m^{-4}) \, ,
    \ee
Here and below we write $f(\m)=O(g(\m))$ if $|f(\m)|/|g(\m)|$ is uniformly bounded
from above by a positive constant, $f(\m)=\Theta(g(\m))$ if $f(\m)=O(g(\m))$ and
$g(\m)=O(f(\m))$ and $f(\m)=o(g(\m))$ if $f(\m)/g(\m)$ tends to $0$ with $m$.
We also say that a sequence of events $(E_\m)_{m\geq 1}$ \emph{occurs with high probability} (whp) when
$\lim_{\m\rightarrow \infty} \prob(E_{\m})=0$.

The first improvement to \cite{AKS82} was obtained by Bollob\'as, Kohayakawa and \L uczak \cite{BolKohLuc92}. They showed that if $p=(1+\epsm)/(\m-1)$ with $\epsm=o(1)$ but $\epsm \geq 60\m^{-1} (\log \m)^3$, then $|\C_1|=(2+o(1))\epsm 2^\m$ \whp. In view of (\ref{pcest}), it is clear that one cannot improve the regime of $\eps_m$ in their result to more than $\eps_m \geq m^{-2}$. In \cite{BCHSS3}, the authors show that when $\epsm \geq \e^{-cm^{1/3}}$ and $p=p_c(1+\epsm)$, then $|\C_1| \geq c\epsm 2^{\m}$ \whp. Note that  $\e^{-c\m^{1/3}} \gg 2^{-\alpha m}$ for any $\alpha>0$ so the requirement on $\epsm$ of Theorem \ref{mainthm} is much weaker. Our result,
combined with those in \cite{BCHSS1, BCHSS2, BCHSS3}, shows that it is sharp and therefore fully identifies the phase transition on the hypercube.

Other models of statistical physics, such as random minimal spanning trees and bootstrap percolation on the hypercube have been studied before, we refer the reader to \cite{BBM09, BBM10, Penrose98}. In the remainder of this section we present some of the necessary background and context of the result, briefly describe our techniques (we provide a more detailed overview of the proof in the next section) and present a general theorem which is used to establish scaling windows for percolation on various other graphs studied in the literature.

\subsection{The Erd\H{o}s and R\'{e}nyi random graph}
\label{sec-gnp}
Recall that $G(n,p)$ is obtained from the complete graph by retaining each edge of the complete graph on $n$ vertices with probability $p$ and erasing it otherwise, independently for all edges. Write $\C_j$ for the $j$th largest component obtained this way. An inspiring discovery of Erd\H{o}s and R\'{e}nyi \cite{ER} is
that this model exhibits a phase transition when $p$ is scaled like $p=c/n$.
When $c<1$ we have $|\C_1|=\Theta(\log n)$ \whp\ and $|\C_1|=\Theta(n)$ \whp{} when $c>1$.

The investigation of the case $c \sim 1$, initiated by Bollob\' as \cite{B1}
and further studied by {\L}uczak \cite{Lu},
revealed an intricate picture of the phase transition's nature. See
\cite{Bol85} for results up to 1984, and \cite{Aldo97,JKL93,JLR00,LPW94} for
references to subsequent work. We briefly describe these here. \\

\noindent {\bf \em The critical window.} When $p=(1 + O(n^{-1/3}))/n$,
for any fixed integer $j\geq 1$,
    $$
    \Big( {|\C_1| \over n^{2/3}}, \ldots, {|\C_j| \over n^{2/3}} \Big )
    \stackrel{d}{\longrightarrow} (\chi_1, \ldots, \chi_j) \, ,
    $$
where $(\chi_i)_{i=1}^j$ are random variables supported on $(0,\infty)$,
and $\convd$ denotes convergence in distribution.\newline

\noindent {\bf \em The subcritical phase.} Let $\epsn=o(1)$ be a non-negative
sequence with $\epsn \gg n^{-1/3}$ and put $p=(1-\epsn)/n$, then, for any
fixed integer $j\geq 1$,
    $$
    \frac{|\C_j|}{2\epsn^{-2} \log(\epsn^3 n)}\convp 1\, .
    $$
\noindent {\bf \em The supercritical phase.} Let $\epsn=o(1)$ be a non-negative sequence with
$\epsn \gg n^{-1/3}$ and put $p=(1+\epsn)/n$, then
    $$
    \frac{|\C_j|}{2\epsn n}\convp 1 \, ,
    $$
and, for any fixed integer $j \geq 2$,
    $$
    \frac{|\C_j|}{2\epsn^{-2} \log(\epsn^3 n)}\convp 1\, .
    $$
Thus, the prominent qualitative features of this phase transition are:

\begin{enumerate}
\item The emergence of the giant component occurs just above the scaling window.
That is, only in the supercritical phase we have that $|\C_2| \ll |\C_1|$, and that
$|\C_1|/n$ increases suddenly but smoothly above the critical value
(in mathematical physics jargon, the phase transition is of second order).

\item Concentration of the size of the largest connected components
outside the scaling window and non-concentration inside the window.

\item Duality: $|\C_2|$ in the supercritical phase has the same asymptotics as
$|\C_1|$ in the corresponding subcritical phase.\\
\end{enumerate}

Theorem \ref{mainthm} shows that (1) and (2) occurs on the hypercube (the non-concentration of $|\C_1|$ at $p_c$ was proved
in \cite{HH2}). Property (3) on the hypercube remains an open problem (see Section \ref{sec-open}).

\subsection{Random subgraphs of transitive graphs}
\label{sec-bchss}
Let us briefly review the study of random subgraphs of general finite transitive graphs
initiated in \cite{BCHSS1, BCHSS2, BCHSS3}. We focus here only on some of the many
results obtained in these papers. Let $G$ be a finite transitive graph
and write $V$ for the number of vertices of $G$. Let $p\in [0,1]$ and write
$G_p$ for the random graph obtained from $G$
by retaining each edge with probability $p$ and erasing it with probability $1-p$,
independently for all edges. We also write $\prob_p$ for this probability measure.
We say an edge is $p$-open ($p$-closed) if was retained
(erased). We say that a path in the graph is
$p$-open if all of its edges are $p$-open. For two
vertices $x,y$ we write $x \lr y$ for the event that there exists a $p$-open path
connecting $x$ and $y$. For an integer $j \geq 1$ we write $\C_j$ for the $j$th largest
component of $G_p$ (breaking ties arbitrarily) and for a vertex $v$ we write $\C(v)$
for the component in $G_p$ containing $v$.

For two vertices $x,y$ we denote
    \be \label{nabledef}
    \nabla_p(x,y) = \sum_{u,v} \prob_p(x \lr u)\prob_p(u \lr v)\prob_p(v \lr y) \, .
    \ee
The quantity $\nabla_p(x,y)$, known as the {\em triangle diagram}, was introduced by
Aizenman and Newman \cite{AN} to study critical percolation on high-dimensional infinite
lattices. In that setting, the important feature of an infinite graph $G$ is
whether $\nabla_{p_c}(0,0)<\infty$.
This condition is often referred to as the {\em triangle condition}.
In high-dimensions, Hara and Slade \cite{HS90}
proved the triangle condition. It allows to deduce
that numerous critical exponents attain the same values as they do on an infinite regular
tree, see e.g.\ \cite{BA91, AN, KN08, KN11}.

When $G$ is a finite graph, $\nabla_p(0,0)$ is obviously finite, however, there is still a {\em finite} triangle condition which in turn guarantees that random critical subgraphs of $G$ have the same geometry as random subgraphs of the complete graph on $V$ vertices, where $V$ denotes the number of vertices in
$G$. That is, in the finite setting the role of the infinite regular tree is played by the complete graph. Let us make this heuristic formal.

We always have that $V\to \infty$ and that $\lambda\in(0,1)$ is a
fixed constant. Let $p_c=p_c(\lambda)$ be defined by
    \be
    \label{pcdefgeneral}
    \E _{p_c(\lambda)} |\C(0)| = \lambda V^{1/3} \, .
    \ee
The finite triangle condition is the assumption that
$\nabla_{p_c(\lambda)}(x,y) \leq {\bf 1}_{\{x=y\}} + a_0$,
for some $a_0=a_0(\lambda)$ sufficiently small.
The strong triangle condition, defined in  \cite[(1.26)]{BCHSS2},
is the statement that there exists a constant $C$ such that
for all $p\leq p_c$,
    \be
    \label{finitetriangle}
    \nabla_{p}(x,y) \leq {\bf 1}_{\{x=y\}} + C\chi(p)^3/V + \alpham,
    \ee
where $\alpham \to 0$ as $m\to \infty$ and $\chi(p)=\E _{p} |\C(0)|$ denotes the expected cluster size.
Throughout this paper, we will assume that the strong triangle
condition holds. In fact, in all examples where the finite triangle
condition is proved to hold, actually the strong triangle
condition \eqref{finitetriangle} is proved. In \cite{BCHSS2},
\eqref{finitetriangle} is shown to hold for
various graphs: the complete graph,
the hypercube and high-dimensional tori $\Z_n^d$.
In particular, the next theorem states \eqref{finitetriangle}
for the hypercube.

\begin{theorem}[\cite{BCHSS2}]
\label{thm-BCHSS2}
Consider percolation on the hypercube $\{0,1\}^\m$.
Then for any $\lambda$, there exists a constant $C=C(\lambda)>0$ such that for any $p\leq p_c(\lambda)$ (as defined in (\ref{pcdefgeneral}))
    \be
    \label{nabla-hamming-cube}
    \nabla_{p}(x,y) \leq {\bf 1}_{\{x=y\}}+ C(\chi(p)^3/V+1/\m) \, .
    \ee
\end{theorem}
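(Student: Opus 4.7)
The plan is to prove the strong triangle condition via the lace expansion on the finite abelian group $\{0,1\}^\m$, combined with Fourier analysis and a bootstrap argument, following the general strategy of Hara--Slade as adapted to finite transitive graphs in \cite{BCHSS1}. First I would derive the lace expansion identity for the two-point function $\tau_p(x,y):=\prob_p(x\lr y)$ in the form
$$\tau_p(x,y)=\indic{x=y}+\Pi_p(x,y)+p\sum_{u,v}\Pi_p(x,u)J(u,v)\tau_p(v,y),$$
where $J(u,v)=\indic{u\sim v}$ is the nearest-neighbour kernel and $\Pi_p$ is a signed coefficient expressible as an alternating sum over diagrams built from pairs of percolation clusters intersecting at designated pivotal vertices. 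Diagonalizing convolutions via the characters $x\mapsto(-1)^{k\cdot x}$ of $\{0,1\}^\m$, under which $\hat{J}(k)=\m-2|k|$ with $|k|$ the Hamming weight, the expansion solves to a closed rational expression for $\hat\tau_p(k)$ in terms of $p\hat J(k)$ and $\hat\Pi_p(k)$.

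The core of the argument is a bootstrap on a small family of control functions. I would introduce, for example,
$$f_1(p)=\chi(p)/(\lambda V^{1/3}),\qquad f_2(p)=\sup_{k\neq 0}|\hat\tau_p(k)|\bigl(1-p\hat J(k)/\m\bigr),$$
and assume that $f_i(p)\leq 4$ on some subinterval $[0,p']\subseteq[0,p_c]$. Under these assumptions every diagram appearing in $\Pi_p$ is dominated by products of triangle-like sums, which in turn are controlled by the hypothesized bounds on $\hat\tau_p$; this yields $|\hat\Pi_p(k)|=O(1/\m)$ uniformly in $k$. Feeding this improved bound back through the Fourier representation of $\hat\tau_p$ then upgrades the $f_i$ to $\leq 3$ on the same interval. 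Since the $f_i$ are continuous in $p$ and trivially small at $p=0$, this improvement-in-place argument propagates the control all the way up to $p=p_c$.

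With uniform Fourier bounds on $\hat\tau_p$ in hand, the desired triangle estimate follows by Parseval's identity applied to the three-fold convolution $\tau_p\star\tau_p\star\tau_p$:
$$\nabla_p(x,y)=\frac{1}{V}\sum_{k\in\{0,1\}^\m}(-1)^{k\cdot(x-y)}\hat\tau_p(k)^3.$$
The $k=0$ mode contributes precisely $\hat\tau_p(0)^3/V=\chi(p)^3/V$, accounting for the first term of the advertised correction. For the sum over $k\neq 0$ one combines the bootstrap bound on $\hat\tau_p(k)$ with $\hat J(k)=\m-2|k|$ and the concentration of $\binom{\m}{j}$ near $j=\m/2$, so that a direct counting argument bounds this tail by $O(1/\m)$. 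The $\indic{x=y}$ piece in the final inequality reflects the deterministic contribution $\tau_p(x,x)^3=1$ to $\nabla_p(x,x)$, while for $x\neq y$ the oscillating characters supply the cancellations needed to leave only the $O(\chi(p)^3/V+1/\m)$ correction.

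The main obstacle is the diagrammatic control of $\hat\Pi_p(k)$: each term of $\Pi_p$ is itself a sum over products of two-point functions arranged in cluster-intersection patterns, so estimating it requires exactly the triangle-type information we are trying to establish, and the bootstrap is the only way to break this circularity. A hypercube-specific subtlety is that $\hat J(k)$ can be as negative as $-\m$ at the antipodal character $k=(1,\dots,1)$, so one must verify that the Fourier denominator $1-p\hat J(k)(1+\hat\Pi_p(k))$ stays bounded away from zero not only near $k=0$ but at every nonzero mode. These negative-eigenvalue modes have no analogue in the usual $\Z^d$ lace expansion and require a tailored estimate to accommodate.
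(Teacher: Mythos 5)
Your plan is essentially the lace-expansion proof of \cite{BCHSS2}, which is indeed where this theorem is sourced, so it is a legitimate route; but it is genuinely different from the argument this paper itself develops and advertises (Remark 4 and Section \ref{sec-nbwtriangle}). The paper proves the finite triangle condition with \emph{no} lace expansion and \emph{no} Fourier inversion of the two-point function: it splits each connection event at the uniform mixing time $\mnot$ of the non-backtracking walk. Connections realized by a path of length at least $\mnot$ are bounded \emph{uniformly in the endpoint} by $(1+o(1))\chi(p)/V$ (Lemma \ref{uniformconnbd}, via BK and the fact that the number of non-backtracking paths of length $\mnot$ from $x$ to $v$ is $(1+\alpham)\m(\m-1)^{\mnot-1}/V$); any such connection in the triangle immediately yields the $\chi(p)^3/V$ term. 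The remaining short connections are dominated by $\prob_p(x\stackrel{=t}{\lrfill}u)\leq \m(\m-1)^{t-1}p^t\p^t(x,u)$, reducing the whole sum to a non-backtracking-walk triangle diagram, which is then evaluated purely by Fourier analysis of the NBW transition matrix on $\{0,1\}^\m$ (Lemma \ref{lem-NBW-high-d-tori}), giving the $O(1/\m)$ term. The only percolation input is $[p_c(\m-1)]^{\mnot}=1+O(\alpham)$, i.e.\ the easy estimate $p_c=1/(\m-1)+O(\m^{-3})$. What each approach buys: your route delivers far more (sharp asymptotics of $\hat\tau_{p_c}(k)$, the expansion \eqref{pcest} of $p_c$) at the cost of the full diagrammatic bootstrap; the paper's route is much shorter, needs no a priori control of $\tau_p$ in Fourier space, and transfers to graphs with no useful character theory for $\tau_p$ (e.g.\ the expanders of Theorem \ref{expander}), since only the \emph{walk} is analyzed spectrally.

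One step in your sketch needs more care: the claim that the $k\neq 0$ tail of $V^{-1}\sum_k\hat\tau_p(k)^3$ is $O(1/\m)$ ``by a direct counting argument.'' The raw bootstrap bound $|\hat\tau_p(k)|\lesssim [1-|\hat D(k)|]^{-1}$ gives $V^{-1}\sum_{k\neq 0}[1-|\hat D(k)|]^{-3}=\Theta(1)$, because the sum is dominated by the $\sim 2^{\m}$ characters with Hamming weight near $\m/2$, where the denominator is of order one. To get $O(1/\m)$ you must first subtract the trivial (zero-step) contributions and peel off at least two explicit factors of $\hat D(k)$ in the numerator, exactly as in the computation \eqref{4-loop-hyper}, where $\m^{-2}\E[(2X-\m)^2]=1/\m$ supplies the decay. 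Your flag about the antipodal mode $k=(1,\dots,1)$, where $\hat D(k)=-1$, is correct and is handled in the paper's own Fourier computation by treating $k=0$ and $k=1$ separately.
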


As we discuss in Remark $4$ below Theorem \ref{mainthmgeneral},
our methodology can be used to yield a simple proof of Theorem \ref{thm-BCHSS2}
without relying on the lace-expansion methods derived in
\cite{BCHSS2}. The main effort in \cite{BCHSS1} is to show
that under condition (\ref{finitetriangle}) the phase transition
behaves similarly to the one in $G(n,p)$  described in the
previous section. The main results obtained in \cite{BCHSS1} are the following:\\

\noindent {\bf \em The critical window.} Let $G$ be a finite transitive graph
for which (\ref{finitetriangle}) holds. Then, for $p=p_c(1+O(V^{-1/3}))$,
    $$
    \prob (A^{-1} V^{2/3} \leq |\C_1| \leq AV^{2/3})= 1- O(A^{-1}) \, .
    $$

\noindent {\bf \em The subcritical phase.} Let $G$ be a finite transitive graph
for which (\ref{finitetriangle}) holds. Let $\eps=o(1)$ be a non-negative sequence
with $\eps \gg V^{-1/3}$ and put $p=p_c(1-\eps)$. Then,
for all fixed $\delta>0$,
    $$
    \prob( |\C_1| \leq (2+\delta)\eps^{-1} \log(\eps^3 V)) = 1-o(1) \, .
    $$

\noindent {\bf \em The supercritical phase.} Let $G$ be a finite transitive graph
for which (\ref{finitetriangle}) holds. Let $\eps=o(1)$ be a non-negative sequence
with $\eps \gg V^{-1/3}$ and put $p=p_c(1+\eps)$. Then,
    $$
    \prob(|\C_1| \geq A \eps V) = O(A^{-1}) \, .
    $$


Thus, while these results hold in a very general setting, they are incomplete.
Most notably, in the supercritical phase there is no matching
lower bound on $|\C_1|$. So, a priori, it is possible that $|\C_1|$
is still of order $V^{2/3}$ when $p=p_c(1+\eps)$
for some $\eps\gg V^{-1/3}$ and that the scaling window is in fact much larger than
$V^{-1/3}$. It remains an open problem to show whether (\ref{finitetriangle}) by itself
implies that $|\C_1|/(\eps V)$ converges in probability to
a constant in the supercritical phase.

As we mentioned before, the particular case of the hypercube was addressed in \cite{BCHSS3}. There the authors employed some of the result of \cite{BCHSS1, BCHSS2} together with a {\em sprinkling} argument to provide a lower bound of order $\eps 2^m$ on $|\C_1|$ valid only when $\eps \geq \e^{-cm^{1/3}}$. We will rely on the sprinkling method for the arguments in this paper, so let us briefly expand on it.

\subsection{Sprinkling}
\label{sec-intsprinkling}
The sprinkling technique was invented by Ajtai, Koml\'{o}s and Szemer\'{e}di
\cite{AKS82} to show that $|\C_1|=\Theta(2^\m)$ when $p=(1 +\eps)/\m$ for fixed $\eps>0$
and can be described as follows. Fix some small $\theta>0$ and write
$p_1 = {(1 + (1-\theta)\eps)/\m}$ and $p_2\geq \theta \eps/ \m$
such that $(1-p_1)(1-p_2)=1-p$. It is clear that $G_p$ is distributed
as the union of the edges in two independent copies of $G_{p_1}$
and $G_{p_2}$. The sprinkling method consists
of two steps. The first step is performed in $G_{p_1}$ and uses a branching
process comparison argument together with Azuma-Hoeffding concentration inequality to obtain
that \whp\ at least $c_2 2^{\m}$ vertices are contained in connected
components of size at least $2^{c_1 \m}$ for some small but fixed
constants $c_1, c_2>0$. In the second step we
add the edges of $G_{p_2}$ (these are the ``sprinkled'' edges) and show that they
connect many of the clusters of size at least $2^{c_1 \m}$ into a giant cluster
of size $\Theta(2^\m)$.

Let us give some details on how the last step is done. A key tool here is the
\emph{isoperimetric inequality} for the hypercube stating that two disjoint
subsets of the hypercube of size at least $c_2 2^{\m}/3$ have at least $2^\m/\m^{100}$ disjoint
paths of length $C(c_2)\sqrt{\m}$ connecting them, for some constant $C(c_2)$.
(The $\m^{100}$ in the denominator
is not sharp, but this is immaterial as long as it is a polynomial in $\m$.)
This fact is used in the following way. Write $V'$ for the set of vertices which
are contained in a component of size at least $2^{c_1 \m}$ in $G_{p_1}$ so that
$V'\geq c_2 2^\m$. We say that \emph{sprinkling fails} when
$|\C_1| \leq c_2 2^\m/3$ in the union $G_{p_1} \cup G_{p_2}$. If
sprinkling fails, then we can partition $V'=A \uplus B$ such that both
$A$ and $B$ have cardinality at least $c_2 2^\m/3$ and {\em any}  path of length
at most $C(c_2)\sqrt{\m}$ between them has an edge which is $p_2$-closed. The number
of such partitions is at most $2^{2^\m/2^{c_1 \m}}$. The probability that a path
of length $k$ has a $p_2$-closed edge is $1- p_2^k$. Applying the isoperimetric inequality
and using that the paths guaranteed to exist by it are disjoint so that
the edges in them are independent, the probability that
sprinkling fails is at most
    \be
    \label{easysprinkle}
    2^{2^\m/2^{c_1 \m}} \cdot
    \Big (1 - \big ( {\theta\eps \over \m} \big )^{C(c_2)\sqrt{\m}} \Big )^{2^\m/\m^{100}}
    = \e^{-2^{(1+o(1))\m}}\, ,
    \ee
which tends to 0.

\subsection{Revised sprinkling}
\label{sec-improv-sprinkling}
The sprinkling argument above is not optimal due to the use of the isoperimetric
inequality. It is wasteful because it assumes that large percolation clusters
can be ``worst-case'' sets, that is, sets which saturate the isoperimetric inequality (e.g., two balls of radius $\m/2-\sqrt{\m}$ around two vertices at Hamming distance $\m$).
However, it is in fact very improbable for percolation clusters to be similar to this kind of worst-case sets. Our approach replaces the use the isoperimetric inequality by proving statements showing that large percolation clusters are ``close'' to uniform random sets of similar size. It allows us to deduce that two large clusters share many closed edges with the property that if
we open even \emph{one} of them, then the two clusters connect. While previously we had paths of length $\sqrt{\m}$ connecting the two clusters, here we will have paths of length precisely $1$. The final line of our proof, replacing (\ref{easysprinkle}), will be
    \be
    \label{hardsprinkle}
    2^{{2\eps V /(\km\eps^{-2})}} \cdot \Big ( 1-{\theta\eps \over \m} \Big )^{\m\eps^2 V} \leq  \e^{-\theta \eps^3 V(1+o(1))} \, ,
    \ee
where $\km$ is some sequence with $\km\to \infty$ very slowly. This tends to $0$ since $\eps^3 V \to \infty$. Compared with the logic leading to (\ref{easysprinkle}), this line is rather suggestive. We will obtain that \whp\ $2\eps V$ vertices are in components of size at least $\km \eps^{-2}$, explaining the $2^{{2\eps V/(\km\eps^{-2})}}$ term in (\ref{hardsprinkle}). The main effort in this paper is to justify the second term showing that for any partition of these vertices into two sets of size $\eps V$, the number of closed edges between them is at least $\eps^2 \m V$ --- the same number of edges one would expect two uniform random sets of size $\eps V$ to have between them. Therefore, given a partition,
the probability that sprinkling fails for it is bounded by $\Big ( 1-{\theta\eps \over \m} \Big )^{\m\eps^2 V}$.

\subsection{The general theorem}
\label{sec-gen-thm}
Our methods use relatively simple geometric properties of the hypercube and apply to a larger set of underlying graphs. We present this general setting that the majority of the paper assumes here and briefly discuss some other cases for which our main theorem holds asides from the hypercube. We remark that the impatient reader may proceed assuming the underlying graph is always the hypercube $\{0,1\}^{\m}$ --- we have set up the notation to support this since the hypercube, in some sense, is our most ``difficult'' example.

The geometric conditions of our underlying graphs will be stated in terms of random walks. The main advantage of this approach is that these conditions are relatively easy to verify. Let $G$ be a finite transitive graph on $V$ vertices and degree $m$. Consider the non-backtracking random walk on it (this is just a simple random walk not allowed to traverse back on the edge it just came from, see Section \ref{sec-NBW} for a precise definition). For any two vertices $x,y$, we put $\p^t(x,y)$ for the probability that the walk started at $x$ visits $y$ at time $t$. For any $\xi>0,$ we write $\Tm(\xi)$ for the $\xi$-{\em uniform mixing time} of the walk, that is,
    $$
    \Tm(\xi) = \min \Big \{ t \colon  \max_{x,y} \,\,{\p^t(x,y) + \p^{t+1}(x,y) \over 2}
    \leq (1+\xi) V^{-1} \Big \} \, .
    $$
%

\begin{theorem}
\label{mainthmgeneral}
Let $G$ be a transitive graph on $V$ vertices with degree $\m$ and define $p_c$ as in (\ref{pcdef})
with $\lambda=1/10$. Assume that there exists a sequence $\alpham=o(1)$ with $\alpham \geq 1/\m$ such that if we put
$\mnot = \Tm(\alpham)$, then the following conditions hold:
\begin{enumerate}
\item $\m \to \infty$,
\item $[p_c(\m-1)]^{\mnot} = 1 + O(\alpham)$,
\item For any vertices $x,y$,
    $$
    \sum_{u,v} \sum_{\substack{t_1,t_2,t_3=0\\t_1+t_2+t_3\geq 3}}^{\mnot} \p^{t_1}(x,u) \p^{t_2}(u,v) \p^{t_3}(v,y) = O(\alpham/\log{V}).
    $$
\end{enumerate}
Then,
\begin{enumerate}
\item[(a)] the finite triangle condition (\ref{finitetriangle}) holds (and hence
the results in \cite{BCHSS1} described in Section \ref{sec-bchss} hold),
\item[(b)] for any sequence $\eps=\epsm$ satisfying $\epsm \gg V^{-1/3}$ and $\epsm = o(\mnot^{-1})$,
    $$
    \frac{|\C_1|}{2\epsm V}\convp 1 \,  , \quad\qquad \E |\C(0)| = (4+o(1))\epsm^2 V \,  , \quad\qquad \frac{|\C_2|}{\epsm V}\convp 0 \, . \\
    $$
\end{enumerate}
\end{theorem}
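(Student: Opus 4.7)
The plan is to first establish part (a) — the strong triangle condition (\ref{finitetriangle}) — and then use it together with a refined sprinkling argument to obtain part (b). For part (a), I would expand each connection probability $\prob_p(a\lr b)$ via a BK-type bound into a sum over self-avoiding (hence non-backtracking) paths from $a$ to $b$, and substitute this into the definition of $\nabla_p(x,y)$. The resulting triple sum decomposes, according to the lengths $t_1,t_2,t_3$ of the three segments, into (i) the trivial term $\mathbf{1}_{\{x=y\}}$ from $t_1=t_2=t_3=0$, (ii) the "diagonal" contribution of paths of total length at most a large constant (bounded by $C\chi(p)^3/V$ once condition (1) allows us to absorb the degree scaling), and (iii) the contribution of paths with total length at least $3$ but at most $\mnot$, which is exactly the sum appearing in condition (3), and is therefore $O(\alpham)$. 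Walks longer than $\mnot$ are controlled using condition (2) to keep the per-step factor $p(\m-1)\leq 1+O(\alpham)$ times the mixing/tree-graph inequality. Summing yields (\ref{finitetriangle}) and unlocks all results from \cite{BCHSS1,BCHSS2} cited in Section \ref{sec-bchss}.

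Among those consequences, the Aizenman--Barsky-type differential inequalities together with (\ref{finitetriangle}) give the supercritical expected cluster size $\E|\C(0)|=(4+o(1))\epsm^2 V$, and the truncated-first-moment and second-moment arguments from \cite{BCHSS1} give the matching \emph{upper} bound $|\C_1|\leq(2+o(1))\epsm V$ \whp\ as well as the fact that a positive fraction of the cluster mass sits in "large" clusters. The remaining content of part (b) is therefore the matching \emph{lower} bound $|\C_1|\geq(2-o(1))\epsm V$ \whp, which is where the sprinkling argument enters.

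For sprinkling, decompose $p=1-(1-p_1)(1-p_2)$ with $p_1=p_c(1+(1-\theta)\epsm)$ and $p_2\approx\theta\epsm p_c$ for a small constant $\theta>0$. A second-moment argument at level $p_1$, using (\ref{finitetriangle}) and the expected cluster size computation above, shows that \whp\ a set $V'$ of size at least $(2-\delta)\epsm V$ lies in $p_1$-clusters of size at least $\km\epsm^{-2}$, where $\km\to\infty$ slowly. Call these the \emph{large} clusters. The core novel step is to show that for \emph{any} partition $V'=A\uplus B$ with $|A|,|B|\geq\epsm V/3$, the number of $p_2$-closed edges between $A$ and $B$ is at least $\epsm^2 \m V$ — the baseline expected from uniform random sets of these sizes — so that (\ref{hardsprinkle}) applies and the probability that sprinkling fails for some such partition is at most
\[
2^{2\epsm V/(\km\epsm^{-2})}\Bigl(1-\tfrac{\theta\epsm}{\m}\Bigr)^{\m\epsm^2 V}\leq \e^{-\theta\epsm^3 V(1+o(1))}\to 0,
\]
since $\epsm^3 V\to\infty$. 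Then $\C_1$ absorbs all of $V'$ up to a $\delta$-fraction, yielding the required lower bound, and the $|\C_2|/(\epsm V)\convp 0$ statement follows because any competing cluster of size at least $\km\epsm^{-2}$ would, by the same sprinkling estimate, merge into $\C_1$ with overwhelming probability, whereas clusters of size $o(\km\epsm^{-2})$ are too small to contribute on the $\epsm V$ scale.

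The main obstacle is the "uniform-like" edge-count estimate — the substitute for the hypercube isoperimetric inequality that allowed (\ref{easysprinkle}). On a general transitive graph, the large clusters of $G_{p_1}$ could in principle pile up near a small region, so one needs a genuine geometric input to force them to spread out. I would prove the estimate by interpreting the edge count between $A$ and $B$ as a double sum over vertices in large clusters and their neighbors, and then bounding its first two moments via exploration of clusters expressed through connection probabilities. Each connection probability is in turn dominated by a non-backtracking walk generating function, and the triangle-type sum in condition (3) controls the covariance, yielding second moment equal to (first moment)$^2\cdot(1+o(1))$. The restriction $\epsm=o(\mnot^{-1})$ is what guarantees that the cluster-exploration horizon relevant here sits inside the $\mnot$-step mixing window, so that random-walk mixing can legitimately replace worst-case isoperimetric input. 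This is precisely where the hypothesis on the uniform mixing time is essential, and is the most delicate part of the argument.
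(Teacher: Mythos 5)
Your treatment of part (a) and the overall sprinkling architecture (the decomposition $p=1-(1-p_1)(1-p_2)$, the union bound over partitions, the target estimate \eqref{hardsprinkle}) match the paper. The genuine gap is in the one step you yourself flag as most delicate: the claim that \emph{every} adversarial partition $V'=A\uplus B$ has at least $\epsm^2\m V$ closed edges across it. You propose to prove this by a first and second moment computation on the edge count, with condition (3) controlling the covariance so that the second moment is $(1+o(1))$ times the square of the first. This fails for two reasons. First, a moment bound for a fixed partition cannot be unioned over the roughly $2^{\epsm V/(\km\epsm^{-2})}$ partitions, because the partition is chosen after the configuration is revealed; the estimate must be proved at the level of \emph{pairs of clusters} and then transferred to partitions by a counting argument (in the paper: Definition \ref{goodpairs} and Theorem \ref{alphapairs}, showing that $(1-o(1))(2\epsm V)^2$ pairs are good, so any balanced partition is crossed by $\Theta(\epsm^2V^2)$ good pairs, each contributing many closed edges, with multiplicity controlled by the cap on $|B_u(2r+r_0)|\cdot|B_{u'}(2r+r_0)|$). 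Second, even for a single pair of large clusters the boundary-edge count $S_{2r+r_0}(x,y)$ is \emph{not} concentrated: as with a supercritical branching process, the early generations have a lasting multiplicative effect, so the variance is of the order of the squared mean and condition (3) cannot remove it. The paper's resolution is to condition on the entire balls $B_x(j_x)$, $B_y(j_y)$ and prove concentration of the conditional edge count around $|\partial B_x(j_x)||\partial B_y(j_y)|V^{-1}\m(\E|B(r_0)|)^2$; this in turn requires the intrinsic-metric regularity theorem (Theorem \ref{mostsetsaregood}) to guarantee that for typical conditioning the expected ``future'' of most boundary vertices is neither depleted nor localized, plus a pivotality device to avoid overcounting. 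None of this machinery appears in your proposal, and without it the central estimate does not go through.

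Two smaller points. The asymptotics $\E|\C(0)|=(4+o(1))\epsm^2V$ do not follow from the differential inequalities alone: they give only the upper bound (via Lemma \ref{lem-conc-Z-I}), and the matching lower bound is deduced \emph{from} the sprinkling result through $\E|\C(0)|\geq V^{-1}\E|\C_1|^2$. Likewise, your argument for $|\C_2|/(\epsm V)\convp 0$ (``a competing large cluster would merge by the same sprinkling estimate'') is circular once the sprinkled edges have been used; the clean route is $V^{-1}\sum_{j\geq1}\E|\C_j|^2=\E|\C(0)|\leq(4+o(1))\epsm^2V$ together with $\E|\C_1|^2\geq(4-o(1))\epsm^2V^2$, which forces $\sum_{j\geq2}\E|\C_j|^2=o(\epsm^2V^2)$.
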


\noindent {\bf Remark 1.} In the case of the hypercube $\{0,1\}^m$ we will take $\alpham=\m^{-1} \log{\m}$ and verify the conditions of Theorem \ref{mainthmgeneral}. This is done in Section \ref{sec-sprinkling}. Although the behavior of random walk on the hypercube is well understood, we were not able to find an estimate on the uniform mixing time yielding $\Tm(\m^{-1} \log m) = \Theta(\m \log \m)$ in the literature. To show this we use the recent paper of Fitzner and the first author \cite{FitHof11a} in which the non-backtracking walk transition matrix on the hypercube is analyzed. We use this result in Lemma \ref{lem-NBW-high-d-tori} to verify condition (3) and condition (2) follows directly from \eqref{pcest} (though condition (2) can be verified by elementary means without using \eqref{pcest}, see Remark 4).\\

\noindent {\bf Remark 2.} Note that part (b) of Theorem \ref{mainthmgeneral} only applies when
$\epsm = o(\mnot^{-1})$ and not for any $\epsm=o(1)$. Thus, for a complete proof of
Theorem \ref{mainthm}, we also require a separate argument dealing with the regime
$\epsm \geq c\mnot^{-1}$ --- in the case of the hypercube and other graphs mentioned
in this paper, this is a much easier regime in which previous
techniques based on sprinkling and
isoperimetric inequalities are effective. \\

\noindent {\bf Remark 3.} Random walk conditions for percolation on finite graphs
were first given by the second author in \cite{Nach09}. The significant difference between the two approaches is that in \cite{Nach09} the condition requires controlling the random walk behavior of a period of time which is as long as the critical cluster diameter, that is, $V^{1/3}$. The outcome is that the results of \cite{Nach09} only apply when $p_c = (1+O(V^{-1/3}))/(\m-1)$ and hence do not apply in the case of the hypercube. Here we are only interested in the behavior of the random walk up to the mixing time, even though that typical percolation paths are much longer. The reason for this is that it turns out that it is enough to randomize the beginning of a percolation path in order to obtain that the end point is uniformly distributed, see Section \ref{sec-unif-conn-NBW}. Another difference is that the results in \cite{Nach09} only show that $|\C_1| \geq c\eps V$ for some $c>0$ and do not give the precise asymptotic value of $|\C_1|$ as we do here.\\

\noindent {\bf Remark 4.} Our approach also enables us to give a
simple proof for the fact that the finite triangle condition (Theorem \ref{thm-BCHSS2})
holds for the hypercube without using the lace expansion as in \cite{BCHSS2}. Our proof of this fact relies on the estimate $p_c = 1/(\m-1) + O(\m^{-3})$ (which is much weaker than (\ref{pcdef}) but also much easier to prove) and on the argument presented in Section \ref{sec-unif-conn-NBW}. We defer this derivation to a future publication.
In fact, in this paper we only rely on this easy
estimate for $p_c$ so our main results here, Theorem \ref{mainthm}, are
in fact self-contained and do not rely at any time on results obtained
via the lace expansion in \cite{BCHSS2} (we do use arguments of \cite{BCHSS1} which
rely on the triangle condition). We hope that this may be useful in future attempts
to ``unlace'' the lace expansion.\\

%


In many cases, verifying the conditions of Theorem \ref{mainthmgeneral} is done using known methods from the theory of percolation and random walks (note that condition (2) involves both a random walk and a percolation estimate). We illustrate how to perform this in Section \ref{sec-sprinkling} in the case of the hypercube (thus proving Theorem \ref{mainthm}) and for expander families of high degree and high girth (see \cite{HLW} for an introduction to expanders). This is a class of graphs that contains various examples such as Payley graphs (see e.g.\ \cite{CGW89}), products of complete graphs $K_n^d$ and many others. Percolation on products of complete graphs were studied in \cite{HL, HLS, Nach09} in the cases $d=2,3$; our expander theorem allows us to provide a complete description of the phase transition in any fixed dimension $d$, answering a question posed in \cite{HL}. Recall that a sequence of graphs $G_n$ is called an {\em expander} family if there exists a constant $c>0$ such that the second largest eigenvalue of the transition matrix of the simple random walk is at most $1-c$ (the largest eigenvalue is $1$). Also, the girth of a graph is the length of the shortest cycle. It is a classical fact that on expanders $\Tm(V^{-1}) = O(\log V)$, where $V$ is the number of vertices of the graph, see e.g.\ \cite[below (19)]{ABLS}.

\begin{theorem}\label{expander} Let $G_\m$ be a transitive family of expanders with degree $\m\to \infty$ and $V$ vertices. Assume that $\m \geq c \log V$ and that the girth of $G$ is at least
$c \log_{m-1}{V}$ for some fixed $c>0$. Then the conditions of Theorem \ref{mainthmgeneral} hold and hence the conclusions of that theorem hold.
\end{theorem}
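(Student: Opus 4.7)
We verify the three conditions of Theorem \ref{mainthmgeneral}. Condition (1) is immediate from $\m \to \infty$. The expander hypothesis supplies an SRW spectral gap bounded away from zero; via the Ihara--Bass formula this transfers to the non-backtracking operator on directed edges, so there exists $\lambda < 1$ depending only on the expansion constant with $\max_{x,y}|\p^t(x,y) - V^{-1}| \leq C_1 \lambda^t$. We choose $\alpham = C/\log V$ for a suitable large constant $C$; then $\mnot = T_{\rm mix}(\alpham) = O(\log(V/\alpham)) = O(\log V)$, and under $\m \geq c \log V$ both $\alpham = o(1)$ and $\alpham \geq 1/\m$ hold.

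\paragraph{Condition (2).} A standard branching-process comparison (paralleling the hypercube estimate \eqref{pcest}) yields $p_c(\m-1) = 1 + O(1/\m^2)$: under the finite triangle condition, $\E_p |\C(0)|$ agrees with the expected size of the Binomial$(\m-1,p)$ Galton--Watson tree up to a correction of order $\chi(p)^3/V$, and imposing $\E_{p_c}|\C(0)| = \lambda V^{1/3}$ forces the stated estimate. In the spirit of Remark 4, this step uses only the random-walk input of condition (3) rather than the lace expansion. Consequently
\[
[p_c(\m-1)]^{\mnot} = \exp\!\bigl(O(\log V/\m^2)\bigr) = 1 + O(1/\log V) = 1 + O(\alpham)
\]
using $\m \geq c \log V$.

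\paragraph{Condition (3).} Let $g \geq c \log_{\m-1} V$ denote the girth, and split the triangle-like sum $S(x,y)$ according to whether all three times $t_i$ are at most $g/2$. In the tree-like regime, the 3-leg NBW stays in an $\m$-regular tree neighbourhood of $x$, so $\p^t(a,b) \leq (\m(\m-1)^{t-1})^{-1}$ and is supported on the $t$-sphere. The dominant contribution comes from triples of total length $3$: for instance $(t_1,t_2,t_3)=(1,1,1)$ with $x \sim y$ gives $A^3(x,y)/\m^3 = O(1/\m^2)$, where $A$ is the adjacency matrix (the girth-$>3$ hypothesis rules out triangles, leaving only the two families of length-$3$ back-and-forth walks), while triples of total length $\geq 4$ contribute $O(\m^{-3})$ or smaller by direct tree enumeration and sum geometrically, totalling $O(1/\m^2)$. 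In the mixing regime (some $t_i > g/2$), the spectral bound gives $\p^{t_i}(a,b) \leq V^{-1} + C_1 \lambda^{t_i}$; expanding $\p^{t_1}\p^{t_2}\p^{t_3}$ around $V^{-1}$ and summing the $O(\mnot^3)$ triples yields a leading term $\mnot^3/V = O(\log^3 V/V)$ and remainder bounded by $O(\mnot^2 \lambda^{g/2})$, both of order $o(1/\log^2 V)$ by the girth hypothesis. Combining, $S(x,y) = O(1/\m^2) = O(1/\log^2 V) = O(\alpham/\log V)$, as required.

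\paragraph{Main obstacle.} The principal technical step is the sharp estimate $p_c(\m-1) = 1 + O(1/\m^2)$ underlying condition (2), which improves the naive mean-field bound by a factor of $\m$; this is available from the lace expansion in the hypercube case (cf.\ \eqref{pcest}) but here must be derived self-contained from the triangle condition using only the random-walk input of condition (3), in the spirit of Remark 4. The remaining ingredients---the Ihara--Bass spectral transfer from SRW to NBW, the combinatorial tree enumeration in the local regime, and the exponential spectral decay in the mixing regime---are more standard once this sharp estimate is available. A secondary concern is that the two regimes play different roles as $\m$ varies (for $\m$ near $\log V$ the girth is large and the tree-like part dominates; for $\m$ close to polynomial in $V$ the girth may be small but $\mnot$ is correspondingly small), but the combined bound still matches $O(\alpham/\log V)$ throughout.
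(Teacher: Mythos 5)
There is a genuine gap in your verification of condition (2). You reduce it to the estimate $p_c(\m-1)=1+O(1/\m^2)$, but this is false for part of the class of graphs the theorem covers: for the complete graph $K_n$ (transitive, expander, $\m=n-1\geq c\log V$, girth $3\geq c\log_{\m-1}V$ for small $c$), the definition $\E_{p_c}|\C(0)|=\lambda V^{1/3}$ forces $p_c=\frac{1}{n}(1-\Theta(n^{-1/3}))$, so $p_c(\m-1)=1-\Theta(V^{-1/3})$, whose deviation from $1$ vastly exceeds $1/\m^2=\Theta(V^{-2})$. Worse, your proposed derivation is circular: the finite triangle condition at $p_c$ is part (a) of the \emph{conclusion} of Theorem \ref{mainthmgeneral}, deduced from conditions (1)--(3), so it cannot be assumed in order to verify condition (2); and your own ``main obstacle'' paragraph concedes that this step ``must be derived'' without deriving it (Remark 4's self-contained derivation is explicitly deferred by the authors to a future publication). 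What is actually needed is much weaker and one-sided: $p_c\leq(\m-1)^{-1}(1+\alpham/\log V)$, since the matching lower bound $p_c(\m-1)\geq 1-2\lambda^{-1}V^{-1/3}$ is immediate from the tree bound $\chi(p)\leq 2/(1-p(\m-1))$ and is harmless after raising to the power $\mnot=O(\log V)$. The paper proves this upper bound concretely, by showing $\E_p|\C(0)|\gg V^{1/3}$ at $p=(\m-1)^{-1}(1+\alpham/\log V)$ via the criterion of \cite[Theorem 4]{Nach09}, which reduces to a sum over return probabilities $\p^t(0,0)$ controlled by the girth for $t\leq C\log V$ and by mixing for $t\geq C\log V$. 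Your proposal is missing this argument entirely.

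Your verification of condition (3) follows the same split as the paper (tree-like bounds below the girth scale, mixing above), but two steps need repair. First, the triple $(t_1,t_2,t_3)=(1,1,1)$ cannot be dismissed by ``girth $>3$ rules out triangles'': the hypothesis only requires girth at least $c\log_{\m-1}V$ for \emph{some} $c>0$, which permits girth $3$ (this is precisely how $K_n^d$ enters the theorem); the paper instead bounds this term on an arbitrary regular graph using Lemma \ref{lem-NBW-ext}. Second, ``expanding $\p^{t_1}\p^{t_2}\p^{t_3}$ around $V^{-1}$'' with a pointwise error $C_1\lambda^{t}$ does not control the sum over $u,v$: the quantity $\sum_u|\p^t(x,u)-V^{-1}|$ can be as large as $CV\lambda^t$, so the cross terms are not negligible as written. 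The paper's device avoids this: pull out of the double sum only the single factor with the largest $t_i$ (bounded pointwise by $(\m-1)^{-t_i}$ when $t_i$ is below the girth scale and by $V^{-q}$ otherwise), and sum each of the remaining two factors over its free vertex, contributing exactly $1$; counting triples then gives $O(\m^{-2})+O((\log V)^3(\log V)^{-5})=O(\alpham/\log V)$.
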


For products of complete graphs $K_n^d$, the girth equals 3, $V=n^d$ and $m=d(n-1)$, so that  the girth assumption is satisfied
for $c\leq 3(1-o(1))/d$ and $n$ sufficiently large. 
Theorem \ref{mainthmgeneral} applies to other examples of graphs, not included in the last theorem, for example, products of complete graphs $K_n^d$ where $d$ may depend on $n$ (as long as $n+d \to \infty$) and finite tori ${\mathbb Z}_n^d$ but only when $d=d(n)$ grows at some rate with $n$. We omit the details since they are rather similar. We emphasize, however, that there are important examples which are methods are insufficient to solve. Most prominently are bounded degree expanders with low girth and finite tori ${\mathbb Z}_n^d$ where $d$ is large but fixed. It seems that new ideas are required to study percolation on these graphs, see Section \ref{sec-open}.

\subsection{Organization}
\label{sec-organisation}
This paper is organized as follows. In Section \ref{sec-overview} we give an
overview of our proof, stating the main results upon which the
proof is based. In Section \ref{sec-prelim} we prove several estimates on
the number of vertices satisfying various properties, such as having large clusters,
or surviving up to great depth. We further prove detailed estimates on connection
probabilities. In Section \ref{sec-exp-vol-est} we prove expected volume estimates,
both in the critical as well as in the supercritical regime. In Section \ref{sec-intrin-reg}
we prove an intrinsic-metric regularity theorem, showing that most vertices that survive
long and have a large cluster size have neighborhoods that are sufficiently regular.
In Section \ref{sec-large-clusters-close} we show that most large clusters
have many closed edges between them, which is the main result in our proof.
In Section \ref{sec-sprinkling} we perform the improved sprinkling argument
as indicated in Section \ref{sec-improv-sprinkling}, and complete the
proof of Theorem \ref{mainthm}. In Section \ref{sec-open} we discuss several open problems. We close the paper in
In Appendix \ref{sec-clustertail} we sharpen the arguments
in \cite{BA91} and \cite{BCHSS2} to obtain the asymptotics of the supercritical
cluster tail.

\subsection{Acknowledgements} 
The work of RvdH was supported in part by the Netherlands
Organization for Scientific Research (NWO).
The work of AN was partially supported by NSF grant \#6923910.
RvdH warmly thanks the Theory Group
at Microsoft Research Redmond for the hospitality during his visit in January 2010,
where this work was initiated, the Department of Mathematics at M.I.T.\ for
their hospitality during his visit in April 2011, and the Department of
Mathematics at U.B.C.\ for their hospitality during
his visit in November 2011. We thank Gady Kozma for useful discussions and Gordon Slade for
valuable comments on a preliminary version of this paper.
AN thanks Christian Borgs for introducing him to \cite[Conjecture 3.2]{BCHSS3} when he was an intern at
Microsoft Research in the summer of 2005.

\section{Overview of the proof}
\label{sec-overview}
In this section we give an overview of the key steps in our proof. Throughout the rest of the paper we assume that $\eps=\epsm$ is a sequence
such that $\eps=o(1)$ but $\eps^3 V \to \infty$.

\subsection{Notations and tools}
\label{sec-notation}
Let $G$ be a transitive graph and recall that $G_p$ is obtained from
$G$ by independently retaining each edge with probability $p$.
Recall that $x \lr y$ denotes that there exists a $p$-open path
connecting $x$ and $y$. We write $d_{G_p}(x,y)$ for the length of a shortest
$p$-open path between $x,y$ and put $d_{G_p}(x,y)=\infty$ if $x$ is not connected
to $y$ in $G_p$. We write $x \lrr y$ if $d_{G_p}(x,y) \leq r$, we write
$x \stackrel{=r}{\lrfill} y$ if $d_{G_p}(x,y)=r$ and $x \stackrel{[a,b]}{\lrfill} y$
if $d_{G_p}(x,y) \in [a,b]$. Further, we write $x \stackrel{P[a,b]}{\lrfill} y$
when there exists an open path of length in $[a,b]$ between $x$ and $y$ (not
necessarily a shortest path). The event
$\{x \stackrel{[a,b]}{\lrfill} y\}$ is not increasing with respect to adding edges,
but the event $\{x \stackrel{P[a,b]}{\lrfill} y\}$ \emph{is}, which often makes
it easier to deal with. Whenever the sign $\lr$ appears it will be clear
what $p$ is and we will drop it from the notation. The {\em intrinsic metric} ball of
radius $r$ around $x$ and its boundary are defined by
    $$
    B_x^{G}(r) = \{ y \colon d_{G_p}(x,y) \leq r\} \, , \qquad
    \partial B_x^{G}(r) = \{ y \colon d_{G_p}(x,y)=r\} \, .
    $$
Note that these are random sets of the graph and not the balls in shortest path metric
of the graph $G$. We often drop the $G$ from the above notation and write $B_x(r)$ when
it is clear what the underlying graph $G$ is. We also denote
    $$
    B_x^{G}([a,b])= \{ y \colon x \stackrel{[a,b]}{\lrfill} y \} \, .
    $$
Our graphs always contain a marked vertex that we call {\em the origin} and denote by $0$.
In the case of the hypercube this is taken to be the all $0$ vector. We often drop $0$
from notation and write $B(r)$ for $B_0(r)$ whenever possible.

We now define the intrinsic metric {\em one-arm} event. This was introduced
in \cite{NP3} to study the mixing time of critical $G(n,p)$ clusters and was very useful
in the context of high-dimensional percolation in \cite{KN08}. Define the event
    $$
    H^{G}(r) = \big \{ \partial B_0^{G}(r) \neq \emptyset \big \} \, ,
    $$
for any integer $r\geq 0$ and
    \be
    \label{gammadef}
    \Gamma(r) = \sup _{G' \subset G} \prob(H^{G'}(r)) \, ,
    \ee
where the supremum is over all subgraphs $G'$ of $G$.

The reason for the somewhat unnatural definition of $\Gamma$ above is the fact
that the event $\partial B_0^G(r) \neq \emptyset$ is not monotone with
respect to the addition of edges. Indeed, turning an edge from closed to open may
shorten a shortest path rendering the configuration such that the event
$\partial B_0^{G}(r) \neq \emptyset$ no longer occurs.

The following theorem studies the survival probability and
expected ball sizes at $p_c$, and is the finite graph analogue of a
theorem of Kozma and the second author \cite{KN08}.
The proof is almost identical to the one in \cite{KN08}
and is stated explicitly in \cite{HH2, KN2}.

\begin{theorem}[Volume and survival probability \cite{KN08}]
\label{KozmaNachmias}
Let $G$ be a finite transitive graph on $V$ vertices such that the finite
triangle condition (\ref{finitetriangle}) holds, and consider percolation on $G$
at $p=p_c(\lambda)$ with any $\lambda>0$. Then there exists a constant $C=C(\lambda)>0$
such that for any $r>0$,
\begin{enumerate}
\item $ \E |B(r)| \leq Cr$, and
\item $\Gamma(r) \leq C/r \, .$
\end{enumerate}
\end{theorem}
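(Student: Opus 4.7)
The plan is to adapt the Kozma--Nachmias argument \cite{KN08} for the intrinsic one-arm exponent to the finite-graph setting, using the finite triangle condition (\ref{finitetriangle}) in place of its infinite-volume counterpart. The two claims are the analogues, for percolation clusters at $p_c$, of critical Galton--Watson facts: a branching process with mean-1 offspring and bounded variance satisfies $\E|B(r)| = r+1$ and $\P(Z_r > 0) \asymp 1/r$ by Kolmogorov's theorem. Both bounds will be derived from discrete differential inequalities obtained via BK-type expansions, with the triangle condition controlling the correlations between distinct ``offspring'' branches at the same level.

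For item~(1), I would set $\chi_r := \E|B(r)|$ and show the bounded-increment recursion $\chi_{r+1} - \chi_r \leq C$. The starting point is $\chi_{r+1} - \chi_r = \E|\partial B(r+1)|$; for each candidate vertex $v$ one decomposes the event $\{v \in \partial B(r+1)\}$ at the last edge $\{u,v\}$ of an open geodesic to $v$. The BK inequality then bounds $\P(v \in \partial B(r+1))$ by a convolution involving $\P(u \in \partial B(r))$ and $p\cdot\indic{\{u,v\}\in E(G)}$, minus corrections coming from paths that could have reached $v$ earlier. Summing over $v$ and using that $p_c(\m-1) = 1 + o(1)$ so that the leading term nearly cancels, the triangle condition (\ref{finitetriangle})---which at $p_c(\lambda)$ has $C\chi(p_c)^3/V = C\lambda^3$ and an $\alpham = o(1)$ term---bounds the correction by a constant. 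Iteration yields $\chi_r \leq Cr$.

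For item~(2) I would aim for the Kolmogorov-type inequality
$$
\Gamma(r) - \Gamma(r+1) \;\geq\; c\,\Gamma(r)^2 \;-\; o\!\bigl(\Gamma(r)^2\bigr),
$$
which yields $\Gamma(r) \leq C/r$ by elementary induction, in perfect analogy with $\theta_{n+1} = \theta_n - \tfrac{1}{2}\sigma^2\theta_n^2 + O(\theta_n^3)$ for critical Galton--Watson survival probabilities. The lower bound on the ``death rate'' $\Gamma(r)-\Gamma(r+1)$ comes from a two-arm argument: if two distinct vertices in $\partial B(r)$ each independently survive an additional step, the cluster survives to level $r+1$, and by BK these two events are roughly independent, contributing a product $\Gamma(r)^2$ to the survival probability; the triangle condition then controls the correction from their correlation. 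The \emph{supremum over subgraphs $G' \subset G$} in definition (\ref{gammadef}) is essential in this step: after exploring $B(r)$ the ``continuation'' of the cluster lives on a random subgraph of $G$, and passing to the supremum both furnishes the uniformity needed to iterate the recursion and restores a monotonicity that the non-monotone event $\{\partial B^G(r)\neq\emptyset\}$ lacks.

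The main obstacle will be controlling the accumulation of error terms from (\ref{finitetriangle}) across the iteration. The triangle correction contributes $O(\lambda^3 + \alpham)$ per step; over the $r \leq O(V^{1/3})$ iterations of interest this must remain harmless, which is arranged by choosing $\lambda$ in (\ref{pcdef}) small enough that the triangle diagram is a small fraction of the leading tree-like term, so that the inherited constants in $\chi_r \leq Cr$ and $\Gamma(r) \leq C/r$ are uniform in $m$. Once the two differential inequalities are established with these uniform constants, both parts of the theorem follow by straightforward induction on $r$, exactly as in \cite{KN08, HH2, KN2}.
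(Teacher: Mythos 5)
First, note that the paper itself does not prove this theorem: it is quoted from \cite{KN08}, with the finite-graph versions written out in \cite{HH2, KN2}, so your proposal has to be measured against that argument. Both halves of your plan deviate from it, and in both cases the deviation opens a real gap. For part (1), the one-step bound you propose to iterate goes the wrong way: conditionally on $B(r)$, $|\partial B(r+1)|$ is stochastically dominated by a sum of $|\partial B(r)|$ Binomial$(\m-1,p_c)$ variables, so the clean per-step inequality available is $\E|\partial B(r+1)|\leq p_c(\m-1)\,\E|\partial B(r)|$ --- and since $p_c(\m-1)>1$ (cf.\ \eqref{pcest}), iterating this gives $(p_c(\m-1))^r$, which is useless for $r$ of order $V^{1/3}$. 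The triangle condition cannot rescue a per-step \emph{upper} bound on the increment: it quantifies how little the BK/tree-graph \emph{upper} bounds overcount, i.e.\ it supplies \emph{lower} bounds on boundary growth (this is exactly how it is used in the paper's Theorem \ref{bdry} and in Lemma \ref{lem-lb-dbc}), not a negative correction cancelling the excess $p_c(\m-1)-1$. The two arguments that actually work are: (a) the correlation-length comparison --- by Lemma \ref{corrlength1}, $\E_{p_c}|B(r)|\leq C\,\E_{p_c(1-1/r)}|B(r)|\leq C\,\chi(p_c(1-1/r))$, and the Aizenman--Newman consequence of \eqref{finitetriangle} (proved for finite graphs in \cite{BCHSS1}) gives $\chi(p_c(1-\eps))\leq C\eps^{-1}$, hence $\E|B(r)|\leq Cr$; or (b) the growth dichotomy of Theorem \ref{bdry}: if $\E|\partial B(s;A)|$ were ever large, then $\E|B(ks)|$ would grow geometrically in $k$ and exceed $\E|\C(0)|=\lambda V^{1/3}$.

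For part (2), the Kolmogorov-type decrement inequality $\Gamma(r)-\Gamma(r+1)\geq c\,\Gamma(r)^2$ is not established in your sketch and is not how \cite{KN08} proceeds. Two concrete obstacles: $\Gamma$ is a supremum over subgraphs, so $\Gamma(r)$ and $\Gamma(r+1)$ may be attained at different $G'$ and their difference is not the probability of any single ``death'' event; and turning the two-offspring heuristic into the required inequality is an inclusion--exclusion on survival, which needs a \emph{reverse} BK bound for two disjoint survivals (obtainable from the triangle condition, as in Lemma \ref{lem-lb-dbc}) \emph{together with} third-moment control of the number of surviving boundary vertices to truncate the Bonferroni expansion --- none of which you address, and the non-monotonicity of $\{\partial B(r)\neq\emptyset\}$ makes each step delicate. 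The actual argument is structurally different and uses part (1) as input: to survive to level $3r$, either $|B(2r)|\geq \eps r^2$, which has probability at most $\E|B(2r)|/(\eps r^2)\leq C/(\eps r)$ by Markov and part (1), or else some level $j\in[r,2r]$ has $0<|\partial B(j)|\leq \eps r$, and a union bound over those at most $\eps r$ vertices, each of which must survive $r$ further levels \emph{off} the explored ball (this is precisely where the supremum over subgraphs in \eqref{gammadef} is used), contributes at most $\eps r\,\Gamma(r)^2$. Optimizing over $\eps$ and inducting over scales $r=3^k$ yields $\Gamma(r)\leq C/r$. I would recommend writing out that scheme rather than the death-rate recursion.
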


We often need to consider percolation performed at different values of $p$. We
write $\prob_{p}$ and $\E_p$ for the probability distribution and the corresponding
expectation operator with parameter $p$ when necessary.
Furthermore, we sometimes need to consider
percolation configurations at different $p$'s on the same probability space. This is
a standard procedure called the {\em simultaneous coupling} and it works as follows.
For each edge $e$ of our graph $G$, we draw an independent uniform random variable
$U(e)$ in $[0,1]$. We say that the edge $e$ receives the value $U(e)$. For any
$p\in [0,1]$, the set of $p$-open edges is distributed precisely as $\{e \colon U(e) \leq p\}$.
In this way, $G_{p_1} \subset G_{p_2}$ with probability $1$ whenever
$p_1 \leq p_2$.

\subsection{Tails of the supercritical cluster size}
\label{sec-super-clus-tail-over}
We start by describing the tail of the cluster size in the supercritical regime. Note that
the following theorem requires only the finite triangle condition, and not the
stronger assumptions of Theorem \ref{mainthmgeneral} and so the restriction
$\eps = o(\mnot)$ is not needed.

\begin{theorem}[Bounds on the cluster tail]
\label{clustertail}
Let $G$ be a finite transitive graph of degree $\m$ on $V$ vertices such that the finite
triangle condition (\ref{finitetriangle}) holds and put $p=p_c(1+\eps)$
where $\eps = o(1)$ and $\eps \gg V^{-1/3}$.
Then, for any $k$ satisfying $k \gg \eps^{-2}$
    \be
    \label{ub-cluster-tail}
    \prob ( |\C(0)| \geq k ) \leq 2 \eps \big(1+O(\vep+(\vep^3V)^{-1}+(\vep^2k)^{-1/4}+\alpham)\big) \, ,
    \ee
and, for the sequence $k_0=\eps^{-2} (\eps^3 V)^{\alpha}$ for any $\alpha\in(0,1/3)$,
there exists a $c=c(\alpha)>0$ such that
    \be
    \label{lb-cluster-tail}
    \prob ( |\C(0)| \geq k_0 ) \geq 2 \eps \big(1+O(\vep+(\eps^3V)^{-c}+\alpham)\big) \, .
    \ee
\end{theorem}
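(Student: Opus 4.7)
The plan is to adapt the Aizenman--Barsky magnetization machinery from \cite{AB87,BA91}, in the finite-graph form developed in \cite{BCHSS2}, to the supercritical regime $p = p_c(1+\eps)$, and then convert the resulting two-sided bounds on the magnetization into two-sided bounds on the cluster-size tail. Introduce a ghost field $h \in [0,1]$ by marking each vertex independently with probability $h$, and consider the magnetization
$$ M(p,h) = 1 - \E_p\!\left[(1-h)^{|\C(0)|}\right] = \sum_{k \geq 1} \prob_p(|\C(0)| = k)\left(1-(1-h)^k\right). $$
Under the finite triangle condition \eqref{finitetriangle}, the differential inequalities of Aizenman--Barsky (in the finite-graph form of \cite{BCHSS2}) bound $\partial_p M$ in terms of $\chi(p)$ and $M$, and bound $M$ in terms of $h \partial_h M$ with a coefficient of the form $1 + O(\nabla_p - 1)$. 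Together with the susceptibility estimate $\chi(p_c(1+\eps)) = (1+o(1))/\eps$, which is already established in \cite{BCHSS1} under \eqref{finitetriangle}, integrating these inequalities from $p_c$ to $p_c(1+\eps)$ will yield the key two-sided estimate
$$ M(p_c(1+\eps), h) = 2\eps\bigl(1+o(1)\bigr) + \Theta\bigl(\sqrt{h/\eps}\bigr), $$
with error terms of size $O(\alpham) + O((\eps^3 V)^{-1})$ coming from the slack in \eqref{finitetriangle} and from the $V^{1/3}$ definition of $p_c$ in \eqref{pcdefgeneral}.

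For the upper bound \eqref{ub-cluster-tail}, the observation is that the series representation of $M$ immediately yields the Markov-type inequality
$$ M(p,h) \geq \bigl(1-(1-h)^k\bigr)\,\prob_p\bigl(|\C(0)| \geq k\bigr). $$
I will choose $h$ of order $k^{-1/2}$: then $hk \to \infty$ forces the prefactor $(1-(1-h)^k)^{-1}$ to tend to $1$, while the additive correction $\sqrt{h/\eps}$ becomes exactly $(\eps^2 k)^{-1/4}$, reproducing the error term in \eqref{ub-cluster-tail}. For the lower bound \eqref{lb-cluster-tail}, the complementary decomposition
$$ M(p,h) \leq \prob_p(|\C(0)| \geq k_0) + h\,\E_p\bigl[|\C(0)|\,\1_{|\C(0)|<k_0}\bigr] \leq \prob_p(|\C(0)| \geq k_0) + h\chi(p) $$
combined with the lower half of the magnetization estimate gives $\prob_p(|\C(0)| \geq k_0) \geq M(p,h) - h\chi(p)$. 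Choosing $h = 1/k_0 = \eps^2(\eps^3 V)^{-\alpha}$ makes $h\chi(p) = \eps \cdot (\eps^3 V)^{-\alpha}$, which is of smaller order than $2\eps$ since $\alpha > 0$, while simultaneously $\sqrt{h/\eps} = (\eps^3 V)^{-\alpha/2}$ is a polynomial error of the form $(\eps^3 V)^{-c}$. The condition $\alpha < 1/3$ guarantees $k_0 \gg \eps^{-2}$, which is precisely the regime in which the magnetization analysis is sharp.

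The main obstacle will be the careful integration of the Aizenman--Barsky differential inequalities to extract the sharp constant $2$ in both directions simultaneously: getting $M(p,h) \leq 2\eps(1+o(1)) + O(\sqrt{h/\eps})$ is the classical Aizenman--Barsky argument, but matching this from below --- so as to produce the same constant in \eqref{lb-cluster-tail} --- requires sharpening the analyses of \cite{BA91,BCHSS2}, as advertised. The delicate point is that the finite triangle condition only gives $\nabla_p(0,0) \leq 1 + O(\chi(p)^3/V) + O(\alpham)$, so that the differential inequalities acquire an extra $O(\chi^3/V)$ term; one must verify that this term, when integrated over $p \in [p_c, p_c(1+\eps)]$, contributes only $O(\eps/(\eps^3 V))$ rather than accumulating across the interval. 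Once this is controlled, both inequalities \eqref{ub-cluster-tail} and \eqref{lb-cluster-tail} follow by the Markov-type sandwich described above.
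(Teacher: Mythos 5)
Your overall strategy---ghost-field magnetization, Aizenman--Barsky differential inequalities under the finite triangle condition, and a Markov-type conversion to the cluster tail---is precisely the route the paper takes in Appendix \ref{sec-clustertail}. But both of your conversion steps contain concrete errors. For the upper bound, the sharp supercritical magnetization estimate has the form $M(p,h)\leq\big(\eps+\sqrt{2h+\eps^2}\big)(1+o(1))$, not $2\eps(1+o(1))+\Theta(\sqrt{h/\eps})$: for $h\ll\eps^2$ the additive correction is $h/\eps$, and for $h\gg\eps^2$ the bound degenerates to $\sqrt{2h}$. Your choice $h\asymp k^{-1/2}$ is off by a factor of $\eps$; in the nonempty range $\eps^{-2}\ll k\ll\eps^{-4}$ it puts $h\gg\eps^2$, so Markov only gives $\prob(|\C(0)|\geq k)\leq(1+o(1))\sqrt{2}\,k^{-1/4}$, which is weaker than $2\eps$. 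The correct choice is $h=\eps k^{-1/2}=\eps^2(\eps^2k)^{-1/2}$ (the paper's $\gamma=a/k$ with $a=(\eps^2k)^{1/2}$), for which $h/\eps^2=(\eps^2k)^{-1/2}=o(1)$ while $hk\to\infty$ still controls the prefactor.

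The lower-bound conversion fails more seriously: you use $\chi(p_c(1+\eps))=(1+o(1))/\eps$, but that is the \emph{subcritical} susceptibility. At $p=p_c(1+\eps)$ one has $\chi(p)=\E|\C(0)|=\Theta(\eps^2V)$, dominated by the giant component (indeed the theorem you are proving is what yields $\E|\C(0)|=(4+o(1))\eps^2V$). With $h=1/k_0$ this makes $h\chi(p)=\Theta\big(\eps(\eps^3V)^{1-\alpha}\big)\gg\eps$, so the inequality $\prob(|\C(0)|\geq k_0)\geq M(p,h)-h\chi(p)$ is vacuous. The paper sidesteps the susceptibility by comparing the magnetization at two field strengths, $\prob(|\C(0)|\geq k)\geq M(p,\gamma_1)-(\gamma_1/\gamma_0)\e^{\gamma_0k}M(p,\gamma_0)$ with $\gamma_0=1/k$ and $\gamma_1/\gamma_0=(\eps^3V)^{\alpha-\alpha'}=o(1)$, so the subtracted term is $o(\eps)$. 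Finally, the part you defer---the matching constant $2$ from below---is the real content of the appendix: it requires the sharpened inequality \refeq{rdi} (giving $M(p_c(1-\eps),\gamma)\leq\sqrt{2\gamma}(1+o(1))$ via a bootstrap), the sharpened inequality \refeq{ineq2} with the extra factors $\tfrac12$, an a priori bound $\prob(|\C(0)|\geq k_0)\geq b\eps$ imported from \cite{BCHSS1}, and an integration of \refeq{ineq2} over a carefully chosen rectangle in $(p,\gamma)$; none of this is supplied or sketched in your proposal.
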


Theorem \ref{clustertail} is reminiscent of the fact that a branching process
with Poisson progeny  distribution of mean $1+\eps$ has survival
probability of $2\eps(1+O(\vep))$ when $\eps=o(1)$. Upper and lower
bounds of order $\eps$ for the cluster tail were proved already in
\cite{BCHSS2} using Barsky and Aizenman's differential inequalities \cite{BA91}.
However, to get the precise constant $2$ we need to sharpen these differential
inequalities and handle some error terms in them that were neglected in the past.
This derivation and the proof of Theorem \ref{clustertail} are presented in
Appendix A. Its proof is entirely self-contained.

Let $Z_{\sss \geq k}$ denote the number of vertices with cluster size at least $k$, i.e.,
    \be
    Z_{\sss \geq k} = \big | \big \{ v \colon |\C(v)| \geq k\big \} \big | \, .
    \ee
We use Theorem \ref{clustertail} to show that $Z_{\sss \geq k_0}$, with $k_0$ as
in the theorem, is concentrated. This advances us towards the first term on the left hand side of (\ref{hardsprinkle}).

\begin{lemma}[Concentration of $Z_{\sss \geq k_0}$]
\label{lem-conc-Z-I}
In setting of Theorem \ref{clustertail}, if $m\to \infty$, then
    $$
    \frac{Z_{\sss \geq k_0}}{2\eps V}\convp 1,
    $$
and
    $$
    \E |\C(0)| \leq (4+o(1))\eps^2 V \, .
    $$
\end{lemma}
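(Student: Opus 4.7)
I would combine Theorem~\ref{clustertail} with a first- and second-moment analysis of $Z_{\sss \geq k_0}$, choosing $k_0=\eps^{-2}(\eps^3V)^{\alpha}$ for some small fixed $\alpha\in(0,1/3)$.

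First, to compute the expectation: by transitivity $\E[Z_{\sss \geq k_0}]=V\,\Pr(|\C(0)|\geq k_0)$, and with this choice of $k_0$ every error term in \eqref{e:ub-cluster-tail} and \eqref{e:lb-cluster-tail} vanishes as $\m\to\infty$---in particular $(\eps^2k_0)^{-1/4}=(\eps^3V)^{-\alpha/4}\to 0$ and $(\eps^3V)^{-c}\to 0$---so $\Pr(|\C(0)|\geq k_0)=(2+o(1))\eps$ and hence $\E[Z_{\sss \geq k_0}]=(2+o(1))\eps V$.

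Next, I would establish the bound $\E|\C(0)|\leq (4+o(1))\eps^2 V$ by writing $\E|\C(0)|=\sum_{k=1}^V\Pr(|\C(0)|\geq k)$ and splitting at $k_0$. For $k\leq k_0$ the near-critical tail $\Pr(|\C(0)|\geq k)=O(k^{-1/2})$---a standard consequence of the triangle condition \eqref{e:finitetriangle} and Aizenman--Newman-type arguments---contributes $O(\sqrt{k_0})=o(\eps^2V)$. For $k>k_0$, the blanket estimate in \eqref{e:ub-cluster-tail} alone would only give $O(\eps V)$, which is off by a factor $1/\eps$; one must integrate the refined tail obtained by sharpening the Aizenman--Barsky differential inequalities (as in Appendix~A), showing that $\Pr(|\C(0)|\geq k)$ decays rapidly past the giant scale $k\approx 2\eps V$. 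This yields the total contribution $(4+o(1))\eps^2 V$.

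Then I would control the variance. Splitting the second moment by whether $u\lr v$ and applying the BK inequality to the increasing events $\{|\C(u)|\geq k_0\}$ and $\{|\C(v)|\geq k_0\}$ (which are witnessed by disjoint edge sets on $\{u\not\lr v\}$) gives
\[
\text{Var}(Z_{\sss \geq k_0}) \;\leq\; \sum_{u,v}\Pr(u\lr v,\;|\C(u)|\geq k_0) \;=\; V\,\E[|\C(0)|\1_{|\C(0)|\geq k_0}] \;\leq\; V\,\E|\C(0)| \;\leq\; (4+o(1))\eps^2V^2.
\]
Combining this with the first-moment estimate, Chebyshev's inequality yields the convergence $Z_{\sss \geq k_0}/(2\eps V)\convp 1$.

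The main obstacle is extracting the sharp constant $4$ in the bound on $\E|\C(0)|$: Theorem~\ref{clustertail} alone gives only the too-weak $O(\eps V)$ bound (the uniform tail $\Pr(|\C(0)|\geq k)\leq 2\eps(1+o(1))$ summed over all $k$ yields $2\eps V$, larger by a factor $1/\eps$), and closing this gap requires the sharpened Aizenman--Barsky machinery of Appendix~A, which pins down the rapid decay of the cluster tail past $k\approx 2\eps V$.
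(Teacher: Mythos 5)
Your variance bound does not close. You bound $\Var(Z_{\sss\geq k_0})\leq\sum_{u,v}\prob(u\lr v,\,|\C(u)|\geq k_0)=V\,\E\big[|\C(0)|\1_{\{|\C(0)|\geq k_0\}}\big]\leq (4+o(1))\eps^2V^2$, but this is exactly $(1+o(1))(\E Z_{\sss\geq k_0})^2$, so Chebyshev gives $\prob(|Z_{\sss\geq k_0}/(2\eps V)-1|>\delta)\leq (1+o(1))\delta^{-2}$, which does not tend to $0$. The problem is not that your bound is lossy: in the supercritical phase the dominant contribution to $\E Z_{\sss\geq k_0}^2$ genuinely comes from pairs with $u\lr v$ (both vertices in the giant component), and that contribution is itself of order $\eps^2V^2$. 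A direct second moment on $Z_{\sss\geq k_0}$ therefore cannot work. The paper instead runs the second moment on $N_r=|\{x\colon \partial B_x(r)\neq\emptyset\}|$ at the intermediate scale $r=M\eps^{-1}$ with $M\to\infty$ slowly, so that $r\leq\eps^{-1}\log\log(\eps^3V)$: the correlated term is then $\sum_{x,y}\prob(x\stackrel{2r}{\lrfill}y)=V\,\E|B(2r)|=O(Vr(1+\eps)^{2r})=o(\eps^2V^2)$ by Corollary \ref{corrlength}, while the disjoint-survival term factorizes through the off-method (Lemma \ref{offplusreimer}) and Lemma \ref{inttail} into $(4+o(1))\eps^2V^2$. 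Concentration is then transferred from $N_r$ to $Z_{\sss\geq k_0}$ by showing that the two symmetric-difference terms have expectation $o(\eps V)$, using \eqref{e:survive.toshow} and the fact that $N_r$ and $Z_{\sss\geq k_0}$ have the same mean $(2+o(1))\eps V$.

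Your route to $\E|\C(0)|\leq(4+o(1))\eps^2V$ also has a gap: you invoke "rapid decay of the cluster tail past $k\approx 2\eps V$" and attribute it to Appendix A, but Appendix A proves no such decay -- it only gives the flat bound $\prob(|\C(0)|\geq k)\leq 2\eps(1+o(1))$ for $k\gg\eps^{-2}$, and a sharp cutoff of the tail at $2\eps V$ is essentially equivalent to the law of large numbers for $|\C_1|$ that this lemma is a step toward, so the argument as stated is circular. The paper avoids the tail sum entirely: it writes $\E|\C(0)|=\sum_y\prob(0\lr y)$ and splits according to whether $d_{G_p}(0,y)<2r$ or $\geq 2r$. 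The short part equals $\E|B(2r)|=o(\eps^2V)$, and for the long part the event $0\stackrel{[2r,\infty)}{\lrfill}y$ forces $\partial B_0(r)\neq\emptyset$, $\partial B_y(r)\neq\emptyset$ and $B_0(r)\cap B_y(r)=\emptyset$, whence Lemmas \ref{offplusreimer} and \ref{inttail} give $\prob(0\lr y)\leq(4+o(1))\eps^2$ uniformly in $y$. Both of your difficulties have the same cure: truncate at the scale $r=M\eps^{-1}$, where survival still costs $(2+o(1))\eps$ but the expected ball size is still negligible compared to $\eps^2 V$.
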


\subsection{Many boundary edges between large clusters}
\label{sec-bound-edges-large}
The term $\big (1-{\theta\eps \over \m} \big )^{\m\eps^2 V}$ in (\ref{hardsprinkle})
suggests that after partitioning the large clusters into two sets of order $\eps V$
vertices, as we did before, the number of closed edges connecting them is of order $\m \eps^2 V$. 
This is
the content of Theorem \ref{alphapairs} below which is the main effort of this paper.
It is rather intuitive if one believes that large clusters are uniform random sets. Indeed,
let $v$ be a vertex in one of the sets of the partition. It has degree $\m$ and hence
we expect $\m \eps$ of these neighbors to belong to the second set of the partition.
Summing over all vertices $v$ we obtain of the order $\eps^2 \m V$ edges. Making this a
precise statement requires some details which we now provide. \\

We work under the general assumptions of Theorem \ref{mainthmgeneral}. In particular, we are given sequences $\epsm,\alpham$ such that both are $o(1)$ and $\epsm^3 V \to \infty$ and $\alpham \geq 1/\m$. Without loss of generality we assume that
\be\label{alphamcond} \alpham \geq (\epsm^3 V)^{-1/2} \, ,\ee
otherwise we replace the original $\alpham$ by $(\epsm^3 V)^{-1/2}$, and note that in both cases $\alpham =o(1)$ and satisfies $\alpham \geq 1/m$.

Let us start by introducing some notation. For vertices $x,y$ and radii $j_x,j_y$, we define the event
   \eqn{
    \label{Acal-def}
    \A(x,y,j_x,j_y)=\{\partial B_x(j_x) \neq \emptyset, \partial B_y(j_y)\neq \emptyset
    \mbox{ and } B_x(j_x) \cap B_y(j_y) = \emptyset\}
    }
Intuitively, if $\A(x,y,j_x,j_y)$ occurs
for $j_x$ and $j_y$ sufficiently large, then $x$ and $y$ are both in the giant
component. The event $\A(x,y,j_x,j_y)$ plays a central role throughout our paper.
We continue by choosing some parameters. The role of each will become clear later. We put
    \be \label{Mchoice}
    M=\Mm=\log\log\log(\eps^3V \wedge \alpham^{-1} \wedge (\eps \mnot)^{-1})
    \qquad
    \mbox{and}
    \qquad
    r=\rmm=\Mm\eps^{-1} \, .
    \ee
Note that $\Mm \to \infty$ in our setting. We choose $r_0$ by
    \begin{eqnarray}
    \label{rzerocond}
    r_0 = {\eps^{-1} \over 2}\log(\alpham\eps^3 V)\, .
    \end{eqnarray}
In Corollary \ref{r0ball}
we prove that $\E |B(j)| = \Theta(\eps^{-1} (1+\eps)^j)$ as long as
$j \leq \eps^{-1} [\log (\eps^3 V)-4\log\log(\eps^3 V)]$ --- the same asymptotics as in a
Poisson $(1+\eps)$ branching process (though in the branching process
the estimate is valid for all $j$). This implies that $\E|B(r_0)|
= \Theta\big (\sqrt{\alpham \eps V}\big)$, a fact that we use
throughout the paper, but we do not use this right now.


For vertices $x,y$ and radius $\ell$ we define
    \begin{eqnarray*}
    S_\ell(x,y) = \big | \big \{ (u,u')\in E(G) &\colon& \{x \stackrel{\ell}{\lrfill} u\} \circ \{y \stackrel{\ell}{\lrfill} u'\} \, , \\ &\ & |B_u(2r+r_0)|\cdot |B_{u'}(2r+r_0)| \leq \e^{40M} \eps^{-2} (\E |B(r_0)|)^2 \big \} \big | \, .
    \end{eqnarray*}
The edges counted in $S_\ell(x,y)$ are the ones that we are going to sprinkle.
Informally, a pair of vertices $(x,y)$ is \emph{good} when their clusters
are large and $S_{\ell}(x,y)$ is large, so that their clusters have many bonds
between them. We make this quantitative in the following definition:

\bde[$(r,r_0)$-good pairs]\label{goodpairs}
We say that $x,y$ are $(r,r_0)$-good if all of the following occur:
\begin{enumerate}
\item $\A(x,y,2r,2r)$,
\item $|\C(x)|\geq (\eps^3 V)^{1/4} \eps^{-2}$ and $|\C(y)| \geq (\eps^3 V)^{1/4} \eps^{-2}$,
\item $S_{2r+r_0}(x,y) \geq (\log M)^{-1} V^{-1} \m \eps^{-2} (\E|B(r_0)|)^2$.
\end{enumerate}
Write $\Park$ for the number of $(r,r_0)$-good pairs.
\ede


\begin{theorem} [Most large clusters share many boundary edges]
\label{alphapairs}
Let $G$ be a graph on $V$ vertices and degree $\m$ satisfying the assumptions in Theorem \ref{mainthmgeneral}. Assume that $\eps=\eps_m$ satisfies
    \be
    \label{epscond}
     \epsm \gg V^{-1/3}
     \qquad
     \and
     \qquad
     \epsm = o(\mnot^{-1}) \, ,
    \ee
as in part (b) of Theorem \ref{mainthmgeneral}. Take $M$ and $r= M \eps^{-1}$ as in \eqref{Mchoice}, and $r_0$ as in \eqref{rzerocond}.
Then,
    $$
    \frac{\Park}{(2\eps V)^2}\convp 1\, .
    $$
\end{theorem}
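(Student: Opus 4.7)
The plan is to combine a deterministic upper bound with a first-moment lower bound. By condition (2) of Definition \ref{goodpairs}, every good pair has both endpoints with clusters of size $\geq k_0=(\eps^3V)^{1/4}\eps^{-2}$, so $\Park\leq Z_{\sss\geq k_0}^2$; since Lemma \ref{lem-conc-Z-I} gives $Z_{\sss\geq k_0}/(2\eps V)\convp 1$, the upper bound $\Park/(2\eps V)^2\leq 1+o(1)$ is automatic. The real work is to show the matching lower bound: the number of pairs $(x,y)$ with $|\C(x)|,|\C(y)|\geq k_0$ that fail condition (1) or condition (3) is $o(\eps^2 V^2)$ with probability $1-o(1)$.

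For condition (1), $\A(x,y,2r,2r)$: survival to intrinsic depth $2r=2M\eps^{-1}\gg\eps^{-1}$ is essentially automatic given a cluster of size $\geq k_0\gg\eps^{-2}$, because a supercritical cluster of macroscopic size reaches depth $2r$ with probability $1-o(1)$ (via branching-process coupling in the intrinsic metric). The disjointness failure $B_x(2r)\cap B_y(2r)\neq\emptyset$ is handled by a first-moment bound using the triangle condition together with the volume estimate $\E|B(2r)|=O(e^{2M}\eps^{-1})$, itself a consequence of branching-process comparison slightly above $p_c$. This gives $O(V(\E|B(2r)|)^2)=O(Ve^{4M}\eps^{-2})$ overlapping pairs in expectation, which is $o(\eps^2V^2)$ provided $\eps^4V\gg e^{4M}$, comfortably covered by $\eps^3V\to\infty$ and the very slow choice $M=\log\log\log(\cdot)$.

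The essential step is condition (3), $S_{2r+r_0}(x,y)\geq(\log M)^{-1}V^{-1}m\eps^{-2}(\E|B(r_0)|)^2$. Here the paper's ``large clusters look uniform'' principle must be made quantitative: conditional on $x$ and $y$ surviving to depth $2r+r_0$ with disjoint balls, the endpoints in $\partial B_x(2r+r_0)$ and $\partial B_y(2r+r_0)$ are approximately uniformly distributed over $V$. This will follow from the non-backtracking-walk analysis outlined in Section \ref{sec-unif-conn-NBW} using hypothesis (3) of Theorem \ref{mainthmgeneral}; the $2r$ prefix is precisely the mixing-time buffer, since $r\gg\mnot$ under the assumption $\eps=o(\mnot^{-1})$. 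For two approximately uniform subsets of the correct size, the expected number of edges of $G$ between them is $\sim mV^{-1}(\E|\partial B(2r+r_0)|)^2$, which via the branching relation $\E|\partial B(r_0)|\sim\eps\E|B(r_0)|$ and the exponential growth over the $2r$ buffer exceeds the target, with the $\log M$ slack absorbing all approximation errors. The size cap $|B_u(2r+r_0)|\cdot|B_{u'}(2r+r_0)|\leq e^{40M}\eps^{-2}(\E|B(r_0)|)^2$ discards only a tail event, handled by a Markov bound using $\E|B(r_0)|^2=O(\eps^{-2}(\E|B(r_0)|)^2)$, itself a consequence of the triangle condition.

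To promote the expected-value bound on condition-(3) failures into a high-probability statement, a second-moment argument on the count of failing pairs is carried out; the same uniform-endpoint property decouples events on quadruples whose four $(2r+r_0)$-balls are pairwise disjoint, bounding the variance and allowing Chebyshev, while non-disjoint quadruples contribute only $o(\eps^4V^4)$ via the volume estimates used for condition (1). The principal obstacle throughout is this quantitative uniformity in condition (3); once the mixing hypothesis of Theorem \ref{mainthmgeneral} has been translated into a uniform-distribution statement for cluster endpoints, the remaining steps are standard supercritical moment bookkeeping.
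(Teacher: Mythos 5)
There is a genuine gap, and it sits exactly at the point the paper identifies as the main difficulty. Your plan for condition (3) is: compute the (conditional on survival) \emph{mean} of $S_{2r+r_0}(x,y)$ via the uniform-endpoint property, then promote this to a high-probability statement by a second moment on the \emph{count of failing pairs}. But to make the expected number of failing pairs $o(\eps^2V^2)$ you first need, for a typical pair, $\prob\big(S_{2r+r_0}(x,y)\ \mbox{small}\mid \A(x,y,2r,2r)\big)=o(1)$, and no amount of variance control on the count of pairs substitutes for this. The obstruction is that $S_{2r+r_0}(x,y)$ is \emph{not} concentrated given survival: like the martingale limit of a branching process, $|\partial B_x(2r)|$ (and hence the number of ``sources'' feeding the $r_0$-future) fluctuates by constant factors, so a mean--variance (Paley--Zygmund) argument with the unconditional second moment only yields $\prob(S\geq \theta\cdot\mbox{mean}\mid\A)\geq c$ for some constant $c<1$. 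That gives $\Park\geq c'\eps^2V^2$, not $\Park=(1+o(1))(2\eps V)^2$, so the theorem as stated is not reached. The $(\log M)^{-1}$ slack in the threshold does not rescue this by itself: one still has to rule out the lower tail of the boundary sizes, which is a separate argument.

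The paper's proof supplies precisely the missing machinery. It conditions on the entire balls $B_x(j_x)$, $B_y(j_y)$ at carefully chosen radii $j_x,j_y\in[r,2r]$, and then needs three nontrivial inputs that your sketch omits: (i) with conditional probability $1-o(1)$ given survival, \emph{some} radius among $\ell$ well-separated good radii has $|\partial B_x(j_x)|\geq \e^{k/4}\eps^{-1}$ (the iterated escape argument in the proof of Theorem \ref{largeclose.findradii}, using Lemma \ref{largeclose.findlarge}); (ii) the conditioning on the past does not damage the future of most boundary vertices --- this is the intrinsic-metric regularity theorem (Theorem \ref{mostsetsaregood}) and the ``regenerative/fit'' framework, which is the quantitative form of your ``clusters look uniform'' principle but \emph{conditional on the realized past}, not merely on the survival event; and (iii) a conditional second moment (Lemma \ref{sedgesecondmom}) which is $(1+o(1))$ times the square of the conditional mean precisely because the sources $\partial A,\partial B$ are frozen by the conditioning. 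Your treatment of condition (1), of condition (2) via $Z_{\geq k_0}$, and of the size-cap error term $\widehat S$ are all in line with the paper; the gap is confined to, but fatal for, condition (3).
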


In light of Theorem \ref{clustertail}, we expect that the number of pairs
of vertices $(x,y)$ with $|\C(x)|\geq (\eps^3 V)^{1/4}\eps^{-2}$ and
$|\C(y)| \geq (\eps^3 V)^{1/4}\eps^{-2}$
is close to $(2\eps V)^2$. Theorem \ref{alphapairs} shows that the majority of these pairs
have clusters that share many edges between them. This allows us to proceed with
the sprinkling argument leading to \eqref{hardsprinkle} and we perform this
in Section \ref{sec-improved-sprinkling-performed} leading to the proof of Theorem \ref{mainthmgeneral}. Since the latter proof assumes only Theorem \ref{alphapairs}, the curious reader
may skip now to Section \ref{sec-improved-sprinkling-performed} to see how this is done.


\subsection{Uniform connection bounds and the role of the random walk}
\label{sec-unif-conn-NBW}
We briefly expand here on one of our most useful percolation inequalities and
its connection with random walks. In the analysis of the Erd\H{o}s-R\'enyi random graph
$G(n,p)$ symmetry plays a special role.
One instance of this symmetry is that the function $f(x) = \prob(0 \lr x)$ is constant whenever $x \neq 0$ and its value is
precisely $(V-1)^{-1}(\E|\C(0)|-1)$ and $1$ when $x=0$. Such a statement
clearly does not hold on the hypercube at $p_c$: the probability that
two neighbors are connected is at least $\m^{-1}$ (recall (\ref{pcest})) and the probability that
$0$ is connected to one of the vertices in the barycenter of the cube is
at most $\sqrt{\m} 2^{-\m} \E|\C(0)|$ by symmetry.

A key observation in our proof is that one can recover this symmetry as long as we
require the connecting paths to be longer than the mixing time of the random walk. A precise statement is that percolation at
$p_c$ on any graph $G$ satisfying the assumptions of Theorem \ref{mainthmgeneral},
    \be
    \label{exampleunif}
    \prob (0 \stackrel{[\mnot, \infty)}{\lrfill} x) \leq (1+ o(1)){\E |\C(0)|\over V}\, ,
    \ee
where $\mnot$ is uniform mixing time, as defined in Theorem \ref{mainthmgeneral}.
This is the content of Lemma \ref{uniformconnbd}
(or rather by taking $r\to \infty$ in the lemma).
This gives us an advantage at estimating difficult sums
such as $\nabla_{p}(0,0)$ as in (\ref{nabledef}), see Section \ref{sec-nbwtriangle}.

\subsection{Sketch of proof of Theorem \ref{alphapairs}}\label{sec-sketch} The difficulty in Theorem \ref{alphapairs}
is the requirement (3) in Definition \ref{goodpairs}. Indeed, conditioned on survival (that is,
on the event $\A(x,y,2r,2r)$), the random variable $S_{2r+r_0}(x,y)$ is not concentrated and
hence it is hard to prove that it is large with high probability. In fact, even the variable
$|B(r_0)|$ is not concentrated. This is not a surprising fact: the number of descendants at
generation $n$ of a branching process with mean $\mu > 1$ divided by $\mu^n$ converges as
$n\to \infty$ to a non-trivial random variable. Intuitively, this non-concentration
occurs because the first generations of the process have a strong and lasting effect
on the future of the population.

In order to counteract this, we condition on the event $\A(x,y,r,r)$ {\em and} on
the entire balls $B_x(r)$ and $B_y(r)$ including all the open and closed edges
touching them (during the actual proof we will use some other radii $j_x,j_y$
between $r$ and $2r$ but this is a technical matter). We will prove
that given this conditioning the variable $S_{r+r_0}(x,y)$ is concentrated around the value
    \be
    \label{sketch.expectedval}
    |\partial B_x(r)| |\partial B_y(r)| V^{-1} m (\E|B(r_0)|)^2 \, ,
    \ee
and that $|\partial B_x(r)||\partial B_y(r)|\geq \eps^{-2}$ with high probability,
yielding that requirement (3) in Definition \ref{goodpairs} occurs with high probability
conditioned on the event above. Our choice of $r_0$ in \eqref{rzerocond} is made in such
a way that the above quantity is large (however, later we will see that $r_0$ cannot be too large). Let us elaborate on the estimate (\ref{sketch.expectedval}). Assume
that $B_x(r)=A$ and $B_y(r)=B$ and write $\pab$ and $\eab$ for the conditional probability
and expectation given $B_x(r)=A$ and $B_y(r)=B$. We have
    $$
    \eab S_{r+r_0}(x,y) \approx \sum_{a \in \partial A, b\in \partial B} \,\,
    \sum_{(u,u')} \pab(\{a \lref u\} \circ \{b \lref u'\}) \, ,
    $$
where we did not write equality because a) we ignored the second condition in the definition of $S_{r+r_0}(x,y)$ on $|B_u(2r+r_0)|\cdot |B_{u'}(2r+r_0)|$, and b) some edges $(u,u')$ may be over-counted
in the sum, and c) we have neglected to count the closed edges $(u,u')$ that
connect $A$ and $B$ (that is, occurring in height smaller than $r$).
However, it turns out that all of these contributions are small compared to (\ref{sketch.expectedval}).
It is a standard matter by now to manipulate, using the triangle condition, to obtain that
for {\em most} edges $(u,u')$
    $$
    \pab(\{a \lref u\} \circ \{b \lref u'\}) \approx \pab(a \lref u) \pab(b \lref u') \, ,
    $$
so in order to proceed we need a good lower bound on $\pab(a \lref u)$. Lemma \ref{uniformconnbd}
(see also (\ref{exampleunif})) immediately gives that $\prob(a \lref u) \geq (1-o(1)) V^{-1} \E|B(r_0)|$
for {\em most} vertices $u$ (since $\sum_u \prob(a\lref u) = \E|B(r_0)|$). Had we had the same estimate for $\pab(a \lref u)$, the lower bound on the conditional first moment required to prove the estimate (\ref{sketch.expectedval}) would follow immediately. However, the probability $\pab(a \lref u)$ may heavily depend on the sets $A$ and $B$. 

To that aim, in Section \ref{sec-intrin-reg}  we present an {\rm intrinsic metric regularity}
theorem, similar in spirit to the extrinsic metric regularity theorem presented in
\cite{KN11}. Roughly, it states that for {\em most} sets $A$ (more precisely, the weight
of sets not having this is $o(\eps)$) for which $B_x(r)=A$ satisfies
$\partial B_x(r) \neq \emptyset$, we have that {\em most} vertices
$a \in \partial A$ satisfy
    $$
    \sum_{u} \pa (a \lref u) \geq (1-o(1)) \E |B(r_0)| \, ,
    $$
where $\pa$ is the conditional probability given $B_x(r)=A$. Thus, the expected size of
the ``future'' of most vertices on the boundary is not affected by the conditioning on
a typical ``past''.

At this point comes another crucial application of the uniform connection bounds
described in the section above. Indeed, even if the expected ``future'' of a
vertex has the same asymptotics with or without conditioning, we cannot a
priori rule out the possibility that this conditional ``future'' concentrates
on a small remote portion of the underlying graph $G$ --- this can potentially
violate the concentration around the value in (\ref{sketch.expectedval}).
However, our uniform connection bounds stated in Lemma \ref{uniformconnbd}
are robust enough to deal with conditioning and immediately imply that
$\pa(a \lref u) = (1-o(1)) V^{-1} \E|B(r_0)|$ for most $a$ in $\partial A$
and for most vertices $u$. In other words, not only did the conditioning
not influence the size of the ``future'', it also left its distribution
approximately unaltered. These considerations allow us to give a
lower bound of (\ref{sketch.expectedval}) on the conditional
expectation. This and the conditional second moment calculation required to show concentration are
performed in Section \ref{sec-large-clusters-close}.

\section{Preliminaries}
\label{sec-prelim}
In this section we provide some preliminary results that we will use.
These involve various expectations and probabilities
related to the random variable $|\partial B(r)|$ in Section \ref{sec-survprob} and
\ref{sec-disj-surv-prob}, non-backtracking random walks in Section \ref{sec-NBW}
and its relation to uniform bounds for connection probabilities in
Section \ref{sec-unif-bds-conn}. In Section \ref{sec-nbwtriangle} we
use these results to prove part (a) of Theorem \ref{mainthmgeneral}.
Finally, in Section \ref{sec-triangle-square} we bound triangle
and square diagrams. The results in this section do {\em not} rely
on the assumptions of Theorem \ref{mainthmgeneral} but sometimes we do
assume the finite triangle condition \eqref{finitetriangle}.
%

\subsection{The ``off'' method and BK-Reimer inequality}
\label{sec-off} We will frequently handle the events $\partial B(r) \neq \emptyset$ and
$x \stackrel{=t}{\lrfill} y$. These events are non monotone with respect to adding edges, indeed, adding an edge may shorten a shortest path and prevent the events from holding. This non-monotonicity
is a technical difficulty which unfortunately manifests itself in many of
the arguments in this paper. Our main tools to deal with this problem are the
BK-Reimer inequality \cite{BerKes85, Reim00} and the notion of events
occurring ``off'' a set of vertices. For the BK-Reimer inequality we
use the formulation in \cite{ReimerRef}.

For a subset of vertices $A$, we say that an event $\M$ occurs {\em off $A$}, intuitively, if it occurs in $G_p \setminus A$. Formally, for a percolation configuration
$\omega$, we write $\omega_A$ for the configuration obtained from $\omega$ by
turning all the edges touching $A$ to closed. The event ``$\M$ occurs off $A$''
is defined to be $\{\omega \colon \omega_A \in \M\}$. We also frequently write $\poffa$
to denote the measure $\poffa(\M) = \prob_p(\M \off A)$.
Equivalently, $\poffa$ can be thought of as a percolation measure in which all edges
touching $A$ are closed with probability $1$ and the rest are distributed independently as before.
We often drop $p$ from the notation when it is clear what $p$ is. This framework also allows us to address the case when $A=A(\omega)$ is a \emph{random} set measurable
with respect to $G_p$ (the most prominent example is $A= B_0(r)$). In this case,
the event $\{\M \hbox{ {\rm occurs off} } A(\omega)\}$ is defined to be
   $$
    \{\M \hbox{ {\rm occurs off} } A(\omega)\} = \{\omega \colon \omega_{A(\omega)} \in \M \} \, .
    $$


Let us review an example occurring frequently in our arguments in which $\M$ is an arbitrary event and $A=B_x(s)$. In this case,
    \be
    \label{offdescription}
    \prob(\M \off B_x(s)) =
    \sum_{A} \prob(B_x(s)=A) \prob(\M \off A) \, ,
    \ee
where we have used the fact that
    $$
    \prob(\M \off B_x(s) \mid B_x(s)=A) = \prob(\M \off A) \, ,
    $$
since the events do not depend on edges
touching $A$ in both sides of the equation, and the marginal of the two
distributions on the edges not touching $A$ is the same product measure.
In terms of this notation, for a subset of vertices $A$, we define
    $$
    B_x^G(r;A) = \{ y \colon d_{G_p}(x,y) \leq r \off A\} \, , \quad
    \partial B_x^G(r;A) = \{ y \colon d_{G_p}(x,y)=r \off A\}
    $$
to be the intrinsic ball off $A$ and its boundary. We finally say that $\M$
occurs {\em only on $A$} if $\M$ occurs but $\M$ off $A$ does not occur.
We frequently rely on the following inclusion:

\begin{claim}
\label{offmethod}
For any event $\M$ and any subset of vertices $A \subset V$,
    $$
    \M \setminus \{\M \onlyon A\} \subset \{\M \off A\} \, .
    $$
\end{claim}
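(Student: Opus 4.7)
The claim is essentially a tautology that unpacks the definitions, so the plan is just to execute the set-theoretic manipulation cleanly. The definition given in the text is that ``$\M$ occurs only on $A$'' means $\M$ occurs but $\M$ off $A$ does not, i.e.
\[
\{\M \onlyon A\} \;=\; \M \cap \{\M \off A\}^c \;=\; \M \setminus \{\M \off A\}.
\]
So the first step is simply to rewrite $\{\M \onlyon A\}$ in this form.

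Next, I would compute the complement inside $\M$: using the identity $X \setminus (X \setminus Y) = X \cap Y$ valid for any sets $X,Y$, we get
\[
\M \setminus \{\M \onlyon A\} \;=\; \M \setminus (\M \setminus \{\M \off A\}) \;=\; \M \cap \{\M \off A\}.
\]
This is immediately contained in $\{\M \off A\}$, which is exactly the claim. In fact one sees that the inclusion is an equality intersected with $\M$, but only the containment is needed for the applications.

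There is really no obstacle here beyond verifying that the definition of ``only on $A$'' given in the paragraph preceding the claim matches the set-theoretic expression used. The one point worth flagging in the write-up is that this identity is purely combinatorial on configurations $\omega$: it does not require $\M$ to be monotone, nor does it require $A$ to be deterministic. When $A = A(\omega)$ is random, both $\{\M \onlyon A\}$ and $\{\M \off A\}$ are defined pointwise on $\omega$ using the map $\omega \mapsto \omega_{A(\omega)}$, and the set-theoretic identity above still holds pointwise, so the same short argument applies. Thus the full proof is one line of Boolean algebra, and I would present it as such.
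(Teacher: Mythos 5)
Your proof is correct and is essentially the same one-line Boolean computation as in the paper: the paper rewrites the left-hand side as $\M \cap \{\M^c \cup \{\M \off A\}\}$, which equals your $\M \cap \{\M \off A\}$. Your added remark that the identity holds pointwise even for random $A(\omega)$ matches the framework the paper sets up, and your observation that the inclusion is really an equality with $\M \cap \{\M \off A\}$ is exactly the paper's own remark following the claim.
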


\begin{proof} By definition of ``$\M \onlyon A$'' the event on the left hand side equals
    $$
    \M \cap \big \{\M^c \cup \{\M \off A\} \big \} \, .
    $$
From this, it is easy to see that this event implies $\M$ off $A$.
\end{proof}
\noindent {\bf Remark.} Equality in Claim \ref{offmethod} does not hold
(unless the right hand side is replaced by $\M \cap \{\M \off A\}$). This
can easily be seen by taking a non-monotone event, say
$\partial B_x(r) \neq \emptyset$.\\

The following lemmas are inequalities valid for any graph $G$ and any $p$.


\begin{lemma}[Disjoint survival]
\label{offplusreimer} For any graph $G$, $p\in [0,1]$, vertices $x,y,z$ and integers $r,s,$
    $$
    \prob( \partial B_x(r)\neq \emptyset, \partial B_y(s) \neq \emptyset, B_x(r) \cap B_y(s) = \emptyset )
    \leq \prob(\partial B_x(r) \neq \emptyset) \max_{A \subset V} \poffa(\partial B_y(s) \neq \emptyset) \, .
    $$
\end{lemma}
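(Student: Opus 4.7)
The plan is to decompose on the value of $B_x(r)$ and then reduce the event for $y$ to one that depends only on edges not touching this random set. First I would write, conditioning on $\{B_x(r)=A\}$,
\begin{align*}
&\prob\big(\partial B_x(r)\neq\emptyset,\,\partial B_y(s)\neq\emptyset,\,B_x(r)\cap B_y(s)=\emptyset\big)\\
&\qquad=\sum_{A}\prob(B_x(r)=A)\,\prob\big(\partial B_y(s)\neq\emptyset,\,B_y(s)\cap A=\emptyset\,\big|\, B_x(r)=A\big),
\end{align*}
where the sum runs over sets $A$ that contain at least one vertex at intrinsic distance exactly $r$ from $x$. Without the conditional factor, the sum collapses to $\prob(\partial B_x(r)\neq\emptyset)$, so the task reduces to bounding the conditional probability by $\max_{A\subset V}\poffa(\partial B_y(s)\neq\emptyset)$, uniformly in $A$.

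The key step is a purely deterministic inclusion that finesses the non-monotonicity of $\{\partial B_y(s)\neq\emptyset\}$: if $B_y(s)\cap A=\emptyset$, then every vertex within intrinsic distance at most $s$ from $y$ lies in $V\setminus A$, and so does every shortest path from $y$ to such a vertex. Consequently no such shortest path uses an edge touching $A$, so killing all edges incident to $A$ leaves $B_y(s)$ (and hence $\partial B_y(s)$) unchanged. Therefore
\[
\{\partial B_y(s)\neq\emptyset\}\cap\{B_y(s)\cap A=\emptyset\}\ \subseteq\ \{\partial B_y(s)\neq\emptyset\off A\}.
\]

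To close the argument I would invoke independence. The event $\{B_x(r)=A\}$ is measurable with respect to the edges having at least one endpoint in $A$ (inside $A$ the edges must realize the required distances, and the edges from $A$ to $V\setminus A$ emanating from vertices at distance strictly less than $r$ must be closed), while $\{\partial B_y(s)\neq\emptyset\off A\}$ depends only on edges with no endpoint in $A$, since the ``off $A$'' operation deterministically closes all the others. Disjointness of these two edge families yields
\[
\prob\big(\partial B_y(s)\neq\emptyset\off A\,\big|\, B_x(r)=A\big)=\poffa(\partial B_y(s)\neq\emptyset),
\]
and taking the maximum over $A$ before summing completes the proof. The only non-routine ingredient is the deterministic inclusion in the middle paragraph, which is precisely where the non-monotonicity of the intrinsic one-arm event is sidestepped; beyond that I do not foresee any real obstacle.
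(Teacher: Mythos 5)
Your proposal is correct and follows essentially the same route as the paper: condition on $B_x(r)=A$, observe that on $\{B_y(s)\cap A=\emptyset\}$ the one-arm event for $y$ is unchanged by closing all edges touching $A$, and use that $\{B_x(r)=A\}$ is measurable with respect to the edges touching $A$ while the ``off $A$'' event lives on the complementary edge set (this is exactly the mechanism of \eqref{e:offdescription} that the paper invokes). Your write-up is in fact slightly more explicit than the paper's about the deterministic inclusion that sidesteps the non-monotonicity, but the argument is the same.
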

\begin{proof} We condition on $B_x(r)=A$ such that $A$ satisfies $\partial B_x(r) \neq \emptyset$ and $\prob(B_x(r)=A)>0$. The left hand side then equals
    $$
    \sum_{A \colon \partial B_x(r) \neq \emptyset} \prob(B_x(r)=A) \prob(\partial B_y(s) \neq \emptyset \off A) \, ,$$
as in \eqref{offdescription}. The lemma now follows.
%
\end{proof}

\begin{lemma}[Tree-graph bound revisited]
\label{goodbkreimer}
For any graph $G$, $p\in[0,1]$, vertices $u,v$ and integers $r,\ell>0,$
    $$
    \prob(0 \stackrel{=r}{\lrfill} u \and 0 \stackrel{\ell}{\lrfill} v)
    \leq \sum_{z} \sum_{t=0}^{r} \prob(0 \stackrel{=t}{\lrfill} z)
     \prob(z \stackrel{\ell}{\lrfill} v) \max_{A \subset V} \poffa(z \stackrel{=r-t}{\lrfill} u)\, .
    $$
\end{lemma}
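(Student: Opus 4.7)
The strategy combines the off method of Section~\ref{sec-off} with a branching-vertex decomposition in the spirit of the classical tree-graph inequality. On the event $E = \{0 \stackrel{=r}{\lrfill} u\} \cap \{0 \stackrel{\ell}{\lrfill} v\}$, I would select canonically a shortest $p$-open path $\pi$ from $0$ to $u$ (of length exactly $r$) and an open path $\sigma$ from $0$ to $v$ of length at most $\ell$. Let $z$ be the last common vertex of $\pi$ and $\sigma$, and let $t$ denote its position along $\pi$. Write $\pi_1,\pi_2$ for the sub-paths of $\pi$ separated by $z$, and $\sigma_2$ for the tail of $\sigma$ from $z$ to $v$; set $A = V(\pi_1)\setminus\{z\}$.

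Three geometric facts drive the estimate. Since $\pi$ is a shortest $0$-to-$u$ path, $\pi_1$ automatically realizes $d_{G_p}(0,z)=t$, so $\{0 \stackrel{=t}{\lrfill} z\}$ occurs. Since $\pi$ is simple, $\pi_2$ realizes $d_{G_p}(z,u)=r-t$ without touching any vertex of $A$, so $\{z \stackrel{=r-t}{\lrfill} u\}$ occurs off $A$. Finally, by the last-common-vertex choice, $\sigma_2$ shares no vertex with $\pi$ other than $z$; in particular $\sigma_2$ lives off $A$ and is edge-disjoint from $\pi_2$, so the events $\{z \stackrel{=r-t}{\lrfill} u\}$ and $\{z \stackrel{\ell}{\lrfill} v\}$ occur \emph{disjointly} in the measure $\poffa$.

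I would then partition $E$ according to the canonical triple $(z,t,\pi_1)$, bounding each term by $\prob(\pi_1 \text{ open},\,\{z \stackrel{=r-t}{\lrfill} u\}\,\Box\,\{z \stackrel{\ell}{\lrfill} v\}\text{ off }A)$. By independence of edges touching $A$ from edges off $A$, this equals $\prob(\pi_1 \text{ open})\cdot\poffa(\{z \stackrel{=r-t}{\lrfill} u\}\,\Box\,\{z \stackrel{\ell}{\lrfill} v\})$. Applying Reimer's inequality to the product measure $\poffa$, together with the monotonicity of $\{z \stackrel{\ell}{\lrfill} v\}$ (which gives $\poffa(\cdot)\le\prob(\cdot)$), the second factor is at most $\max_A \poffa(z \stackrel{=r-t}{\lrfill} u)\cdot\prob(z \stackrel{\ell}{\lrfill} v)$. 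Reassembling the sum over the canonical choices of $\pi_1$ at fixed $(z,t)$ then recovers the factor $\prob(0 \stackrel{=t}{\lrfill} z)$, completing the bound.

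The main obstacle is twofold. First, the non-monotonicity of the $\stackrel{=\cdot}{\lrfill}$ events prevents any direct FKG factorization of $\{z \stackrel{\ell}{\lrfill} v\}$: conditioning on $\pi_1$ open makes increasing events \emph{more} likely, pointing in the wrong direction. The off method circumvents this by pushing the $\{z \stackrel{=r-t}{\lrfill} u\}$ and $\{z \stackrel{\ell}{\lrfill} v\}$ events into the product measure $\poffa$, where BK-Reimer delivers the clean factorization and the monotonicity of $\{z \stackrel{\ell}{\lrfill} v\}$ is then used in the reverse direction to transfer the bound back to $\prob$. Second, the summation over canonical first segments $\pi_1$ must be handled without overcounting, and the price of this manoeuvre is precisely the appearance of $\max_A \poffa(z \stackrel{=r-t}{\lrfill} u)$ in place of $\prob(z \stackrel{=r-t}{\lrfill} u)$ in the final bound.
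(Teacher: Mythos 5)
Your decomposition at the last common vertex $z$, the choice of witnesses (the open edges of $\pi_2$ together with all closed edges for the non-monotone event, the open edges of $\sigma_2$ for the monotone one), and the application of Reimer followed by monotonicity of $\{z \stackrel{\ell}{\lrfill} v\}$ all match the paper's argument. The gap is in the final reassembly. You partition over the explicit initial segment $\pi_1$, bound each piece by $\prob(\pi_1 \hbox{ open})\cdot \poffa(\cdots)$ using edge-independence, and then claim that summing over $\pi_1$ at fixed $(z,t)$ "recovers" $\prob(0 \stackrel{=t}{\lrfill} z)$. It does not: $\sum_{\pi_1}\prob(\pi_1\hbox{ open})$ is the \emph{expected number} of open paths of length $t$ from $0$ to $z$, which in general far exceeds $\prob(0 \stackrel{=t}{\lrfill} z)$. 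To make the $\pi_1$-sum collapse to that probability you would need the events indexed by $\pi_1$ to be disjoint, i.e.\ you would have to retain the event ``$\pi_1$ is the initial segment of the canonical shortest open $0$-to-$u$ path''; but that event depends on edges not touching $V(\pi_1)$ (it must exclude shorter or lexicographically earlier competitors elsewhere in the graph), so it is no longer independent of the two events off $A=V(\pi_1)\setminus\{z\}$ and your factorization breaks. With a path-sized $A$ you cannot have both the product factorization and the correct reassembly.

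The paper's resolution is to condition on the whole intrinsic ball rather than on a path. One first applies BK--Reimer in $\prob$ to split off $\{z \stackrel{\ell}{\lrfill} v\}$ from the event that some shortest open $0$-to-$u$ path of length $r$ passes through $z$ at step $t$. That latter event implies $0 \stackrel{=t}{\lrfill} z$ (the initial segment of a geodesic is a geodesic) and, given $B_0(t)=A$, that $z \stackrel{=r-t}{\lrfill} u$ off $A$ (the tail of the geodesic avoids $B_0(t)\setminus\{z\}$ and still realizes the distance). Since $\{B_0(t)=A\}$ is determined by the edges touching $A$ and leaves the edges off $A$ with their unconditioned product law, as in \eqref{offdescription}, one obtains $\sum_{A\colon 0 \stackrel{=t}{\lrfill} z}\prob(B_0(t)=A)\,\prob(z \stackrel{=r-t}{\lrfill} u \off A)\leq \prob(0 \stackrel{=t}{\lrfill} z)\max_{A}\poffa(z \stackrel{=r-t}{\lrfill} u)$, which is exactly the stated bound. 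Replacing your conditioning on $\pi_1$ by conditioning on $B_0(t)$ repairs the proof; the rest of your argument then goes through.
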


\begin{proof} We claim that if $0 \stackrel{=r}{\lrfill} u$ and $0 \stackrel{\ell}{\lrfill} v$, then
there exist $z\in V$ and $t \leq r$ such that the two following events occur disjointly
\begin{enumerate}
\item[(a)] There exists a shortest open path $\eta$ of length $r$ between $0$ and $u$ such that $\eta(t)=z$, and
\item[(b)] There exist an open path between $z$ and $\ell$ of length at most $\ell$.
\end{enumerate}
Indeed, if the event occurs, let $\eta$ be the lexicographical first shortest path of length $r$ between $0$ and $u$ and let $\gamma$ be an open path of length at most $\ell$ between $0$ and $v$. We take $z$ to be the last vertex on $\gamma$ which belongs to $\eta$ and put $t$ such that $\eta(t)=z$. The witness for the first event is the set of open edges of $\eta$ together with all the closed edges in $G_p$ (the closed edges determine that $\eta$ is a shortest path) and the second witness is the edges of $\gamma$ from $z$ to $v$. These are disjoint witnesses so we may use the BK-Reimer inequality and bound
    $$ \prob(0 \stackrel{=r}{\lrfill} u \and 0 \stackrel{\ell}{\lrfill} v) \leq \sum_{z \in V, t \leq r} \prob((a)) \prob(z \stackrel{\ell}{\lrfill} v) \, .$$
To bound $\prob((a))$ we condition on $B_0(t)=A$ such that $A$ satisfies $0 \stackrel{=t}{\lrfill} z$, so
    $$ \prob((a)) = \sum_{A : 0 \stackrel{=t}{\lrfill} z} \prob(B_0(t)=A) \prob(z \stackrel{=r-t}{\lrfill} u \off A) \, ,$$
and the lemma follows.
\end{proof}

\subsection{Survival probabilities}
\label{sec-survprob}
In this section, we prove Lemma \ref{lem-conc-Z-I} and a few other useful estimates
of a similar nature. In the rest of this section we only rely on the finite triangle
condition (\ref{finitetriangle}), Theorem \ref{KozmaNachmias}
and Theorem \ref{ub-cluster-tail} (both follow from the triangle condition).

\begin{lemma}[Relating connection probabilities for different $p's$]
\label{corrlength1}
Let $p_1,p_2\in [0,1]$ satisfy $p_1\leq p_2$ and let $r>0$ be integer.
The following bounds hold for any graph $G$ and vertex $v$:
\begin{itemize}
\item[(1)] $\displaystyle \prob_{p_2} ( \partial B_v(r) \neq \emptyset )
\leq \Big(\frac{p_2}{p_1}\Big)^{r} \prob_{p_1} (\partial B_v(r) \neq \emptyset)$,
\item[(2)] $\displaystyle \E_{p_2} |\partial B_v(r)| \leq \Big(\frac{p_2}{p_1}\Big)^{r} \E_{p_1}| \partial B_v(r)|$.
\end{itemize}
\end{lemma}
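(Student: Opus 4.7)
The plan is a single coupling argument. Let $U(e) \sim \text{Unif}[0,1]$ independently for each edge $e$ and declare $e$ to be $p$-open iff $U(e)\le p$; then $G_{p_1}\subseteq G_{p_2}$, so pointwise $d_{G_{p_1}}(\cdot,\cdot) \ge d_{G_{p_2}}(\cdot,\cdot)$. The driving idea is that whenever $\partial B_v^{G_{p_2}}(r)\neq\emptyset$, a witnessing shortest path in $G_{p_2}$ has only $r$ edges, and if all $r$ of them additionally have $U(e)\le p_1$ (a conditional event of probability $(p_1/p_2)^r$), then the reverse distance inequality automatically upgrades this into $\partial B_v^{G_{p_1}}(r)\neq\emptyset$.

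For part (1), on $H_2:=\{\partial B_v^{G_{p_2}}(r)\neq\emptyset\}$ I would pick the lex-smallest vertex $y^*\in\partial B_v^{G_{p_2}}(r)$ and let $\gamma^*$ be the lex-first shortest $p_2$-open path from $v$ to $y^*$. Then $\gamma^*$ is a deterministic function of the $p_2$-configuration, has exactly $r$ edges, each satisfying $U(e)\le p_2$. Set $E:=\{U(e)\le p_1\text{ for every }e\in\gamma^*\}$; conditional on $H_2$ and the $p_2$-configuration, the values $U(e)$ for $e\in\gamma^*$ are i.i.d.\ uniform on $[0,p_2]$, so
\begin{equation*}
\prob\bigl(E\mid H_2\bigr) \;=\; (p_1/p_2)^r.
\end{equation*}
On $H_2\cap E$, the path $\gamma^*$ is $p_1$-open, giving $d_{G_{p_1}}(v,y^*)\le r$, while monotonicity of the coupling gives $d_{G_{p_1}}(v,y^*)\ge d_{G_{p_2}}(v,y^*)=r$; hence $y^*\in\partial B_v^{G_{p_1}}(r)$, so $H_2\cap E\subseteq H_1$. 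Therefore
\begin{equation*}
\prob_{p_1}\bigl(\partial B_v(r)\neq\emptyset\bigr) \;\ge\; \prob(H_2\cap E) \;=\; (p_1/p_2)^r\,\prob_{p_2}(H_2),
\end{equation*}
which rearranges to the desired bound.

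Part (2) follows by running the same argument pointwise in $y$: condition on $\{y\in\partial B_v^{G_{p_2}}(r)\}$, take $\gamma_y$ to be the canonical shortest $p_2$-open path from $v$ to $y$, and let $E_y$ be the event that all $r$ edges of $\gamma_y$ are $p_1$-open. The same sandwich $r=d_{G_{p_2}}(v,y)\le d_{G_{p_1}}(v,y)\le r$ (lower bound from the coupling, upper bound from $E_y$) yields $\mathbf{1}\{y\in\partial B_v^{G_{p_1}}(r)\}\ge \mathbf{1}\{y\in\partial B_v^{G_{p_2}}(r)\}\cdot\mathbf{1}_{E_y}$. Summing over $y$ and taking expectations gives $\E_{p_1}|\partial B_v(r)|\ge (p_1/p_2)^r\,\E_{p_2}|\partial B_v(r)|$, which is the second claim. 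The only step requiring real care — and the main (mild) obstacle — is the two-sided sandwich that pins $d_{G_{p_1}}(v,y)$ at exactly $r$; once this is established from the monotone coupling, both parts follow by the same short sprinkling-in-reverse argument.
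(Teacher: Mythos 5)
Your proof is correct and follows essentially the same route as the paper: the simultaneous coupling via uniform labels $U(e)$, conditioning on the $p_2$-configuration so that the $r$ edges of a canonically chosen shortest $p_2$-open path are i.i.d.\ uniform on $[0,p_2]$, and the observation that $G_{p_1}\subseteq G_{p_2}$ pins the $p_1$-distance at exactly $r$ when that path survives the thinning. The paper states the sandwich more tersely (``it is then a shortest path in $G_{p_1}$'') and omits the details of part (2), which you carry out pointwise in $y$ exactly as intended.
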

\begin{proof} We recall the standard simultaneous coupling between percolation measure at different $p$'s
discussed in Section \ref{sec-notation}. Order all the paths in $G$ of length
$r$ starting at $v$ lexicographically. Write $\A$ for the event that $\partial B_v(r) \neq \emptyset$ in
$G_{p_2}$ and that the lexicographical first  $p_2$-open shortest path of length $r$ starting at $v$ is
in fact $p_1$-open. We claim that
    \eqn{
    \label{PA-prob}
    \prob(\A) = \Big(\frac{p_1}{p_2}\Big)^{r}\prob(\partial B_v(r) \neq \emptyset \hbox{ {\rm in} } G_{p_2}) \, .
    }
Indeed, conditioned on the edges of $G_{p_2}$, the value $U(e)$ of each edge in $G_{p_2}$ is
distributed uniformly on the interval $[0,p_2]$. Hence, the probability of the first shortest path
being $p_1$-open in this conditioning is precisely $(p_1/p_2)^r$, which proves \eqref{PA-prob}.
To see (1), note that if the first $p_2$-open shortest path is $p_1$-open, then it is a shortest path of
length $r$ in $G_{p_1}$, so that $\A$ implies that $\partial B_v(r) \neq \emptyset$ in $G_{p_1}$ whence
    $$
    \prob_{p_1} ( \partial B_v(r) \neq \emptyset )
    \geq \Big(\frac{p_1}{p_2}\Big)^{r}\prob_{p_2} (\partial B_v(r) \neq \emptyset) \, .
    $$
The proof of (2) is similar and we omit the details.
\end{proof}

\begin{corollary}[Correlation length is $1/\eps$]
\label{corrlength} Let $G$ be a transitive finite graph for which
(\ref{finitetriangle}) holds and put $p=p_c(1+\eps)$. The following bounds
hold for any subset of vertices $A$ and any vertex $v$ and any integer $r$:
\begin{itemize}
\item[(1)] $\displaystyle \poffa \big ( \partial B_v(\eps^{-1}) \neq \emptyset \big ) = O(\eps) \, ,$ and
\item[(2)] $\displaystyle \E |B_v(r;A)| = O(r(1+\eps)^r)$.
\end{itemize}
\end{corollary}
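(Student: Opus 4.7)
The overall plan is to reduce both estimates to percolation at exactly $p_c$ by paying the factor $(1+\eps)^r$ from Lemma \ref{corrlength1}, and then invoke Theorem \ref{KozmaNachmias}. The crucial observation that makes this work is that Lemma \ref{corrlength1} was stated for an arbitrary graph, so we may apply it on the graph $G'$ obtained from $G$ by deleting the edges incident to $A$; equivalently, the same coupling argument proving Lemma \ref{corrlength1} goes through verbatim under the measure $\poffa$ (the uniform variables on edges touching $A$ are simply ignored). Thus for every integer $s \geq 0$,
\begin{equation*}
\poffa_{p_c(1+\eps)}\!\big(\partial B_v(s)\neq\emptyset\big) \leq (1+\eps)^{s}\,\poffa_{p_c}\!\big(\partial B_v(s)\neq\emptyset\big),\qquad
\E_{p_c(1+\eps),\off A}|\partial B_v(s)| \leq (1+\eps)^{s}\,\E_{p_c,\off A}|\partial B_v(s)|.
\end{equation*}

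For part (1), I set $s=\eps^{-1}$ in the first inequality and observe $(1+\eps)^{\eps^{-1}}\leq \e$. The event $\{\partial B_v(\eps^{-1})\neq\emptyset\}$ under $\poffa_{p_c}$ is precisely $H^{G'}(\eps^{-1})$ for the subgraph $G'$ of $G$ with edges incident to $A$ removed, so by the very definition of $\Gamma$ in \eqref{gammadef} and by Theorem \ref{KozmaNachmias}(2),
\begin{equation*}
\poffa_{p_c}\!\big(\partial B_v(\eps^{-1})\neq\emptyset\big) \leq \Gamma(\eps^{-1}) \leq C\eps.
\end{equation*}
Combining the two displays yields the desired $O(\eps)$ bound.

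For part (2), I decompose $|B_v(r;A)|=\sum_{j=0}^{r}|\partial B_v(j;A)|$, take expectations, apply the second inequality above shell by shell, and bound the off-$A$ expectation by the full one (removing edges can only shrink the intrinsic ball):
\begin{equation*}
\E|B_v(r;A)| \leq \sum_{j=0}^{r}(1+\eps)^{j}\E_{p_c}|\partial B_v(j)| \leq (1+\eps)^{r}\E_{p_c}|B_v(r)| \leq Cr(1+\eps)^{r},
\end{equation*}
where the last step uses Theorem \ref{KozmaNachmias}(1).

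There is no real obstacle here: the whole content is the simultaneous-coupling estimate of Lemma \ref{corrlength1}, which is precisely designed to trade a factor $(1+\eps)^r$ for a reduction to $p_c$, together with the fact that $\Gamma$ is defined as a supremum over subgraphs so that it automatically absorbs the ``off $A$'' operation. The only minor point to verify is that the coupling argument of Lemma \ref{corrlength1} carries over to the off-$A$ measure, which is immediate because that measure is itself an independent Bernoulli percolation on a subgraph of $G$.
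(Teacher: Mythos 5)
Your argument is exactly the paper's: the paper's entire proof of this corollary reads ``immediate by combining Lemma \ref{corrlength1} and Theorem \ref{KozmaNachmias}'', and your observation that Lemma \ref{corrlength1} applies verbatim under $\poffa$ (which is just Bernoulli percolation on the subgraph with edges touching $A$ deleted, so that $\Gamma$ absorbs the off-$A$ operation by definition) is the intended reading. One small repair in part (2): the displayed chain implicitly uses the shell-level comparison $\E_{p_c}|\partial B_v(j;A)|\leq \E_{p_c}|\partial B_v(j)|$, which is not justified, since deleting edges touching $A$ can push vertices that were at distance $<j$ onto the level-$j$ shell; instead first bound $(1+\eps)^{j}\leq(1+\eps)^{r}$, sum the off-$A$ shells to obtain $(1+\eps)^{r}\,\E_{p_c}|B_v(r;A)|$, and only then apply the monotonicity $|B_v(r;A)|\leq|B_v(r)|$, which does hold pointwise at the level of balls exactly as your parenthetical states.
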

\begin{proof} The result is immediate by combining Lemma \ref{corrlength1}
and Theorem \ref{KozmaNachmias}.
\end{proof}

\noindent {\bf Remark.} In Section \ref{sec-exp-vol-est} we will show a sharp estimate replacing (2) in the above corollary.

\begin{lemma}[Supercritical survival probability]
\label{inttail}
Let $G$ be a transitive finite graph for which (\ref{finitetriangle})
holds and put $p=p_c(1+\eps)$. Then, for any $M\to \infty$ and any subset of vertices $A$,
    $$
    \poffa (\partial B(M\eps^{-1}) \neq \emptyset) \leq (2+o(1)) \eps \, ,
    $$
and, for any $M \leq \log\log(\eps^3 V)$ such that $M \to \infty$,
    $$
    \prob(\partial B(M\eps^{-1}) \neq \emptyset) \geq
    (2-o(1))\eps \, .
    $$
\end{lemma}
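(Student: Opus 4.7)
The strategy is to bridge the depth event $\{\partial B(M\eps^{-1})\neq\emptyset\}$ to a size event $\{|\C(0)|\geq k\}$ and apply the cluster-tail Theorem~\ref{clustertail}.

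\textbf{Lower bound.} I take $k_0=\eps^{-2}(\eps^3V)^\alpha$ with any fixed $\alpha\in(0,1/3)$. Theorem~\ref{clustertail} yields $\prob(|\C(0)|\geq k_0)\geq (2-o(1))\eps$. The geometric observation is that $\partial B(r)=\emptyset$ forces $\C(0)\subseteq B(r-1)$: a vertex $v\in\C(0)$ with $d_{G_p}(0,v)>r$ would lie on a geodesic whose $r$-th vertex is in $\partial B(r)$. Hence
\[
\{|\C(0)|\geq k_0\}\cap\{\partial B(M\eps^{-1})=\emptyset\}\subseteq\{|B(M\eps^{-1})|\geq k_0\}.
\]
By Markov's inequality and Corollary~\ref{corrlength}(2),
\[
\prob(|B(M\eps^{-1})|\geq k_0)\leq\frac{\E|B(M\eps^{-1})|}{k_0}=O\!\left(\frac{M\eps\, e^M}{(\eps^3V)^\alpha}\right)=o(\eps),
\]
since $M\leq\log\log(\eps^3V)$ forces $e^M\leq\log(\eps^3V)\ll(\eps^3V)^\alpha$. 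Subtracting this $o(\eps)$ error from $\prob(|\C(0)|\geq k_0)\geq (2-o(1))\eps$ delivers the lower bound.

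\textbf{Upper bound.} I would split
\[
\poffa(\partial B(M\eps^{-1})\neq\emptyset)\leq\poffa(|\C(0)|\geq k)+\poffa(\partial B(M\eps^{-1})\neq\emptyset,\,|\C(0)|<k),
\]
taking $k=\eps^{-2}g_M$ with $g_M\to\infty$ slowly so that $(\eps^2k)^{-1/4}\to 0$ in the Theorem~\ref{clustertail} error. Since $\{|\C(0)|\geq k\}$ is increasing and closing edges off $A$ only shrinks the cluster, the first term is dominated by $\prob(|\C(0)|\geq k)\leq (2+o(1))\eps$. For the second ``thin-deep cluster'' term I would condition on $B(r/2)$ with $r=M\eps^{-1}$: given $\partial B(r/2)\neq\emptyset$, the forward exploration from $\partial B(r/2)$ off $A\cup B_0(r/2-1)$ produces, by Corollary~\ref{corrlength} plus a second-moment computation, at least $k$ vertices with conditional probability $1-o(1)$. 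Combining with Corollary~\ref{corrlength}(1) applied at depth $\eps^{-1}\leq r/2$, the second term is $o(\eps)$.

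\textbf{Main obstacle.} The delicate point is the second-moment control of the forward volume, uniformly over realizations of the past $A\cup B_0(r/2-1)$. The ``off'' formalism of Section~\ref{sec-off} is invoked to propagate Theorems~\ref{KozmaNachmias}--\ref{clustertail} to arbitrary subgraphs obtained by closing edges touching any vertex set, exploiting that the finite triangle condition \eqref{finitetriangle} persists under such restrictions. The detailed estimates developed later in Section~\ref{sec-prelim} and in Section~\ref{sec-exp-vol-est} are tailored to provide exactly the uniform bounds this step requires.
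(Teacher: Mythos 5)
Your lower bound is exactly the paper's argument: intersect $\{|\C(0)|\geq k_0\}$ with the depth event and kill the difference $\{|\C(0)|\geq k_0,\ \partial B(M\eps^{-1})=\emptyset\}\subseteq\{|B(M\eps^{-1})|\geq k_0\}$ by Markov's inequality and Corollary \ref{corrlength}(2); the arithmetic under $M\leq\log\log(\eps^3V)$ is the same. The first half of your upper bound (splitting on $|\C(0)|\geq k$ with $k=\eps^{-2}g_M$, $g_M\to\infty$, using monotonicity to pass from $\poffa$ to $\prob$, then Theorem \ref{clustertail}) also matches the paper, which takes $g_M=\sqrt{M}$.

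The gap is in your treatment of the ``thin--deep'' term $\poffa(\partial B(M\eps^{-1})\neq\emptyset,\ |\C(0)|<k)$. The claim that, conditionally on $B_0(r/2)$ with $\partial B_0(r/2)\neq\emptyset$, the forward exploration produces at least $k$ vertices with probability $1-o(1)$ is false: if $|\partial B_0(r/2)|=1$ --- an event whose probability is a non-negligible fraction of $\prob(\partial B_0(r/2)\neq\emptyset)=\Theta(\eps)$, since you dropped survival to depth $r$ from the conditioning --- the forward volume is a single cluster and exceeds $k\gg\eps^{-2}$ with conditional probability $O(\eps)$, not $1-o(1)$. Nor can a second-moment computation rescue the large-boundary case: the forward volume is \emph{not} concentrated given the past (the paper stresses this in Section \ref{sec-sketch}; even $|B(r_0)|$ given survival is non-concentrated, for the same reason that a supercritical branching process normalized by its mean converges to a non-degenerate limit), so Chebyshev or Paley--Zygmund only yield a conditional probability bounded below by a constant, never $1-o(1)$. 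The paper's actual argument sidesteps both problems with a pigeonhole: if $|\C(0)|\leq\sqrt{M}\eps^{-2}$ yet the cluster reaches depth $M\eps^{-1}$, then some level $J\in[M\eps^{-1}/3,2M\eps^{-1}/3]$ satisfies $0<|\partial B(J;A)|\leq 3M^{-1/2}\eps^{-1}$; conditioning on $B(J;A)$, each of these few boundary vertices must survive a further $M\eps^{-1}/3\geq\eps^{-1}$ levels, which by Corollary \ref{corrlength}(1) and a union bound costs $O(\eps\cdot M^{-1/2}\eps^{-1})=O(M^{-1/2})$, and multiplying by the $O(\eps)$ cost of reaching level $M\eps^{-1}/3$ gives $O(M^{-1/2}\eps)=o(\eps)$. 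That ``find a thin level'' step is the missing idea; note it uses only upper bounds off arbitrary sets (which are available), whereas your route would additionally require a lower bound on forward cluster tails off an arbitrary past, which Theorem \ref{clustertail} does not provide.
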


\begin{proof} To prove the upper bound we write
    \begin{eqnarray*}
    \poffa (\partial B(M\eps^{-1}) \neq \emptyset)
    &=& \poffa(\partial B(M\eps^{-1}) \neq \emptyset \, , \,
    |\C(0)| > \sqrt{M} \eps^{-2}) \\ &&\quad + \poffa(\partial B(M\eps^{-1}) \neq \emptyset \, , \,
    |\C(0)|\leq \sqrt{M} \eps^{-2}) \, .
    \end{eqnarray*}
The first term on the right hand side is at most $(2+o(1))\eps$ by
Theorem \ref{clustertail} --- note that we used the fact that the event $|\C(0)|\geq k$
is monotone so $\poffa(|\C(0)|\geq k) \leq \prob(|\C(0)|\geq k)$. It remains to
show that the second term is $o(\eps)$. Indeed, if this event occurs, then
there exists a radius $j \in [M\eps^{-1}/3, 2M\eps^{-1}/3]$ such that
    $$
    0 <\partial B(j;A) \leq 3M^{-1/2}\eps^{-1} \and \partial B(M\eps^{-1}; A)\neq \emptyset \, .
    $$
Let $J$ be the first level satisfying this. By Corollary \ref{corrlength} and the union bound
    $$
    \poffa \big (\exists\,\, y \in \partial B(J; A)
    \with \partial B_y(M\eps^{-1}/3; A) \neq \emptyset \off B(J; A) \, \mid \,
    B(J; A) \big )
    \leq C \eps |\partial B(J; A)| = O(M^{-1/2}) \, .
    $$
Corollary \ref{corrlength} also shows that $\poffa( \partial B(M\eps^{-1}/3)\neq\varnothing) = O(\eps)$,
so putting this together gives
    \be
    \label{inttail.willuselater}
    \poffa(\partial B(M\eps^{-1}) \neq \emptyset \, , \,
    |\C(x)|\leq \sqrt{M} \eps^{-2}) = O(M^{-1/2}\eps) \, ,
    \ee
concluding the proof of the upper bound. For the lower bound, take $k_0 = \eps^{-2}(\eps^3 V)^\alpha$ for some
fixed $\alpha\in (0,1/3)$. We have
    \begin{eqnarray*}
    \prob(\partial B(M\eps^{-1}) \neq \emptyset)
    &\geq& \prob(\partial B(M\eps^{-1}) \neq \emptyset \and |\C(0)|\geq k_0) \\
    &=& \prob(|\C(0)| \geq k_0) - \prob(|\C(0)| \geq k_0 \and \partial B(M\eps^{-1}) = \emptyset) \, ,
    \end{eqnarray*}
so by Theorem \ref{clustertail} it suffices to bound the last term on the right hand side from above.
Indeed, by Markov's inequality and Corollary \ref{corrlength},
    \eqan{
    \label{survive.toshow}
    \prob(|\C(0)| \geq k_0 \and \partial B(M\eps^{-1}) = \emptyset)
    &\leq \prob(|B(M \eps^{-1})| \geq k_0)\\
    &\leq {C M \e^{M} \eps^{-1} \over k_0} = O\Big(\eps (\eps^3 V)^{-\alpha}\log(\eps^3 V)\Big)
    = o(\eps) \, ,
    \nn
    }
since $M\leq \log\log(\eps^3V)$.
\end{proof}

We proceed with preparations towards the proof of Lemma \ref{lem-conc-Z-I}.
For an integer $r>0$, we write $N_r$ for the random variable
    $$
    N_r = \big | \big \{ x \colon \partial B_x(r) \neq \emptyset \big \} \big | \, .
    $$
We think of $1/\eps$ as the correlation length, see \cite{Grim99}. In other
words, when $r \gg 1/\eps$, then the vertices contributing to
$N_r$ should be those in the giant component.

\begin{lemma}[$N_r$ is concentrated]
\label{manyvertssurvived}
Let $G_m$ be a sequence of transitive finite graph with degree $\m$ for which (\ref{finitetriangle}) holds and $\m \to \infty$.
Put $p=p_c(1+\eps)$, then for any $r\gg 1/\vep$ satisfying
$r \leq \eps^{-1} \log\log(\eps^3 V)$,
    $$
    \frac{N_r}{2\eps V}\convp 1.
    $$
\end{lemma}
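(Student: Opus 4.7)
The plan is to establish concentration of $N_r$ by the second moment method. For the first moment, transitivity gives $\E N_r = V \cdot \prob(\partial B_0(r) \neq \emptyset)$. Writing $r = M/\eps$, the hypotheses $r \gg 1/\eps$ and $r \leq \eps^{-1}\log\log(\eps^3 V)$ become $M \to \infty$ and $M \leq \log\log(\eps^3 V)$, so both halves of Lemma \ref{inttail} apply (taking $A = \emptyset$ for the upper bound) and yield
\[
\E N_r = (2+o(1))\eps V.
\]

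For the second moment I would expand
\[
\E N_r^2 = \sum_{x,y} \prob\bigl(\partial B_x(r) \neq \emptyset,\ \partial B_y(r) \neq \emptyset\bigr)
\]
and split the sum by whether $B_x(r) \cap B_y(r) = \emptyset$ or not. On the disjoint event, Lemma \ref{offplusreimer} together with the universal upper bound of Lemma \ref{inttail} (valid uniformly in $A$) gives
\[
\prob\bigl(\partial B_x(r)\neq\emptyset,\ \partial B_y(r)\neq\emptyset,\ B_x(r)\cap B_y(r)=\emptyset\bigr) \leq \prob(\partial B_x(r)\neq\emptyset)\cdot(2+o(1))\eps.
\]
Summing over $x,y$ and using transitivity, the disjoint contribution is at most $V(2+o(1))\eps \cdot V(2+o(1))\eps = (1+o(1))(\E N_r)^2$.

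On the overlapping event, any witness $z\in B_x(r)\cap B_y(r)$ produces open paths of length at most $r$ from $x$ to $z$ and from $z$ to $y$, hence $y \in B_x(2r)$. The overlapping contribution is therefore bounded by
\[
\sum_x \E |B_x(2r)| = V\cdot \E|B_0(2r)| = O\bigl(V\eps^{-1}(1+\eps)^{2r}\bigr)
\]
by Corollary \ref{corrlength}(2). The constraint $r\eps \leq \log\log(\eps^3 V)$ gives $(1+\eps)^{2r} \leq (\log(\eps^3 V))^2$, so this term is $O\bigl(V\eps^{-1}\log\log(\eps^3 V)(\log(\eps^3 V))^2\bigr) = o(\eps^2 V^2) = o((\E N_r)^2)$, where the decisive use is $\eps^3 V \to \infty$.

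Combining the two contributions gives $\E N_r^2 \leq (1+o(1))(\E N_r)^2$, so $\Var(N_r) = o((\E N_r)^2)$, and Chebyshev's inequality delivers $N_r/(2\eps V)\convp 1$. The only real subtlety is the non-monotonicity of the event $\partial B_x(r)\neq\emptyset$, which obstructs a direct BK-style bound on the disjoint term; but this is precisely what the ``off'' machinery of Lemmas \ref{inttail} and \ref{offplusreimer} was packaged to circumvent, and beyond that the argument is routine.
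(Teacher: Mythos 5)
Your proof is correct and follows essentially the same route as the paper: second moment method, with the diagonal-type term split according to whether $B_x(r)\cap B_y(r)=\emptyset$, the disjoint part handled by Lemma \ref{offplusreimer} plus the uniform-in-$A$ upper bound of Lemma \ref{inttail}, and the overlapping part absorbed into $\prob(x\stackrel{2r}{\lrfill}y)$ and summed via Corollary \ref{corrlength}. The only quibble is that Corollary \ref{corrlength}(2) gives $\E|B_0(2r)|=O(r(1+\eps)^{2r})$ rather than $O(\eps^{-1}(1+\eps)^{2r})$ (the sharper form is Lemma \ref{upperball}, which needs more hypotheses), but your final displayed estimate already carries the extra $\log\log(\eps^3V)$ factor and remains $o(\eps^2V^2)$, so nothing is affected.
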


\begin{proof}
We use a second moment method on $N_r$. Lemma \ref{inttail} and our assumption on $r$ shows that
    $$
    \E N_r = (2+o(1)) \eps V \, .$$
The second moment is
    $$
    \E N_r^2 = \sum_{x,y} \prob(\partial B_x(r) \neq \emptyset \and \partial B_y(r) \neq \emptyset) \, .
    $$
We have
    \begin{eqnarray*}
    \prob(\partial B_x(r) \neq \emptyset \and \partial B_y(r) \neq \emptyset)
    &\leq& \prob(\partial B_x(r) \neq \emptyset \and \partial B_y(r) \neq \emptyset , B_x(r) \cap B_y(r) = \emptyset) \\
    &&\quad +\prob(x \stackrel{2r}{\lrfill} y) \, .
    \end{eqnarray*}
We sum the first term on the right hand side using Lemmas \ref{offplusreimer} and \ref{inttail}
and the second term using Corollary \ref{corrlength}. We get that
    $$
    \E N_r^2 \leq (4+o(1))\eps^2 V^2 + O\big(Vr(1+\eps)^{2r}\big)
    = (1+o(1)) [\E N_r]^2 \, ,
    $$
since $Vr(1+\eps)^{2r} = o(\eps^2 V^2)$ by our assumption on $r$
and since $\eps^3 V \to \infty$. The assertion of the lemma now follows by Chebychev's inequality.
\end{proof}

\noindent {\bf Proof of Lemma \ref{lem-conc-Z-I}.} Take $M=\Mm$ and $r$ as in (\ref{Mchoice}) and write
    \begin{eqnarray*}
    Z_{\sss \geq k_0}&=N_r+\big |\big \{x \colon \partial B_x(r)=\emptyset,
    |\C(x)|\geq k_0\big \} \big | -\big |\big \{ x \colon \partial B_x(r)\neq \emptyset,
    |\C(x)|< k_0\big \} \big | \, .
    \end{eqnarray*}
By Lemma \ref{manyvertssurvived}, $N_r/(2\eps V)\convp 1$, so it suffices
to show that the expectation of both remaining terms is $o(\eps V)$.
The expectation of the first term is
    \be
    V\prob(x \colon \partial B_x(r)=\emptyset,
    |\C(v)|\geq k_0)=o(\eps V),
    \ee
by \eqref{survive.toshow}. The expectation of the second term now must be $o(\eps V)$
since both $N_r$ and $Z_{\sss \geq k_0}$ have mean $(2+o(1))\eps V$
by Theorem \ref{clustertail} and Lemma \ref{inttail}. This shows that $Z_{\sss \geq k_0}/(2\eps V)\convp 1$
as stated in Lemma \ref{lem-conc-Z-I}.

To prove the upper bound on $\expec|\C(0)|$ we write
    $$
    \E |\C(0)| = \sum_{y} \prob(0 \lr y) = \sum_{y} \prob ( 0 \stackrel{[0,2r)}{\lrfill} y)
    + \sum_y \prob(0 \stackrel{[2r,\infty)}{\lrfill} y) \, .
    $$
By Corollary \ref{corrlength},
    \be \label{willuselater}
    \sum_{y} \prob ( 0 \stackrel{[0,2r)}{\lrfill} y) = \E |B(2r)| \leq C \eps^{-1} \log^3 (\eps^3 V) = o(\eps^2 V) \, ,
    \ee
since $\eps^3 V \gg 1$. If $0 \stackrel{[2r,\infty)}{\lrfill} y$, then the event
    $$
    \{\partial B_0(r)\neq \emptyset, \partial B_y(r) \neq \emptyset, B_0(r) \cap B_y(r) = \emptyset\} \, ,
    $$
occurs. Hence Lemmas \ref{offplusreimer} and \ref{inttail} give that $\prob(0 \stackrel{[2r,\infty)}{\lrfill} y) \leq (4+o(1)) \eps^2$ and summing this over $y$ gives the required upper bound on $\E |\C(0)|$. \qed

\subsection{Disjoint survival probabilities}
\label{sec-disj-surv-prob}
In this section we show that for most pairs $x,y$ the event $\A(x,y,r,r)$ occurs with probability asymptotic to $4\eps^2$.
The point is that $r$ is chosen in such that $r \gg \eps^{-1}$, where $\eps^{-1}$ is the correlation length,
but $r \ll \eps^{-1} \log(\eps^3 V)$ which is the diameter of $\C_1$.

\begin{lemma}[Number of pairs surviving disjointly]
\label{alotofpairs}
Let $G_m$ be a sequence of transitive finite graph with degree $\m$ for which (\ref{finitetriangle}) holds and $\m \to \infty$.
Put $p=p_c(1+\eps)$, then for any $r\gg \vep^{-1}$ satisfying $r \leq \eps^{-1} \log\log(\eps^3 V)$
    $$
    \frac{\big | \big \{ x,y \, \colon \, \A(x,y,r,r)\mbox{ {\rm occurs}}\big \}  \big |}
    {(2\eps V)^2}\convp 1\, .
    $$
\end{lemma}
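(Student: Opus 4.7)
Write $W = |\{(x,y) : \A(x,y,r,r)\}|$ for the random variable of interest. The plan is to relate $W$ to the square of $N_r = |\{x : \partial B_x(r) \neq \emptyset\}|$, whose concentration is already established by Lemma \ref{manyvertssurvived}, and argue that the discrepancy is negligible.

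The key identity I would exploit is
$$ N_r^2 \;=\; \sum_{x,y} \indic{\partial B_x(r) \neq \emptyset,\, \partial B_y(r) \neq \emptyset} \;=\; W \;+\; D, $$
where $D$ counts those pairs $(x,y)$ for which both $\partial B_x(r)$ and $\partial B_y(r)$ are nonempty \emph{but} $B_x(r)\cap B_y(r)\neq \emptyset$. The task then reduces to showing that $D = o(\eps^2 V^2)$ with high probability, since Lemma \ref{manyvertssurvived} already gives $N_r/(2\eps V)\convp 1$, whence $N_r^2/(4\eps^2 V^2)\convp 1$.

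To bound $D$, observe that if $\partial B_x(r), \partial B_y(r)\neq\emptyset$ and the two balls intersect, then $x \stackrel{2r}{\lrfill} y$. Hence
$$ D \;\leq\; \big|\{(x,y): x \stackrel{2r}{\lrfill} y\}\big| \;\leq\; \sum_{x} |B_x(2r)|. $$
Taking expectations and using transitivity together with the bound $\E|B(2r)| = O(r(1+\eps)^{2r})$ from Corollary \ref{corrlength}, and then invoking the hypothesis $r \leq \eps^{-1}\log\log(\eps^3 V)$ (so that $(1+\eps)^{2r} \leq \exp(2\log\log(\eps^3V)) = \log^2(\eps^3V)$), we get
$$ \E D \;\leq\; V \cdot \E|B(2r)| \;=\; O\!\left(V \eps^{-1}\log^3(\eps^3 V)\right) \;=\; o(\eps^2 V^2), $$
where the last equality uses $\eps^3 V \to \infty$. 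Markov's inequality then gives $D = o(\eps^2 V^2)$ whp.

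Combining the two pieces, $W = N_r^2 - D = (1+o_{\mathbb{P}}(1))\,4\eps^2 V^2$, which is the statement of the lemma. No step here is an obstacle — all the work was already done in establishing Lemma \ref{manyvertssurvived} (the concentration of $N_r$) and Corollary \ref{corrlength} (the expected volume bound); the only mild subtlety is making sure the intersecting-pair correction $D$ is of strictly smaller order, and this follows from the gap between the correlation length $\eps^{-1}$ and the chosen scale $r \leq \eps^{-1}\log\log(\eps^3 V)$, together with $\eps^3 V \to \infty$.
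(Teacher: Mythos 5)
Your argument is correct and is essentially identical to the paper's proof: the paper sandwiches $N^{\sss(2)}_r$ between $N_r^2 - |\{x,y : x \stackrel{2r}{\lrfill} y\}|$ and $N_r^2$, bounds the expected number of $2r$-close pairs by $V\E|B(2r)| = o(\eps^2 V^2)$ exactly as you do, and concludes via Markov's inequality and Lemma \ref{manyvertssurvived}. No issues.
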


\begin{proof}
Define
    $$
    N^{\sss (2)}_r = \big | \big \{ x,y \colon \A(x,y,r,r) \hbox{ {\rm occurs}} \big \} \big | \, .
    $$
Then,
    $$
    N_r^2 - |\{ x,y \colon x \stackrel{2r}{\lrfill} y\}| \leq N^{\sss (2)}_r \leq N_r^2 \, ,
    $$
and $\E |\{ x,y \colon x \stackrel{2r}{\lrfill} y\}| = o(\eps^2 V^2)$ as we did in (\ref{willuselater}). The result now follows by Markov's inequality and Lemma \ref{manyvertssurvived}.
\end{proof}

\begin{lemma}[Most pairs have almost independent disjoint survival probabilities]
\label{xysurvive}
Let $G_m$ be a sequence of transitive finite graph with degree $\m$ for which (\ref{finitetriangle}) holds and $\m \to \infty$. Put $p=p_c(1+\eps)$. Then, for any $j_x,j_y \leq \eps^{-1} \log \log \eps^3 V$ such that $j_x,j_y \gg \eps^{-1}$, there exist at least $(1-o(1))V^2$
pairs of vertices $x,y$ such that
    $$
    \prob(\A(x,y,j_x,j_y)) = (1+o(1))4\eps^2 \, .
    $$
\end{lemma}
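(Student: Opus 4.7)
The plan is to prove matching upper and lower bounds on $\prob(\A(x,y,j_x,j_y))$ and combine them via a simple averaging argument. The uniform upper bound $\prob(\A(x,y,j_x,j_y)) \leq (4+o(1))\eps^2$ is essentially immediate from Lemma \ref{offplusreimer}, which yields
$$\prob(\A(x,y,j_x,j_y)) \leq \prob(\partial B_x(j_x) \neq \emptyset)\cdot \max_{A \subset V} \poffa(\partial B_y(j_y) \neq \emptyset),$$
combined with the bound $(2+o(1))\eps$ for each factor from Lemma \ref{inttail}, valid since $j_x, j_y \gg \eps^{-1}$ and $j_x, j_y \leq \eps^{-1}\log\log(\eps^3V)$.

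The core of the argument is to match this uniform upper bound on average. The proof of Lemma \ref{manyvertssurvived} extends verbatim to any radius $j$ in the range $\eps^{-1} \ll j \leq \eps^{-1}\log\log(\eps^3 V)$, so that $N_{j_x}/(2\eps V)\convp 1$ and $N_{j_y}/(2\eps V)\convp 1$. Consequently, on an event of probability $1-o(1)$ both variables are at least $(2-o(1))\eps V$, which gives
$$\E[N_{j_x} N_{j_y}] \geq (4-o(1))\eps^2 V^2.$$
Expanding the left-hand side as a double sum over $(x,y)$, the events $\{\partial B_x(j_x) \neq \emptyset,\,\partial B_y(j_y) \neq \emptyset\}$ differ from $\A(x,y,j_x,j_y)$ only through the overlap $\{B_x(j_x) \cap B_y(j_y) \neq \emptyset\}$, which forces $x \stackrel{P[0,\,j_x+j_y]}{\lrfill} y$. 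Summing this monotone event over $x,y$ is at most $V \cdot \E|B(j_x+j_y)|$, which by Corollary \ref{corrlength}(2) is $O(V \eps^{-1}\log^3(\eps^3 V)) = o(\eps^2 V^2)$, since $\eps^3 V \to \infty$. Therefore
$$\sum_{x,y} \prob(\A(x,y,j_x,j_y)) \geq (4-o(1))\eps^2 V^2.$$

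A standard averaging argument then completes the proof: if for some fixed $\delta>0$ there were at least $\delta V^2$ pairs with $\prob(\A(x,y,j_x,j_y)) \leq (4-\delta)\eps^2$, the uniform upper bound would force the total to be at most $(4-\delta^2+o(1))\eps^2 V^2$, contradicting the lower bound. Hence for every fixed $\delta>0$ all but $o(V^2)$ pairs satisfy $\prob(\A(x,y,j_x,j_y)) \geq (4-\delta)\eps^2$, and a standard diagonal extraction yields $(1-o(1))V^2$ pairs with $\prob(\A(x,y,j_x,j_y)) = (1+o(1))4\eps^2$. The main obstacle is controlling the overlap term, which relies crucially on the sharp expected-ball-size estimate of Corollary \ref{corrlength}(2) together with the cushion $\eps^3 V \to \infty$ to absorb the logarithmic factors. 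The non-monotonicity of the events $\partial B_v(r) \neq \emptyset$ causes no difficulty here, because Lemma \ref{offplusreimer} was designed precisely to handle it through the ``off $A$'' conditional probability.
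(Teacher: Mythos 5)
Your proof is correct and follows essentially the same route as the paper: a uniform upper bound from Lemma \ref{offplusreimer} and Lemma \ref{inttail}, a matching lower bound on $\sum_{x,y}\prob(\A(x,y,j_x,j_y))$ obtained by subtracting the overlap term $V\,\E|B(j_x+j_y)| = o(\eps^2V^2)$, and an averaging argument. The only cosmetic difference is that you derive the lower bound on the sum from the concentration of $N_{j_x}$ and $N_{j_y}$ in Lemma \ref{manyvertssurvived}, whereas the paper applies Cauchy--Schwarz to $N_r$ at the single radius $r=\eps^{-1}\log\log(\eps^3V)$ and uses monotonicity of $j\mapsto\prob(\partial B_x(j)\neq\emptyset)$; both amount to the same second-moment computation.
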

\begin{proof} The upper bound $\prob( \A(x,y,j_x,j_x) ) \leq (1+o(1))4\eps^2$
follows immediately from Lemmas \ref{offplusreimer} and \ref{inttail} and is valid for all pairs $x,y$.
We turn to showing the corresponding lower bound. First note that Cauchy-Schwartz's inequality
gives $\E [N_{2}^2] \geq (\E N_{2})^2$ which can be written as
    $$
    \sum_{x,y} \prob( \partial B_x(r) \neq \emptyset \and \partial B_y(r) \neq \emptyset)
    \geq V^2 \prob(\partial B(r)\neq \varnothing)^2 \, .
    $$
We take $r= \eps^{-1} \log\log \eps^3 V$. Since $\prob(\partial B_x(j) \neq \emptyset)$ is decreasing in $j$, by Lemma \ref{inttail} and our assumption on $j_x$ and $j_y$ we get
    $$
    \sum_{x,y} \prob(\partial B_x(j_x) \neq \emptyset \and \partial B_y(j_y) \neq \emptyset)
    \geq (4-o(1)) V^2 \eps^2 \, .
    $$
Secondly, Corollary \ref{corrlength} implies that
    $$
    \sum_{x,y} \prob(x \stackrel{2r}{\lrfill} y)
    =V \expec[|B(2r)|]\leq CV r (1+\eps)^{2r} = o(\eps^2 V^2) \, ,
    $$
by our choice of $r$ and since $\eps^3 V \to \infty$. Since
    $$
    \A(x,y,j_x,j_y) \subseteq \{ \partial B_x(j_x) \neq \emptyset ,
    \partial B_y(j_y) \neq \emptyset\} \setminus \{x \stackrel{2r}{\lrfill} y\}\, ,
    $$
we deduce that
    $$
    \sum_{x,y} \prob(\A(x,y,j_x,j_y)) \geq (4-o(1)) \eps^2 V^2 \, ,
    $$
and since the upper bound was valid for all $x,y$, the lemma follows.
\end{proof}

\begin{lemma}
\label{morefitnesstwo}
Let $G_m$ be a sequence of transitive finite graph with degree $\m$ for which (\ref{finitetriangle}) holds and $\m \to \infty$. Put $p=p_c(1+\eps)$. Then for any $r \leq \eps^{-1} \log \log \eps^3 V$ such that $r \gg \eps^{-1}$ we have
    $$
    {\Big | \Big \{ x,y \colon \A(x,y,r,r) \and |\C(x)| \leq (\eps^3 V)^{1/4} \eps^{-2} \Big \}
    \Big | \over (\eps V)^2} \convp 0  \, .
    $$
\end{lemma}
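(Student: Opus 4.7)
The plan is to dominate the counted quantity by a simple product. Set $k_0 := (\eps^3 V)^{1/4}\eps^{-2}$, which is exactly the value of $k_0$ with $\alpha = 1/4$ in Theorem \ref{clustertail} and Lemma \ref{lem-conc-Z-I}. Since the event $\A(x,y,r,r)$ requires both $\partial B_x(r)\neq \emptyset$ and $\partial B_y(r)\neq\emptyset$, we have the deterministic inequality
\[
\big|\{x,y \colon \A(x,y,r,r)\ \text{and}\ |\C(x)|\leq k_0\}\big| \ \leq\ N_r^{\star}\cdot N_r,
\]
where $N_r^\star := |\{x \colon \partial B_x(r)\neq\emptyset,\ |\C(x)|\leq k_0\}|$. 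By Lemma \ref{manyvertssurvived}, $N_r = (2+o(1))\eps V$ with high probability, so it suffices to prove $N_r^\star = o(\eps V)$ whp.

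For this I would use the elementary identity
\[
N_r^\star \ =\ N_r - Z_{\sss\geq k_0} + Y,\qquad Y := \big|\{x \colon \partial B_x(r)=\emptyset,\ |\C(x)|\geq k_0\}\big|,
\]
which is just inclusion--exclusion applied to the two events $\{\partial B_x(r)\neq\emptyset\}$ and $\{|\C(x)|\geq k_0\}$ summed over $x$. Lemma \ref{manyvertssurvived} and Lemma \ref{lem-conc-Z-I} together give $N_r - Z_{\sss\geq k_0} = o(\eps V)$ whp, because both quantities concentrate at $(2+o(1))\eps V$. For the remainder $Y$, Markov's inequality combined with the ball-size bound in Corollary \ref{corrlength} gives
\[
\E Y \ \leq\ V\,\prob(|B(r)|\geq k_0) \ \leq\ \frac{V\,\E|B(r)|}{k_0} \ \leq\ \frac{C\,V r(1+\eps)^r}{k_0}.
\]
Using $r\leq \eps^{-1}\log\log(\eps^3 V)$ so that $(1+\eps)^r \leq \log(\eps^3 V)$, and $k_0 = (\eps^3 V)^{1/4}\eps^{-2}$, the right-hand side simplifies to $C\eps V \cdot \log\log(\eps^3 V)\log(\eps^3 V) / (\eps^3 V)^{1/4} = o(\eps V)$; Markov then gives $Y = o(\eps V)$ whp. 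Combining the three estimates yields $N_r^\star = o(\eps V)$ whp, so the counted quantity is at most $o(\eps V)\cdot O(\eps V) = o(\eps^2 V^2)$, as required.

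The main ``obstacle'' is largely illusory: the hard work has already been done in the cluster tail asymptotics (Theorem \ref{clustertail}) and in the second-moment concentration of $N_r$ and $Z_{\sss\geq k_0}$ (Lemmas \ref{manyvertssurvived} and \ref{lem-conc-Z-I}). The only genuine point is the observation that a vertex surviving to depth $r$ either lies in a large cluster (in which case it is absorbed into $Z_{\sss\geq k_0}$) or is an outlier, and outliers are few because the totals $N_r$ and $Z_{\sss\geq k_0}$ match to leading order, with the small discrepancy $Y$ controlled by Markov via the linear bound on $\E|B(r)|$.
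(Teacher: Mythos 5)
Your proof is correct and follows essentially the same route as the paper's: the same reduction via Lemma \ref{manyvertssurvived} to showing that $|\{x\colon \partial B_x(r)\neq\emptyset,\ |\C(x)|\leq k_0\}|=o(\eps V)$, the same inclusion--exclusion between the events $\{\partial B_x(r)\neq\emptyset\}$ and $\{|\C(x)|\geq k_0\}$, and the same Markov bound on $\prob(|B_x(r)|\geq k_0)$ to control the cross term. The only cosmetic difference is that you carry out the cancellation at the level of the aggregate counts $N_r$ and $Z_{\geq k_0}$ (quoting Lemmas \ref{manyvertssurvived} and \ref{lem-conc-Z-I}), whereas the paper does it per vertex directly from Theorem \ref{clustertail} and Lemma \ref{inttail} and then applies Markov to the expectation.
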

\begin{proof} By Lemma \ref{manyvertssurvived}, the assertion follows from the statement that
    $$
    \big|\big\{ x \colon \partial B_x(r) \neq \emptyset \and |\C(x)| \leq (\eps^3 V)^{1/4} \eps^{-2}\big\}\big|/(\eps V)
    \convp 0 \, .
    $$
To show this, note that
    $$
    \prob\big(\partial B_x(r) = \emptyset \and |\C(x)| \geq (\eps^3 V)^{1/4} \eps^{-2}\big)
    \leq \prob\big(|B_x(r)| \geq (\eps^3 V)^{1/4} \eps^{-2}\big)
    \leq {Cr(1+\eps)^{r} \over (\eps^3 V)^{1/4} \eps^{-2}} = o(\eps) \, .
    $$
Hence, by Theorem \ref{clustertail},
    $$
    \prob(\partial B_x(r) \neq \emptyset \and |\C(x)| \geq (\eps^3 V)^{1/4} \eps^{-2}) \geq (2-o(1)) \eps \, .
    $$
Together with Lemma \ref{inttail}, this yields that
    $$
    \prob(\partial B_x(r) \neq \emptyset \and |\C(x)| \leq (\eps^3 V)^{1/4} \eps^{-2}) = o(\eps) \, ,
    $$
concluding our proof.
\end{proof}

\subsection{Using the non-backtracking random walk}
\label{sec-NBW}
In the rest of this section we provide several basic percolation estimates which we
use throughout the paper. These include bounds on long and short connection probabilities
and bounds on various triangle and square diagrams. It is here that  we make crucial use
of the geometry of the graph and the behavior of the random walk on it, namely, the assumptions of Theorem \ref{mainthmgeneral}. We frequently use \emph{non-backtracking} random walk estimates. This walk is a simple random walk on a graph
that is not allowed to traverse back on an edge it has just walked on. Let us first define
it formally. 

The non-backtracking random walk on an undirected graph $G=(V,E)$, starting from a
vertex $x \in V$, is a Markov chain $\{X_t\}$ with transition
matrix $\prob ^x$ on the state space of {\em directed} edges
    $$
    \overrightarrow{E} = \Big \{ (x,y) \colon \{x,y\} \in E \Big \} \, .
    $$
If $X_t = (x,y)$, then we write $X_t^{\sss (1)} = x$ and $X_t^{\sss (2)} = y$.
Also, for notational convenience, we write
    $$
    \prob _{(x,w)}(\cdot)=\prob^x\big( \cdot \mid X_0 = (x,w)\big),
    \quad \and \quad
    \p^t(x,y)=\prob^x \big(X_t^{\sss (2)} = y\big).
    $$
The non-backtracking walk starting from
a vertex $x$ has initial state given by
    $$
    \prob ^x(X_0 = (x,y)) = {\bf 1}_{\{(x,y)\in \overrightarrow{E}\}} {1 \over {\rm deg}(x)} \, ,
    $$
and transition probabilities given by
    $$
    \prob^x_{(u,v)}(X_1 = (v,w))
    = {\bf 1}_{\{(v,w)\in \overrightarrow{E} \, , w \neq
    v \}} {1 \over {\rm deg}(v) - 1} \, ,
    $$
where we write deg$(x)$ for the degree of $x$ in $G$. The following lemma will be useful.
\begin{lemma}
\label{lem-NBW-ext}
Let $G$ be a regular graph of degree $\m$. Then,
for $t\geq 0$,
    $$
    \sum_{u}\p^1(0,u)\p^t(u,z)
    = \p^{t+1}(0,z)+\frac{1}{\m-1}\p^{t-1}(0,z),
    $$
with the convention that $\p^{-1}(0,z)=0$.
\end{lemma}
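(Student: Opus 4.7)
The plan is to derive the identity from the basic recurrence satisfied by non-backtracking walk counts on a regular graph of degree $m$.

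First I would reformulate both sides in terms of the combinatorial counts $N_k(x,y)$ of non-backtracking walks of length $k$ from $x$ to $y$. The Markov chain definition given in the paper yields
\[
\p^t(x,y) = \frac{N_{t+1}(x,y)}{m(m-1)^t}\qquad (t\ge 0),
\]
since the MC traverses one edge at time $0$ (the uniform initial directed edge) and one further edge at each of the $t$ subsequent transitions, so $\p^t(x,y)$ counts non-backtracking walks of $t+1$ edges from $x$ to $y$ weighted by $1/m(m-1)^t$.

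Second, I would prove the combinatorial recurrence
\[
\sum_{u\sim x} N_k(u,y) \;=\; N_{k+1}(x,y) + (m-1)\,N_{k-1}(x,y) \qquad (k\ge 2).
\]
This is a direct counting argument: for each non-backtracking walk $u,u_1,\ldots,u_k=y$, either $u_1\ne x$ and prepending $x$ produces a non-backtracking walk of length $k+1$ from $x$ to $y$ (counted by $N_{k+1}(x,y)$), or $u_1=x$ and the concatenated walk $x,u,x,u_2,\ldots,u_k=y$ backtracks at step one. In the latter case, for each non-backtracking walk $x,u_2,\ldots,u_k=y$ of length $k-1$ there are exactly $m-1$ admissible choices of the cancelled vertex $u\in N(x)\setminus\{u_2\}$, yielding the coefficient $(m-1)N_{k-1}(x,y)$.

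Third, I would substitute these expressions into the left-hand side of the stated identity and collect terms using the recurrence. After unpacking the combinatorial content of $\p^1(0,u)$, the sum $\sum_u\p^1(0,u)\p^t(u,z)$ reduces, up to an explicit normalization factor, to a sum of the form $\sum_{u\sim 0} N_{t+1}(u,z)$; the recurrence above expresses this as $N_{t+2}(0,z)+(m-1)N_t(0,z)$. Reconverting to the $\p^\bullet$ notation yields exactly $\p^{t+1}(0,z)+\tfrac{1}{m-1}\p^{t-1}(0,z)$, where the factor $\tfrac{1}{m-1}$ is a direct reflection of the ratio of consecutive NBW normalizations $m(m-1)^t$.

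The main obstacle will be the careful bookkeeping of normalization factors between $\p^t$ and $N_{t+1}$, and in particular a separate handling of the boundary case $t=0$, where the combinatorial recurrence requires the slightly different coefficient $m$ in place of $m-1$ (because there are no non-backtracking walks of length $2$ from a vertex to itself). The convention $\p^{-1}(0,z)=0$ is precisely what is needed to make the identity hold uniformly in this boundary case.
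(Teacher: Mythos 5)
Your underlying idea is the same as the paper's: the paper's one-line proof conditions on whether the first step of the NBW counted in $\p^t(u,z)$ equals $(u,0)$, and your prepend-or-cancel decomposition, i.e.\ the recurrence $\sum_{u\sim x}N_k(u,y)=N_{k+1}(x,y)+(\m-1)N_{k-1}(x,y)$ for $k\geq 2$, is exactly that conditioning made combinatorial; the recurrence and its proof are correct. The gap is in the normalization, which you rightly flag as the main obstacle but then handle inconsistently. The formula $\p^t(x,y)=N_{t+1}(x,y)/(\m(\m-1)^t)$ is the literal reading of the definition of $X_0$, but under it $\p^1(0,u)=N_2(0,u)/(\m(\m-1))$ is supported on endpoints of two-edge walks, not on neighbours of $0$, so the left-hand side does \emph{not} reduce to a multiple of $\sum_{u\sim 0}N_{t+1}(u,z)$ as your third step claims; with that convention the two sides even count walks of different lengths ($t+3$ versus $t+2$ and $t$), so the identity cannot hold. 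The convention actually operative throughout the paper (see Claim \ref{reallysmalltriangle}, where $\p^0(x,y)={\bf 1}_{\{x=y\}}$, and the bound on the number of simple paths by $\m(\m-1)^{t-1}\p^t(x,u)$) is $\p^t(x,y)=N_t(x,y)/(\m(\m-1)^{t-1})$ for $t\geq1$, and your step 3 silently assumes it for $\p^1$ while your step 1 assumes the other one for $\p^t$.

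If you adopt the operative convention throughout, your scheme does go through, but the bookkeeping yields, for $t\geq 1$,
$$
\sum_{u}\p^1(0,u)\p^t(u,z)=\frac{1}{\m^2(\m-1)^{t-1}}\sum_{u\sim 0}N_t(u,z)
=\frac{\m-1}{\m}\,\p^{t+1}(0,z)+\frac{1}{\m}\,\p^{t-1}(0,z),
$$
which is the stated right-hand side multiplied by $(\m-1)/\m$ (the $t=1$ boundary term with coefficient $\m$ is exactly absorbed here, and $t=0$ is trivial since $\p^0(u,z)={\bf 1}_{\{u=z\}}$). So the lemma as printed is correct only as an upper bound, which is all that is used in Lemma \ref{squarediagram} and in the proof of Theorem \ref{expander}; that imprecision is in the paper's statement and is not something your argument can repair. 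Concretely: fix the normalization to $\p^t\leftrightarrow N_t$, run your recurrence, and either record the exact identity above or state the conclusion as an inequality.
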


\proof The claim follows by conditioning on whether the first
step of the NBW counted in $\p^t(u,z)$ is equal to $(u,0)$ or not.
We omit further details.
\qed

\subsection{Uniform upper bounds on long connection probabilities}
\label{sec-unif-bds-conn}
In this section we show that long percolation paths are asymptotically equally likely to end at any vertex
in $G$.
\begin{lemma}
\label{moreunifconnbd} Let $G$ be a graph satisfying the assumptions in
Theorem \ref{mainthmgeneral} and consider percolation on it with $p\leq p_c(1+\eps)$.
Then, for any integer $t \geq m_0$ and any vertex $x$,
    $$
    \prob_{p}(0 \stackrel{=t}{\lrfill} x) + \prob_{p}(0 \stackrel{=t+1}{\lrfill} x)
    \leq {2 + O(\alpham+\eps\mnot) \over V} \E |\partial B(t-\mnot)| \, .
    $$
\end{lemma}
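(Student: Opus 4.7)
The plan is to decompose any shortest open path of length $s\in\{t,t+1\}$ from $0$ to $x$ at its $(t-\mnot)$-th vertex $y$, separating it into the intrinsic ball $B_0(t-\mnot)$ (with $y\in\partial B_0(t-\mnot)$) and a self-avoiding walk of length $\ell=s-(t-\mnot)\in\{\mnot,\mnot+1\}$ from $y$ to $x$ avoiding $B_0(t-\mnot)\setminus\{y\}$. A union bound over $y$ gives
$$
\prob_p(0\stackrel{=s}{\lrfill} x)\leq \sum_y \prob_p\big(y\in\partial B_0(t-\mnot),\ \exists\text{ open SAW of length }\ell\colon y\to x\ \off B_0(t-\mnot)\setminus\{y\}\big).
$$

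The key observation is an independence statement: the event $\{B_0(t-\mnot)=A\}$ is measurable with respect to the edges inside $A$ together with the requirement that every edge from $A\setminus\partial A$ to $V\setminus A$ be closed. This holds because for each $u\in A$ the shortest $G_p$-path from $0$ to $u$ must stay inside $A$ (otherwise it would produce a vertex of $V\setminus A$ at distance at most $t-\mnot-1$ from $0$, contradicting that vertex being outside $A$). Consequently, conditionally on $\{B_0(t-\mnot)=A\}$ with $y\in\partial A$, the edges touching only $\{y\}\cup(V\setminus A)$ remain independent Bernoulli$(p)$, so the conditional probability of the SAW event equals its unconditional value. I then bound the latter by the expected number of open self-avoiding walks of length $\ell$ from $y$ to $x$, in turn bounded by counting non-backtracking walks:
$$
\prob_p(\exists\text{ open SAW of length }\ell\colon y\to x\ \off A\setminus\{y\})\leq p^\ell\,\m(\m-1)^{\ell-1}\,\p^\ell(y,x).
$$
Summing over $A$ with $y\in\partial A$ and then over $y$ yields
$$
\prob_p(0\stackrel{=s}{\lrfill} x)\leq p^\ell\,\m(\m-1)^{\ell-1}\sum_y \prob_p(0\stackrel{=t-\mnot}{\lrfill} y)\,\p^\ell(y,x).
$$

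To finish, I would add the bounds for $s=t$ (so $\ell=\mnot$) and $s=t+1$ (so $\ell=\mnot+1$), factor out $p^{\mnot}\m(\m-1)^{\mnot-1}$, and treat the bracketed quantity $\p^{\mnot}(y,x)+p(\m-1)\p^{\mnot+1}(y,x)$. The definition $\mnot=\Tm(\alpham)$ gives $\p^{\mnot}(y,x)+\p^{\mnot+1}(y,x)\leq 2(1+\alpham)/V$, while condition (2) of Theorem \ref{mainthmgeneral} combined with $p\leq p_c(1+\eps)$ and $\eps\mnot=o(1)$ gives $(p(\m-1))^{\mnot}=(1+\eps)^{\mnot}(p_c(\m-1))^{\mnot}=1+O(\eps\mnot+\alpham)$ as well as $p(\m-1)=1+O(\eps+\alpham)$. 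Multiplying these estimates out produces a prefactor $(2+O(\eps\mnot+\alpham))/V$, and the remaining sum equals $\E|\partial B(t-\mnot)|$ by definition.

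The main delicate step is the independence claim above; once it is verified, the rest is a routine combination of the NBW count and the uniform mixing assumption.
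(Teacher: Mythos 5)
Your proof is correct, and it reaches the same algebraic endpoint as the paper's — a sum of the form $p^{\ell}\m(\m-1)^{\ell-1}\sum \prob_p(0\stackrel{=t-\mnot}{\lrfill}\cdot)\,\p^{\ell}(\cdot,\cdot)$, finished off by the NBW count, the definition of $\mnot=\Tm(\alpham)$, and condition (2) — but it gets there by a genuinely different decoupling. The paper cuts the shortest path at its \emph{first} $\mnot$ steps: the initial open simple path $\omega=\eta[0,\mnot]$ and the event $\{v\stackrel{=t-\mnot}{\lrfill}x\}$ (whose witness must include all closed edges, since it is non-monotone) occur disjointly, so the BK--Reimer inequality is invoked to factorize. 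You instead cut at the $(t-\mnot)$-th vertex, condition on the intrinsic ball $B_0(t-\mnot)=A$, and decouple the suffix by observing that $\{B_0(r)=A\}$ is measurable with respect to the edges inside $A$ together with the closedness of edges from $A\setminus\partial A$ to $A^c$, so that edges within $\{y\}\cup A^c$ for $y\in\partial A$ remain i.i.d.\ Bernoulli$(p)$. Your independence claim is correct (it is exactly the mechanism behind \eqref{offdescription}, with the additional — and valid — refinement that edges from the \emph{outer boundary} $\partial A$ to $A^c$ are also unconstrained by the conditioning, not just edges not touching $A$), and the suffix $\eta[t-\mnot,s]$ of a shortest path indeed avoids $B_0(t-\mnot)\setminus\{y\}$ since all its later vertices are at intrinsic distance $>t-\mnot$. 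What your route buys is the elimination of Reimer's inequality in favor of elementary independence of disjoint edge sets; what it costs is the slightly more delicate bookkeeping about which edges determine $\{B_0(r)=A\}$ (one should really condition on the full internal configuration and on $\partial B_0(r)$, summing over both, to make the measurability statement airtight — but this is routine). The treatment of the $t$ versus $t+1$ parity issue, pulling out $\p^{\mnot}(y,x)+p(\m-1)\p^{\mnot+1}(y,x)\leq (2+O(\alpham+\eps))/V$ before summing over $y$ to produce $\E|\partial B(t-\mnot)|$, matches the paper's and is handled correctly.
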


\begin{proof} If $0 \stackrel{=t}{\lrfill} x$, then there exists a vertex $v$ and simple path
$\omega$ of length $\mnot$ from $x$ to $v$ such that the event
    $$
    \{ \omega \hbox{ {\rm is open}}\} \circ \{ v \stackrel{=t-\mnot}{\lrfill} x\} \, ,
    $$
occurs. Indeed, consider a shortest path $\eta$ of length $t$ between $0$ to $x$. Take
$v= \eta(\mnot)$ and $\omega = \eta[1,\mnot]$. Now, the first witness is the path $\omega$ and
the witness for $\{v \stackrel{=t-\mnot}{\lrfill} x\}$ is the path $\eta[\mnot,t]$ together
with all the closed edges in $G_p$ (which determine that $\eta[\mnot,t]$ is a shortest path). These are disjoint witnesses.
If $0 \stackrel{=t+1}{\lrfill} x$ occurs, then we get the same conclusion with $\omega$ of
length $\mnot+1$. We now apply the BK-Reimer inequality and the fact that the probability
that $\omega$ is open is precisely $p^{|\omega|}$. This yields
    $$
    \prob_{p}(0 \stackrel{=t}{\lrfill} x) \leq p^{\mnot} \sum_v
    \sum_{\substack{\omega \colon |\omega|=\mnot \\ \omega[\mnot]=v}} \prob_p(v \stackrel{=t-\mnot}{\lrfill} x) \, ,
    $$
and
    $$
    \prob_{p}(0 \stackrel{={t+1}}{\lrfill} x) \leq p^{\mnot+1} \sum_{v} \sum_{\substack{\omega \colon |\omega|=\mnot+1 \\ \omega[\mnot+1]=v}} \prob_p(v \stackrel{=t-\mnot}{\lrfill} x) \, .$$
We now bound these above by relaxing the requirement that $\omega$ is simple and
only require that it is non-backtracking. Since $\mnot=\Tm(\alpham)$, we get by definition that
    $$
    {\big |\{ \omega \colon |\omega|=\mnot, \omega[\mnot]=v \} \big |
    \over \m(\m-1)^{\mnot-1}} + { \big  |\{ \omega \colon |\omega|=\mnot+1, \omega[\mnot+1]=v \} \big |
    \over \m(\m-1)^{\mnot}} =\p^{\mnot}(0,v)+\p^{\mnot+1}(0,v)\leq {{2 + 2\alpham} \over V} \, ,
    $$
where we enumerated only non-backtracking paths in the above. Using this,
condition (2) and summing over $v$ gives
    \begin{eqnarray*}
    \prob_{p}(0 \stackrel{=t}{\lrfill} x) +
    \prob_{p}(0 \stackrel{={t+1}}{\lrfill} x)
    &\leq & {2+O(\alpham) \over V} [p(m-1)]^{\mnot}
    \E |\partial B(t-\mnot)| \\ &\leq& {2 + O(\alpham+\eps\mnot) \over V}
    \E |\partial B(t-\mnot)| \, ,
    \end{eqnarray*}
concluding our proof.
%
\end{proof}

\begin{lemma}
\label{uniformconnbd}
Let $G$ be a graph satisfying the assumptions in Theorem \ref{mainthmgeneral}
and consider percolation on it with $p\leq p_c(1+\eps)$.
Then, for any $r \geq \mnot$ and any vertex $x$,
    $$
    \prob_p \big (0\stackrel{P[\mnot,r]}{\lrfill} x \big ) \leq
    {1+O(\alpham + \eps \mnot) \over V} \E |B(r)|\, .
    $$
\end{lemma}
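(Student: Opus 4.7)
The plan mirrors the strategy of Lemma \ref{moreunifconnbd}: decompose an open path of length in $[\mnot,r]$ at step $\mnot$ via BK-Reimer, bound the initial segment by counting non-backtracking walks, and apply the uniform mixing estimate.

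Specifically, suppose the event $\{0\stackrel{P[\mnot,r]}{\lrfill}x\}$ occurs, and fix an open simple path $\omega$ of length $t\in[\mnot,r]$ from $0$ to $x$. Setting $v=\omega(\mnot)$, the initial segment $\omega[0,\mnot]$ is a simple open path of length $\mnot$ from $0$ to $v$, while $\omega[\mnot,t]$ is an open path of length $t-\mnot\leq r-\mnot$ from $v$ to $x$, and their edge sets are disjoint. Summing over $v$ and applying the BK-Reimer inequality yields
\[
\prob\big(0\stackrel{P[\mnot,r]}{\lrfill}x\big)\leq \sum_v \prob\big(0\stackrel{P\{\mnot\}}{\lrfill}v\big)\,\prob(d_{G_p}(v,x)\leq r-\mnot).
\]
Bounding the first factor by the count of non-backtracking walks,
\[
\prob\big(0\stackrel{P\{\mnot\}}{\lrfill}v\big)\leq p^{\mnot} m(m-1)^{\mnot-1}\p^{\mnot}(0,v) = \frac{m}{m-1}[p(m-1)]^{\mnot}\p^{\mnot}(0,v),
\]
and using condition (2) of Theorem \ref{mainthmgeneral} together with $p\leq p_c(1+\eps)$ and $\alpham\geq 1/m$, the prefactor equals $1+O(\alpham+\eps\mnot)$.

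The main obstacle is the near-bipartite character of the non-backtracking walk: $\p^{\mnot}(0,v)$ alone need not be controlled by $(1+\alpham)/V$; only the average $[\p^{\mnot}(0,v)+\p^{\mnot+1}(0,v)]/2\leq (1+\alpham)/V$ is, by the definition $\mnot=\Tm(\alpham)$. I work around this by repeating the decomposition with initial segment of length $\mnot+1$, producing a companion bound featuring $\p^{\mnot+1}(0,v)$ and $\prob(d_{G_p}(v,x)\leq r-\mnot-1)$. Adding the two bounds, invoking the monotonicity $\prob(d_{G_p}(v,x)\leq r-\mnot-1)\leq \prob(d_{G_p}(v,x)\leq r-\mnot)$, the uniform-mixing estimate, and the identity $\sum_v \prob(d_{G_p}(v,x)\leq r-\mnot)=\E|B(r-\mnot)|$ from transitivity, one arrives at
\[
\prob\big(0\stackrel{P[\mnot,r]}{\lrfill}x\big)+\prob\big(0\stackrel{P[\mnot+1,r]}{\lrfill}x\big)\leq \frac{2+O(\alpham+\eps\mnot)}{V}\E|B(r-\mnot)|.
\]

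To extract a bound on $\prob\big(0\stackrel{P[\mnot,r]}{\lrfill}x\big)$ alone with the stated constant $1$, I combine this with $\prob(0\stackrel{P[\mnot,r]}{\lrfill}x)\leq \prob(0\stackrel{P\{\mnot\}}{\lrfill}x)+\prob(0\stackrel{P[\mnot+1,r]}{\lrfill}x)$, where $\prob(0\stackrel{P\{\mnot\}}{\lrfill}x)\leq (2+O(\alpham+\eps\mnot))/V$ follows from the same NBW count applied to $\p^{\mnot}(0,x)+\p^{\mnot+1}(0,x)\leq 2(1+\alpham)/V$. This yields $2\prob(0\stackrel{P[\mnot,r]}{\lrfill}x)\leq \frac{2+O(\alpham+\eps\mnot)}{V}(\E|B(r-\mnot)|+1)$, and the additive constant is absorbed into $\E|B(r)|$ using $\E|B(r)|\geq \E|B(r-\mnot)|+1$ (which holds as $B(r)\supseteq B(r-\mnot)$ and condition (1) ensures that the intervening shells contribute at least a constant in expectation).
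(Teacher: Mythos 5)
Your overall strategy is the paper's: cut an open path of length in $[\mnot,r]$ after $\mnot$ (and after $\mnot+1$) steps, apply BK, bound the prefix by counting non-backtracking walks, and invoke $\tfrac12[\p^{\mnot}(0,v)+\p^{\mnot+1}(0,v)]\leq (1+\alpham)V^{-1}$. Where you deviate is in how the two decompositions are combined. The paper simply asserts that both the length-$\mnot$ and the length-$(\mnot+1)$ prefix decompositions bound the \emph{same} quantity $\prob(0\stackrel{P[\mnot,r]}{\lrfill}x)$ and averages; you rightly note that the second decomposition only bounds $\prob(0\stackrel{P[\mnot+1,r]}{\lrfill}x)$, since a witnessing path of length exactly $\mnot$ has no prefix of length $\mnot+1$, and you compensate via $\prob(0\stackrel{P[\mnot,r]}{\lrfill}x)\leq\prob(0\stackrel{P\{\mnot\}}{\lrfill}x)+\prob(0\stackrel{P[\mnot+1,r]}{\lrfill}x)$. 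Everything up to and including $2\prob(0\stackrel{P[\mnot,r]}{\lrfill}x)\leq\frac{2+O(\alpham+\eps\mnot)}{V}\big(\E|B(r-\mnot)|+1\big)$ is correct.

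The gap is the last step. The inequality $\E|B(r)|\geq\E|B(r-\mnot)|+1$ does not follow from condition (1), which only states $\m\to\infty$ and says nothing about the expected sizes of the shells $\partial B(t)$ for $t\in(r-\mnot,r]$; and it is false in general --- for $r$ exceeding the diameter of $\C(0)$ one has $B(r)=B(r-\mnot)=\C(0)$, and for $p$ well below $p_c$ the last $\mnot$ shells have total expected size less than $1$. Without it, the best you get from $\E|B(r)|\geq 1$ is $\E|B(r-\mnot)|+1\leq 2\,\E|B(r)|$, i.e.\ the lemma with leading constant $2$ rather than $1$, and the constant $1$ is actually used (e.g.\ in deriving $|U|\leq 2\beta^{1/2}V$ in the proof of Lemma \ref{sedgefirstmommain}). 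The defect is local and reparable: the extra $\frac{1+O(\cdot)}{V}$ is absorbed whenever $(\alpham+\eps\mnot)\,\E|B(r)|\geq c$, and at the parameter values where the lemma is applied one has $\E|B(r)|\geq\E|B(\mnot)|\geq c\,\mnot$ (Lemma \ref{lowercritball} at $p_c$, transferred to $p=p_c(1+O(\eps))$ with $\eps\mnot=o(1)$ via Lemma \ref{corrlength1}), which together with $\alpham\mnot\geq c$ in the examples suffices. But as written, the absorption is asserted on false grounds; you should either supply such a lower bound on $\E|B(r)|$ or state the conclusion with $\E|B(r-\mnot)|+1$ on the right-hand side.
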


\begin{proof} The proof is similar to that of Lemma \ref{moreunifconnbd}. If the event occurs,
then there exists a vertex $v$ and a simple path $\omega$ of length $\mnot$
from $0$ to $v$ such that the event
    $$
    \{ \omega \hbox{ {\rm is open}}\} \circ \{v\stackrel{r}{\lrfill} x\} \, ,
    $$
occurs. Hence,
    $$
    \prob_{p}(0 \stackrel{P[\mnot,r]}{\lrfill} x) \leq p^{\mnot} \sum_v
    \sum_{\substack{\omega \colon |\omega|=\mnot \\ \omega[\mnot]=v}}
    \prob_p(v \stackrel{r}{\lrfill} x) \, ,
    $$
by the BK inequality. By the same argument,
    $$
    \prob_{p}(0 \stackrel{P[\mnot,r]}{\lrfill} x) \leq p^{\mnot+1} \sum_v
    \sum_{\substack{\omega \colon |\omega|=\mnot+1 \\ \omega[\mnot+1]=v}}
    \prob_p(v \stackrel{r}{\lrfill} x) \, .
    $$
The reason we make two such similar estimates is that due to possible periodicity, in each of the estimates the sum over $v$ may include only half the vertices of $V$ and not all of them. We now take the average of these two estimates, sum over $v$ to get the $\E|B(r)|$ factor and use the same analysis as in Lemma \ref{moreunifconnbd} using condition (2). This gives the required assertion of the lemma. \end{proof}

\begin{lemma}
\label{unifconnbd2}
Let $G$ be a graph satisfying the assumptions in Theorem \ref{mainthmgeneral}
and consider percolation on it with $p\leq p_c(1+\eps)$.
Then, for any $r \geq \mnot$ and any vertex $x$,
    $$
    \prob(0 \lrer x) \leq {1+O(\alpham+\eps \mnot)\over V}\E |B([r-m_0,2r-m_0])|\, .
    $$
\end{lemma}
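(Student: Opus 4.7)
The plan is to adapt the proof of Lemma \ref{moreunifconnbd}---which bounds $\prob(0 \stackrel{=t}{\lrfill} x)+\prob(0\stackrel{=t+1}{\lrfill}x)$ by splitting a shortest open path at time $\mnot$ and exploiting uniform mixing of the non-backtracking walk---to the interval-of-distances event $\{0\lrer x\}$. The key idea is that for each exact distance $t=d_{G_p}(0,x)\in[r,2r]$, the ``head'' and ``tail'' of a shortest open path of length $t$ decouple via the BK--Reimer setup used for Lemma \ref{moreunifconnbd}, and the head-counting factor is converted into a $\tfrac{1+O(\alpham)}{V}$ factor using condition $(2)$ of Theorem \ref{mainthmgeneral} together with the estimate $[p_c(\m-1)]^{\mnot}=1+O(\alpham)$.

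First I would establish, exactly as in Lemma \ref{moreunifconnbd}, that on $\{0\lrer x\}$, the lexicographically first shortest open path $\eta$ of length $t$ decomposes as $\omega=\eta[0,\mnot]$ (a simple open path of length $\mnot$ from $0$ to $v:=\eta(\mnot)$) followed by a shortest open path from $v$ to $x$ of length $t-\mnot\in[r-\mnot,2r-\mnot]$, with disjoint BK--Reimer witnesses. This yields
\[
\prob(0\lrer x)\leq p^{\mnot}\sum_{v}\#\{\omega\text{ simple},\,|\omega|=\mnot,\,\omega(\mnot)=v\}\cdot\prob\bigl(v\stackrel{[r-\mnot,2r-\mnot]}{\lrfill}x\bigr).
\]
Repeating the argument with a split at time $\mnot+1$ gives the analogous inequality with shifted exponents and target range $[r-\mnot-1,2r-\mnot-1]$. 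Bounding the number of simple paths by the non-backtracking walk count $\m(\m-1)^{\mnot-1}\p^{\mnot}(0,v)$, averaging the two inequalities, and applying $\p^{\mnot}(0,v)+\p^{\mnot+1}(0,v)\leq (2+O(\alpham))/V$ (the defining property of $\mnot=\Tm(\alpham)$) together with $[p(\m-1)]^{\mnot}=[p_c(\m-1)]^{\mnot}(1+\eps)^{\mnot}=1+O(\alpham+\eps\mnot)$, one obtains the prefactor $\tfrac{1+O(\alpham+\eps\mnot)}{V}$ uniformly in $v$.

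The final step is to sum over $v$, using $\sum_v\prob\bigl(v\stackrel{[a,b]}{\lrfill}x\bigr)=\E|B_x([a,b])|=\E|B([a,b])|$ by transitivity. The main bookkeeping obstacle I anticipate is the mismatch between the target ranges $[r-\mnot,2r-\mnot]$ and $[r-\mnot-1,2r-\mnot-1]$ produced by the two splits: bounding both target events by the union range $[r-\mnot-1,2r-\mnot]$ introduces a spurious boundary layer $\E|\partial B(r-\mnot-1)|$, which has to be absorbed into the $O(\alpham+\eps\mnot)$ error (using Lemma \ref{moreunifconnbd} to control that single layer separately). In the regimes where this lemma is applied, the geometric growth of $\E|\partial B(\cdot)|$ from Corollary \ref{corrlength} makes this boundary layer exponentially smaller than the dominant contribution $\E|\partial B(2r-\mnot)|$ to $\E|B([r-\mnot,2r-\mnot])|$, so the constant $1$ in the stated bound emerges cleanly after this estimate.
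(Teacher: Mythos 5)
Your argument is essentially the paper's: the published proof simply says ``sum the estimate of Lemma \ref{moreunifconnbd} over $t\in[r,2r]$,'' and re-running the path-splitting/BK--Reimer/NBW-mixing argument directly on the event $\{0\lrer x\}$, as you do, is the same computation. The one step I would push back on is your justification for absorbing the mismatched boundary layer $\E|\partial B(r-\mnot-1)|$: Corollary \ref{corrlength} gives only \emph{upper} bounds on $\E|B_v(r;A)|$, and for $p\leq p_c$ the layers $\E|\partial B(s)|$ are of constant order rather than geometrically growing (Theorem \ref{bdry} and Lemma \ref{lowercritbdry}), so the extra layer is not ``exponentially smaller'' than the dominant one in general; it is small relative to $\E|B([r-\mnot,2r-\mnot])|$ only because it is one layer out of $r+1$ comparable ones, which requires $1/r=O(\alpham+\eps\mnot)$ (true in every application, where $r\gg\eps^{-1}\gg\mnot$, but not for arbitrary $r\geq\mnot$). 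The paper's own device for the index shift is the one-step comparison $\E|\partial B(t)|\leq p(\m-1)\E|\partial B(t-1)|\leq(1+\m^{-1}+\eps)\E|\partial B(t-1)|$, which moves a layer by one step at multiplicative cost $1+O(\alpham+\eps)$ instead of discarding it; you should either invoke that inequality or restrict to the regime where the $1/r$ bound suffices, rather than appeal to geometric growth.
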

\begin{proof} This follows by summing the estimate of Lemma \ref{moreunifconnbd}
and using the fact that
    $$
    \E |\partial B(t)| \leq p(m-1) \E |\partial B(t-1)|
    \leq (1+\m^{-1}+\eps) \E |\partial B(t-1)| \, ,
    $$
where the last inequality is due to condition (2) and the first inequality holds since,
given $\partial B(t-1)$, $|\partial B(t)|$ is stochastically bounded above by a sum of
$|\partial B(t-1)|$ binomial random variables with parameters $\m-1$ and $p$. The lemma follows since $\alpham\geq \m^{-1}$.
\end{proof}

We close this section with a remark which will become useful later. We often would like to have these uniform
connection bounds off some subsets of vertices. The proofs in the lemmas of this sections
immediately generalize to such a setting. This is because the claim ``the number of paths
from $0$ to $v$ of length $\mnot$ is at most $n$'' still holds
even if we are in $G \setminus A$, for any subset of vertices $A$. We state the required
assertion here and omit their proofs:

\begin{lemma}[Uniform connection bounds off sets]
\label{unifconnbdoff}
Consider percolation on $G=\{0,1\}^\m$ with $p=p_c(1+\eps)$,
and let $A$ be any subset of vertices.
Then, for any $r \geq \mnot$ and any vertex $x$,
\begin{eqnarray*}
\poffa(0 \stackrel{=r}{\lrfill} x) &\leq& {(2+O(\alpham+ \eps \mnot)) V^{-1}\E |\partial B(r-\mnot; A)| } \, , \\
\poffa \big ( 0 \stackrel{[\mnot,r]}{\lrfill} x \big ) &\leq& {(1+O(\alpham+ \eps \mnot))V^{-1} \E |B(r; A)|} \, .
\end{eqnarray*}
\end{lemma}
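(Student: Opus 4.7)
The plan is to repeat the arguments of Lemmas \ref{moreunifconnbd} and \ref{uniformconnbd} essentially verbatim, but carried out under the measure $\poffa$ in place of $\prob_p$, and to check at each step that nothing new is needed. The measure $\poffa$ is itself a product measure on the edges not touching $A$ (those touching $A$ are closed deterministically), so the BK-Reimer inequality and the independence arguments used before remain valid. Moreover, the purely combinatorial counting estimate used before — that the number of non-backtracking paths of length $\mnot$ from $0$ to a fixed vertex $v$ is at most $(\m-1)^{\mnot-1}\m \cdot[\p^{\mnot}(0,v)+\p^{\mnot+1}(0,v)]/2$ in terms of the NBW on the ambient graph $G$ — is an upper bound that can only get stronger when we restrict to paths avoiding $A$, since we are counting over a smaller set of paths.

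Concretely, for the first bound, suppose $0 \stackrel{=r}{\lrfill} x$ holds off $A$. Then there is a shortest open path $\eta$ of length $r$ from $0$ to $x$ in $G\setminus A$, and setting $v=\eta(\mnot)$, $\omega=\eta[0,\mnot]$ gives a decomposition into the disjoint occurrence of $\{\omega \text{ is open}\}$ and $\{v \stackrel{=r-\mnot}{\lrfill} x \text{ off } A\}$. The witness sets for the two events use disjoint sets of edges, all of which lie in $G\setminus A$, so BK-Reimer applied under $\poffa$ yields
\begin{equation*}
\poffa(0 \stackrel{=r}{\lrfill} x) \leq p^{\mnot}\sum_v \sum_{\substack{\omega\colon|\omega|=\mnot\\ \omega(\mnot)=v,\ \omega\cap A=\emptyset}} \poffa(v\stackrel{=r-\mnot}{\lrfill}x),
\end{equation*}
and the analogous estimate with $\mnot+1$. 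Relaxing the simple-path requirement to the non-backtracking one (which only enlarges the sum), averaging the two bounds, and invoking the NBW estimate $\p^{\mnot}(0,v)+\p^{\mnot+1}(0,v) \leq (2+2\alpham)/V$ from the definition of $\mnot=\Tm(\alpham)$, together with condition (2) of Theorem \ref{mainthmgeneral} in the form $[p(\m-1)]^{\mnot}=1+O(\alpham+\eps\mnot)$, gives the desired bound with $\E |\partial B(r-\mnot;A)|$ appearing from the sum over $v$ of $\poffa(v\stackrel{=r-\mnot}{\lrfill}x)$.

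The second inequality follows by the same argument applied to the monotone event $0 \stackrel{P[\mnot,r]}{\lrfill} x$ off $A$: decompose as $\{\omega \text{ open}\}\circ \{v \stackrel{r}{\lrfill} x \text{ off } A\}$, apply BK-Reimer under $\poffa$, relax to non-backtracking walks, average the $\mnot$ and $\mnot+1$ estimates to kill potential periodicity, and sum over $v$ to produce $\E |B(r;A)|$ instead of $\E |\partial B(r-\mnot;A)|$. Since these arguments are line-by-line the same as in the unconditional case, with $\prob_p$ replaced by $\poffa$ throughout, no separate obstacle arises; the only point to verify is that the combinatorial path count and the BK-Reimer inequality are insensitive to the side constraint of avoiding $A$, which is immediate. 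For this reason, following the paper, I would simply state the result and omit the proof.
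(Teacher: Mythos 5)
Your proposal is correct and is exactly the route the paper takes: the paper itself omits the proof, justifying this by the same observation you make, namely that the path-counting bound and the BK-Reimer inequality are unaffected by (indeed only improved by) restricting to edges off $A$, so the arguments of Lemmas \ref{moreunifconnbd} and \ref{uniformconnbd} carry over verbatim under $\poffa$. The only cosmetic remark is that the second bound in the statement concerns the event $0 \stackrel{[\mnot,r]}{\lrfill} x$ off $A$, which is contained in the monotone event $0 \stackrel{P[\mnot,r]}{\lrfill} x$ off $A$ that you actually bound, so your estimate applies a fortiori.
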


\subsection{Proof of part (a) of Theorem \ref{mainthmgeneral}.} \label{sec-nbwtriangle}
We demonstrate the use of Lemma \ref{uniformconnbd} by showing that the finite triangle condition holds under the assumptions of Theorem \ref{mainthmgeneral}. We begin with an easy calculation.
    \begin{claim}
    \label{reallysmalltriangle}
    On any regular graph $G$ of degree $\m$ and any vertices $x,y$,
    $$
    \sum_{u,v} \sum_{t_1, t_2, t_3 : t_1 + t_2 +t_3 \in \{0,1,2\}} \p^{t_1}(x,u) \p^{t_2}(u,v) \p^{t_3}(v,y) = {\bf 1}_{\{x=y\}} + O(\m^{-1}) \, ,
    $$
and
    $$
    \sum_{u,v} \sum_{t_1, t_2, t_3 : t_1 + t_2 +t_3 \in \{1,2\}} \p^{t_1}(x,u) \p^{t_2}(u,v) \p^{t_3}(v,y) = O(\m^{-1}) \, .
    $$
    \end{claim}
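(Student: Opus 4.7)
The strategy is direct enumeration. Writing $S = \{(t_1,t_2,t_3)\in\Z_{\geq 0}^3 : t_1+t_2+t_3 \leq 2\}$, we have $|S| = 10$ (one triple with sum $0$, three with sum $1$, and six with sum $2$), so the left-hand side of the first identity is a sum of ten innocuous terms. For each triple, using the convention $\p^0(u,v) = \mathbf{1}_{\{u=v\}}$ (which is implicit throughout Section \ref{sec-NBW}; note Lemma \ref{lem-NBW-ext} uses it to handle the $t=1$ case), the $\p^0$ factors collapse the $(u,v)$ sum to a small expression involving $\p^{t_1+t_2+t_3}(x,y)$ or $\sum_u \p^1(x,u)\p^1(u,y)$.

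First I would record three elementary estimates valid on any $\m$-regular graph: $\p^0(x,y) = \mathbf{1}_{\{x=y\}}$; $\p^1(x,y) = \mathbf{1}_{\{y \sim x\}}/\m \leq 1/\m$; and $\p^2(x,y) \leq 1/(\m-1)$, the latter because a non-backtracking path of length two from $x$ to $y$ has probability exactly $1/[\m(\m-1)]$ and there are at most $\m$ such paths (one for each first step). Also, $\sum_u \p^1(x,u)\p^1(u,y) = \m^{-2}\,|N(x) \cap N(y)| \leq 1/\m$, since $|N(x) \cap N(y)| \leq \m$.

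With these in hand, the enumeration proceeds as follows. The unique triple $(0,0,0)$ gives $\sum_{u,v}\mathbf{1}_{\{x=u\}}\mathbf{1}_{\{u=v\}}\mathbf{1}_{\{v=y\}} = \mathbf{1}_{\{x=y\}}$. Each of the three triples with $t_1+t_2+t_3 = 1$ collapses to $\p^1(x,y)$, contributing $3\p^1(x,y) = O(\m^{-1})$. Among triples with $t_1+t_2+t_3 = 2$, the three with a single $t_i = 2$ each collapse to $\p^2(x,y) = O(\m^{-1})$, and the three with two indices equal to $1$ each collapse to $\sum_u \p^1(x,u)\p^1(u,y) \leq 1/\m$. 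Summing all ten contributions yields $\mathbf{1}_{\{x=y\}} + O(\m^{-1})$, proving the first identity. The second identity is immediate by subtracting off the unique contribution $\mathbf{1}_{\{x=y\}}$ coming from $(0,0,0)$.

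There is no genuine obstacle: the proof is entirely combinatorial bookkeeping. The only point worth flagging is the $\p^0$ convention, without which the statement would not even type-check (the NBW state-space formalism in Section \ref{sec-NBW} would formally give $\p^0(x,y) = \mathbf{1}_{\{y \sim x\}}/\m$, but the paper consistently uses the vertex-based convention $\p^0(x,y) = \mathbf{1}_{\{x=y\}}$, as required both by this claim and by the boundary term $\p^{t-1}$ in Lemma \ref{lem-NBW-ext}).
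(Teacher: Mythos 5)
Your proof is correct and follows essentially the same route as the paper's: a direct case enumeration over the ten triples $(t_1,t_2,t_3)$, with the $\p^0$ factors collapsing the sums and the elementary bounds $\p^t(\cdot,\cdot)\leq 1/(\m-1)$ for $t\geq 1$ and $\sum_u\p^1(x,u)\p^1(u,y)\leq 1/\m$ handling the remaining terms. Your remark on the convention $\p^0(x,y)=\mathbf{1}_{\{x=y\}}$ is a fair observation, and it is indeed the reading the paper uses both here and in Lemma \ref{lem-NBW-ext}.
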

\begin{proof} We prove both statements simultaneously.
The contribution coming from $t_1+t_2+t_3=0$ is the one we get when $x=u=v=y$ giving ${\bf 1}_{\{x=y\}}$. The contributions coming from $t_1+t_2+t_3=1$ can only come from the cases $u=v=y$ and $(t_1,t_2,t_3) = (1,0,0)$, or $u=v=x$ and $(t_1,t_2,t_3) = (0,0,1)$, or $u=x$ and $v=y$ and $(t_1,t_2,t_3) = (0,1,0)$. These are easily bounded using the fact that $\max_{z} \p^t(w,z)\leq 1/(\m-1)$ for any $t\geq 1$. We perform a similar case analysis to bound the contributions of $t_1+t_2+t_3=2$. If $(t_1,t_2,t_3)=(0,0,2)$, then we must have $u=v=x$ and $\p^2(v,y) = O(m^{-1})$, this argument also handles the case where one of the other $t_i$'s is $2$. In the case $(t_1,t_2,t_3)=(1,1,0)$ we must have that $v=y$ and that $u$ is a neighbor both of $x$ and $y$. There are at most $m$ such $u$'s and for each we have that $\p^1(x,u)\p^1(u,y) = O(\m^{-2})$. The case $(t_1,t_2,t_3)\in \{(0,1,1), (1,0,1)\}$ are handled similarly.
\end{proof}

%

\noindent {\bf Proof of part (a) of Theorem \ref{mainthmgeneral}}. Let $p \leq p_c$. If one of the connections
in the sum $\nabla_{p}(x,y)$ is of length in $[\mnot, \infty)$, say between $x$ and $u$, then we may estimate
    \begin{eqnarray*}
    \sum _{u,v} \prob_{p}(x\stackrel{[\mnot, \infty)}{\lrfill} u) \prob_{p}(u \lr v) \prob_{p}(v \lr y)
     &\leq& {(1+o(1)) \E_{p} |\C(0)| \over V} \sum_{u,v} \prob_{p}(u \lr v) \prob_{p}(v \lr y)
     \\ &=& {(1+o(1)) (\E_{p} |\C(0)|)^3 \over V} \, ,
    \end{eqnarray*}
where we used Lemma \ref{uniformconnbd} (and took $r\to \infty$ in both sides of the lemma) for the first inequality. Thus, we are only left to deal with short connections,
    $$
    \nabla_{p}(x,y) \leq \sum_{u,v} \prob_{p}(x \stackrel{\mnot}{\lrfill} u)
    \prob_{p}(u \stackrel{\mnot}{\lrfill} v)\prob_{p}(v \stackrel{\mnot}{\lrfill} y) + O(V^{-1} \chi(p)^3) \, .
    $$
We write
    $$
    \prob_{p}(x \stackrel{\mnot}{\lrfill} u) = \sum_{t_1=0}^{\mnot} \prob_{p}(x \stackrel{=t_1}{\lrfill} u) \, ,
    $$
and do the same for all three terms so that
    \be\label{triangle.midstep} \nabla_{p}(x,y) \leq \sum_{u,v} \sum_{t_1,t_2,t_3}^{\mnot} \prob_{p}(x \stackrel{=t_1}{\lrfill} u) \prob_{p}(u \stackrel{=t_2}{\lrfill} v) \prob_{p}(v \stackrel{=t_3}{\lrfill} y) + O(V^{-1} \chi(p)^3) \, .
    \ee
We bound
    $$
    \prob_{p}(x \stackrel{=t_1}{\lrfill} u) \leq \m(\m-1)^{t_1-1} \p^{t_1}(x,u) p^{t_1} \, ,
    $$
simply because $\m(\m-1)^{t_1-1} \p^{t_1}(x,u)$ is an upper bound on the number of
simple paths of length $t_1$ starting at $x$ and ending at $u$. Hence
    $$ \nabla_{p}(x,y) \leq {\m^3 \over (\m-1)^3} \sum_{u,v} \sum_{\substack{t_1,t_2,t_3}}^{\mnot} [p(m-1)]^{t_1+t_2+t_3} \p^{t_1}(x,u) \p^{t_2}(u,v) \p^{t_3}(v,y) + O(V^{-1} \chi(p)^3) \, .$$
Since $p \leq p_c$, assumption (2) gives that $[p(m-1)]^{t_1+t_2+t_3} = 1 + O(\alpham)$, and condition (3) together with Claim \ref{reallysmalltriangle} yields that
$$ \nabla_{p}(x,y) \leq {\bf 1}_{\{x=y\}} + O(V^{-1} \chi(p)^3) + O(\m^{-1} + \alpham / \log V) \, ,$$
concluding the proof. \qed

\subsection{Extended triangle and square diagrams}
\label{sec-triangle-square}
In this section, we provide several extensions to the triangle condition \eqref{finitetriangle}. We will bound the triangle diagram in the supercritical phase (which requires bounding the length of connections, otherwise the sums blow up) and estimate a square diagram which will be useful in a key second moment calculation in Section \ref{sec-large-clusters-close}.

\begin{lemma}[Short supercritical triangles]
\label{shorttriangle}
Let $G$ be a graph satisfying the assumptions in Theorem \ref{mainthmgeneral}
and consider percolation on it with $p\leq p_c(1+\eps)$.
Then,
    $$
    \max_{x,y}\sum_{u,v\colon \{u,v\} \neq \{0,0\}} \prob_p(x \stackrel{\mnot}{\lrfill} u)\prob_p(u \stackrel{\mnot}{\lrfill} v)
    \prob_p(v \stackrel{\mnot}{\lrfill} y) = O(\alpham +\eps \mnot) \, .
    $$
\end{lemma}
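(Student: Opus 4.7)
The plan is to mimic the proof of part (a) of Theorem~\ref{mainthmgeneral}, with one extra twist: in the supercritical regime we must additionally track a factor of $(1+\eps)$ accumulated over up to $3\mnot$ steps. The exclusion $\{u,v\}\neq\{0,0\}$ is designed precisely to strip off the trivial $\mathbf{1}_{\{x=y\}}$ contribution coming from length-zero paths on all three sides; without it the left-hand side would be at least $1$ whenever $x=y$.

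First I would expand each connection probability by exact length,
$$\prob_p(x\stackrel{\mnot}{\lrfill} u)=\sum_{t_1=0}^{\mnot}\prob_p(x\stackrel{=t_1}{\lrfill} u),$$
and apply the BK-style path-counting bound $\prob_p(x\stackrel{=t}{\lrfill} u)\leq \m(\m-1)^{t-1}\,p^{t}\,\p^{t}(x,u)$ that was already used in the proof of part (a). Multiplying the three such expansions and summing over $u,v$ reduces the left-hand side to
$$\frac{\m^3}{(\m-1)^3}\sum_{u,v}\sum_{t_1,t_2,t_3=0}^{\mnot}[p(\m-1)]^{t_1+t_2+t_3}\,\p^{t_1}(x,u)\p^{t_2}(u,v)\p^{t_3}(v,y).$$

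Next I would control the prefactor $[p(\m-1)]^{t_1+t_2+t_3}$ uniformly over the triple sum. Since $p\leq p_c(1+\eps)$, condition~(2) of Theorem~\ref{mainthmgeneral} yields $[p_c(\m-1)]^{3\mnot}=1+O(\alpham)$, and the standing hypothesis $\eps\mnot=o(1)$ of Theorem~\ref{mainthmgeneral}(b) gives $(1+\eps)^{3\mnot}\leq \e^{3\eps\mnot}=1+O(\eps\mnot)$. Combining these, $[p(\m-1)]^{t_1+t_2+t_3}=1+O(\alpham+\eps\mnot)$ uniformly in the relevant range. I would then partition the remaining triple sum according to the value of $t_1+t_2+t_3$: the $(0,0,0)$ term contributes exactly $\mathbf{1}_{\{x=y\}}$, which is the piece removed by the exclusion; the terms with $t_1+t_2+t_3\in\{1,2\}$ contribute $O(\m^{-1})=O(\alpham)$ by Claim~\ref{reallysmalltriangle} together with the standing assumption $\alpham\geq\m^{-1}$; and the terms with $t_1+t_2+t_3\geq 3$ contribute $O(\alpham/\log V)=O(\alpham)$ directly by condition~(3) of Theorem~\ref{mainthmgeneral}.

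Summing these three contributions and multiplying by the uniform prefactor $1+O(\alpham+\eps\mnot)$ delivers the claimed bound $O(\alpham+\eps\mnot)$, uniformly in $x,y$. Thus the lemma is in effect a direct supercritical analogue of part (a) of Theorem~\ref{mainthmgeneral}, with the identity term stripped off. I do not foresee a serious obstacle; the only subtlety is verifying that the $(1+\eps)^{3\mnot}$ inflation remains harmless, which is secured exactly by the standing hypothesis $\eps\mnot=o(1)$. Everything else is bookkeeping on the argument already executed for part (a).
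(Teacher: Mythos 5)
Your proposal is correct and matches the paper's proof, which is exactly the one-line argument you describe: apply the bound $\prob_p(x \stackrel{=s}{\lrfill} u) \leq p^s \m (\m-1)^{s-1} \p^s(x,u)$, then invoke conditions (2) and (3) of Theorem \ref{mainthmgeneral} (with Claim \ref{reallysmalltriangle} handling the short terms), the factor $O(\eps\mnot)$ arising from $[p(\m-1)]^{t_1+t_2+t_3}$ in the supercritical regime. Your bookkeeping of the excluded identity term and of the $(1+\eps)^{3\mnot}$ inflation is exactly what the paper intends.
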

\begin{proof}
This follows immediately by assumptions (2) and (3) of
Theorem \ref{mainthmgeneral} and the usual bound
    $$
    \prob_p(x \stackrel{=s}{\lrfill} u) \leq p^s \m (\m-1)^{s-1} \p^s(x,u) \, ,
    $$
as we did before.
\end{proof}

\begin{corollary}[Long supercritical triangles]
\label{extendedtriangle}
Let $G$ be a graph satisfying the assumptions in Theorem \ref{mainthmgeneral}
and consider percolation on it with $p\leq p_c(1+\eps)$.
Let $r_1, r_2, r_3$ be integers that are all at least $\mnot$. Then,
    \eqan{
    &\max_{x,y}\sum_{u,v\colon \{u,v\} \neq \{0,0\}} \prob(x \stackrel{r_1}{\lrfill} u)
    \prob(u \stackrel{r_2}{\lrfill} v) \prob(v \stackrel{r_3}{\lrfill} y)\nn\\
    &\qquad\quad \leq O(\alpham + \eps \mnot) +
    {(3+O(\alpham+\eps \mnot)) \over V}\E|B(r_1)|\E|B(r_2)|\E|B(r_3)| \, .
    }
\end{corollary}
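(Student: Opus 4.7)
My plan is to bound the left-hand side by splitting each connection probability according to whether the shortest $p$-open distance is short (at most $\mnot$) or long (in $[\mnot+1, r_i]$), and then to apply Lemma \ref{shorttriangle} to the all-short piece and Lemma \ref{uniformconnbd} to any piece containing a long factor. For each $i$ I will write $\prob(\cdot \stackrel{r_i}{\lrfill} \cdot) = s_i + \ell_i$, where $s_i = \prob(\cdot \stackrel{\mnot}{\lrfill} \cdot)$ and $\ell_i = \prob(\cdot \stackrel{[\mnot+1, r_i]}{\lrfill} \cdot)$; these are probabilities of disjoint events, so the identity is exact.

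The algebraic core of the argument is the exact decomposition
\[
(s_1+\ell_1)(s_2+\ell_2)(s_3+\ell_3) = s_1 s_2 s_3 \;+\; \ell_1(s_2+\ell_2)(s_3+\ell_3) \;+\; s_1\, \ell_2 (s_3+\ell_3) \;+\; s_1 s_2 \ell_3,
\]
which partitions the triple product into one all-short monomial and three terms, each with a single isolated long factor. Summed over $\{u,v\}\neq\{0,0\}$, the all-short monomial is bounded by $O(\alpham + \eps\mnot)$ directly via Lemma \ref{shorttriangle}. For each of the three remaining terms, I use the inclusion $\{\cdot \stackrel{[\mnot+1, r_i]}{\lrfill} \cdot\} \subseteq \{\cdot \stackrel{P[\mnot, r_i]}{\lrfill} \cdot\}$ so that Lemma \ref{uniformconnbd} gives
\[
\ell_i \;\leq\; \frac{1+O(\alpham+\eps\mnot)}{V}\, \E|B(r_i)|
\]
uniformly in the endpoint, and pull this bound out of the sum. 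The remaining factors then sum by transitivity of $G$: $\sum_u \prob(u \stackrel{r_j}{\lrfill} v) = \E|B(r_j)|$ for every $v$, and any leftover short factor satisfies $\sum_u s_j \le \E|B(\mnot)| \le \E|B(r_j)|$ since $r_j \geq \mnot$. Each of the three one-long terms therefore contributes at most $\frac{1+O(\alpham+\eps\mnot)}{V}\E|B(r_1)|\E|B(r_2)|\E|B(r_3)|$, and summing them produces the claimed coefficient $3 + O(\alpham+\eps\mnot)$ on the $V^{-1}$ term.

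The main point requiring attention — and the closest thing to an obstacle — is ensuring that the decomposition above is an \emph{equality} rather than a loose upper bound. A naive expansion into the eight monomials $s_1 s_2 s_3,\, s_1 s_2 \ell_3,\, \dots,\, \ell_1 \ell_2 \ell_3$ would yield a coefficient on $V^{-1}\E|B(r_1)|\E|B(r_2)|\E|B(r_3)|$ that is too large; one must absorb each monomial containing two or three long factors into exactly one of the three single-long buckets, which is what the asymmetric grouping above accomplishes. Everything else reduces to a direct combination of Lemmas \ref{shorttriangle} and \ref{uniformconnbd} together with transitivity of $G$, both of which hold under the assumptions of Theorem \ref{mainthmgeneral}.
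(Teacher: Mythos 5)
Your proposal is correct and follows essentially the same route as the paper: split according to whether all three connections are short (bounded by Lemma \ref{shorttriangle}) or at least one is long (bounded by isolating the long factor via Lemma \ref{uniformconnbd} and summing the rest by transitivity), with the three single-long buckets producing the coefficient $3+O(\alpham+\eps\mnot)$. Your exact telescoping partition of $(s_1+\ell_1)(s_2+\ell_2)(s_3+\ell_3)$ is a slightly tidier bookkeeping of the same union-bound the paper uses implicitly, and all the supporting steps check out.
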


\begin{proof} We split the sum into two cases. The first case is that at least one of the
connection events occurs with a path of length at least $\mnot$. For instance,
if $0 \stackrel{P[\mnot, r_1]}{\lrfill} u$ occurs, then we use Lemma \ref{uniformconnbd} to bound,
uniformly in $x,y$,
    \eqan{
    &\sum_{u,v} \prob(x \stackrel{P[\mnot,r_1]}{\lrfill} u)
    \prob(u \stackrel{r_2}{\lrfill} v) \prob(v \stackrel{r_3}{\lrfill} y) \nn\\
    &\qquad \leq {(1 + O(\alpham+ \eps \mnot))\over V}  \E |B(r_1)| \sum_{u,v} \prob(u \stackrel{r_2}{\lrfill} v) \prob(v \stackrel{r_3}{\lrfill} y) \nn\\
    &\qquad = {(1 + O(\alpham + \eps \mnot))\over V} \E |B(r_1)| \E |B(r_2)| \E |B(r_3)| \, .\nn
    }
The second case is when all the connections occur with paths of length at most $\mnot$,
in which case we use Lemma \ref{shorttriangle} and get a $O(\alpham+\eps \mnot)$ bound. This concludes the proof.
\end{proof}

\begin{lemma}[Supercritical square diagram]
\label{squarediagram}
Let $G$ be a graph satisfying the assumptions in Theorem \ref{mainthmgeneral}
and consider percolation on it with $p\leq p_c(1+\eps)$. Let $r_1,r_2$ be both
at least $\mnot$. Then,
    $$
    \sum_{(u_1,u_1'), (u_2,u_2'), z_1,z_2} \!\!\!\!\prob(z_1 \stackrel{r_1}{\lrfill} u_1)
    \prob(z_1 \stackrel{r_1}{\lrfill} u_2) \prob(z_2 \stackrel{r_2}{\lrfill} u_1')
    \prob(z_2 \stackrel{r_2}{\lrfill} u_2') \leq C\m^2 (\E|B(r_1)|)^2(\E|B(r_2)|)^2 +CV\m^2 \mnot\alpham\, .
    $$
\end{lemma}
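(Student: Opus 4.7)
The plan is to regroup the sum as
$$S = \sum_{z_1, z_2} K(z_1, z_2)^2, \qquad K(z_1, z_2) := \sum_{(u, u') \in E(G)} \prob(z_1 \stackrel{r_1}{\lrfill} u)\, \prob(z_2 \stackrel{r_2}{\lrfill} u'),$$
and to split each connection probability into a short component (shortest path of length $\leq \mnot$) and a long component (an open path of length in $[\mnot, r_j]$). By Lemma \ref{uniformconnbd} the long component is uniformly at most $(1+o(1))\E|B(r_j)|/V$. Decompose $K = K^{SS} + K^{SL} + K^{LS} + K^{LL}$ according to this classification and bound $K^2 \leq 4\bigl((K^{SS})^2 + (K^{SL})^2 + (K^{LS})^2 + (K^{LL})^2\bigr)$.

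For each of the three terms $K^{SL}, K^{LS}, K^{LL}$ in which at least one leg is long, I would substitute the uniform bound for that leg and use $\sum_u \prob(z \stackrel{\mnot}{\lrfill} u) = \E|B(\mnot)| \leq \E|B(r_j)|$ on any remaining short leg. Each such contribution is then at most $\frac{C\m}{V}\E|B(r_1)|\E|B(r_2)|$, and its square summed over the $V^2$ choices of $(z_1,z_2)$ contributes $C\m^2(\E|B(r_1)|)^2(\E|B(r_2)|)^2$, which matches the first term of the claimed bound.

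The genuinely delicate case is $K^{SS}$. Here I would use the standard path bound $\prob(z \stackrel{=s}{\lrfill} u) \leq p^s \m(\m-1)^{s-1}\p^s(z,u)$ together with condition~(2) of Theorem \ref{mainthmgeneral} to obtain $\prob(z \stackrel{\mnot}{\lrfill} u) \leq C\sum_{s=0}^{\mnot}\p^s(z, u)$. Rewriting the edge constraint as $\mathbf{1}_{u \sim u'} = \m\,\p^1(u, u')$ and using NBW reversibility puts $K^{SS}$ in the triangle form
$$K^{SS}(z_1, z_2) \leq C^2\m \sum_{s_1, s_2 \leq \mnot} \sum_{u, v} \p^{s_1}(z_1, u)\, \p^1(u, v)\, \p^{s_2}(v, z_2),$$
with the middle length $t_2 = 1$ fixed. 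Splitting by total length $s_1 + 1 + s_2$, Claim \ref{reallysmalltriangle} handles the $\leq 2$ piece and condition~(3) bounds the $\geq 3$ piece by $O(\alpham/\log V)$, giving the pointwise estimate $K^{SS}(z_1, z_2) \leq C\m\mathbf{1}_{z_1=z_2} + O(1) + O(\m\alpham/\log V)$.

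The main obstacle will be assembling $\sum_{z_1, z_2}(K^{SS})^2$ tightly enough to produce the $CV\m^2\mnot\alpham$ bound, since squaring the pointwise estimate blindly is too wasteful. The strategy is to instead square inside and carry out the $z_1, z_2$ sums first, using NBW reversibility and the convolution identity of Lemma \ref{lem-NBW-ext} to collapse $\sum_z \p^s(z, u)\p^{s'}(z, u')$ to $\p^{s+s'}(u, u')$ plus $O(1/\m)$ corrections. The resulting four-vertex NBW sum reduces by a loop computation to $V\,\p^t(0, 0)$ for some $t \leq 4\mnot + 2$, which is then summed against the combinatorial counts of $(s_1, s_2, s_1', s_2')$ using the consequence $\sum_{T \geq 3} T\,\p^T(0,0) = O(\alpham/\log V)$ of condition~(3). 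The diagonal contribution $O(V\m^2)$ and the close-pair contribution $O(V\m)$ (which is nonzero only on pairs $(z_1, z_2)$ lying within graph-distance $2$) are absorbed into the main term using $\m\mnot\alpham \geq 1$, which follows from $\alpham \geq 1/\m$.
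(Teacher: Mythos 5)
Your regrouping of the sum as $\sum_{z_1,z_2}K(z_1,z_2)^2$ and your treatment of every term containing a long leg are fine, and that half of the argument is essentially the paper's: one uniform bound from Lemma \ref{uniformconnbd} plus $\sum_u \prob(z\stackrel{\mnot}{\lrfill}u)=\E|B(\mnot)|\leq \E|B(r_j)|$ yields $K^{\cdot L},K^{L\cdot}\leq C\m\E|B(r_1)|\E|B(r_2)|/V$, whose squares sum to the first term of the claim. The problem is the piece you yourself flag as delicate, $\sum_{z_1,z_2}(K^{SS})^2$, where the sketch does not close with the tools you cite. First, Lemma \ref{lem-NBW-ext} only collapses convolutions in which one factor has length $1$; the identity you need, for $\sum_z \p^{s}(z,u)\p^{s'}(z,u')$ with general $s,s'\leq \mnot$, is not in the paper and must be proved (it holds with a geometric series of backtracking corrections $\sum_j (\m-1)^{-j}\p^{s+s'-2j}$, but that is a genuine extra lemma). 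Second, condition (3) controls triple convolutions with each $t_i\leq\mnot$; turning it into a bound on $\sum_T T\,\p^T(0,0)$ requires a path-splitting argument and, more seriously, only reaches $T\leq 3\mnot$, whereas your loop lengths run up to $4\mnot+2$, so the tail needs a separate mixing-time estimate. Third, the absorption step is arithmetically wrong: $V\m^2\leq CV\m^2\mnot\alpham$ requires $\mnot\alpham\geq c$, which does not follow from $\alpham\geq 1/\m$. (That $O(V\m^2)$ diagonal is in fact an artifact: with the middle length fixed at $t_2=1$ the total length in Claim \ref{reallysmalltriangle} is at least $1$, so the $\m\indicwo{\{z_1=z_2\}}$ term never arises and the true local contribution is only $O(V\m)$, which does absorb — but as written your accounting is off.)

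The paper avoids the four-fold convolution entirely. For the all-short sum it uses transitivity to fix one endpoint, writes the sum as $V\sum_{u_2'} f(u_2')g(u_2')$ with $f(u_2')=\sum_{u_2\sim u_2'}\prob(0\stackrel{\mnot}{\lrfill}u_2)$ and $g$ a triple (triangle) sum, and bounds it by $V\big(\sum_{u_2'}f(u_2')\big)\max_{u_2'}g(u_2')= V\m\,\E|B(\mnot)|\cdot\max_x(\text{triangle})$. The triangle is exactly what Lemma \ref{lem-NBW-ext}, Claim \ref{reallysmalltriangle} and condition (3) are designed to control, giving $O(\alpham)$, and $\E|B(\mnot)|=O(\mnot)$ finishes. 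I would recommend replacing your square-diagram computation by this H\"older-type reduction: it sidesteps all three gaps above. If you insist on the direct route, you must supply the general NBW convolution identity, treat loop lengths in $(3\mnot,4\mnot+2]$ via the mixing-time bound, and redo the diagonal bookkeeping.
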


\begin{figure}
\begin{center}
\includegraphics{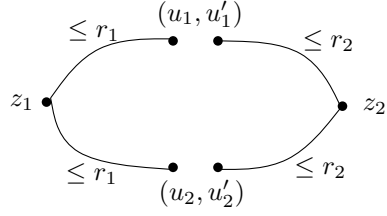}
\caption{A square diagram.}
\label{fig.square}
\end{center}
\end{figure}

\begin{proof} See Figure \ref{fig.square}. If one of the connections is of
length at least $\mnot$, then we use Lemma \ref{uniformconnbd} and the summation simplifies.
For instance, if $u_1 \stackrel{[\mnot,r_1]}{\lrfill} z_1$, then we use Lemma \ref{uniformconnbd} and
sum over $z_1$, followed by sums over $u_1$ and $u_2$. This gives a bound of
    $$
    { C \m^2(\E|B(r_1)|)^2 \over V}
    \sum_{u_1',u_2',z_2} \prob(z_2 \stackrel{r_2}{\lrfill} u_1') \prob(z_2 \stackrel{r_2}{\lrfill} u_2')
    \leq C\m^2[\E |B(r_1)|^2\E|B(r_2)|]^2\, ,
    $$
where $C>1$ is an upper bound on $1+O(\alpham+\eps \mnot)$.

We are left to bound the sum
    \eqan{
    \lbeq{square-diagram}
    &\sum_{(u_1,u_1'), (u_2,u_2'), z_1,z_2} \prob(z_1 \stackrel{\mnot}{\lrfill} u_1)
    \prob(z_1 \stackrel{\mnot}{\lrfill} u_2) \prob(z_2 \stackrel{\mnot}{\lrfill} u_1')
    \prob(z_2 \stackrel{\mnot}{\lrfill} u_2')\\
    &\qquad =V \sum_{(u_1,u_1'), (u_2,u_2'),z_2} \prob(0 \stackrel{\mnot}{\lrfill} u_1)
    \prob(0 \stackrel{\mnot}{\lrfill} u_2) \prob(z_2 \stackrel{\mnot}{\lrfill} u_1')
    \prob(z_2 \stackrel{\mnot}{\lrfill} u_2')\, ,\nn
    }
by transitivity. We write this sum as $V \sum_{u_2'} f(u_2') g(u_2')$,
where
    \eqn{
    g(u_2')=\sum_{(u_1,u_1'),z_2} \prob(0 \stackrel{\mnot}{\lrfill} u_1)
    \prob(z_2 \stackrel{\mnot}{\lrfill} u_1')
    \prob(z_2 \stackrel{\mnot}{\lrfill} u_2'),
    \qquad
    f(u_2')=\sum_{u_2\sim u_2'} \prob(0 \stackrel{\mnot}{\lrfill} u_2).
    }
We then bound
    \eqan{
    V \sum_{u_2'} f(u_2') g(u_2')
    &\leq V \big(\sum_{u_2'} f(u_2')\big)\big(\max_{u_2'} g(u_2')\big)\\
    &=V \m \expec|B(\mnot)|
    \max_x \sum_{(u_1,u_1'),z_2} \prob(0 \stackrel{\mnot}{\lrfill} u_1)
    \prob(z_2 \stackrel{\mnot}{\lrfill} u_1')
    \prob(z_2 \stackrel{\mnot}{\lrfill} x).\nn
    }
By condition (2) in Theorem \ref{mainthmgeneral},
we can write the above as
    \eqan{
    V \sum_{u_2'} f(u_2') g(u_2')
    &\leq CV\m^2
    \expec|B(\mnot)|
    \max_x \sum_{u_1,u_1',z_2} \sum_{t_1, t_2, t_3\geq 0}^{\mnot}
    \p^{t_1}(0,u_1)\p^1(u_1,u_1')
    \p^{t_2}(u_1',z_2)
    \p^{t_3}(z_2,x)\\
    &\leq CV\m^2\expec|B(\mnot)|
    \max_x \sum_{u_1,z_2} \sum_{t_1, t_2, t_3\geq 0}^{\mnot}
    \p^{t_1}(0,u_1)[\p^{t_2+1}(u_1,z_2)+\frac{1}{\m-1}\p^{t_2-1}(u_1,z_2)]
    \p^{t_3}(z_2,x)\nn\\
    &\leq CV\m^2 \expec|B(\mnot)| (\alpham+O(1/\m))
    \leq CV\m^2\expec|B(\mnot)|\alpham,\nn
    }
where we use Lemma \ref{lem-NBW-ext} in the second inequality, and
Claim \ref{reallysmalltriangle}, condition (2) in Theorem \ref{mainthmgeneral}
and $\alpham\geq 1/\m$ in the final inequality.
Further, $\expec|B(\mnot)|=O(\mnot)$ by Corollary \ref{corrlength}
and the fact that $\mnot=o(\vep^{-1})$.
%
This concludes our proof.
\end{proof}

\section{Volume estimates}
\label{sec-exp-vol-est}
In this section, we study the expected volume of intrinsic balls and their boundaries at various radii in both the critical and supercritical phase.
\subsection{In the critical regime}
\label{sec-volcrit} Given a subset of vertices $A$ and integer $r\geq 0$ we write
    $$
    G(r; A)=\E|\partial B(r;A)|,
    \qquad
    G(r)=\max_{A \subseteq V(G)} G(r; A) \, .
    $$

\begin{theorem}[Expected boundary size] \label{bdry}Let $G$ be a graph satisfying the assumptions of Theorem \ref{mainthmgeneral} and consider percolation on it with $p= p_c$. Then there exists a constant $C>0$ such that for any integer $r$,
    $$
    G(r) \leq C \, .
    $$
\end{theorem}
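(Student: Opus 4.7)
I would prove $G(r) \leq C$ by induction on $r$, combining the two estimates from Theorem~\ref{KozmaNachmias} ($\Gamma(r) \leq C/r$ and $\E|B(r;A)| \leq Cr$) with the uniform connection bounds of Section~\ref{sec-unif-bds-conn}. The base case $r=O(1)$ is immediate from condition~(2) of Theorem~\ref{mainthmgeneral}: trivially $G(r;A) \leq (p_c(m-1))^r \cdot m/(m-1) = O(1)$ for bounded $r$.

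The first step is a Cauchy--Schwarz reduction. Since $|\partial B(r;A)|$ vanishes on $\{\partial B(r;A)=\emptyset\}$,
\[
G(r;A)^2 \;\leq\; \poffa(\partial B(r)\neq\emptyset)\cdot \E_A|\partial B(r)|^2 \;\leq\; \frac{C}{r}\,\E_A|\partial B(r)|^2
\]
by Theorem~\ref{KozmaNachmias}(2), so it suffices to prove the second-moment estimate $\E_A|\partial B(r)|^2 \leq C'r$ uniformly in $A$.

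For the second moment, I would expand $\E_A|\partial B(r)|^2 = \sum_{x,y} \poffa(0 \stackrel{=r}{\lrfill} x,\;0 \stackrel{=r}{\lrfill} y)$ and decompose each joint event at the last common vertex $z$ (at some level $t \leq r$) on the lex-first pair of shortest paths from $0$. A layered conditioning argument iterating the BK-Reimer/off-set framework of Lemma~\ref{goodbkreimer} -- first conditioning on $B_0(t) = S_0$ to isolate the shared initial segment of length $t$, then on $B_z(r-t;S_0) = S_1$ to isolate one of the two length-$(r-t)$ diverging branches, and finally bounding the remaining branch off $S_0 \cup S_1$ -- should yield
\[
\E_A|\partial B(r)|^2 \;\leq\; \sum_{z,\,t\leq r} \poffa(0 \stackrel{=t}{\lrfill} z) \Bigl(\sum_x \max_{A'} \prob(z \stackrel{=r-t}{\lrfill} x \off A')\Bigr)^2.
\]
Each inner sum is $O(1)$: for $r-t \geq \mnot$, Lemma~\ref{unifconnbdoff} gives that the per-$x$ probability is bounded by $(C/V)\,G(r-t-\mnot)$, so the sum is $\leq C\,G(r-t-\mnot) = O(1)$ by the inductive hypothesis; for $r-t < \mnot$, the non-backtracking walk count bounds the sum by $(p_c(m-1))^{r-t}\cdot m/(m-1) = O(1)$ via condition~(2). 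Plugging back yields
\[
\E_A|\partial B(r)|^2 \;\leq\; C\sum_{t\leq r} G(t;A) \;=\; C\,\E_A|B(r;A)| \;\leq\; C' r
\]
by Theorem~\ref{KozmaNachmias}(1), closing the induction.

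The main obstacle is justifying this layered BK-Reimer decomposition. The three constituent events (the shared path of length $t$ and the two diverging branches of length $r-t$) all rely on ``closed-edge'' witnesses -- needed to certify that no shorter paths exist -- and so cannot be handled by a single application of BK-Reimer. The iterated off-set conditioning, which is precisely the structural content of Lemma~\ref{goodbkreimer}, is the standard way around this, and the resulting ``$\max_{A'}$'' factors are exactly what Lemma~\ref{unifconnbdoff} (the off-set version of the uniform connection bound) is designed to control.
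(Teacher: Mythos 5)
The fatal problem is that your induction does not close: the constant degrades by a fixed multiplicative factor at every scale. Unwind what the argument actually proves. Write $C_{\rm ind}=\sup_{s<r}G(s)$ for the inductive constant. Your inner sums are bounded by $C_1 C_{\rm ind}$ with $C_1\geq 2$ (Lemma \ref{unifconnbdoff} carries a factor $2+o(1)$), so the second-moment estimate reads $\E|\partial B(r;A)|^2\leq C_1^2C_{\rm ind}^2\,\E|B(r;A)|\leq C_1^2 C_{\rm ind}^2 C_B\, r$, where $C_B$ is the non-explicit constant of Theorem \ref{KozmaNachmias}(1). Feeding this into your Cauchy--Schwarz step with $\Gamma(r)\leq C_\Gamma/r$ gives $G(r)\leq C_1\sqrt{C_\Gamma C_B}\;C_{\rm ind}$, i.e.\ $G(r)\leq K\sup_{s<r}G(s)$ with $K=C_1\sqrt{C_\Gamma C_B}>1$. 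Iterating yields only $G(r)\leq K^{O(r)}$, which is useless. This is not a bookkeeping issue that sharper constants would cure: even in the idealized critical branching-process computation one has $\prob(Z_r>0)\sim 2/(\sigma^2 r)$ and $\E[Z_r^2]\sim \sigma^2 r$, so Cauchy--Schwarz recovers only $(\E Z_r)^2\leq 2$ rather than $1$. The survival-probability/second-moment route to the first moment intrinsically loses a factor of at least $\sqrt{2}$, and you pay that factor once per scale.

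The paper avoids this by arranging for the dangerous quantity to multiply something with a free a priori bound rather than itself. It proves, at the maximizing scale $r$ and maximizing set $A$, the recursion $F(r+s;A)\geq c\,G^*(r)F(s;A)$ where $F(s;A)=\E|B(s;A)|$ and $G^*(r)=\max_{s\leq r}G(s)$: the dominant term of the expansion is exactly $G^*(r)F(s;A)$ \emph{because} $r$ and $A$ are maximizers, and the triangle condition (with $\lambda=1/10$ chosen small) controls the error terms. If $G^*(r)\geq 2/c$, iterating in $s$ forces $F(kr;A)\geq 2^{k-1}$, contradicting the trivial bound $F\leq V$; no constant is ever iterated. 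A secondary issue with your route: the decomposition with \emph{two} exact-distance branches needs the full Kozma--Nachmias-type argument, since both branches require closed-edge witnesses; if instead one branch is relaxed to a $\leq(r-t)$ connection as in Lemma \ref{goodbkreimer}, that branch contributes $\E|B(r-t)|=O(r)$ and you get $\E|\partial B(r;A)|^2=O(r^2)$ rather than $O(r)$. But this is moot given the non-closing induction.
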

\begin{proof} Define $F(r) = \E |B(r)|$ and $F(r; A) = \E|B(r;A)|$, so that
$F(r; A) \leq F(r)$ for all subsets $A$. Define $G^*(r)=\max _{s \leq r} G(s),$ and
let $r\geq 2m_0$ be a maximizer of $G^*$, that is, $r$ is such that  $G(r') \leq G(r)$
for any $r'< r$. Let $A=A(r)$ be the subset of vertices which maximizes
$G(r; A)$ so that $G(r; A)=G(r)=G^*(r)$. Given such $r$ and $A=A(r)$ we will prove that
there exists $c>0$ such that for any integer $s \geq 0$,
    \be
    \label{bdry.recurse}
    F(r + s; A) \geq c G^*(r) F(s; A) \, .
    \ee
We begin by bounding
    $$
    F(r+s; A) \geq \sum_{v} \poffa (0 \stackrel{(r,r+s]}{\lrfill} v) \, .
    $$
For a vertex $u$ we define $\C^u(0; A) = \{x : 0 \lr x \off A \cup \{u\}\}$. Now, for any
vertex $v$, if there exists $u\neq v$ such that $0 \stackrel{=r}{\lrfill} u \off A$ and
$u \lrs v \off \C^u(0; A)$, then $0 \stackrel{(r,r+s]}{\lrfill} v \off A$. Furthermore,
if such $u$ exists, then it is unique because otherwise $v\in \C^u(0; A)$. We deduce that
    $$
    F(r+s; A) \geq \sum_{v\neq u} \poffa(0 \stackrel{=r}{\lrfill} u \and \{u \lrs v \off \C^u(0; A)\}) \, .
    $$
We now condition on $\C^u(0; A)=H$ for some admissible $H$ (that is, for which
the probability of the event $\C^u(0;A)=H$ is positive, and in which $0 \stackrel{=r}{\lrfill} u$ occurs). In this conditioning, we also condition on the status of all edges touching $H$. Note that by definition
$A \cap H = \emptyset$. We can write the right hand side of the last inequality as
    $$
    \sum_{v\neq u} \sum_{H \colon 0 \stackrel{=r}{\lrfill}{u} \off A}
    \poffa (\C^u(0; A)=H) \poffa(u \lrs v \off H) \, ,
    $$
in the same way as we derived (\ref{offdescription}). This can be rewritten as
    $$
    \sum_{v \neq u} \sum_{H \colon 0 \stackrel{=r}{\lr}{u} \off A} \poffa(\C^u(0; A)=H)
    \big [ \poffa(u \lrs v) - \poffa(u \lrs v \onlyon H) \big ] \, .
    $$
We open the parenthesis and have that the first part of this sum equals precisely $G^*(r) F(s; A)$ since $r$ and $A$ were  maximizers. We need to show
that the second part of the sum is of lower order. To that aim, if $u \lrs v \onlyon H$,
then there exists $h\in H$ such that $h \neq u$ and $\{u \lrs h\}\circ\{h \lrs v\}$. By the BK inequality,
we bound the second part of the sum above by
    $$
    \sum_{u} \sum_{H \colon 0 \stackrel{=r}{\lrfill}{u} \off A}
    \sum_{h\in H, h \neq u, v} \poffa(\C^u(0; A)=H) \poffa(u \lrs h) \poffa(h \lrs v) \, .
    $$
Summing over $v$ and changing the order of the summation gives that the last sum is at most
    $$
    F(s; A) \sum_{u \neq h}  \poffa(0 \stackrel{=r}{\lr}{u}, 0 \lr h) \poffa(u \lrs h) \, .
    $$
We bound this from above using Lemma \ref{goodbkreimer} by
    \be\label{bdry.midstep}
    F(s; A) \sum_{u \neq h,z} \sum_{t\leq r} \poffa(0 \stackrel{=t}{\lrfill} z)
    \max_{D \subseteq V(G)} \prob(z \stackrel{=r-t}{\lrfill} u \off A\cup D) \poffa(z \lr h) \poffa(u \lrs h) \, .
    \ee
We sum this separately for $t \leq r-\mnot$ and $t \in [r-\mnot,r]$. For $t \leq r-\mnot$,
we bound for any $D \subset V$
    $$
    \prob(z \stackrel{=r-t}{\lrfill} u \off A\cup D) \leq {3G(r-t-\mnot;A \cup D) \over V}
    \leq {3 G^*(r) \over V} \, ,
    $$
where the first inequality is by Lemma \ref{unifconnbdoff} and the second
by definition of $G^*(r)$. Hence, the sum over $t \leq r-\mnot$ in (\ref{bdry.midstep}) is at most
    $$ {3G^*(r)F(s; A) \over V} \sum_{u,h,z}  \poffa(0 \lrr z) \poffa(z \lr h) \poffa(h \lrs u) \nonumber\\
    \leq {3G^*(r)F(s; A) (\E|\C(0)|)^3 \over V} = 3\lambda^3 G^*(r)F(s; A) \, ,
    $$
where the inequality we got by summing over $u, h$ and $z$ (in that order) and the equality is due to the definition of $p_c$ in (\ref{pcdef}). Our $\lambda=1/10$ is chosen small enough so that $3 \lambda^3 \leq 1/2$.

We now bound the sum in (\ref{bdry.midstep}) for $t \in [r-\mnot,r]$. We first bound
    $$ \poffa(0 \stackrel{=t}{\lrfill} z) \leq {3 G^*(r) \over V} \, ,$$
as we did before using Lemma \ref{unifconnbdoff} and pull that term out of the sum. This gives an upper bound of
    $$ {3G^*(r)F(s; A) \over V} \sum_{u\neq h,z} \sum_{s_1=0}^{\mnot} \max_{D \subseteq V(G)} \prob(z \stackrel{=s_1}{\lrfill} u \off A\cup D) \poffa(z \lr h) \poffa(u \lrs h) \, .$$
We would like to sum the first term in the sum over $s_1$ and get a contribution of $\prob(z \stackrel{\mnot}{\lrfill} u)$. We cannot do that however, because the maximizing set $D$ may depend on $s_1$ so these are not necessarily disjoint events. Instead we bound for all $D \subseteq V(G)$
    $$ \prob(z \stackrel{=s_1}{\lrfill} u \off A\cup D) \leq m(m-1)^{s_1} p_c^{s_1} \p^{s_1}(z,u) \leq (1+o(1)) \p^{s_1}(z,u) \, \,$$
where the first inequality is since $m(m-1)^{s_1}\p^{s_1}(z,u)$ bounds the number of simple paths of length $s_1$ connecting $z$ to $u$ and the second inequality is due to condition (2) of Theorem \ref{mainthmgeneral}. Now, if one of the connections $z \lr h$ or $u \lrs h$ is in fact a connection of length at least $\mnot$ we use Lemma \ref{uniformconnbd} to simplify the sum. For instance, if the connection is $z \stackrel{[\mnot,\infty)}{\lrfill} h$, then we bound the probability of this by $2V^{-1}\E|\C(0)|$ and the sum simplifies to
    $$ {4 G^*(r)F(s; A) \E|\C(0)| \over V^2} \sum_{u\neq h,z} \sum_{s_1=0}^{\mnot} \p^{s_1}(z,u) \prob(u \lrs h) \, ,$$
we then sum over $h,u, z$ and $s_1$ and get a contribution of
    $$ {4 G^*(r)F(s; A) \mnot (\E|\C(0)|)^2 \over V} = o(G^*(r)F(s; A)) \, ,$$
since $(\E|\C(0)|)^2 = O( V^{2/3})$ and $\mnot = o(\eps^{-1}) = o(V^{1/3})$. Thus, it remains to bound
    $$ {3G^*(r)F(s; A) \over V} \sum_{u\neq h,z} \sum_{s_1=0}^{\mnot} \p^{s_1}(z,u) \poffa(z \stackrel{\mnot}{\lrfill} h) \poffa(u \stackrel{\mnot}{\lrfill} h) \, .$$
We bound this by
    $$ {CG^*(r)F(s; A) \over V} \sum_{u\neq h,z} \sum_{s_1=0,s_2=1,s_3=0}^{\mnot} \p^{s_1}(z,u) \p^{s_2}(u,h) \p^{s_3}(h,z) = o(1) \cdot G^*(r)F(s; A) \, ,$$
where we used Claim \ref{reallysmalltriangle} and condition (3) of Theorem \ref{mainthmgeneral}. This concludes the proof of (\ref{bdry.recurse}).


We now turn to prove the main result assuming (\ref{bdry.recurse}).
First, for any $r \leq 2\mnot$ we have that the number of non-backtracking
paths emanating from $0$ is at most $\m(\m-1)^{r-1}$ and hence, for any $A$,
    $$
    G(r;A) \leq \m(\m-1)^{r-1} p_c^r = 1+o(1) \, ,
    $$
by condition (2) of Theorem \ref{mainthmgeneral}. It remains to consider the case $r \geq 2\mnot$.
Assume by contradiction that there exists some $r \geq 2\mnot$ such that $r$ is
the maximizer in the definition $G^*(r)$ and that $G^*(r) \geq 2/c$ where $c$ is
the constant from (\ref{bdry.recurse}). Fix such $r$ and let $A=A(r)$ be the
maximizing set as in (\ref{bdry.recurse}). Now, putting $s=r$ in (\ref{bdry.recurse}) gives
    $$
    F(2r;A) \geq c G^*(r) F(r;A) \geq 2 F(r;A) \, .
    $$
Putting $s=2r$ in (\ref{bdry.recurse}) gives
    $$
    F(3r; A) \geq c G^*(r) F(2r;A) \geq 4 F(r;A) \, ,
    $$
and so, by induction, for any $k$,
    $$ F(kr; A) \geq 2^{k-1} F(r; A) \, .
    $$
We have reached a contradiction, since on the right hand side we have a quantity
going to $\infty$ in $k$ (note that $A$ cannot contain $0$, otherwise it will not be maximizing,
so $F(r; A)\geq 1$) and on the left hand side our quantity is bounded by $V$.
\end{proof}

We now wish to obtain the reverse inequality to Theorem \ref{bdry}, that is,
a lower bound to $\E |\partial B(r)|$. Of course, this cannot hold for all $r$
but it turns out to hold as long as $r \ll V^{1/3}$. This is the correct upper bound on $r$ because the diameter of critical
clusters is of order $V^{1/3}$ (see \cite{NP3}).

\begin{lemma}[Lower bound on critical expected ball]
\label{lowercritball} Let $G$ be any transitive finite graph and put $p=p_c$
where $p_c$ is defined in
\eqref{pcdefgeneral}. Then, there exists a constant $\xi>0$ such
that for all $r \leq \xi V^{1/3}$,
    $$
    \E |B(r)| \geq r/4 \, .
    $$
\end{lemma}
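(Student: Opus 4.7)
The plan is to argue by contradiction: I will assume $\E|B(r)|<r/4$ for some $r\leq \xi V^{1/3}$ and derive $\E|\C(0)|<r/3$, which contradicts $\E_{p_c}|\C(0)|=\lambda V^{1/3}$ once $\xi$ is chosen small enough.

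The main technical step will be establishing the sub-additive inequality
\begin{equation*}
\E|B(s+t)| \leq \E|B(s)| + \E|B(t)|\cdot \E|\partial B(s)|,\qquad s\geq 1,\ t\geq 0.
\end{equation*}
To derive this, I would first argue combinatorially: for any $v\in B(s+t)\setminus B(s)$, pick a shortest open path $\eta$ from $0$ to $v$; then $u:=\eta(s)$ lies in $\partial B(s)$, and the tail $\eta[s,|\eta|]$ is an open path of length $\leq t$ from $u$ to $v$ that avoids $B(s-1)$, since any intermediate vertex at distance less than $s$ from $0$ would create a shortcut contradicting the length of $\eta$. This yields $|B(s+t)\setminus B(s)|\leq \sum_{u\in\partial B(s)}|B_u(t;B(s-1))|$. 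Next I would condition on $B(s-1)=A$: conditionally, $\partial B(s)$ is measurable with respect to the edges between $\partial A$ and $V\setminus A$, while each $|B_u(t;A)|$ for $u\notin A$ is measurable with respect to the edges inside $V\setminus A$. These two edge sets are disjoint, hence conditionally independent, which allows factoring the expectation. A monotone coupling (closing edges only shrinks intrinsic balls) together with transitivity then gives $\E|B_u(t;A)|\leq \E|B(t)|$, completing the derivation.

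Granting the inequality, the contradiction comes out in a few lines. Since $\E|B(r)|<r/4$ and $\E|\partial B(0)|=1$, pigeonhole applied to $\E|B(r)|=\sum_{s=0}^{r}\E|\partial B(s)|$ forces some $s^{*}\in\{1,\ldots,r\}$ with $\E|\partial B(s^{*})|<1/4$. Setting $a_{j}:=\E|B(js^{*})|$, the inequality reads $a_{j+1}\leq a_{1}+a_{j}/4$, and a straightforward induction yields $a_{j}\leq \tfrac{4}{3}a_{1}$ uniformly in $j$. Monotonicity of $r\mapsto \E|B(r)|$ then gives
\begin{equation*}
\E|\C(0)|=\sup_{j}a_{j}\leq \tfrac{4}{3}\E|B(s^{*})|\leq \tfrac{4}{3}\E|B(r)|<r/3,
\end{equation*}
and comparing with $\E|\C(0)|=\lambda V^{1/3}$ forces $r>3\lambda V^{1/3}$. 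Choosing $\xi=\lambda$ (or any constant below $3\lambda$) closes the argument; for $r\leq 4$ the bound is trivial since $\E|B(r)|\geq 1$.

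The main obstacle is the sub-additive inequality, and in particular the two somewhat delicate pieces inside it: the shortest-path surgery that confines the excess ball to the exterior of $B(s-1)$, and the conditioning/independence step that disentangles $\partial B(s)$ from the outward balls. The rest is deterministic bookkeeping. It is worth noting that this plan uses neither the finite triangle condition nor any special structure beyond transitivity of $G$ and the defining equation \eqref{pcdefgeneral}, which matches the stated generality of the lemma.
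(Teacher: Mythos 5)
Your proof is correct and follows essentially the same strategy as the paper's: argue by contradiction, use pigeonhole to find a level $s^*$ with $\E|\partial B(s^*)|<1/4$, condition on the ball up to that level to bound the expected forward growth by $\E|\partial B(s^*)|\cdot\E|B(t)|$, and iterate to conclude $\E|\C(0)|=O(r)$, contradicting $\E_{p_c}|\C(0)|=\lambda V^{1/3}$. The only substantive difference is organizational: the paper conditions on the full ball $B(r')$ (so that $\partial B(r')$ is determined by the conditioning) and runs a geometric-decay induction over annuli, whereas you condition at level $s-1$ and supply the extra conditional-independence step between $\partial B(s)$ and the outward balls, then iterate a linear recursion at multiples of $s^*$ — both are valid and yield the same conclusion.
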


\begin{proof}
For convenience write $c=1/4$. Assume by contradiction that $\E |B(r)|\leq cr$.
Given this assumption, we will prove by
induction that for any integer $k \geq 0$,
    \be
    \label{indhyp}
    \E\big |B\big ([r(1 + k/2), r(1 + (k+1)/2)] \big ) \big | \leq 2^{k+1} c^{k+2} r \,
    .
    \ee
Let us begin with the case $k=0$. Since $\E |B(r)|\leq cr$ there exists $r'\in[r/2,r]$
such that $\E |\partial B(r')| \leq 2c$. Hence,
    \begin{eqnarray*}
    \E |B([r,3r/2])| &=& \sum_{A}\prob(B(r')=A)\E [ |B([r,3r/2])| \, \mid \, B(r')=A ] \\
    &=&  \sum_{A}\prob(B(r')=A)\E \big [ \sum_{a \in \partial A} \E |B_a(3r/2-r';A)| \big ]\\
    &\leq& cr \E |\partial B(r')| \leq 2c^2 r \,
    ,\end{eqnarray*}
where the inequality follows since $\E |B_a(3r/2-r';A)| \leq \E|B(r)|\leq cr$ by monotonicity
and for any $A$. Assume now that (\ref{indhyp}) holds for some $k$ and we prove it for
$k+1$. Since it holds for $k$,
we have that there exists $r' \in  [r(1 + k/2), r(1 + (k+1)/2)]$ such that
$\E |\partial B(r')| \leq 2^{k+2}c^{k+2}$. By conditioning on $B(r')=A$ as before we get that
    $$
    \E\big |B\big ([r(1 + (k+1)/2), r(1 + (k+2)/2)] \big ) \big | \leq cr \cdot  2^{k+2}c^{k+2} \, ,
    $$
concluding the proof (\ref{indhyp}). Now, since $c<1/2$ it is clear that the
sum over $k$ of (\ref{indhyp}) is at most $Cr$, contradicting the fact that
$\E_{p_c}|\C(0)|=\lambda V^{1/3}$ by our definition of $p_c$ in (\ref{pcdefgeneral}).
Note that the constant $\xi$ may depend on $\lambda$.
\end{proof}

\begin{lemma}[Lower bound on expected boundary size]
\label{lowercritbdry} Let $G$ be a transitive finite graph for which (\ref{finitetriangle}) holds
 and put $p=p_c$. Then there exists constants $c,\xi>0$ such
that for any $r \leq \xi V^{1/3}$,
    $$
    \E |\partial B(r)| \geq c \, .
    $$
\end{lemma}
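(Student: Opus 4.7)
The plan is to derive $\E|\partial B(r)| \geq c$ by combining the linear lower bound on $\E|B(r)|$ from Lemma \ref{lowercritball} with a concatenation-type inequality in the spirit of the argument used in the proof of Theorem \ref{bdry}. I would start from the identity $\E|\partial B(r)| = \E|B(r)| - \E|B(r-1)|$ and note immediately that the lower bound $\E|B(r)| \geq r/4$ is not by itself enough, since the upper bound $\E|B(r-1)| \leq C(r-1)$ from Theorem \ref{KozmaNachmias} can be much larger than $r/4$ when $C > 1/4$. Instead, the increment must be averaged over a range of scales.

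The core step would be to establish a concatenation estimate of the form
\begin{equation}\label{eq:concat-plan}
\E|B(r+s)| - \E|B(r)| \geq c_1\, \E|\partial B(r)| \cdot s - (\text{triangle error}),
\end{equation}
valid when $r+s \leq \xi V^{1/3}$. The derivation would proceed by conditioning on $B(r)=A$ with $\partial A \neq \emptyset$; for each $v\in\partial A$ the intrinsic ball $B_v(s;A\setminus\partial A)$ off $A\setminus\partial A$ consists of vertices at intrinsic distance at most $r+s$ from $0$, and those outside $A$ automatically land in $B(r+s)\setminus B(r)$. Summing over $v\in\partial A$, taking expectation, and subtracting an overlap term bounded by a two-point function controlled by \eqref{finitetriangle}, together with a uniform-in-$A$ lower bound on the expected intrinsic ball off $A\setminus \partial A$ (an adaptation of Lemma \ref{lowercritball}), would yield \eqref{eq:concat-plan}.

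With \eqref{eq:concat-plan} in hand, I would pick $s=Kr$ with $K$ a large constant and shrink $\xi$ so that $(K+1)r\leq \xi V^{1/3}$ still holds. Then Lemma \ref{lowercritball} at radius $(K+1)r$ gives $\E|B((K+1)r)|\geq (K+1)r/4$, and Theorem \ref{KozmaNachmias} at radius $r$ gives $\E|B(r)|\leq Cr$. Plugging into \eqref{eq:concat-plan},
$$\tfrac{(K+1)r}{4}-Cr \leq c_1\, \E|\partial B(r)|\cdot Kr + \text{error},$$
so that, for $K > 4C$ and $r$ large enough that the error is negligible, $\E|\partial B(r)|$ is bounded below by a positive constant; the finitely many small $r$ are handled by the fact that $\E|\partial B(r)| \geq p^r m(m-1)^{r-1}$ via a single simple open path, which is bounded below by a positive constant under the triangle condition (which forces $p_c(\m-1)\geq 1-o(1)$ through $\chi(p_c) = \lambda V^{1/3} \to \infty$).

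The main obstacle is proving the concatenation inequality \eqref{eq:concat-plan} with sharp enough constants, which involves: (i) showing that the ``off $A\setminus\partial A$'' conditioning does not reduce the expected intrinsic ball size by more than a constant factor uniformly in $A$, in the same iterative style used for the upper bound in Theorem \ref{bdry}; and (ii) bounding the expected overlap $\E\sum_{v_1\neq v_2\in\partial A}|B_{v_1}(s;A\setminus\partial A)\cap B_{v_2}(s;A\setminus\partial A)|$ by a triangle-type sum controlled by \eqref{finitetriangle}. Both are in the same spirit as arguments elsewhere in Section \ref{sec-volcrit}, but must be executed uniformly in the conditioning set $A$.
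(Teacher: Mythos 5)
Your central displayed inequality is stated in the wrong direction for the conclusion you want, and in the direction you state it, the key ingredient is false. If the increment $\E|B(r+s)|-\E|B(r)|$ is bounded \emph{below} by $c_1 s\,\E|\partial B(r)|$ minus an error, then knowing that the increment is large (which is what Lemma \ref{lowercritball} plus Theorem \ref{KozmaNachmias} supply) gives no information about $\E|\partial B(r)|$: two lower bounds on the same quantity cannot be chained into your final display $\frac{(K+1)r}{4}-Cr\leq c_1\,\E|\partial B(r)|\,Kr+\mathrm{err}$, which silently uses the \emph{reverse} inequality. Worse, ingredient (i) needed for your ``$\geq$'' version --- a lower bound on $\E|B_v(s;A\setminus\partial A)|$ of order $s$ \emph{uniformly in the conditioning set} $A$ --- is simply not true: if $A$ contains all neighbours of $v$, the off-set ball of $v$ is a single point. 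Turning such a statement into one valid for \emph{most} conditioning sets is exactly the content of the intrinsic regularity theorem (Theorem \ref{mostsetsaregood}), one of the most delicate parts of the paper, and is not available with the tools of Section \ref{sec-volcrit}. The small-$r$ patch at the end is also shaky ($\sum_y\prob(d_{G_p}(0,y)=r)$ is not bounded below by $p^r$ times the number of non-backtracking paths, since distinct paths may share endpoints and shorter connections must be excluded), but it is an artifact of the wrong-direction formulation and is not needed.

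The fix is to flip the inequality: conditioning on $B(r)=A$ and decomposing $B(r+s)\setminus B(r)\subseteq\bigcup_{v\in\partial A}B_v(s;\cdot)$ gives $\E|B(r+s)|-\E|B(r)|\leq \E|\partial B(r)|\cdot\max_{A}\E|B(s;A)|\leq Cs\,\E|\partial B(r)|$ by monotonicity of off-set balls and Theorem \ref{KozmaNachmias}; this is precisely the concatenation step already used (in the same direction) in the proof of Lemma \ref{lowercritball}, it requires no overlap or triangle error, and no uniform lower bound off sets. Dividing by $Cs$ with $s=Kr$, $K$ large, then yields $\E|\partial B(r)|\geq c$ for all $r\geq1$ with $(K+1)r\leq\xi V^{1/3}$. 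So corrected, your route does work and is genuinely different from --- and more elementary than --- the paper's, which instead runs a second-moment/Paley--Zygmund argument on $|B([2r,Cr])|$ to get $\prob(\partial B(2r)\neq\emptyset)\geq c/r$ and then invokes the one-arm bound $\Gamma(r)\leq C/r$ to conclude that $|\partial B(r)|\geq\zeta r$ with probability at least $c'/r$. As written, however, the proposal does not prove the lemma.
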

\begin{proof}
By Lemma \ref{lowercritball} and Theorem \ref{KozmaNachmias} we have that
$\E |B([2r,Cr])| \geq r$ for some large fixed $C>0$. Also,
    \begin{eqnarray*}
    \E |B(Cr)|^2 \leq \sum_{x,y} \prob(0 \stackrel{Cr}{\lrfill} x, 0 \stackrel{Cr}{\lrfill} y)
    \leq  \sum_{x,y,z} \prob(0 \stackrel{Cr}{\lrfill} z) \prob(z \stackrel{Cr}{\lrfill} x)
    \prob( z \stackrel{Cr}{\lrfill} y ) \leq Cr^3 \, ,
    \end{eqnarray*}
by Theorem \ref{KozmaNachmias}. By the inequality
    \eqn{
    \label{X>a-prob-bd}
    \prob(X > a)\geq (\E X-a)^2/\E X^2
    }
valid for any non-negative random variable $X$ and $a<\E X$, with $a=0$,
    $$
    \prob(\partial B(2r) \neq \emptyset) \geq c/r \, ,
    $$
for some $c>0$. Furthermore, given $B(r)$, each vertex of $\partial B(r)$
has probability at most $Cr^{-1}$ of reaching $\partial B(2r)$ by
Theorem \ref{KozmaNachmias}. Hence, for any $\zeta>0$,
    $$
    \prob(\partial B(2r) \neq \emptyset \and |\partial B(r)| \leq \zeta r) \leq C\zeta/r \, .
    $$
We now have
    \eqan{
    \prob(|\partial B(r)| \geq \zeta r ) &
    \geq \prob(\partial B(2r) \neq \emptyset) - \prob(\partial B(2r) \neq \emptyset \and |\partial B(r)| \leq \zeta r)\nn\\
    &\geq {c \over r} - {C \zeta \over r} \, ,\nn
    }
and the lemma follows by choosing $\zeta>0$ small enough.
\end{proof}

\subsection{In the supercritical regime}
\label{sec-volume-super}
In this section, we extend the volume estimates to
the supercritical regime. The following is an immediate corollary of Theorem \ref{bdry}:
\begin{lemma}[Upper bound on supercritical volume]
\label{upperball} Let $G$ be a graph satisfying the assumptions in Theorem \ref{mainthmgeneral} and consider percolation on it with $p\leq p_c(1+\eps)$. Then for any $r$ and any $A \subset V$ we have
    $$
    \E |\partial B(t; A)|\leq C (1+\eps)^t,
    \quad \and \quad \E |B(r; A)| \leq C\eps^{-1} (1+\eps)^r \, .
    $$
\end{lemma}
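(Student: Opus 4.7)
The plan is to deduce Lemma \ref{upperball} from Theorem \ref{bdry} by transferring the critical boundary-size bound to the supercritical regime via the path-counting estimate in Lemma \ref{corrlength1}, and then converting the boundary bound into a ball bound by summing a geometric series.

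First I would observe that Lemma \ref{corrlength1} extends verbatim to the ``off $A$'' setting. Its proof uses the simultaneous coupling and the lexicographically first $p_2$-open shortest path $\eta$ of length $t$ from the starting vertex; the ratio $(p_1/p_2)^t$ comes only from the conditional probability that each of the $t$ edges of $\eta$ has its uniform label below $p_1$. This argument makes no use of the global graph structure, and in particular applies to the graph $G$ with all edges touching $A$ declared closed. Thus for any vertex $v$, any subset $A$, and any $p \le p_c(1+\eps)$,
\[
\E_p |\partial B_v(t;A)| \;\le\; \bigl(p/p_c\bigr)^{t}\, \E_{p_c} |\partial B_v(t;A)|
\;\le\; (1+\eps)^{t}\, \E_{p_c} |\partial B_v(t;A)|.
\]

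Next I would invoke Theorem \ref{bdry}: under the hypotheses of Theorem \ref{mainthmgeneral}, there is a constant $C$ with
\[
G(t) \;=\; \max_{A \subseteq V(G)} \E_{p_c} |\partial B(t;A)| \;\le\; C
\]
for every integer $t \ge 0$. Combining this with the transfer inequality above immediately yields the first claim,
\[
\E_p |\partial B(t;A)| \;\le\; C (1+\eps)^t,
\]
uniformly in $A$ and in $p \le p_c(1+\eps)$.

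Finally, the ball bound follows by summing over heights. Using transitivity (so we may write $B(r;A)$ for a fixed origin) together with the decomposition $B(r;A) = \bigsqcup_{t=0}^{r} \partial B(t;A)$, linearity of expectation gives
\[
\E_p |B(r;A)| \;=\; \sum_{t=0}^{r} \E_p |\partial B(t;A)|
\;\le\; C \sum_{t=0}^{r} (1+\eps)^t
\;=\; C\,\frac{(1+\eps)^{r+1}-1}{\eps}
\;\le\; C'\,\eps^{-1}(1+\eps)^r,
\]
which is the second claim after absorbing constants. There is no real obstacle here: once one grants that the simultaneous-coupling proof of Lemma \ref{corrlength1} is insensitive to forbidding edges touching $A$, the whole lemma reduces to combining Theorem \ref{bdry} with a geometric sum.
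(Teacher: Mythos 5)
Your proposal is correct and follows exactly the paper's argument: the first bound is obtained by combining Theorem \ref{bdry} with the monotone-coupling comparison of Lemma \ref{corrlength1} (which, as you note, applies verbatim to the graph with edges touching $A$ closed, since it holds for any graph), and the second follows by summing the first over $t \le r$. The extra detail you supply about the off-$A$ extension is exactly the implicit step the paper relies on (compare Corollary \ref{corrlength}), so nothing is missing.
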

\begin{proof} The first assertion is immediate by Theorem \ref{bdry} and Lemma \ref{corrlength1}. The second assertion follows by summing the first over $t \leq r$.
\end{proof}

The corresponding lower bound is more complicated to obtain, and as before,
can only hold up to some value of $r$. In conjunction with Lemma \ref{upperball},
it identifies $\E |B(r)|=\Theta(\eps^{-1} (1+\eps)^r)$ for appropriate $r$'s.

\begin{theorem}[Lower bound on supercritical volume] \label{lowerball} Let $G$ be a graph satisfying the assumptions in Theorem \ref{mainthmgeneral} and consider percolation on it with $p = p_c(1+\eps)$. Then for any $r$ satisfying
    \be
    \label{ballcond}
    \E |B(r)| \leq {\eps^2 V \over (\log \eps^3 V)^4},
    \ee
the following bound holds:
    $$
    \E |B(r)| \geq c \eps^{-1} (1+\eps)^r \, .
    $$
\end{theorem}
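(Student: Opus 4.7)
The plan is to establish a multiplicative recursive lower bound for $P(r):=\E|\partial B(r)|$ and iterate it from a base case provided by the critical volume estimate Lemma \ref{lowercritball}. Conditioning on $B(r)=A$, the edges from $\partial A$ to $V\setminus A$ are independent of $\{B(r)=A\}$, and each $v\notin A$ lies in $\partial B(r+1)$ with probability $1-(1-p)^{|N(v)\cap\partial A|}$. Applying Bernoulli's inequality $1-(1-p)^k\geq pk-\binom{k}{2}p^2$ and summing over $v$, the linear-in-$p$ part equals $p[(m-1)|\partial A|-X_r(A)]$, where $X_r(A):=\sum_{u\in\partial A}(|N(u)\cap A|-1)\geq 0$ measures the excess edges from $\partial A$ back into $A$ beyond the mandatory parent edge present for each $u\in\partial A$. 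Averaging over $A$ yields
$$P(r+1)\geq p(m-1)P(r)-p\E X_r-p^2\E Y_r,$$
with $Y_r:=\sum_{v\notin B(r)}\binom{|N(v)\cap\partial B(r)|}{2}$.

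To control $\E X_r$, I would bound it by $\sum_{u\sim w}\prob(0\lrr u,\,0\lrr w)$ and use the BK-type Lemma \ref{goodbkreimer} to decompose this two-arm probability as a triangle diagram over a meeting vertex $z$. Replacing long-distance connection factors by $F(r)/V$ via Lemma \ref{uniformconnbd}, and controlling the residual short-range triangles by Corollary \ref{extendedtriangle} together with Lemma \ref{shorttriangle}, gives $p\E X_r=O(\m F(r)^2/V+\alpham P(r))$. Similarly, $\E Y_r$ reduces to a square-diagram sum bounded by Lemma \ref{squarediagram}, yielding $p^2\E Y_r$ of the same order. Combined with $p(m-1)=1+\eps+O(\alpham/\mnot)$ from condition (2) of Theorem \ref{mainthmgeneral}, and using the hypothesis \eqref{ballcond} to ensure $\m F(r)^2/V=o(\eps P(r))$, the recursion simplifies to $P(r+1)\geq(1+\eps-o(\eps))P(r)$; residual $O(\alpham)$ factors are absorbed by iterating over blocks of $\mnot$ consecutive levels so that the accumulated multiplicative correction becomes $(1+\eps)^{\mnot}(1-o(1))$.

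For the base case, Lemma \ref{lowercritball} combined with the monotonicity $B_p(r)\supseteq B_{p_c}(r)$ gives $F_p(r)\geq r/4$ for all $r\leq\xi V^{1/3}$. Since $\eps\gg V^{-1/3}$, I can take $r_0=\Theta(\eps^{-1})$ and obtain $F(r_0)\geq c_0\eps^{-1}$; a pigeonhole argument then provides some $r^*\in[r_0,2r_0]$ with $P(r^*)\geq c_1$. Iterating the recursion upward from $r^*$ yields $P(r)\geq c(1+\eps)^{r-r^*}$ throughout the regime \eqref{ballcond}, and summing $F(r)\geq\sum_{s\leq r}P(s)\geq c'\eps^{-1}(1+\eps)^r$ as claimed. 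The main obstacle will be the error control in the middle step: the factors $\m$ and $\alpham$ produced by short-range contributions to $\E X_r$ and $\E Y_r$ must be dominated by $\eps P(r)$, which requires both the polylogarithmic slack in \eqref{ballcond} and, in the small-$\eps$ regime where $\alpham/\mnot$ need not be $o(\eps)$, iteration in blocks of length $\mnot$ rather than single steps.
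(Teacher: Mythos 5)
Your proposal has a genuine gap at its core, and it is precisely the gap that forces the paper to take a completely different route. The one-step recursion $\E|\partial B(r+1)|\geq p(\m-1)\E|\partial B(r)|-p\,\E X_r-p^2\E Y_r$ is correctly derived, and your base case (Lemma \ref{lowercritball}/\ref{lowercritbdry} plus monotonicity) and your treatment of the \emph{long-range} part of $\E X_r$ (branch point far from $u$, controlled by \eqref{ballcond}) are both fine. The problem is the \emph{short-range} part of $\E X_r$, which is not $o(\eps\,\E|\partial B(r)|)$. On the hypercube, given $u\in\partial B(r)$ with parent $v=u+e_i$ and grandparent $a=u+e_i+e_j$, the vertex $w=u+e_j$ is a graph-neighbor of both $u$ and $a$, and lies in $B(r)$ whenever the edge $\{a,w\}$ is open; this four-cycle mechanism alone contributes $\Theta(1/\m)$ extra neighbors-in-ball per boundary vertex, so $\E X_r=\Theta(\E|\partial B(r)|/\m)$ and $p\,\E X_r=\Theta(\E|\partial B(r)|/\m^2)$. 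For the recursion to yield growth factor $1+\eps+o(\eps)$, this $\Theta(\m^{-2})$ loss must cancel against the $\Theta(\m^{-2})$ excess of $p_c(\m-1)$ over $1$ (cf.\ \eqref{pcest}: $p_c(\m-1)=1+\tfrac{7}{2}\m^{-2}+O(\m^{-3})$) to precision $o(\eps)$ — and in the hardest regime $\eps$ is as small as $V^{-1/3}=2^{-\m/3}\lll\m^{-2}$. Neither condition (2) of Theorem \ref{mainthmgeneral} (which pins $p_c(\m-1)$ only to within $O(\alpham/\mnot)=O(\m^{-2})$) nor any diagrammatic bound gives you this cancellation; the uncertainty in the per-level growth rate is larger than $\eps$ itself, and over $r\approx\eps^{-1}\log(\eps^3V)$ levels the accumulated multiplicative error $\e^{O(r/\m^2)}$ diverges. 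Iterating in blocks of length $\mnot$ does not help: the four-cycle losses occur at every level, so a block still accrues an $O(\mnot/\m^2)=O(\alpham)$ relative loss, to be compared with the desired gain $\eps\mnot$, and $\alpham\gg\eps\mnot$ for small $\eps$.

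The paper's proof is engineered to sidestep exactly this. It never computes a per-level growth rate. Instead it uses the simultaneous coupling of $p_1=p_c$ and $p_2=p_c(1+\eps)$: for a fixed lexicographically-first $p_2$-open geodesic $\eta$ of length $\ell$, $\prob(\eta\mbox{ is }p_1\mbox{-open}\mid\eta\mbox{ is the first }p_2\mbox{-geodesic})=(1+\eps)^{-\ell}$ \emph{exactly}, so all the delicate $\m^{-2}$ corrections encoded in $p_c$ cancel in the ratio, and $\E_{p_2}|B(r)|\geq(1+\eps)^{r-\eps^{-1}}\sum_{x,\ell,\eta}\prob(\A_\ell(x,\eta)\cap\B_\ell(x,\eta))$. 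The only thing left to control is the event that the $p_2$-geodesic is strictly shorter than (or not entirely $p_1$-open along) the $p_1$-geodesic — the ``shortcut'' events $\T(u,v,x,\ell,k,t)$ — and these are genuinely small, bounded via condition (3), Lemma \ref{uniformconnbd} and \eqref{ballcond} by $o(\eps^{-1})$; the critical bound $\E_{p_c}|B([r-\eps^{-1},r])|\geq c\eps^{-1}$ then seeds the estimate. If you want to salvage your outline, you would have to replace the expansion recursion by some comparison between $p_c(1+\eps)$ and $p_c$ in which the unknown $O(\m^{-2})$ terms cancel identically — which is, in effect, the paper's coupling.
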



\begin{proof} First, we may assume that
    \be
    \label{lowerball.rcond}
    r \leq \eps^{-1} \log(\eps^3 V) \, ,
    \ee
since otherwise the assumption of the lemma cannot hold together with
the conclusion. Recall now the simultaneous coupling (described at the end of Section \ref{sec-notation}) between percolation at $p_1=p_c$ and $p_2 = p_c(1+\eps)$. Let
    $$
    \A_\ell(x)=\{0 \stackrel{=\ell}{\lrfill} x \text{ in }G_{p_2}\},
    $$
and given a simple path $\eta$ of length $\ell$ between $0$ and $x$, write
    $$ \A_\ell(x,\eta) = \{0 \stackrel{=\ell}{\lrfill} x \text{ in }G_{p_2} \and \eta \text { is the lexicographical first $p_2$-open path between $0$ and $x$}\} \, ,$$
so that $\A_\ell(x) = \bigcup_{\eta} \A_{\ell}(x,\eta)$. Write $\B_\ell(x,\eta)$ for the event
that the edges of $\eta$ are in fact $p_1$-open (not just $p_2$).
We have that
    $$
    \A_\ell(x,\eta) \cap \B_\ell(x,\eta) \subseteq
    \{0 \stackrel{=\ell}{\lrfill} x \hbox{ {\rm in $G_{p_1}$}} \} \, ,
    $$
so,
    \be\label{lowerball.bigset}
    \bigcup _{\ell \in [r-\eps^{-1}, r], \eta} \A_\ell(x,\eta) \cap \B_\ell(x,\eta) \subseteq
    \{0 \stackrel{[r-\eps^{-1},r]}{\lrfill} x \hbox{ {\rm in $G_{p_1}$}} \} \, .
    \ee
We will show that
    \be
    \label{lowerball.toshow}
    \sum_{x} \prob \Big (0 \stackrel{[r-\eps^{-1},r]}{\lrfill} x \hbox{ {\rm in $G_{p_1}$}}
    \setminus  \bigcup _{\ell \in [r-\eps^{-1}, r], \eta} \A_\ell(x,\eta) \cap \B_\ell(x,\eta) \Big )
    = o(\eps^{-1})  \, ,
    \ee
and first complete the proof subject to \eqref{lowerball.toshow}. Since $\{\A_\ell(x,\eta) \cap \B_\ell(x,\eta)\}_{\ell, \eta}$ are disjoint events, (\ref{lowerball.bigset}) and (\ref{lowerball.toshow}) show that
    $$
    \sum_{x, \ell \in [r-\eps^{-1},r], \eta} \prob(\A_\ell(x, \eta) \cap \B_\ell(x,\eta)) \geq
    \E_{p_1} |B([r-\eps^{-1},r])| - o(\eps^{-1}) \geq c\eps^{-1} \, ,
    $$
where the last inequality used Lemma \ref{lowercritbdry} and the fact that $r \ll V^{1/3}$
by \eqref{lowerball.rcond} and $\vep\gg V^{-1/3}$. From this the required result follows since
    $$
    \prob(\B_\ell(x,\eta) \mid \A_\ell(x,\eta)) = (1+\eps)^{-\ell} \, ,
    $$
which implies that
    \eqan{
    \E_{p_2} |B(r)|
    &\geq \sum_{x,\ell\in[r-\eps^{-1},r], \eta} \prob(\A_\ell(x, \eta))\\
    &=\sum_{x,\ell\in[r-\eps^{-1},r], \eta} \prob(\A_\ell(x,\eta))
    \prob(\B_\ell(x,\eta) \mid \A_\ell(x,\eta))(1+\eps)^{\ell}\nn\\
    &\geq (1+\eps)^{r-\eps^{-1}} \sum_{x,\ell\in[r-\eps^{-1},r], \eta}
    \prob(\A_\ell(x,\eta)\cap \B_\ell(x,\eta))\nn\\
    &\geq (1+\eps)^{r-\eps^{-1}} c\eps^{-1} \, .\nn
    }

Thus, our main effort is to show (\ref{lowerball.toshow}) under the restriction
of (\ref{ballcond}) and (\ref{lowerball.rcond}). Fix $x$ and assume that the event
    \be\label{lowerball.badevent}
    \{ 0 \stackrel{[r-\eps^{-1},r]}{\lrfill} x \inn G_{p_1}\} \setminus
    \bigcup _{\ell \in [r-\eps^{-1}, r], \eta} \A_\ell(x,\eta) \cap \B_\ell(x,\eta) \, ,
    \ee
occurs. In words, this event means that either the shortest $p_2$-open path is
shorter than the shortest $p_1$-path or that they have the same length but the
lexicographically first shortest $p_2$-path contains an edge having value in
$[p_1,p_2]$. This implies that the $p_2$-path {\em shortcuts} the $p_1$-path.
Formally, given vertices $u,v$ and integers $\ell\in[r-\eps^{-1},r], k \in [0,\ell], t \in [2,\ell]$
with $k+t \leq \ell$ write $\T(u,v,x,\ell,k,t)$ for the event that there exists paths $\eta_1, \eta_2, \eta_3, \gamma$ in the graph such that
\begin{enumerate}
\item $\eta_1$ is a shortest $p_1$-open path of length $k$ connecting $0$ to $u$,
\item $\eta_2$ is a shortest $p_1$-open path of length $t$ connecting $u$ to $v$,
\item $\eta_3$ is a shortest $p_1$-open path of length $\ell-t-k$ connecting $v$ to $x$,
\item $B_x(\ell-k-t) \cap B_0(k) = \emptyset$ in $G_{p_1}$,
\item $\gamma$ is $p_2$-open path of length at most $t$ connecting $u$ to $v$ and one of the edges of $\gamma$ receives value in $[p_1,p_2]$, and
\item $\eta_1, \eta_2, \eta_3$ and $\gamma$ are disjoint paths,
\end{enumerate}
see Figure \ref{fig.excursion}. The event (\ref{lowerball.badevent}) implies that $\T(u,v,x,\ell,k,t)$ occurs for some $u,v,\ell,k,t$ satisfying the conditions above. Our treatment of the case $t \geq \mnot$ is easier than the case $t \leq \mnot$ so let us perform this first. When $t \geq \mnot$ we forget about condition (4) and the special edge with value $[p_1,p_2]$ in (5) and take a union over $\ell, k$ and $t \in [\mnot,r]$ of the event $\T(u,v,x,\ell,k,t)$. This union implies the existence of vertices $u,v$ such that the following events occur disjointly:
\begin{enumerate}
\item $0 \lrr u$ in $G_{p_1}$,
\item $u \stackrel{P[\mnot,r]}{\lrfill} v$ in $G_{p_1}$,
\item $v \lrr x$ in $G_{p_1}$,
\item $u \lrr v$ in $G_{p_2}$.
\end{enumerate}
Indeed, the witnesses to these (monotone) events are the paths $\eta_1, \eta_2, \eta_3, \gamma$. We now wish to use the BK inequality, however, as the astute reader
may have already noticed, our witnesses are not stated in an i.i.d.~ product
measure. Let us expand briefly on how we may still use the BK inequality.
We may consider our simultaneous coupling measure to be an i.i.d.~ product
measure by putting on each edge a countable infinite sequence of independent
random bits receiving $0$ with probability $1/2$ and $1$ otherwise such that
this sequence encodes the uniform $[0,1]$ random variable attached to each edge.
In this setting, a witness for an edge being $p$-open is the sequence of bits
attached to the edge and similarly for the edge being $p$-closed. Similarly,
we define this way events of the form ``$E_1$ in $G_{p_1}$ occurs disjointly
from $E_2$ in $G_{p_2}$''. With this definition of witnesses we may use
the BK inequality here to bound the probability of the union above and sum over $x$ (as in (\ref{lowerball.toshow}). This gives an upper bound of
$$ \sum_{u,v,x} \prob_{p_1} (0 \lrr u) \prob_{p_1}(u \stackrel{P[\mnot,r]}{\lrfill} v) \prob_{p_1}(v \lrr x) \prob_{p_2}(u \lrr v) \, .$$
\begin{figure}
\begin{center}
\includegraphics{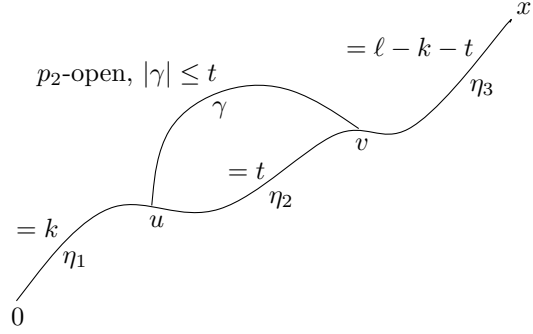}
\caption{The event $\T(u, v, x, \ell, k, t)$ --- an excursion.}
\label{fig.excursion}
\end{center}
\end{figure}
We sum over $x$ and get a factor of $r$ by Theorem \ref{KozmaNachmias}. We bound $\prob_{p_1}(u \stackrel{P[\mnot,r]}{\lrfill} v) \leq CrV^{-1}$ by Lemma \ref{uniformconnbd} and Theorem \ref{KozmaNachmias}. We then sum over $v$ and get a factor of $\E_{p_2} |B(r)|$ and on $u$ to get another factor of $r$. All together this gives an upper bound of
$$ {C r^3 \E_{p_2}|B(r)| \over V} = O(\eps^{-1} \log^{-1} (\eps^3 V)) = o(\eps^{-1}) \, ,$$
by (\ref{ballcond}).

We now treat the case in which $t\in [2,\mnot]$.  We claim that the event $\T(u,v,x,\ell,k,t)$ implies that there exist disjoint paths $\eta_2, \gamma$ between $u$ and $v$ such that $|\eta_2|=t$ and $|\gamma|\leq t$ and the intersection of the following events occurs:
\begin{enumerate}
\item[(a)] $\eta_2$ is $p_1$-open,
\item[(b)] $\gamma$ is $p_2$-open, and one of its edges receives value in $[p_1,p_2]$,
\item[(c)] $0 \stackrel{=k}{\lrfill} u$ off $\eta_2 \cup \gamma$ and $v \stackrel{=\ell-k-t}{\lrfill} x$ off $\eta_2 \cup \gamma \cup B_0(k)$ in $G_{p_1}$.
\end{enumerate}
Indeed, let $\eta_1,\eta_2, \eta_3, \gamma$ be the disjoint paths guaranteed to exists in the definition of $\T(u,v,x,\ell,k,t)$. The paths $\eta_2$ and $\gamma$ show that both (a) and (b) indeed occur (note that we have relaxed the requirement that $\eta_2$ is a shortest $p_1$-open path). Seeing that (c) occurs is more subtle. First observe that for any two vertices $z,y$ and integer $\ell \geq 0$
    $$ \{ z \stackrel{=\ell}{\lrfill} y \off A \} = \bigcup _{\beta : |\beta|=\ell, \beta \cap A = \emptyset} \Big ( \big \{ \beta \hbox{ {\rm is open}}\big \} \bigcap _{\beta' : |\beta'| < \ell, \beta' \cap A = \emptyset} \big \{ \beta' \hbox{ {\rm has a closed edge}}\big \} \Big ) \, ,$$
where $\beta, \beta'$ are simple paths in $G$ and we slightly abuse notation and write $\beta \cap A = \emptyset$
to denote that the edges of $\beta$ are disjoint from the edges touching $A$. To see that (c) holds we note that the event $\T(u,v,x,\ell,k,t)$ implies that $\eta_3$ is of length $k$ between $0$ and $u$, disjoint from $\eta_2 \cup \gamma$, is $p_1$-open and any shorter path between $0$ and $u$ has a $p_1$-closed edge in it; in particular, $0 \stackrel{=k}{\lrfill} u$ off $\eta_2 \cup \gamma$ occurs in $G_{p_1}$. Similarly, $\eta_3$ is of length $\ell-k-t$ between $v$ and $x$, is disjoint from $\eta_2 \cup \gamma \cup B_0(k)$, is $p_1$-open and any shorter path between $v$ and $x$ has a $p_1$-closed edge in it; in particular $v \stackrel{=\ell-k-t}{\lrfill} x$ off $\eta_2 \cup \gamma \cup B_0(k)$ occurs in $G_{p_1}$.

Now, the events (a), (b), (c) are independent since they are measurable with respect to disjoint sets of edges (the edges of $\eta_2$, $\gamma$ and all the rest). The probability of their intersection is hence
    $$ p_1 ^{|\eta_2|} p_2^{|\gamma|} \big [1- (p_1/p_2)^{|\gamma|} \big] \prob_{p_1} \big (0 \stackrel{=k}{\lrfill} u \off \eta_2 \cup \gamma \and v \stackrel{=\ell-k-t}{\lrfill} x \off \eta_2 \cup \gamma \cup B_0(k) \big ) \, ,$$
where the factor $[1- (p_1/p_2)^{|\gamma|}]$ is the probability that one edge of $\gamma$ has value in $[p_1,p_2]$ conditioned on all edges being $p_2$-open. We compute the probability on the right hand side as usual by conditioning on $B_0(k)$, this gives
    $$ p_1 ^{|\eta_2|} p_2^{|\gamma|} \big [1- (p_1/p_2)^{|\gamma|} \big] \sum_{A : 0 \stackrel{=k}{\lrfill} u} \prob_{p_1} ( B_0(k) = A) \prob_{p_1}(v \stackrel{=\ell-k-t}{\lrfill} x \off A \cup \eta_2 \cup \gamma ) \, .$$
We now start summing all this over $u,v,x,\ell,k,t, \eta_2, \gamma$. We start by summing over $x$ the last probability, giving as a constant factor by Theorem \ref{bdry}. The sum over $A$ gives a term of $\prob_{p_1}(0 \stackrel{=k}{\lrfill} u \off \eta_2 \cup \gamma)$ which we sum over $k\in [0,r]$ and bound this by $\prob_{p_1}(0 \lrr u)$. Furthermore, the number of possible $\eta_2$'s is at most $m(m-1)^{t} \p^t(u,v)$ and if $|\gamma|=s \leq t$, then the number of such $\gamma$'s is at most $m(m-1)^{s-1} \p^s(u,v)$. We also bound $[1-(p_1/p_2)^s] \leq C s \eps$. All this gives that
    $$ \sum_{u,v,x,\ell,k,t\in[2,\mnot]} \prob(\T(u,v,x,\ell,k,t)) \leq C \eps \sum_{\substack{u,v,\ell \\ t \in [2,\mnot], s \in [1,t]}} (m-1)^{s+t} p_1^t p_2^s s \p^t(u,v) \p^s(u,v) \prob_{p_1}(0 \lrr u) \, .$$
By condition (2) of Theorem \ref{mainthmgeneral} and the fact that $\mnot = o(\eps^{-1})$, we have that $(m-1)^{s+t} p_1^t p_2^s = 1+o(\alpham)$, so we may bound this sum by
    $$ C \eps \sum_{u,\ell} \prob_{p_1}(0 \lrr u) \sum_{v, t \in [2,\mnot], s \in [1,t]} s \p^t(u,v) \p^s(u,v) \, .$$
The sum over $\ell\in [r-\eps^{-1},r]$ gives a factor of $\eps^{-1}$, and since $G$ is transitive, the second sum over $v,t,s$ does not depend on $u$. Hence we may sum over $u$ separately using Theorem \ref{KozmaNachmias} giving a bound of
$$ C r \sum_{v,t\in[2,\mnot], s\in [1,t]} s \p^t(u,v) \p^s(u,v) \, .$$
For each $s\geq 1$ and $s_1\in \{1, \ldots, s\}$, we can bound
    $$
    \p^s(0,v)\leq \frac{\m}{\m-1} \sum_{w} \p^{s_1}(0,w)\p^{s-s_1}(w,v) \, ,
    $$
because the number of non-backtracking paths of length $s$ from $0$ to $v$ is at most the sum over $w$ the number of non-backtracking paths of length $s_1$ from $0$ to $w$ times the number of non-backtracking paths of length $s-s_1$ from $w$ to $v$ (the factor $m/(m-1)$ comes from properly normalizing these numbers). As a result,
    $$
    \sum_{v,t \in[2,\mnot], s\in [1,t]} s \p^t(0,v) \p^s(0,v)
    \leq  \frac{\m}{\m-1}
    \sum_{v,w}\sum_{t \in[2,\mnot], s_1\in [1,t], s_2\leq s_1} \p^t(0,v) \p^{s_1}(0,w)\p^{s_2}(w,v)
    \leq \frac{C\alpham}{\log{V}},
    $$
by condition (3) in Theorem \ref{mainthmgeneral} and the fact that $t+s_1+s_2\geq 3$. All together we get that
$$ \sum_{u,v,x,\ell,k,t\in[2,\mnot]} \prob(\T(u,v,x,\ell,k,t)) \leq {Cr \alpham \over \log V} = o(\eps^{-1}) \, ,$$
by our assumption (\ref{lowerball.rcond}) and since $\alpham=o(1)$. This finishes the proof of (\ref{lowerball.toshow}) and concludes the proof of the theorem.
\end{proof}

The following are easy corollaries:

\begin{corollary} \label{r0ball} Let $G$ be a graph satisfying the assumptions in Theorem \ref{mainthmgeneral} and consider percolation on it with $p = p_c(1+\eps)$. Then for any $r$ satisfying
$$r \leq \eps^{-1} \big [ \log(\eps^3 V) - 4 \log\log (\eps^3 V) \big ] \, ,$$
the following bound holds
    $$
    \E |B(r)| = \Theta(\eps^{-1} (1+\eps)^r) \, .
    $$
In particular for $r_0$ defined in (\ref{rzerocond})
    $$ \E |B(r_0)| = \Theta( \sqrt{\alpham \eps V}) \, .$$
\end{corollary}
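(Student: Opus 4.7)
The upper bound is immediate: Lemma \ref{upperball} (applied with $A = \emptyset$) gives $\E|B(r)| \leq C\eps^{-1}(1+\eps)^r$ for every $r$, so this half of the $\Theta$ costs nothing.

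For the matching lower bound my plan is simply to feed the upper bound into Theorem \ref{lowerball}. Under the hypothesis $r \leq \eps^{-1}[\log(\eps^3V) - 4\log\log(\eps^3V)]$ we have $\eps r \leq \log(\eps^3V) - 4\log\log(\eps^3V)$, and since $(1+\eps)^r \leq \e^{\eps r}$, combining this with the upper bound gives
\[
\E|B(r)| \;\leq\; C\eps^{-1}(1+\eps)^r \;\leq\; C\,\eps^{-1}\cdot\frac{\eps^3V}{(\log\eps^3V)^4} \;=\; \frac{C\eps^2V}{(\log\eps^3V)^4},
\]
which is at most $\eps^2V/(\log\eps^3V)^4$ once $m$ is large enough (using $\eps^3V\to\infty$, so that $(\log\eps^3V)^4$ absorbs the constant $C$; alternatively one may slightly strengthen the $4$ in the statement to, say, $5$, which is harmless for the intended application since $\log\log(\eps^3V)=o(\log(\eps^3V))$). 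Hypothesis \eqref{ballcond} of Theorem \ref{lowerball} is therefore satisfied, and Theorem \ref{lowerball} delivers $\E|B(r)| \geq c\eps^{-1}(1+\eps)^r$, establishing the $\Theta$-estimate.

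For the ``in particular'' statement, substitute $r_0 = \tfrac{1}{2}\eps^{-1}\log(\alpham\eps^3V)$ from \eqref{rzerocond}. First we check admissibility: since $\alpham = o(1)$ we have $\log\alpham \leq 0$, hence $r_0 \leq \tfrac{1}{2}\eps^{-1}\log(\eps^3V)$, which is comfortably below $\eps^{-1}[\log(\eps^3V) - 4\log\log(\eps^3V)]$ once $\eps^3V$ is large. Then $(1+\eps)^{r_0} = \e^{(1+o(1))\eps r_0} = \Theta\bigl(\sqrt{\alpham\eps^3V}\bigr)$, so
\[
\E|B(r_0)| \;=\; \Theta\bigl(\eps^{-1}\sqrt{\alpham\eps^3V}\bigr) \;=\; \Theta\bigl(\sqrt{\alpham\eps V}\bigr),
\]
as claimed.

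There is really no obstacle here beyond bookkeeping; the only point that requires any care is matching the constants in the upper bound against the $4$-log-log cushion in the hypothesis, which is why the bound is stated with a $4$ rather than with the tight exponent $1$.
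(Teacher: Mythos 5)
Your proof is correct and is exactly the paper's argument: the paper's entire proof of Corollary \ref{r0ball} is the one-liner ``the upper bound follows from Lemma \ref{upperball} and the lower bound from Theorem \ref{lowerball},'' and your verification of hypothesis \eqref{ballcond} and of the admissibility of $r_0$ just fills in the bookkeeping the paper omits. On the constant-factor point you raise: note that a constant $C$ in the numerator cannot literally be ``absorbed'' by the $(\log\eps^3V)^4$ appearing on both sides, but the mismatch is harmless because in the proof of Theorem \ref{lowerball} the hypothesis \eqref{ballcond} is only used to conclude $Cr^3\E_{p_2}|B(r)|/V=o(\eps^{-1})$, a bound that tolerates any constant multiple, so the theorem applies verbatim.
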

\begin{proof}
The upper bound follows from Lemma \ref{upperball} and the lower bound from Theorem \ref{lowerball}.
\end{proof}

\begin{lemma}
\label{upperballsqaured}
Let $G$ be a graph satisfying the assumptions in Theorem \ref{mainthmgeneral}
and consider percolation on it with $p\leq p_c(1+\eps)$. Let $r$ be an integer
satisfying the assumptions of Theorem \ref{lowerball}. Then,
    $$
    \E |B(r)|^2 \leq C\eps^{-1} (\E |B(r)|)^2 \, .
    $$
\end{lemma}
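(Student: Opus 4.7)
The plan is to show that $\E|B(r)|^2 \leq C(1+\eps)^{2r}/\eps^3$; combining this with the lower bound $\E|B(r)| \geq c(1+\eps)^r/\eps$ from Theorem \ref{lowerball} (valid under the hypothesis on $r$) then yields the lemma with a single application of squaring. First I would decompose
$$\E|B(r)|^2 = \sum_{x,y} \prob(0 \stackrel{r}{\lr} x, 0 \stackrel{r}{\lr} y)$$
via a branching-vertex argument. Given a configuration on which $0 \stackrel{r}{\lr} x$ and $0 \stackrel{r}{\lr} y$, let $\eta_1,\eta_2$ be the lexicographically first shortest open paths from $0$ to $x$ and $y$, and let $z$ be the vertex on $\eta_1$ of largest index that also lies on $\eta_2$. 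Both initial segments $\eta_1[0,u]$ and $\eta_2[0,u]$ are then shortest open paths of the \emph{same} length $u=d(0,z)$, while the continuations $\eta_1[u,\cdot]$ and $\eta_2[u,\cdot]$ are \emph{edge-disjoint} open paths of length at most $r-u$ from $z$ to $x$ and $y$ respectively (past $z$ the two paths visit vertices at distance $>u$ from $0$, and by the maximal choice of $z$ these two vertex sets are disjoint).

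To circumvent the non-monotonicity of the event $0 \stackrel{=u}{\lrfill} z$, I would imitate the conditioning trick from the proof of Lemma \ref{goodbkreimer}: partition on $B_0(u)=A$ for some $A$ containing $z$ in its boundary. Conditional on $B_0(u)=A$ the edges off $A$ are independent of $B_0(u)$, and the two events ``$z \stackrel{r-u}{\lr} x$ off $A$'' and ``$z \stackrel{r-u}{\lr} y$ off $A$'' are monotone and certified by the edge-disjoint open paths $\eta_1[u,\cdot]$, $\eta_2[u,\cdot]$ from the previous paragraph. Hence BK on the off-$A$ product measure yields
$$\poffa\big(\{z \stackrel{r-u}{\lr} x\}\circ\{z \stackrel{r-u}{\lr} y\}\big) \leq \poffa(z \stackrel{r-u}{\lr} x)\cdot\poffa(z \stackrel{r-u}{\lr} y).$$
Summing over $x$ and $y$ produces a factor of $(\E|B_z(r-u;A)|)^2$, which by Lemma \ref{upperball} is at most $C^2(1+\eps)^{2(r-u)}/\eps^2$ uniformly in $A$.

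Using $\sum_{A,z}\prob(B_0(u)=A)\,\indic{0 \stackrel{=u}{\lrfill} z\inn A}=\E|\partial B(u)|\leq C(1+\eps)^u$ (again Lemma \ref{upperball}), assembling the estimates and summing over $u$ gives
$$\E|B(r)|^2 \leq \sum_{u=0}^{r} C(1+\eps)^u\cdot\frac{C^2(1+\eps)^{2(r-u)}}{\eps^2}=\frac{C^3(1+\eps)^{2r}}{\eps^2}\sum_{u=0}^{r}(1+\eps)^{-u}.$$
The geometric series is bounded by $(1+\eps)/\eps = O(1/\eps)$, yielding $\E|B(r)|^2 \leq C'(1+\eps)^{2r}/\eps^3 \leq C\eps^{-1}(\E|B(r)|)^2$, as required. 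The hard part of the proof, and what I would be most careful about, is the BK-type decomposition in the first paragraph: because $0 \stackrel{=u}{\lrfill} z$ is non-monotone (adding open edges can shorten shortest paths and destroy the event) one cannot apply BK-Reimer directly. The conditioning on $B_0(u)=A$ absorbs this non-monotone event into the realization of the $u$-ball, leaving only monotone events on the independent off-$A$ edges — precisely the mechanism used in Lemma \ref{goodbkreimer}.
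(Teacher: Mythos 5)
Your proof is correct and follows essentially the same route as the paper: the same branching-vertex decomposition $\{0 \stackrel{=t}{\lrfill} z\}\circ\{z \stackrel{r-t}{\lrfill} x\}\circ\{z \stackrel{r-t}{\lrfill} y\}$, the same bound $\sum_t \E|\partial B(t)|(\E|B(r-t)|)^2 \leq C\eps^{-3}(1+\eps)^{2r}$, and the same final appeal to Theorem \ref{lowerball}. The only cosmetic difference is that the paper handles the non-monotone event $0\stackrel{=t}{\lrfill}z$ by applying BK--Reimer directly with the closed edges as part of the witness, whereas you use the conditioning-on-$B_0(u)$ trick of Lemma \ref{goodbkreimer}; both devices appear throughout the paper and are interchangeable here.
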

\begin{proof} If $0 \lrr x$ and $0 \lrr y$, then there exists a vertex $z$ and an integer $t \leq r$ such that the event
    $$
    \{0 \stackrel{=t}{\lrfill} z\} \circ \{z \stackrel{r-t}{\lrfill} x\} \circ \{z \stackrel{r-t}{\lrfill} y\} \, .
    $$
Apply BK-Reimer and sum over $x,y$ and then $z$ to bound
    $$
    \E |B(r)|^2 \leq \sum_{t=1}^{r} \E | \partial B(t)| \E |B(r-t)| \E |B(r-t)| \, .
    $$
We apply Lemma \ref{upperball} and Theorem \ref{bdry} to bound
    $$
    \E |B(r)|^2 \leq C \eps^{-3} (1+\eps)^{2r} \, ,
    $$
and Theorem \ref{lowerball} gives the required claim.
\end{proof}

\section{An intrinsic-metric regularity theorem}
\label{sec-intrin-reg}

For an increasing event $E$ and a vertex $a$, we say that $a$ is \emph{pivotal for} $E$
for the event that $E$ occurs but does not occur in the modified configuration in which we
close all the edges touching $a$. We write $\Piv(E)$ for the set of pivotal vertices for
the event $E$. For vertices $a,x$, radii $r_1,j_x$ and $A \subset V$, we define
    \eqan{
    G_{r_1,j_x}(a, x; A)
    = \E\big[ |\{u\colon
    a \stackrel{P[2\mnot, r_1]}{\lrfill} u \off A\setminus \{a\}
    \and a \in \Piv(\{x \stackrel{j_x+r_1}{\lrfill} u\})\} |
    \,\,  \big | \,\, B_x(j_x)=A\big] \, .\nonumber
    }

\bde[Regenerative and fit vertices] (a) Given vertices $x,a$, radii
$r_1, j_x \geq \mnot$ and a real number $\beta>0$, we say that $a$ is
{\em $(\beta,j_x,r_1)$-regenerative} if
    \begin{enumerate}
     \item $x \lrejx a$, and
     \item $G_{r_1, j_x}\big(a,x;B_x(j_x)\big)\geq (1-\beta) \E|B(r_1)|$,
    \end{enumerate}
and note that this event is determined by the status of the edges touching
$B_x(j_x)$. We say that $a$ is {\em $(\beta,j_x,r_1)$-nonregenerative} if
$x \lrejx a$ but it is not $(\beta,j_x,r_1)$-regenerative.


\noindent
(b) Given an additional real number $\delta>0$, we say that $x$ is
{\em $(\delta,\beta,j_x,r_1)$-fit} if
\begin{enumerate}
 \item $\partial B_x(j_x) \neq \emptyset$ holds and,
 \item the number of $(\beta,j_x,r_1)$-nonregenerative vertices is at
 most $\delta \eps^{-1}$.
\end{enumerate}
\ede

\noindent It will also be convenient to combine our error terms. For this, we define
    \eqn{
    \label{omegam-def}
    \omegam = \alpham^{1/2} + \eps \mnot \, ,
    }
so that $\omegam = o(1)$. Our goal in this section is to prove that if $\partial B_x(j_x) \neq \emptyset$, then $x$ is fit with high probability. This is the {\em intrinsic metric} regularity theorem discussed in Section \ref{sec-sketch}.

\begin{theorem}[Intrinsic regularity]
\label{mostsetsaregood} Let $G$ be a graph satisfying the assumptions of Theorem \ref{mainthmgeneral}.
Let $p=p_c(1+\eps)$, let $r=\rmm=M/\eps$ where
$M=\Mm$ is defined in (\ref{Mchoice}) and $r_1\in [\eps^{-1}, r_0]$, where
$r_0$ is defined in (\ref{rzerocond}). For any $\delta, \beta \in (0,1)$ there
exist at least $(1-O(\omegam^{1/4}))r$ radii $j_x \in [r,2r]$ such that
    $$
    \prob( x \hbox{ {\rm is} } (\delta,\beta,j_x,r_1)\hbox{{\rm -fit}} )
    \geq \big ( 1- O\big(\delta^{-1} \beta^{-2} \e^{2M}\omegam^{1/4}\big) \big) \prob(\partial B_x(j_x) \neq \emptyset) \, .
    $$
\end{theorem}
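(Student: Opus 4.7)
The plan is to compare, for each $j_x\in[r,2r]$, the quantity
$$S(j_x):=\E\!\left[\1\{\partial B_x(j_x)\neq\emptyset\}\sum_{a\in\partial B_x(j_x)}G_{r_1,j_x}(a,x;B_x(j_x))\right]$$
with its trivial upper bound $\E|\partial B_x(j_x)|\cdot\E|B(r_1)|$. The pointwise inequality $G_{r_1,j_x}(a,x;A)\leq\E|B_a(r_1;A\setminus\{a\})|\leq\E|B(r_1)|$ holds for every admissible $A$ since, in the conditional measure, the off-cluster from $a$ is measurable in edges independent of the conditioning on $B_x(j_x)=A$ and is dominated by the unrestricted cluster. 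If I can show that the deficit $\Delta(j_x):=\E|\partial B_x(j_x)|\E|B(r_1)|-S(j_x)$ is small for a given $j_x$, a Markov inequality bounds the expected number of $(\beta,j_x,r_1)$-nonregenerative boundary vertices by $\Delta(j_x)/(\beta\E|B(r_1)|)$, and a second Markov step on this count produces the required fitness probability.

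The crux is therefore to control the total deficit $\sum_{j_x\in[r,2r]}\Delta(j_x)$. Expanding via \eqref{offdescription},
$$S(j_x)=\sum_{a,u}\prob\!\Big(x\stackrel{=j_x}{\lrfill}a,\;a\stackrel{P[2\mnot,r_1]}{\lrfill}u\text{ off }B_x(j_x)\setminus\{a\},\;a\in\Piv(\{x\stackrel{j_x+r_1}{\lrfill}u\})\Big),$$
the idealized ``factorized'' version $\sum_{a,u}\prob(x\stackrel{=j_x}{\lrfill}a)\prob(a\stackrel{P[2\mnot,r_1]}{\lrfill}u)=\E|\partial B_x(j_x)|\,\E|B([2\mnot,r_1])|$ is $(1-o(1))\E|\partial B_x(j_x)|\E|B(r_1)|$ by Corollary~\ref{r0ball} (the loss $\E|B(2\mnot-1)|=O(\mnot)$ is negligible against $\E|B(r_1)|=\Theta(\vep^{-1}(1+\vep)^{r_1})$). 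The gap decomposes into a \emph{triangle} error $E_1(j_x)$ from configurations in which the $a\to u$ path intersects $B_x(j_x)\setminus\{a\}$, and a \emph{square} error $E_2(j_x)$ from non-pivotal configurations in which a second open $x\to u$ path of length at most $j_x+r_1$ bypasses $a$. BK-Reimer together with Lemmas~\ref{offplusreimer},~\ref{goodbkreimer} and~\ref{uniformconnbd} recast each as an extended triangle or square diagram; applying Corollary~\ref{extendedtriangle} and Lemma~\ref{squarediagram}, the sum over $j_x\in[r,2r]$ is at most $C\,r\,\e^{2M}\omegam^{1/2}\E|B(r_1)|$, where the $\omegam^{1/2}$ is precisely what condition~(3) of Theorem~\ref{mainthmgeneral} delivers for the relevant non-backtracking walk diagrams.

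Markov over $j_x$ then produces at least $(1-O(\omegam^{1/4}))r$ good radii on which $\Delta(j_x)\leq C\e^{2M}\omegam^{1/4}\E|B(r_1)|$. For each such $j_x$, the expected number of nonregenerative vertices is at most $C\beta^{-1}\e^{2M}\omegam^{1/4}$, a final Markov step gives $\prob(N_{\mathrm{nonreg}}>\delta\vep^{-1})\leq C\delta^{-1}\beta^{-1}\e^{2M}\omegam^{1/4}\vep$, and dividing by $\prob(\partial B_x(j_x)\neq\emptyset)\geq (2-o(1))\vep$ from Lemma~\ref{inttail} yields the advertised bound $O(\delta^{-1}\beta^{-2}\e^{2M}\omegam^{1/4})\prob(\partial B_x(j_x)\neq\emptyset)$, with the additional factor of $\beta^{-1}$ picked up in the combined Markov steps when the regime of small $\beta$ is handled carefully.

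The main obstacle is the square error $E_2(j_x)$. Non-pivotality forces, via a BK-Reimer witness decomposition at the last vertex $v$ at which an alternative $x\to u$ path rejoins the original, a four-legged configuration of disjoint open paths with total length at most $2(j_x+r_1)$; only the sharp supercritical estimate of Lemma~\ref{squarediagram} is strong enough to absorb the $\e^{2M}$ growth of $\E|\partial B_x(j_x)|$ into the $\omegam$-factor, and the simultaneous bookkeeping of long pieces (handled uniformly by Lemma~\ref{uniformconnbd}) and short pieces (handled by the explicit NBW counting of Claim~\ref{reallysmalltriangle}) against the tight length constraint is the most delicate part of the argument.
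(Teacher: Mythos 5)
Your proposal is correct and shares the paper's overall strategy --- compare the conditional expected ``future'' of a boundary vertex $a$ with the unconditional one, control the deficit by diagrammatic bounds, then apply Markov over radii and over vertices --- but your organization of the final step is genuinely different and simpler. The paper argues vertex by vertex: for each $a$ it needs both a lower bound on $\sum_{j_x,u}\prob(x \lrejx a,\dots)$ (its Lemmas \ref{lowerbound}--\ref{lowerboundfinal}) \emph{and} a matching per-$a$ upper bound $\prob(x\lrer a)\leq(1+O(\omegam))V^{-1}\E|B([r,2r])|$ (Lemma \ref{converse}), and then runs a conditional second-moment/Chebyshev argument on $\widetilde G$ to get $\prob(\widetilde G\leq(1-\beta)\E|B(r_1)|\mid x\lrer a)=O(\beta^{-2}\omegam^{1/2})$. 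You instead aggregate over $a$ first: since $G_{r_1,j_x}(a,x;A)\leq \E|B(r_1)|$ pointwise (the event is increasing and the ``off'' restriction together with the conditioning only decreases it), the total deficit $\sum_{j_x}\Delta(j_x)=\E|B([r,2r])|\,\E|B(r_1)|-\sum_{j_x}S(j_x)$ is a sum of nonnegative terms to which every nonregenerative vertex contributes at least $\beta\E|B(r_1)|$, and the single global lower bound of Lemma \ref{lowerboundfinal} already gives $\sum_{j_x}\Delta(j_x)=O(\omegam)\E|B([r,2r])|\E|B(r_1)|$. This removes the need for Lemma \ref{converse} and for any second moment, and it yields $\beta^{-1}$ where the statement allows $\beta^{-2}$ (your closing remark about ``picking up an extra $\beta^{-1}$'' is backwards, but harmlessly so, since your bound is stronger than required).

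One step is stated too loosely: the witness decomposition for the non-pivotality error $E_2$. An alternative open $x\to u$ path that bypasses $a$ must be cut at \emph{two} junction vertices --- the point $z_1$ where it leaves the geodesic from $x$ to $a$ and the point $z_2$ where it merges into the path from $a$ to $u$ --- producing five disjoint witnesses ($x\to z_1$, $z_1\to a$, $a\to z_2$, $z_2\to u$, and the shortcut $z_1\to z_2$), not a four-legged configuration anchored at a single rejoining vertex; with only one junction the alternative path may share its initial segment with the geodesic to $a$, the witnesses are not disjoint, and BK--Reimer does not apply. Once decomposed correctly, the resulting theta-shaped diagram is summed exactly as in the paper's Lemma \ref{lowerboundfinal}, using the extended triangle bound of Corollary \ref{extendedtriangle} on the inner triangle $z_1,a,z_2$; the square diagram of Lemma \ref{squarediagram} is not the natural tool here (it is reserved for the conditional second moment in Section \ref{sec-large-clusters-close}), although the order of magnitude $O(\omegam)\E|B([r,2r])|\E|B(r_1)|$ that you assert for this error is correct, and the rest of your bookkeeping (Markov over $j_x$ to isolate $(1-O(\omegam^{1/4}))r$ good radii, Markov over vertices, and division by $\prob(\partial B_x(j_x)\neq\emptyset)\geq(2-o(1))\eps$ via Lemma \ref{inttail}) goes through as written.
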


We start by proving some preparatory lemmas:

\begin{lemma}
\label{lowerbound} Assume the setting of Theorem \ref{mostsetsaregood}. Then,
    \eqan{
    \sum_{j_x=r}^{2r} \sum_{a,u}
    \prob\big (x \lrejx a,\,
    a \stackrel{P[2\mnot, r_1]}{\lrfill} u \off  B_x(j_x)\setminus \{a\}\big )
    \geq \big (1- O ( \omegam) \big )
    \E|B([r,2r])|\E|B(r_1)| \, .\nonumber
    }
    \end{lemma}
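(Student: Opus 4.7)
\smallskip
\noindent\emph{Proof plan for Lemma \ref{lowerbound}.}
The plan is to condition on the intrinsic ball $B_x(j_x)=A$ and exploit that, in this conditioning, the event $\{a \stackrel{P[2\mnot, r_1]}{\lrfill} u \off A\setminus\{a\}\}$ depends only on the edges not touching $A\setminus\{a\}$, so is distributed as in a fresh percolation on $G$ with those edges forbidden. Via the standard decomposition from \eqref{offdescription}, the LHS equals
\[
\sum_{j_x=r}^{2r}\sum_{A}\prob(B_x(j_x)=A)\sum_{a\in\partial A,u}\prob\big(a\stackrel{P[2\mnot,r_1]}{\lrfill} u \off A\setminus\{a\}\big).
\]
To lower-bound the inner probability I apply Claim \ref{offmethod} to the (monotone) event $\M=\{a\stackrel{P[2\mnot,r_1]}{\lrfill} u\}$, yielding
\[
\prob(\M \off A\setminus\{a\})\geq \prob(\M)-\prob(\M\onlyon A\setminus\{a\}).
\]

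For the unrestricted first term I observe that $\{d_{G_p}(a,u)\in[2\mnot,r_1]\}\subseteq\M$, so by transitivity
\[
\sum_u\prob(\M)\geq \E|B(r_1)|-\E|B(2\mnot-1)|\geq \big(1-O(\eps\mnot)\big)\E|B(r_1)|,
\]
where the final step uses Corollary \ref{corrlength} to bound $\E|B(2\mnot-1)|=O(\mnot)$ combined with Theorem \ref{lowerball}/Corollary \ref{r0ball} giving $\E|B(r_1)|\geq c\eps^{-1}$ (valid since $r_1\in[\eps^{-1},r_0]$ satisfies the hypotheses of Theorem \ref{lowerball}). For the second term, if $\M$ occurs only on $A\setminus\{a\}$ then every open path of length in $[2\mnot,r_1]$ from $a$ to $u$ must visit some $b\in A\setminus\{a\}$, splitting at $b$ into two edge-disjoint pieces each of length $\leq r_1$. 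The BK-Reimer inequality then gives
\[
\prob(\M\onlyon A\setminus\{a\})\leq \sum_{b\in A\setminus\{a\}}\prob(a\lrr{r_1} b)\prob(b\lrr{r_1} u),
\]
and summing over $u$ yields a bound $\E|B(r_1)|\sum_{b\in A\setminus\{a\}}\prob(a\lrr{r_1} b)$.

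Assembling pieces, the main contribution after summing over $a\in\partial A$, averaging over $A$, and summing over $j_x\in[r,2r]$ is $(1-O(\omegam))\,\E|B([r,2r])|\,\E|B(r_1)|$, with an error
\[
\mathrm{Err}=\E|B(r_1)|\sum_{j_x=r}^{2r}\sum_{a\neq b}\prob\big(x\lrejx a,\,b\in B_x(j_x)\big)\prob(a\lrr{r_1} b).
\]
Since $x\lrejx a$ determines $j_x=d_{G_p}(x,a)$, the double sum over $j_x$ and $\{b\in B_x(j_x)\}$ is at most $\prob(x\lrer a,\,x\lrr{2r} b)$. Applying the tree-graph decomposition (as in Lemma \ref{goodbkreimer}) at the last common point $z$ gives
\[
\mathrm{Err}\leq \E|B(r_1)|\sum_z\prob(x\lrr{2r} z)\sum_{a,b}\prob(z\lrr{2r} a)\prob(z\lrr{2r} b)\prob(a\lrr{r_1} b).
\]
By transitivity the inner triangle is independent of $z$, and Corollary \ref{extendedtriangle} bounds it by $O(\omegam)+O(V^{-1})\E|B(2r)|^2\E|B(r_1)|$. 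The remaining factor $\sum_z\prob(x\lrr{2r} z)=\E|B(2r)|\sim \E|B([r,2r])|$ (using $\e^{-M}=o(1)$), so the ratio $\mathrm{Err}/(\E|B([r,2r])|\E|B(r_1)|)$ splits as $O(\omegam)\cdot\eps +O(V^{-1}\eps^{-2}\e^{4M})$; the first term is $o(\omegam)$, and the second is $o(\omegam)$ because $\eps^3 V\to\infty$ faster than any polylog, absorbing the $\e^{4M}=(\log\log(\eps^3 V))^4$ loss. Combining the two bounds completes the proof.

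The main obstacle will be the final error estimate: threading the ``only on'' decomposition through BK-Reimer without incurring a worse-than-$\omegam$ loss requires using the sharp supercritical triangle bound (Corollary \ref{extendedtriangle}) rather than the na\"ive triangle condition, and verifying that the parameter choices $M=\Mm$, $r=\Mm/\eps$, and $r_1\leq r_0$ leave enough room for $\e^{4M}$ and $(1+\eps)^{r_1}$ to be absorbed.
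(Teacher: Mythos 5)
Your proposal follows essentially the same route as the paper's proof: condition on $B_x(j_x)=A$, split via Claim \ref{offmethod} into the unrestricted term (lower-bounded by $\E|B(r_1)|-\E|B(2\mnot)|$, which costs only $O(\omegam)$ since $\E|B(2\mnot)|=O(\mnot)$ while $\E|B(r_1)|\geq c\eps^{-1}$) and an ``only on'' correction, open that correction with the BK inequality at a vertex of $A$, reassemble the sum over $A$ and over $j_x$ into a triangle rooted at $x$, and close it with Corollary \ref{extendedtriangle} --- exactly as in the paper. The only blemish is a bookkeeping slip in your final ratio, where you appear to divide by an extra factor of $\E|B(r_1)|$: the two terms should read $O(\omegam)$ and $O\big(\e^{4M}\alpham^{1/2}(\eps^3V)^{-1/2}\big)$ rather than $O(\omegam)\eps$ and $O(V^{-1}\eps^{-2}\e^{4M})$, but since both corrected terms are still $O(\omegam)$ the conclusion is unaffected.
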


\begin{proof}
We condition on $B_x(j_x)=A$ for any admissible $A$ (that is, $A$ for which the event
$x \lrejx a$ occurs and $\prob(B_x(j_x)=A) > 0$). Then,
    $$
    \prob(a \stackrel{P[2\mnot, r_1]}{\lrfill} u \off B_x(j_x)\setminus \{a\}\mid B_x(j_x)=A)
    =\prob(a \stackrel{P[2\mnot, r_1]}{\lrfill} u \off A\setminus \{a\}) \, ,
    $$
and
    $$
    \prob(a \stackrel{P[2\mnot, r_1]}{\lrfill}  u \off A\setminus \{a\})=
    \prob(a \stackrel{P[2\mnot, r_1]}{\lrfill} u) - \prob(a \stackrel{P[2\mnot, r_1]}{\lrfill} u \onlyon A\setminus \{a\}) \, .
    $$
Summing over the first term gives
    \eqan{
    \sum_{a,u,j_x\in [r,2r]}
    \prob(x \lrejx a)
    \prob(a \stackrel{P[2\mnot, r_1]}{\lrfill} u)
    &\geq
    \E|B([r,2r])| \big(\E|B(r_1)|-\E|B(2\mnot)|\big) \nonumber\\
    &=(1- O(\omegam))\E|B(r_1)| \E|B([r,2r])|\, ,
    \nonumber
    }
since $\E|B(2\mnot)|\leq C \mnot$ by Corollary \ref{corrlength}
and $\E|B(r_1)|\geq c\eps^{-1}$ by Theorem \ref{lowerball} and since $r_1 \geq \eps^{-1}$. It remains to bound the sum
    $$
    \sum_{a,u,j_x \in [r,2r]} \sum_{A} \prob(B_x(j_x)=A)
    \prob(a \stackrel{P[2\mnot, r_1]}{\lrfill} u \onlyon A\setminus \{a\}) \, .
    $$
As usual, if $a \stackrel{P[2\mnot, r_1]}{\lrfill} u \onlyon A\setminus \{a\}$ occurs, then there exists $z\in A$ such that
$\{ a \stackrel{r_1}{\lrfill} z \} \circ \{z \stackrel{r_1}{\lrfill} u\}$. BK inequality now gives
    \be\label{lowerbound.midsum}
    \sum_{a,u, j_x\in[r,2r]}\sum_{A} \prob(B_x(j_x)=A)
    \sum_{z \in A \setminus \{a\}} \prob(a \stackrel{r_1}{\lrfill} z) \prob(z \stackrel{r_1}{\lrfill} u) \, .
    \ee
We sum over $u$ and extract a factor of $\E|B(r_1)|$. We then change the
order of summation, so the sum simplifies to	
    $$
    \E|B(r_1)| \sum_{a,z\neq a, j_x\in[r,2r]}
    \prob(x \stackrel{=j_x}{\lrfill} a\, , x \stackrel{j_x}{\lrfill} z)
    \prob(a \stackrel{r_1}{\lrfill} z) \, .
    $$
We sum over $j_x$ (noting that the events $x \stackrel{=j_x}{\lrfill} a\, , x \stackrel{j_x}{\lrfill} z$ are disjoint as $j_x$ varies) and bound this sum by
    $$
    \E|B(r_1)| \sum_{a,z\neq a} \prob(x \stackrel{2r}{\lrfill} a, x \stackrel{2r}{\lrfill} z)
    \prob(a \stackrel{r_1}{\lrfill} z) \, .
    $$
As usual, if $x \stackrel{2r}{\lrfill} a$ and $x \stackrel{2r}{\lrfill} z$,
then there exists $z'$ such that the event
    $$
    \{ x \stackrel{2r}{\lrfill} z' \} \circ \{ z' \stackrel{2r}{\lrfill} z\}
    \circ \{ z' \stackrel{2r}{\lrfill} a\} \, ,
    $$
occurs. By the BK inequality we bound the above sum by
    $$
    \E|B(r_1)| \sum_{a,z', z \neq a} \prob(x \stackrel{2r}{\lrfill} z')
    \prob(z' \stackrel{2r}{\lrfill} z) \prob (z' \stackrel{2r}{\lrfill} a) \prob(a \stackrel{r_1}{\lrfill} z) \, .
    $$
We may now sum over $a$ and $z\neq a$ using Corollary \ref{extendedtriangle} and
then sum over $z'$ to get that this is bounded by
    $$
    C\E|B(r_1)| \E|B(2r)| \Big [ \omegam +
    {\E|B(r_1)| (\E|B(2r)|)^2 \over V} \Big ] \, .
    $$
This concludes our proof since the second term in the parenthesis is of
order at most $\alpham^{1/2} \e^{4M} (\eps^3 V)^{-1/2} \leq \alpham^{1/2}$ by the upper bound on $r_1$, our choice of $r$ and $M$ in \eqref{Mchoice} and Corollary \ref{r0ball}.
\end{proof}

\begin{lemma}
\label{lowerboundfinal} Assume the setting of Theorem \ref{mostsetsaregood}. There exists a $C>0$,
such that
    \eqan{
    \sum_{j_x=r}^{2r} \sum_{a,u} \prob\big (x \lrejx a,\, a \stackrel{P[2\mnot, r_1]}{\lrfill} u \off B_x(j_x)
    \setminus \{a\}, a \in \Piv(\{x \stackrel{j_x+r_1}{\lrfill} u\})\big)
    \geq  \big (1- O ( \omegam) \big )
    \E|B([r,2r])|\E|B(r_1)| \, . \nonumber
    }
\end{lemma}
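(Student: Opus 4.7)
The plan is to deduce Lemma \ref{lowerboundfinal} from Lemma \ref{lowerbound} by controlling the contribution of configurations in which $a$ fails to be pivotal. Writing
$$
S_{\mathrm{corr}} = \sum_{j_x=r}^{2r}\sum_{a,u}
\prob\big(x \lrejx a,\, a \stackrel{P[2\mnot, r_1]}{\lrfill} u \off B_x(j_x)\setminus\{a\},\,
a\notin \Piv(\{x \stackrel{j_x+r_1}{\lrfill} u\})\big),
$$
the sum in Lemma \ref{lowerboundfinal} equals the sum in Lemma \ref{lowerbound} minus $S_{\mathrm{corr}}$, so it will suffice to prove $S_{\mathrm{corr}} \leq C\omegam\,\E|B([r,2r])|\,\E|B(r_1)|$.

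The key combinatorial step will be to extract five edge-disjoint witnesses on the bad event. Fix open paths $\eta_1$ from $x$ to $a$ of length $j_x$ (inside $B_x(j_x)$), $\eta_2$ from $a$ to $u$ of length in $[2\mnot,r_1]$ with interior disjoint from $B_x(j_x)\setminus\{a\}$, and a path $\gamma$ from $x$ to $u$ of length at most $j_x+r_1$ avoiding $a$; such $\gamma$ exists by non-pivotality. Walking along $\gamma$ from $x$ to $u$ and tracking its intersections with $P = \eta_1\cdot\eta_2$, one locates consecutive intersections $w_1$ (on $\eta_1$, before $a$) and $w_2$ (on $\eta_2$, after $a$) such that the subpath of $\gamma$ from $w_1$ to $w_2$ has interior disjoint from $P$. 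The five connections $x\stackrel{2r}{\lrfill} w_1$, $w_1\stackrel{2r}{\lrfill} a$, $a\stackrel{r_1}{\lrfill} w_2$, $w_2\stackrel{r_1}{\lrfill} u$, and $w_1\stackrel{2r+r_1}{\lrfill} w_2$ then have edge-disjoint witnesses: the first four are pairwise disjoint segments of the simple path $P$, and the fifth uses only edges outside $P$.

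The remaining work is an application of BK-Reimer followed by standard diagrammatic bounds. Since the events $\{x\lrejx a\}$ are disjoint for distinct $j_x$, the sum $\sum_{j_x}$ in $S_{\mathrm{corr}}$ equals the probability of the union and therefore incurs no additional $j_x$-factor after BK. Carrying out $\sum_u \prob(w_2\stackrel{r_1}{\lrfill} u) = \E|B(r_1)|$ and $\sum_{w_1}\prob(x\stackrel{2r}{\lrfill} w_1) = \E|B(2r)|$, and bounding the remaining triangle on $w_1, a, w_2$ by Corollary \ref{extendedtriangle}, yields
$$
S_{\mathrm{corr}}
\leq \E|B(r_1)|\,\E|B(2r)|\cdot \Big( C\omegam + \tfrac{C}{V}\E|B(2r)|\,\E|B(r_1)|\,\E|B(2r+r_1)|\Big).
$$
The first contribution is $O(\omegam\,\E|B([r,2r])|\,\E|B(r_1)|)$ since $\E|B([r,2r])|=\Theta(\E|B(2r)|)$. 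For the second, Corollary \ref{r0ball} gives $\E|B(2r)|\leq C\eps^{-1}e^{2M}$ and $\E|B(r_1)|,\E|B(2r+r_1)|\leq C e^{2M}\sqrt{\alpham\eps V}$, so the task reduces to checking $\alpham e^{4M} = O(\omegam)$; this holds because the choice $M=\log\log\log(\eps^3 V \wedge \alpham^{-1}\wedge(\eps\mnot)^{-1})$ forces $\alpham^{-1}\geq \exp(\exp(\exp M))\gg e^{8M}$, whence $\alpham e^{4M}\ll\alpham^{1/2}\leq\omegam$.

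The hardest part will be the extraction of the five disjoint witnesses, specifically the verification that $\gamma$ can be chosen so that its bypass of $a$ has interior disjoint from $P$; this rests on the observation that $\gamma$, going from $x\in P$ to $u\in P$ while avoiding $a\in P$, must skip over $a$ between two of its visits to $P$, and taking any such skip defines $w_1, w_2$ with the required property. Beyond this step the argument is routine, combining the supercritical triangle bound of Corollary \ref{extendedtriangle} with the volume estimates of Corollary \ref{r0ball}.
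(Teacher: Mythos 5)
Your proposal is correct and follows essentially the same route as the paper: subtract off the non-pivotal contribution from Lemma \ref{lowerbound}, extract five edge-disjoint witnesses from the bypass path around $a$ (your $w_1,w_2$ are the paper's $z_1,z_2$), use disjointness of $\{x\lrejx a\}$ over $j_x$ to avoid an extra factor of $r$, apply BK together with Corollary \ref{extendedtriangle}, and check the error is $O(\omegam)$ via Corollary \ref{r0ball}. The only deviation is that you bound the bypass segment $w_1\to w_2$ by length $2r+r_1$ rather than the paper's $r_1$, which is the more conservative (and more easily justified) choice and still leaves the error term of order $\e^{O(M)}\alpham = o(\alpham^{1/2}) = O(\omegam)$.
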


\begin{proof} Fix $j_x \in [r,2r]$. We rely on Lemma \ref{lowerbound}, and bound the difference
in the probabilities appearing in Lemma \ref{lowerbound} and the one above. If the event
    \be
    \label{lowerboundfinal.goodevent}
    \{x \lrejx a,\, a \stackrel{P[2\mnot, r_1]}{\lrfill} u \off B_x(j_x)
    \setminus \{a\}\}
    \ee
occurs but $a \not\in \Piv(\{x \stackrel{j_x+r_1}{\lrfill} u\})$, then there
exist $z_1,z_2$ and $t \leq j_x$ and paths $\eta_1, \eta_2, \gamma_1, \gamma_2, \gamma_3$ such that
\begin{enumerate}
\item[(a)] $\gamma_1$ is an open path of length at most $r_1$ connecting $a$ to $z_2$,
\item[(b)] $\gamma_2$ is an open path of length at most $r_1$ connecting $z_2$ to $u$,
\item[(c)] $\gamma_3$ is an open path of length at most $r_1$ connecting $z_1$ to $z_2$,
\item[(d)] $\eta_1$ is a shortest open path of length precisely $t$ connecting $x$ to $z_1$,
\item[(e)] $\eta_2$ is a shortest open path of length precisely $j_x-t$ connecting $z_1$ to $a$,
\item[(f)] $\gamma_1, \gamma_2, \gamma_3, \eta_1, \eta_2$ are disjoint.
\end{enumerate}
%
\begin{figure}
\begin{center}
\includegraphics{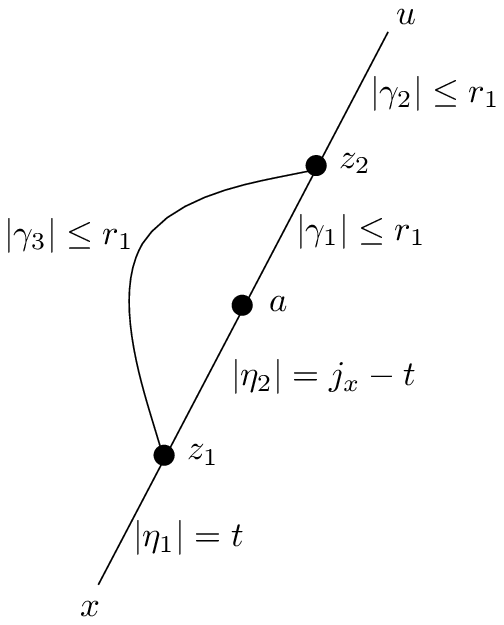}
\caption{$a$ is not pivotal for the event $x \stackrel{j_x+r_1}{\lrfill u}$.}
\label{fig.notpivotal}
\end{center}
\end{figure}
See Figure \ref{fig.notpivotal}. Indeed, assume that $a$ is not pivotal for $x \stackrel{j_x+r_1}{\lrfill} u$
and (\ref{lowerboundfinal.goodevent}) holds. Let $\eta$ be the first
lexicographical shortest open path of length $j_x$ between $x$ and $a$ and
$\gamma$ a disjoint open path of length in $[2\mnot,r_1]$ between $a$ and $u$
off $B_x(j_x)\setminus \{a\}$ which we are guaranteed to have since (\ref{lowerboundfinal.goodevent})
holds. Since $a$ is not pivotal we learn that there exists another open path $\beta$ between
$x$ and $u$ of length at most $j_x+r_1$ that does {\em not} visit $a$.
Hence, $\beta$ goes ``around'' $a$, or in formal words, there exists vertices
$z_1$ and $z_2$ on $\beta$ appearing on it in that order such that
$z_1 \in \eta$ and $z_2\in \gamma$ and the part of $\eta$ between $z_1$ and $z_2$ is disjoint from $\eta \cup \gamma$. We take $t < j_x$ to be such that $\eta(t)=z_1$ and put $\eta_1=\eta[0,t], \eta_2=\eta[t,j_x]$. We take $\gamma_3$ to be that section of $\beta$ between $z_1$ and $z_2$ and $\gamma_1, \gamma_2$ be the sections of $\gamma$ from $a$ to $z_2$ and from $z_2$ to $u$, respectively.


For all $j_x,t \in [r,2r]$ these events (that is, the existence of $z_1,z_2$ and the disjoint paths) are disjoint since $\eta_1$ and $\eta_2$ are required to be shortest open paths. The union of these events over $j_x,t$ implies that there exists $z_1,z_2$ such that
    $$ \{x \stackrel{2r}{\lrfill} z_1\} \circ \{z_1 \stackrel{2r}{\lrfill} a\} \circ \{a \stackrel{r_1}{\lrfill} z_2\} \circ
    \{z_1 \stackrel{r_1}{\lrfill} z_2\} \circ \{z_2 \stackrel{r_1}{\lrfill} u\} \, ,$$
since we can just take $\eta_1,\eta_2,\gamma_1,\gamma_2,\gamma_3$ as our disjoint witnesses. Using BK inequality we bound the required sum from above by
    $$ \sum_{\substack{a,u, z_1,z_2 \colon \\ z_1 \neq z_2, z_1 \neq a, z_2 \neq a}} \prob(x \stackrel{2r}{\lrfill} z_1)
    \prob(z_1 \stackrel{2r}{\lrfill} a) \prob(a \stackrel{r_1}{\lrfill} z_2)
    \prob(z_1 \stackrel{r_1}{\lrfill} z_2) \prob(z_2 \stackrel{r_1}{\lrfill} u) \, .
    $$
Summing first over $u$ extracts a factor of $\E|B(r_1)|$, and we sum over $a$ and $z_2$ using Corollary \ref{extendedtriangle}, and lastly sum over $z_1$. This gives a bound of
$$ C \E|B(r_1)| \E|B(2r)| \Big [ \omegam + {(\E|B(r_1)|)^2\E|B(2r)| \over V} \Big ] \, .$$
We apply Lemma \ref{lowerbound} to conclude the proof since the second term in the
parenthesis is of order at most $\alpham \e^{2M} \leq \alpham^{1/2}$ by the upper bound on $r_1$, our choice of $r$ and $M$ in \eqref{Mchoice} and Corollary \ref{r0ball}. Also note that $\E |B([r,2r)| \geq \E|B(2r)|/2$ by Corollary \ref{r0ball} and our choice of $r$ and $M$.
\end{proof}

\begin{lemma}
\label{converse} Assume the setting of Theorem \ref{mostsetsaregood}.
For any vertices $x,a$,
    \eqan{
    \sum_{j_x=r}^{2r} \sum_{u}
    \prob\big (x \lrejx a,\,
    a \stackrel{P[2\mnot, r_1]}{\lrfill} u \off  B_x(j_x)\setminus \{a\}\big )
    \leq (1 + O ( \omegam) ) V^{-1}
    \E|B(r_1)|\E|B([r,2r])| \, .\nonumber
    }
\end{lemma}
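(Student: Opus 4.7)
The plan is to decouple the two events by conditioning on $B_x(j_x)=A$, apply monotonicity to discard the ``off'' restriction, and finish with the uniform connection bound of Lemma \ref{unifconnbd2}. First, conditional on $B_x(j_x)=A$ the event $\{x\lrejx a\}$ is deterministic (it holds iff $a\in\partial A$), so the identity \eqref{offdescription} gives
\[
\prob\big(x\lrejx a,\, a \stackrel{P[2\mnot, r_1]}{\lrfill} u \off B_x(j_x)\setminus\{a\}\big)
=\sum_{A\colon a\in\partial A}\prob(B_x(j_x)=A)\,\prob\big(a \stackrel{P[2\mnot, r_1]}{\lrfill} u \off A\setminus\{a\}\big).
\]
The event $\{a\stackrel{P[2\mnot, r_1]}{\lrfill} u\}$ is monotone increasing, so forcing all edges touching $A\setminus\{a\}$ to be closed can only reduce its probability, yielding $\prob(a\stackrel{P[2\mnot, r_1]}{\lrfill} u \off A\setminus\{a\})\leq\prob(a \stackrel{P[2\mnot, r_1]}{\lrfill} u)$. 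Summing back over $A$ gives
\[
\prob\big(x\lrejx a,\, a \stackrel{P[2\mnot, r_1]}{\lrfill} u \off B_x(j_x)\setminus\{a\}\big)\leq\prob(x\lrejx a)\,\prob\big(a \stackrel{P[2\mnot, r_1]}{\lrfill} u\big).
\]

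Next, summing over $u$ and using transitivity together with the containment $\{a\stackrel{P[2\mnot, r_1]}{\lrfill} u\}\subseteq\{a\stackrel{r_1}{\lrfill} u\}$ gives $\sum_u\prob(a\stackrel{P[2\mnot, r_1]}{\lrfill} u)\leq\E|B(r_1)|$. Summing over $j_x\in[r,2r]$, the events $\{x\lrejx a\}$ are disjoint across $j_x$, so $\sum_{j_x=r}^{2r}\prob(x\lrejx a)=\prob(x\lrer a)$. Lemma \ref{unifconnbd2} then yields $\prob(x\lrer a)\leq (1+O(\omegam))V^{-1}\E|B([r-\mnot,2r-\mnot])|$. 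Since $\mnot=o(\eps^{-1})$ and $r=M\eps^{-1}$ with $r_1\leq r_0$ both in the regime where Corollary \ref{r0ball} applies, we have $\E|B([r-\mnot,2r-\mnot])|=(1+O(\eps\mnot))\E|B([r,2r])|=(1+O(\omegam))\E|B([r,2r])|$. Multiplying the three estimates produces the asserted bound.

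The only subtlety is the monotonicity step: it goes through precisely because we work with the path-existence event $\{a\stackrel{P[2\mnot, r_1]}{\lrfill} u\}$, which is increasing, rather than a shortest-path event like $\{a\stackrel{=t}{\lrfill} u\}$ (for which closing edges could actually create the event by forcing a longer shortest path, as discussed in Section \ref{sec-off}). All quantitative work---namely, the uniform spread of connection probabilities over the graph---is already packaged in Lemma \ref{unifconnbd2}, so no further delicate estimate is required here.
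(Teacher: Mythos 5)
Your proof is correct, and its overall architecture coincides with the paper's: sum over $u$ to extract $\E|B(r_1)|$, use disjointness over $j_x$ to collapse $\sum_{j_x}\prob(x\lrejx a)$ into $\prob(x\lrer a)$, apply Lemma \ref{unifconnbd2}, and absorb the shift from $[r-\mnot,2r-\mnot]$ to $[r,2r]$ into the $O(\omegam)$ error. The one place you diverge is the decoupling of the two events: the paper invokes the BK--Reimer inequality, exhibiting as witnesses the lexicographically first shortest path for $\{x\lrejx a\}$ (together with all closed edges) and the open path of length in $[2\mnot,r_1]$ for the second event, whereas you condition on $B_x(j_x)=A$ as in \eqref{offdescription}, note that the off-event is measurable with respect to the unrevealed edges, and then use monotonicity of the path-existence event $\{a\stackrel{P[2\mnot,r_1]}{\lrfill}u\}$ to drop the ``off'' restriction. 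Your route is slightly more elementary (no Reimer-type witness bookkeeping for a non-monotone event) and is in fact the same conditioning identity the paper itself uses in the proof of Lemma \ref{lowerbound}; the BK--Reimer route is marginally more flexible since it would survive even if the second event were not increasing. Both yield exactly the bound $\prob(x\lrejx a)\prob(a\stackrel{r_1}{\lrfill}u)$, so the two arguments are interchangeable here, and your observation that the $P[a,b]$-events were designed precisely to make the monotonicity step legitimate is the right one.
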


\begin{proof} The event $x \lrejx a,\, a \stackrel{P[2\mnot, r_1]}{\lrfill} u \off  B_x(j_x)\setminus \{a\}$ implies that
$$ \{ x \lrejx a \} \circ \{ a \stackrel{r_1}{\lrfill} u\} \, ,$$
the second witness is the open edges of an open path of length in $[2\mnot, r_1]$ off $B_x(j_x) \setminus \{a\}$ and the first witness is the lexicographically first shortest open path of length  $j_x$ between $x$ and $a$ together with all the closed edges of the graph. The BK-Reimer inequality gives that
    $$
    \prob\big (x \lrejx a,\, a \stackrel{P[2\mnot, r_1]}{\lrfill} u \off  B_x(j_x)\setminus \{a\}\big )
    \leq \prob(x \lrejx a) \prob(a \stackrel{r_1}{\lrfill} u) \, .
    $$
We sum over $u$ and $j_x\in[r,2r]$ get that the sum is bounded by
    $$ \E|B(r_1)| \prob(x \lrer a) \, .
    $$
Lemma \ref{unifconnbd2} gives that
    $$
    \prob(x \lrer a ) \leq { (1 + O (\omegam)  ) V^{-1} \E|B([r-\mnot,2r-\mnot])|} \, .
    $$
We have that
    \eqan{
    \E |B([r-\mnot,2r-\mnot])| &\leq \E|B([r,2r])|+\E|B([r-\mnot,r])|\leq (1 + O(\eps \mnot)) \E|B([r,2r])| \,\nonumber
    }
since $\E|B([r-\mnot,r])| \leq C\mnot (1+\eps)^r$ by Theorem \ref{bdry} and
Corollary \ref{corrlength} and since $\E|B([r,2r]) \geq c\eps^{-1}(1+\eps)^{2r}$ by Theorem \ref{lowerball} and Lemma \ref{upperball} (we use the assumption that $r \gg \eps^{-1}$). Hence
    \eqn{
    \label{unif-a-bd}
    \prob(x \lrer a ) \leq { (1+ O(\omegam) ) V^{-1} \E|B([r,2r])|} \, ,
    }
concluding our proof.
\end{proof}



\noindent{\bf Proof of Theorem \ref{mostsetsaregood}.} By combining Lemmas \ref{lowerboundfinal} and \ref{converse} we deduce that for any $x$ there exist at least $(1- O(\omegam^{1/2}))V$ vertices $a$ such that
    \eqan{
    \sum_{j_x=r}^{2r} \sum_{u} \prob\big (x \lrejx a,\,
    a \stackrel{P[2\mnot, r_1]}{\lrfill} u \off  B_x(j_x)\setminus \{a\}, a \in \Piv(\{x \stackrel{j_x+r_1}{\lrfill} u\})\big )
    = (1+ O(\omegam^{1/2}) ) V^{-1} \E|B(r_1)| \E|B([r,2r])|\, .\nonumber
    }

Write $\widetilde{G}$ for the variable
    $$
    \widetilde{G} = \sum _{j_x=r}^{2r} G_{j_x, r_1}(a, x; B_x(j_x)) {\bf 1}_{\{x \lrejx a\}} \, .
    $$
Note that $\widetilde{G}$ is a random variable that is measurable with
respect to $B_x(2r)$ (that is, it is determined by the status of the edges touching $B_x(2r)$) and that it equals $0$ unless $x \lrer a$. Furthermore,
only one of the summands can be nonzero because the events
in the indicators are disjoint. Our previous inequality can be rewritten as
    $$
    \E \widetilde{G} = (1+ O(\omegam^{1/2}) ) V^{-1} \E|B(r_1)| \E|B([r,2r])| \, .
    $$
Hence, for at least $(1-O(\omegam^{1/2}))V$ vertices $a$,
    \be\label{mostsets.firstmom}
    \E \big [ \widetilde{G} \, \big | \, x \lrer a \big ] \geq (1-O(\omegam^{1/2}))\E|B(r_1)| \, ,
    \ee
by Lemma \ref{unifconnbd2}. This gives the conditional first moment estimate. The second moment calculation is somewhat easier. We have
    \eqan{
    \E \widetilde{G}^2 = \sum_{j_x=r}^{2r} \sum_{u_1,u_2}
    \E \Big [ \prob\big(a \stackrel{P[2\mnot, r_1]}{\lrfill} u_1 \off  B_x(j_x)\setminus \{a\}\mid B_x(j_x)\big)
    \prob\big(a \stackrel{P[2\mnot, r_1]}{\lrfill} u_2 \off  B_x(j_x)\setminus \{a\}\mid B_x(j_x)\big)
    {\bf 1}_{\{x \lrejx a\}}\Big ]\, .
    \nonumber
    }
We bound, almost surely in $B_x(j_x)$ and for $i=1,2$,
    $$
    \prob(a \stackrel{P[2\mnot, r_1]}{\lrfill} u_i \off  B_x(j_x)\setminus \{a\}\mid B_x(j_x))
    \leq
    \prob(a \stackrel{r_1}{\lrfill} u_i),
    $$
and sum over $u_1$ and $u_2$ to get that
    \begin{eqnarray*}
    \E \widetilde{G}^2 {\bf 1}_{\{x \lrer a \}} \leq \big [\E|B(r_1)|\big ]^2
    \prob(x \lrer a) \, ,
    \end{eqnarray*}
so that
    $$
    \E \big [\widetilde{G}^2 \, \big | \, x \lrer a \big ] \leq \big [\E|B(r_1)| \big ]^2\, .
    $$
Combining this with (\ref{mostsets.firstmom}), we obtain
    $$
    {\rm Var}\big(\widetilde{G}\big | \, x \lrer a\big) = O\big ([\E|B(r_1)|]^2 \omegam^{1/2}  \big ) \, .$$
By Chebychev's inequality, for any $\beta>0$,
    $$
    \prob\big ( \widetilde{G} \leq (1-\beta) \E|B(r_1)| \, \big | \,  x \lrer a \big)
    = O( \beta^{-2}\omegam^{1/2})\, .
    $$
Recall that this holds for at least $(1-O(\omegam^{1/2}))V$ vertices $a$.
Call these vertices {\em valid}. We have
    $$
    \sum_{a \hbox{\small{\rm~valid}}}
    \prob\Big (x \lrer a, \widetilde{G} \leq (1-\beta) \E|B(r_1)|\Big ) = O \big ( \E|B([r,2r])| \beta^{-2} \omegam^{1/2} \big ) \, ,
    $$
by our previous estimate. Also, since there are at most $O(\omegam^{1/2} V)$ invalid $a$'s,
we apply \eqref{unif-a-bd} to bound the sum over all $a$ by
    $$
    \sum_{a} \prob\Big (x \lrer a,  \widetilde{G} \leq (1-\beta) \E|B(r_1)|\Big) =
    O \big ( \E|B([r,2r])| \beta^{-2} \omegam^{1/2} \big )\, .
    $$
Returning to our original notation, we rewrite this as
    $$
    \sum_{j_x=r}^{2r}
    \sum_{a} \prob\Big (x \lrejx a, G_{j_x,r_1}(a, x; B_x(j_x))
    \leq (1-\beta) \E|B(r_1)|\Big) = O \big ( \E|B([r,2r])| \beta^{-2} \omegam^{1/2} \big ) \, .
    $$
Hence, there are at least $(1-O(\omegam^{1/4}))r$ radii $j_x \in [r,2r]$ such that
    \begin{eqnarray*}
    \sum_{a} \prob\Big (x \lrejx a, G_{j_x,r_1}(a, x; B_x(j_x))
    \leq (1-\beta) \E|B(r_1)|\Big) &=& O \big ( \E|B([r,2r])| r^{-1} \beta^{-2} \omegam^{1/4} \big ) \\
    &=& O(\e^{2M} \beta^{-2} \omegam^{1/4} )\, ,
    \end{eqnarray*}
where the last inequality is by Lemma \ref{upperball}.
Given such $j_x$, write $X(j_x)$ for the random variable
    $$
    X(j_x) = \Big | \Big \{ a \colon x \lrejx a, G_{j_x,r_1}(a,x;  B_x(j_x))
    \leq (1-\beta) \E|B(r_1)| \Big \}\Big | \, ,
    $$
so that $\E X(j_x)\leq C \e^{2M} \beta^{-2} \omegam^{1/4}$. The variable $X(j_x)$ equals the
number of $(\beta,j_x,r_1)$-non-regenerative vertices. By
Markov's inequality we get that for any $\delta>0$
    $$
    \prob(X(j_x) \geq \delta \eps^{-1}) = O\big (\eps \delta^{-1} \beta^{-2} \e^{2M} \omegam^{1/4} \big ) \, ,
    $$
and we conclude by Lemma \ref{inttail} that
at least $(1-O(\omegam^{1/4}))r$ radii $j_x\in[r,2r]$ satisfy
    $$
    \prob(\partial B_x(j_x) \neq \emptyset \and X(j_x) \leq \delta \eps^{-1})
    \geq  \big (1- O(\delta^{-1} \beta^{-2} \e^{2M} \omegam^{1/4}) \big ) \prob(\partial B_x(j_x) \neq \emptyset) \, ,
    $$
as required.
\qed
\medskip

\section{Large clusters are close}
\label{sec-large-clusters-close}

In this section, we prove Theorem \ref{alphapairs} which shows that many
closed edges exist between most large clusters. This section involves
all our notation from the previous sections and in order to ease the
readability we include a legend in terms of $V, \m,\mnot, \epsm, \alpham$
which are given in Theorem \ref{mainthmgeneral} in Table 1
at the end of the paper. We define $\beta, k, \ell, \zeta, \delta$ as
    \eqn{
    \label{parameter-def-1}
    \beta= (\log M)^{-2},  \qquad k = {M \over \log M}, \qquad \ell = (\log M)^{1/4},
    \qquad \zeta=(\log M)^{-1/8},
    \qquad
    \delta=\zeta/2.
    }
For notational convenience we also denote
    \begin{eqnarray*} \{ a \lrefmpx u \} &=& \{ a \stackrel{P[2\mnot,r_0]}{\lrfill} u \} \cap \{ a \in \Piv(x \stackrel{j_x+r_0}{\lrfill} u)\} \, ,\\
    \{ b \lrefmpy u' \} &=& \{ b \stackrel{P[2\mnot,r_0]}{\lrfill} u' \} \cap \{ b \in \Piv(y \stackrel{j_y+r_0}{\lrfill} u') \} \, .
    \end{eqnarray*}

Let $S_{j_x,j_y,r_0}(x,y)$ be the random variable counting the number of edges
$(u,u')$ such that there exist vertices $a,b$ with
\begin{enumerate}
\item $\A(x,y,j_x,j_y)$,
\item $x \stackrel{=j_x}{\lrfill} a$ and $y \stackrel{=j_y}{\lrfill} b$ and
\item $a \lrefmpx u$ and
\item $b \lrefmpy u'$ off $B_x(j_x+r_0)$.
\end{enumerate}

Further define
    \begin{eqnarray*}
    \widehat{S}_{2r,2r,r_0}(x,y)
    = \big | \big \{ (u,u') \colon \{x \stackrel{2r+r_0}{\lrfill} u\}\circ\{y \stackrel{2r+r_0}{\lrfill} u'\} \, ,
    |B_u(2r+r_0)|\cdot |B_{u'}(2r+r_0)| \geq \e^{40M} \eps^{-2} (\E|B(r_0)|)^2 \big \} \big | \, .
    \end{eqnarray*}
We will use the fact that for any $j_x,j_y \in \{r, \ldots, 2r\}^2$
    \be
    \label{usefulfact}
    S_{2r+r_0}(x,y) \geq S_{j_x,j_y,r_0}(x,y) - \widehat{S}_{2r,2r,r_0}(x,y) \, ,
    \ee
where $S_{2r+r_0}(x,y)$ is the random variable defined above
Theorem \ref{alphapairs}. Finally, write $\A(x,y,j_x,j_y, r_0, \beta, k)$
for the intersection of the events
\begin{enumerate}
\item $\A(x,y,j_x,j_y)$,
\item $|B_x(j_x)| \leq \eps^{-2} (1+\eps)^{3r}$ and $|B_y(j_y)|\leq \eps^{-2}(1+\eps)^{3r}$ and $|\partial B_x(j_x)| \leq \eps^{-1} (1+\eps)^{3r}$,
\item $|\partial B_x(j_x)| \geq \e^{k/4} \eps^{-1}$ and $|\partial B_y(j_y)| \geq \e^{k/4} \eps^{-1}$,
\item $x$ is $(1,\beta,j_x,r_0)$-fit and $y$ is $(1,\beta,j_y,r_0)$-fit,
\item
    $$
    \E\Big[ S_{j_x,j_y,r_0}(x,y) {\bf 1}_{\{\partial B_x(j_x) \stackrel{2r_0}{\lrfill} \partial B_y(j_y)\}} \mid B_x(j_x), B_y(j_y)\Big] \leq { V^{-1} \m \eps^{-2} (\E|B(r_0)|)^2} \alpham^{1/2} \, .
    $$
\end{enumerate}
\noindent This event is measurable with respect to $B_x(j_x), B_y(j_y)$.
The following three statements will prove Theorem \ref{alphapairs}:

\begin{lemma}
\label{errorfromlargeu}
Assume the setting of Theorem \ref{alphapairs}. Then,
    $$
    \E \big | \big \{ x,y \colon \A(x,y,2r,2r) \and \widehat{S}_{2r,2r,r_0}(x,y)
    \geq \beta^{1/2} V^{-1} \m \eps^{-2}(\E|B(r_0)|)^2 \big \} \big | = o(\eps^2 V^2) \, .
    $$
\end{lemma}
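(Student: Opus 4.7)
The plan is to apply Markov's inequality. Writing $\tau=\beta^{1/2}V^{-1}\m\eps^{-2}(\E|B(r_0)|)^2$ for the threshold in the statement, the cardinality is bounded above by $\tau^{-1}\sum_{x,y}\E[\widehat{S}_{2r,2r,r_0}(x,y)\1_{\A(x,y,2r,2r)}]$, so it suffices to show
\[
\sum_{x,y}\E\widehat{S}_{2r,2r,r_0}(x,y)=o(\eps^2 V^2\tau)=o\big(\beta^{1/2}\m V(\E|B(r_0)|)^2\big),
\]
after dropping $\1_{\A}$. Abbreviate $L=2r+r_0$ and set $K=\e^{20M}\eps^{-1}\E|B(r_0)|$, so that the threshold $\e^{40M}\eps^{-2}(\E|B(r_0)|)^2$ in the definition of $\widehat{S}$ equals $K^2$. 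Since $|B_u(L)||B_{u'}(L)|\geq K^2$ forces $\max(|B_u(L)|,|B_{u'}(L)|)\geq K$, the symmetry between $u$ and $u'$ reduces the task to bounding
\[
\sum_{x,y}\sum_{(u,u')}\prob\big(\{x\stackrel{L}{\lrfill} u\}\circ\{y\stackrel{L}{\lrfill} u'\}\cap\{|B_u(L)|\geq K\}\big).
\]

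The main ingredient is a BK--Reimer bound applied to the increasing events $A=\{x\stackrel{L}{\lrfill} u,\,|B_u(L)|\geq K\}$ and $B=\{y\stackrel{L}{\lrfill} u'\}$; granting the inclusion $\{x\stackrel{L}{\lrfill} u\}\circ\{y\stackrel{L}{\lrfill} u'\}\cap\{|B_u(L)|\geq K\}\subseteq A\circ B$, one obtains
\[
\prob(\cdots)\leq \prob(x\stackrel{L}{\lrfill} u,\,|B_u(L)|\geq K)\,\prob(y\stackrel{L}{\lrfill} u').
\]
By transitivity $\sum_x\prob(x\stackrel{L}{\lrfill} u,\,|B_u(L)|\geq K)=\E[|B(L)|\1_{|B(L)|\geq K}]$ and $\sum_y\prob(y\stackrel{L}{\lrfill} u')=\E|B(L)|$; summing over the $O(V\m)$ edges then gives
\[
\sum_{x,y}\E\widehat{S}_{2r,2r,r_0}(x,y)\leq CV\m\,\E[|B(L)|\1_{|B(L)|\geq K}]\,\E|B(L)|.
\]

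To close, Markov in the form $\E[|B(L)|\1_{|B(L)|\geq K}]\leq \E|B(L)|^2/K$ combined with Lemma~\ref{upperballsqaured} yields $\E|B(L)|^2\leq C\eps^{-1}(\E|B(L)|)^2$; its hypothesis holds because Corollary~\ref{r0ball} gives $\E|B(L)|=\Theta(\e^{2M}\E|B(r_0)|)=\Theta(\e^{2M}\sqrt{\alpham\eps V})$, which is much less than $\eps^2V/(\log\eps^3V)^4$. Substituting and using the definition of $K$ collapses the tail expectation to $C\e^{-16M}\E|B(r_0)|$, so $\sum_{x,y}\E\widehat{S}_{2r,2r,r_0}(x,y)\leq CV\m\,\e^{-14M}(\E|B(r_0)|)^2$. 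Dividing by the target $\beta^{1/2}\m V(\E|B(r_0)|)^2$ leaves $C\e^{-14M}/\beta^{1/2}=C\e^{-14M}\log M$, which tends to $0$ because $M\to\infty$ and $\beta^{-1/2}=\log M$ by~\eqref{parameter-def-1}.

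The one delicate point I expect to have to justify is the BK--Reimer inclusion above: given disjoint witnesses $P_1,P_2$ for $\{x\stackrel{L}{\lrfill} u\}$ and $\{y\stackrel{L}{\lrfill} u'\}$ respectively, one must exhibit an open certificate for $\{|B_u(L)|\geq K\}$ that is edge-disjoint from $P_2$, so that $P_1$ together with this certificate witnesses $A$ edge-disjointly from $P_2$. Since $|P_2|\leq L\ll K\asymp \E|B(r_0)|$, a BFS from $u$ that refuses to traverse the edges of $P_2$ will still reach at least $K$ vertices within $L$ steps; the small residual contribution where it fails can be absorbed via a tree-graph bound in the spirit of Lemma~\ref{shorttriangle}. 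This is the only real obstacle; the rest of the argument is bookkeeping.
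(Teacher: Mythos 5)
Your overall architecture --- Markov's inequality, the symmetry reduction to $\{|B_u(L)|\geq K\}$ with $K=\e^{20M}\eps^{-1}\E|B(r_0)|$, and the tail estimate for $\E\big[|B(L)|{\bf 1}_{\{|B(L)|\geq K\}}\big]$ via Lemma \ref{upperballsqaured} --- matches the paper's proof, and your plain Markov bound in place of the paper's Cauchy--Schwarz step is fine (the arithmetic closes either way). The genuine gap is exactly at the point you flag, and your proposed repair does not work. The inclusion $\{x\stackrel{L}{\lrfill}u\}\circ\{y\stackrel{L}{\lrfill}u'\}\cap\{|B_u(L)|\geq K\}\subseteq A\circ B$ can fail: knowing that the witness $P_2$ for $\{y\stackrel{L}{\lrfill}u'\}$ has at most $L$ edges tells you nothing about how many vertices of $B_u(L)$ survive when those edges are forbidden. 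A single edge of $P_2$ incident to $u$ may be a cut edge of $\C(u)$, in which case a BFS from $u$ avoiding $P_2$ reaches $O(1)$ vertices rather than $K$; the comparison $|P_2|\leq L\ll K$ is irrelevant, since deleting $L$ edges can delete arbitrarily many vertices from the ball. Moreover, the ``residual contribution where it fails'' is essentially the event that the $y$--$u'$ witness meets $B_u(L)$, i.e.\ roughly $\{u\stackrel{2L}{\lrfill}u'\}$ summed over $x,y,(u,u')$. This is not a short-connection error absorbable ``in the spirit of Lemma \ref{shorttriangle}'' (which only treats connections of length at most $\mnot$); it requires the full strength of Lemma \ref{connectearlyerror}(1), a substantive estimate whose proof occupies a page and which yields the bound $C[\m\eps^{-5}(1+\eps)^{4L}+\alpham V\m\eps^{-2}(1+\eps)^{2L}]$ that must then be checked against the target $o(\beta^{1/2}\m V(\E|B(r_0)|)^2)$.

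The paper resolves this by splitting the sum according to whether $B_u(L)\cap B_{u'}(L)=\emptyset$. On that event every witness of $\{x\stackrel{L}{\lrfill}u\}\cap\{|B_u(L)|\geq K\}$ (an open path from $x$ to $u$, which lies in $B_u(L)$, together with a BFS tree on $\geq K$ vertices of $B_u(L)$) uses only edges with both endpoints in $B_u(L)$, while every witness of $\{y\stackrel{L}{\lrfill}u'\}$ uses only edges with both endpoints in $B_{u'}(L)$; disjointness of the balls therefore forces disjointness of the witnesses, BK applies cleanly, and one obtains exactly your product bound (this is the term $S_1$). On the complementary event one has $u\stackrel{2L}{\lrfill}u'$, the large-ball condition is simply discarded, and Lemma \ref{connectearlyerror}(1) is invoked (the term $S_2$). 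You need to add this case split and the appeal to Lemma \ref{connectearlyerror}(1); as written, your argument does not close.
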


\begin{theorem}
\label{largeclose.findradii}
Assume the setting of Theorem \ref{alphapairs}. Then there exists
radii $j_1, \ldots, j_\ell \in [r,2r]$ such that for at least $(1-o(1))V^2$ pairs $x,y$,
    $$
    \prob \Big ( \A(x,y,2r,2r) \and \bigcap _{j_x,j_y \in \{j_1,\ldots, j_\ell\}^2}
    \A(x,y,j_x,j_y,r_0,\beta,k)^c \Big ) = o(\eps^2) \, .
    $$
\end{theorem}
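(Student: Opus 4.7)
The plan is to apply the intrinsic regularity theorem (Theorem \ref{mostsetsaregood}) to locate $\ell$ good radii, then to show that for most pairs $(x,y)$ the only way all $\ell^2$ products can fail the event $\A(x,y,j_x,j_y,r_0,\beta,k)$ is through a low-probability event. First I would apply Theorem \ref{mostsetsaregood} with $r_1 = r_0$, $\delta = \zeta/2$ and $\beta$ as in \eqref{parameter-def-1}: since $\delta^{-1}\beta^{-2}\e^{2M}\omegam^{1/4} = o(1)$ under the parameter choices, it yields at least $(1-O(\omegam^{1/4}))r$ admissible radii in $[r,2r]$ at which $x$ is $(1,\beta,j,r_0)$-fit on survival with conditional probability $1-o(1)$. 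From these I select $\ell = (\log M)^{1/4}$ radii $j_1 < \cdots < j_\ell$ with pairwise gaps $\gg \mnot$. Throughout, condition (1) of $\A(x,y,j_x,j_y,r_0,\beta,k)$ is automatic on $\A(x,y,2r,2r)$ for $(j_x,j_y) \in [r,2r]^2$, since surviving to depth $2r$ implies surviving to every $j \le 2r$ and disjointness of balls at radius $2r$ implies disjointness at smaller radii.

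The $x$-side-only conditions (parts of (2), (3), (4)), and the symmetric $y$-side conditions, can be controlled by first factoring the disjoint survival event via Lemma \ref{offplusreimer} combined with Lemma \ref{inttail} to extract a factor $(2+o(1))\vep$ from the other side. For condition (2), Markov together with Lemma \ref{upperball} and Theorem \ref{bdry} gives probability $O(\vep\e^{-M})$ per radius for the event that $|B_x(j_i)|$ or $|\partial B_x(j_i)|$ exceeds its cap; summing over the $\ell$ radii and multiplying by the $\vep$-factor from disjoint survival on the other side yields $o(\vep^2)$. For condition (4), Theorem \ref{mostsetsaregood} directly bounds the probability of non-fitness at each $j_i$ on survival by $o(\vep)$, again summing to $o(\vep^2)$. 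For the joint condition (5), I would bound the unconditional expectation of $S_{j_x,j_y,r_0}(x,y)\mathbf{1}_{\{\partial B_x(j_x)\stackrel{2r_0}{\lrfill}\partial B_y(j_y)\}}$: expand $S$ over the quadruples $(a,b,u,u')$, apply BK-Reimer to split off the short connection $\partial B_x(j_x)\stackrel{2r_0}{\lrfill}\partial B_y(j_y)$, and use the uniform connection bound (Lemma \ref{unifconnbd2}) to pay a factor $V^{-1}\expec|B(2r_0)| = O(\alpham\vep^2)$ from that short connection, where I use the choice of $r_0$ in \eqref{rzerocond} together with Corollary \ref{r0ball}. The resulting total expectation is much smaller than the threshold $\alpham^{1/2}V^{-1}\m\vep^{-2}(\expec|B(r_0)|)^2$, and a Markov argument over the joint law of $(B_x(j_x),B_y(j_y))$ promotes this to condition (5) holding off a set of ball configurations of total weight $o(\vep^2)$.

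The principal obstacle is condition (3), which requires that for \emph{at least one} pair $(j_i,j_j)$, both $|\partial B_x(j_i)|$ and $|\partial B_y(j_j)|$ exceed $\e^{k/4}\vep^{-1}$. A single-scale bound
$$
\prob\big(|\partial B_x(j_i)| < \e^{k/4}\vep^{-1},\, \partial B_x(2r)\ne\emptyset\big) \le 2\vep \cdot \e^{k/4}\vep^{-1}\cdot\prob(\partial B_x(j_i)\ne\emptyset) = O(\vep\e^{k/4})
$$
is far too weak, since $\e^{k/4}= \e^{M/(4\log M)}\to\infty$. The strategy is to exploit the $\ell$ radii \emph{jointly}: failure at every $j_i$ means the intrinsic boundary stays below $\e^{k/4}\vep^{-1}$ at all $\ell$ well-separated scales while the cluster still reaches depth $2r$. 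Iterating the conditional survival bound $\prob(\partial B_x(j_{i+1})\ne\emptyset \mid \partial B_x(j_i)) \le 2\vep|\partial B_x(j_i)|$ (from Lemma \ref{inttail}) together with the supercritical growth $\expec|\partial B_x(j_{i+1})|\asymp(1+\vep)^{j_{i+1}-j_i}\expec|\partial B_x(j_i)|$ should drive this joint probability to be exponentially small in $\ell$. Making this iteration rigorous is the delicate part: the naive multiplicative bound fails because $2\vep K = 2\e^{k/4}>1$, so one must instead use that surviving the next scale while remaining below the threshold is a mass-loss event whose probability decays geometrically in the gap $j_{i+1}-j_i$. The parameter choices $\ell=(\log M)^{1/4}$ and $k=M/\log M$ in \eqref{parameter-def-1} are tuned precisely so that this geometric decay, combined with the $\vep$-factor from the other side of the disjoint survival, produces the required $o(\vep^2)$ estimate for a $(1-o(1))V^2$-fraction of pairs $(x,y)$.
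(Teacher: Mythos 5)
Most of your outline for conditions (1), (2), (4) and (5) tracks the paper's proof (the paper handles (4) by conditioning on $B_x(j_x)=A$ rather than by BK--Reimer, precisely because fitness is non-monotone, but since fitness is measurable with respect to the edges touching $B_x(j_x)$ your factorization amounts to the same thing; and (5) is done via Lemma \ref{connectearlyerror}(2) rather than a direct BK split, but the spirit is similar). The genuine gap is in condition (3), which you correctly identify as the principal obstacle but do not actually resolve. You diagnose that the naive union bound $\prob(\partial B_x(j_{i+1})\neq\emptyset\mid |\partial B_x(j_i)|\leq \e^{k/4}\eps^{-1})\leq C\eps\cdot \e^{k/4}\eps^{-1}\gg 1$ is useless, and then assert that ``surviving the next scale while remaining below the threshold is a mass-loss event whose probability decays geometrically in the gap.'' That is not the mechanism, and I do not see how to make it work: staying below $\e^{k/4}\eps^{-1}$ while surviving is \emph{not} a small-probability event per unit length in any useful quantitative sense at these scales. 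The paper's actual argument is a conditional second-moment (Paley--Zygmund) step, Lemma \ref{largeclose.findlarge}: conditioned on $B_x(k_i)=A$ with $x$ being $(\delta,\beta,k_i,k\eps^{-1})$-\emph{fit} and $|\partial A|\geq\zeta\eps^{-1}$, the first moment of $|B_x([k_i+k\eps^{-1}/2,k_i+k\eps^{-1}])|$ is $\asymp|\partial A|\,\E|B(k\eps^{-1})|$ (this is exactly where fitness with radius parameter $r_1=k\eps^{-1}$ enters, to prevent the conditioning on the past from killing the future), the second moment is $\leq C(\E|B(k\eps^{-1})|)^2(|\partial A|^2+|\partial A|\eps^{-1})$, and \eqref{X>a-prob-bd} gives a probability at least $c\zeta$ that the boundary at $j_i=k_i+k\eps^{-1}/2$ \emph{exceeds} $\e^{k/4}\eps^{-1}$. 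Iterating over the $\ell$ well-separated scales then yields $(1-c\zeta)^{\ell}=o(1)$ because $\zeta^{-1}=o(\ell)$, with the event $\{|\partial B_x(k_t)|\geq\zeta\eps^{-1}\ \forall t\}$ controlled separately by Lemma \ref{nottoosmallbdry}.

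Two of your concrete parameter choices are also incompatible with this repair. First, you invoke Theorem \ref{mostsetsaregood} only with $r_1=r_0$; the iteration for condition (3) additionally requires fitness at the radii $k_i$ with $r_1=k\eps^{-1}$ (and the paper in fact applies the regularity theorem three times, at $k_i$ with $r_1=k\eps^{-1}$, at $k_i$ with $r_1=r_0$, and at $k_i+k\eps^{-1}/2$ with $r_1=r_0$). Second, your radii are separated by gaps $\gg\mnot$, which is far too small: the boundary must be able to grow from $\zeta\eps^{-1}$ to $\e^{k/4}\eps^{-1}$ between consecutive scales, and since $\E|\partial B(t)|\asymp(1+\eps)^t$ this forces gaps of order $k\eps^{-1}$ (the paper takes exactly $k\eps^{-1}$, and then $j_i=k_i+k\eps^{-1}/2$ so that $(1+\eps)^{k\eps^{-1}/2}=\e^{k/2}\gg\e^{k/4}/\zeta$). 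With gaps of order $\mnot=o(\eps^{-1})$ there is essentially no growth between scales and no version of the multi-scale argument can produce the required $o(1)$ factor.
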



\begin{theorem}
\label{largeclose.getsedge}
Assume the setting of Theorem \ref{alphapairs} and let $x,y$ be a pair of vertices.
Then, for any radii $j_x,j_y \in \{r,\ldots, 2r\}^2$,
    $$
    \prob \big ( S_{j_x,j_y,r_0}(x,y) \leq  2\beta^{1/2} V^{-1} \m \eps^{-2}(\E|B(r_0)|)^2 \and \A(x,y,j_x,j_y,r_0,\beta,k) \big ) = O( \beta^{1/2} \eps^2 ) \, .
    $$
\end{theorem}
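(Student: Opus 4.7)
The plan is to condition on $B_x(j_x)=A$ and $B_y(j_y)=B$ for admissible pairs $(A,B)$ on which $\A(x,y,j_x,j_y,r_0,\beta,k)$ holds (this event is $\sigma(B_x(j_x),B_y(j_y))$-measurable) and to run a second-moment argument for $S_{j_x,j_y,r_0}$ under $\pab$. First I would decompose $S_{j_x,j_y,r_0}=S^{(1)}+S^{(2)}$, where $S^{(2)}$ is the contribution from configurations in which $\partial B_x(j_x)$ connects to $\partial B_y(j_y)$ within $2r_0$ steps. Condition (5) of $\A(x,y,j_x,j_y,r_0,\beta,k)$ gives $\eab[S^{(2)}]\leq\alpham^{1/2}V^{-1}\m\eps^{-2}(\E|B(r_0)|)^2$, so Markov's inequality bounds $\pab(S^{(2)}\geq\beta^{1/2}V^{-1}\m\eps^{-2}(\E|B(r_0)|)^2)$ by $\alpham^{1/2}\beta^{-1/2}=o(\beta^{1/2})$ (using that $\alpham\leq\beta^2$ by the parameter choices). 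Hence it suffices to show that $S^{(1)}$ exceeds $\beta^{1/2}V^{-1}\m\eps^{-2}(\E|B(r_0)|)^2$ on a $\pab$-event of probability $1-O(\beta)$.

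For the first moment of $S^{(1)}$, I would restrict the sum to pairs $(a,b)\in\partial A\times\partial B$ that are $(\beta,j_x,r_0)$- and $(\beta,j_y,r_0)$-regenerative respectively; by the fitness condition (4) of $\A(x,y,j_x,j_y,r_0,\beta,k)$, only $O(\delta\eps^{-1})$ vertices on each side are excluded, which is negligible compared to $|\partial A|,|\partial B|\geq\e^{k/4}\eps^{-1}$ from condition (3). On the event defining $S^{(1)}$, the events $\{a\lrefmpx u\}$ and $\{b\lrefmpy u'\off B_x(j_x+r_0)\}$ are witnessed by paths living on disjoint subsets of the edge set, so that their joint $\pab$-probability factorizes (up to corrections absorbed by $\eab[S^{(2)}]$) as $\pa(a\lrefmpx u)\pb(b\lrefmpy u'\off A)$. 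The regenerative property gives $\sum_u\pa(a\lrefmpx u)\geq(1-\beta)\E|B(r_0)|$, and the uniform pointwise upper bound from Lemma \ref{unifconnbdoff} gives $\pb(b\lrefmpy u'\off A)\leq(1+o(1))V^{-1}\E|B(r_0)|$. A Markov-type deficit argument then shows $\sum_{u'\sim u}\pb(b\lrefmpy u'\off A)\geq(1-O(\beta^{1/2}))\m V^{-1}\E|B(r_0)|$ for all but $O(\beta^{1/2}V)$ vertices $u$, and combining yields
$$\eab[S^{(1)}]\geq(1-o(1))\m V^{-1}(\E|B(r_0)|)^2|\partial A||\partial B|\geq(1-o(1))\m V^{-1}\e^{k/2}\eps^{-2}(\E|B(r_0)|)^2,$$
which exceeds the target threshold by a factor $\gg\beta^{-1/2}$.

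For the second moment, I would expand $\eab[(S^{(1)})^2]$ as a sum over pairs of edges $(u_1,u_1'),(u_2,u_2')$, apply BK-Reimer to decouple the $x$- and $y$-side connections, and invoke the supercritical square-diagram bound (Lemma \ref{squarediagram}) with $r_1=r_2=r_0$ to bound the resulting four-point expression by $C\m^2(\E|B(r_0)|)^4$ plus lower-order terms absorbed using the upper bounds $|B_x(j_x)|,|B_y(j_y)|\leq\eps^{-2}(1+\eps)^{3r}$ and $|\partial B_x(j_x)|\leq\eps^{-1}(1+\eps)^{3r}$ from condition (2). This gives $\eab[(S^{(1)})^2]\leq(1+O(\beta))(\eab[S^{(1)}])^2$, and Chebyshev's inequality yields $\pab(S^{(1)}\leq\eab[S^{(1)}]/2)=O(\beta)$. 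Since the target threshold is smaller than $\eab[S^{(1)}]/2$ by a factor $\gg\beta^{-1/2}$, we obtain $\pab(S_{j_x,j_y,r_0}\leq 2\beta^{1/2}V^{-1}\m\eps^{-2}(\E|B(r_0)|)^2)=O(\beta^{1/2})$. Integrating against $\prob(B_x(j_x)=A,B_y(j_y)=B)$ over admissible $(A,B)$ and using the upper bound $\prob(\A(x,y,j_x,j_y))\leq(4+o(1))\eps^2$ from Lemma \ref{xysurvive} completes the proof. The main obstacle is the first-moment lower bound: one must simultaneously exploit the regenerative lower bound on $\sum_u\pa(a\lrefmpx u)$ under the conditioning on $A$ and the uniform pointwise upper bound from Lemma \ref{unifconnbdoff} on $\pb(b\lrefmpy u'\off A)$ in order to convert vertex sums into edge sums with the correct factor of $\m$, and condition (5) is essential precisely because it removes the short-connection configurations that would otherwise spoil this factorization.
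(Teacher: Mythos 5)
Your overall architecture coincides with the paper's: condition on $B_x(j_x)=A$, $B_y(j_y)=B$ on the event $\A(x,y,j_x,j_y,r_0,\beta,k)$, restrict to the non-connection event $\{\partial A \stackrel{2r_0}{\not\lr}\partial B\}$ using requirement (5), run a conditional first/second moment argument, and integrate against $\prob(\A(x,y,j_x,j_y))=O(\eps^2)$. The second-moment step and the final Chebyshev/Paley--Zygmund step are essentially as in the paper (modulo a minor slip: the variance ratio is $1+O(\beta^{1/2})$, not $1+O(\beta)$, since the first-moment deficit coming from regenerativity is of order $\beta^{1/2}$; this still yields the stated $O(\beta^{1/2}\eps^2)$, and the comparison $Q_2,Q_3\ll Q_1$ uses the \emph{lower} bounds $|\partial A|,|\partial B|\geq \e^{k/4}\eps^{-1}$ from requirement (3), not the upper bounds in (2)).

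The genuine gap is in the first-moment lower bound, which is the heart of the proof (the paper's Lemma \ref{sedgefirstmom} together with Lemmas \ref{sedgefirstmomerror1}--\ref{sedgefirstmomerror3}). You assert that $\pab\big(\{a\lrefmpx u\}\cap\{b\lrefmpy u'\off B_x(j_x+r_0)\}\big)$ factorizes as $\pa(a\lrefmpx u)\pb(b\lrefmpy u'\off A)$ ``up to corrections absorbed by $\eab[S^{\sss(2)}]$'' because the witnessing paths are edge-disjoint. Two problems. First, disjointness of witnesses only yields the BK--Reimer \emph{upper} bound $\prob(E\circ F)\leq\prob(E)\prob(F)$, which is useless for a lower bound; the correct route is to condition on the forward cluster $B_{\partial A}(r_0;A\cup B)=H$, use the off-method identity for $\pb(b\lrefmpy u'\off A\cup H)$, and then apply Claim \ref{offmethod} to peel off the ``$\onlyon A\cup H$'' and ``$\onlyon B$'' error events. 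Second, these error events are \emph{not} controlled by requirement (5): condition (5) only handles the event $\partial A\stackrel{2r_0}{\lr}\partial B$, whereas the factorization errors include, e.g., the path from $a$ to $u$ passing through the interior of $B$, or the path from $b$ to $u'$ passing through the interior of $A$, neither of which forces the two boundaries to be within $2r_0$ of each other. Bounding these requires separate diagrammatic estimates (extended triangles and the uniform connection bounds off sets), and the resulting error is of the form $C|\partial A||\partial B|V^{-1}\m(\E|B(r_0)|)^2\big[r_0(|A|+|B|)V^{-1}+\alpham^{1/2}(1+\eps|\partial A|)\big]$, whose smallness is exactly what requirement (2) of $\A(x,y,j_x,j_y,r_0,\beta,k)$ is there to guarantee. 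Without this analysis the first-moment lower bound, and hence the theorem, is not established.
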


\noindent {\bf Proof of Theorem \ref{alphapairs} subject to Lemma \ref{errorfromlargeu}
and Theorems \ref{largeclose.findradii}--\ref{largeclose.getsedge}.}
Lemma \ref{alotofpairs} shows that
    $$
     { | \{ x,y \colon \A(x,y,2r,2r) \} | \over 4 \eps^2 V^2 } \convp 1 \, .
    $$
Thus, it suffices to prove that
    $$ { \big | \big \{ x,y \colon \A(x,y,2r,2r) \and x,y \hbox{ {\rm are not} }
    (r,r_0)\hbox{{\rm -good}} \big \} \big | \over \eps^2 V^2} \convp 0 \, .
    $$
Lemma \ref{morefitnesstwo} shows that
    $$
    { \big | \big \{ x,y : \A(x,y,2r,2r) \and |\C(x)| \leq (\eps^3 V)^{1/4} \eps^{-2}
    \hbox{ {\rm or} } |\C(y)| \leq (\eps^3 V)^{1/4} \eps^{-2} \big \} \big | \over \eps^2 V^2} \convp 0 \, .
    $$
We are left to handle requirement (3) in the definition of $(r,r_0)$-good.
Let $j_1, \ldots, j_\ell$ be the radii guaranteed to exist by Theorem \ref{largeclose.findradii}
and let $x,y$ be a pair of vertices for which the assertion of Theorem \ref{largeclose.findradii} holds.
Theorem \ref{largeclose.findradii} asserts that the number of such pairs is $(1-o(1))V^2$ so the sum of $\prob(\A(x,y,2r,2r))$ over pairs not counted is $o(\eps^2 V^2)$.
Write  $J(x), J(y)$ for the lexicographically first pair $(j_x,j_y) \in \{j_1,\ldots, j_\ell\}^2$
for which the event $\A(x,y,j_x,j_y, r_0, \beta, k)$ occurs, or put $J(x)=J(y)=\infty$ if no such $j_x,j_y$ exist.
Theorem \ref{largeclose.findradii} states that for at least $(1-o(1))V^2$ pairs $x,y$
    $$
    \prob( \A(x,y,2r,2r), J(x) = \infty, J(y) = \infty ) = o(\eps^2) \, .
    $$
Theorem \ref{largeclose.getsedge} together with the union bound implies that for any such pair $x,y$
    $$
    \sum_{j_x,j_y\in \{j_1,\ldots,j_\ell\}^2} \prob \big ( S_{j_x,j_y,r_0}(x,y)
    \leq 2\beta^{1/2} V^{-1} \m \eps^{-2}(\E|B(r_0)|)^2, J(x)=j_x, J(y)=j_y \big ) = O(\beta^{1/2} \ell^2 \eps^2) \, ,
    $$
which is $o(\eps^2)$ by our choice of $\ell$ and $\beta$ in \eqref{parameter-def-1}.
By these last two statements we deduce that
    $$
    \E \big | \big \{ x,y \colon \A(x,y,2r,2r) \and \,\,\, \forall j_x,j_y  \quad  S_{j_x,j_y,r_0}(x,y)
    \leq 2 \beta^{1/2} V^{-1} \m \eps^{-2}(\E|B(r_0)|)^2 \big \} \big | = o(\eps^2 V^2) \, .
    $$
This together with (\ref{usefulfact}) and Lemma \ref{errorfromlargeu} implies that
    $$
    \E \big | \big \{ x,y \colon \A(x,y,2r,2r) \and S_{2r+r_0}(x,y)
    \leq \beta^{1/2} V^{-1} m \eps^{-2} (\E|B(r_0)|)^2 \big \} \big | = o(\eps^2 V^2) \, ,
    $$
concluding our proof since $\beta^{1/2} = (\log M)^{-1}$.\qed \\

%

\subsection{Proof of Lemma \ref{errorfromlargeu}:
Bounding the error $\widehat{S}_{2r,2r,r_0}$}
In this section, we prove Lemma \ref{errorfromlargeu}.
We begin by providing some useful estimates.

\begin{lemma}
\label{connectearlyerror}
Assume the setting of Theorem \ref{alphapairs} and let $p=p_c(1+\eps)$.
There exists $C>0$ such that for any positive integer $n$
    \begin{enumerate}
    \item[{\rm (1)}]
    $\displaystyle \sum_{x,y,(u,u')} \prob \big( \{ u \stackrel{n}{\lrfill} x \} \circ \{ u' \stackrel{n}{\lrfill} y \} \and u \stackrel{2n}{\lrfill} u' \big ) \leq C \big [\m \eps^{-5} (1+\eps)^{4n} + \alpham V\m \eps^{-2} (1+\eps)^{2n} \big ]$.
    \item[{\rm (2)}]
    $\displaystyle \sum_{x,y,(u,u')} \prob \big( \{ u \stackrel{n}{\lrfill} x \} \circ \{ u' \stackrel{n}{\lrfill} y \} \and x \stackrel{2n}{\lrfill} y \big ) \leq C \big [\m \eps^{-5} (1+\eps)^{4n} + \alpham V\m \eps^{-2} (1+\eps)^{2n} \big ]$.
    \end{enumerate}
\end{lemma}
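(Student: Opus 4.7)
Both parts of the lemma are proved by the same strategy: decompose the event into pairwise edge-disjoint witnesses, apply the BK-Reimer inequality, and then combine the triangle estimate of Corollary \ref{extendedtriangle} with the uniform connection bound of Lemma \ref{uniformconnbd}. I describe the plan for (1); (2) is handled analogously.

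Suppose the event $\{u \stackrel{n}{\lrfill} x\} \circ \{u' \stackrel{n}{\lrfill} y\} \cap \{u \stackrel{2n}{\lrfill} u'\}$ occurs. Let $\gamma_1, \gamma_2$ be disjoint open witnesses of lengths at most $n$ for $u\to x$ and $u'\to y$, and let $\gamma_3$ be a shortest open $u$-to-$u'$ path, of length at most $2n$. Traversing $\gamma_3$ from $u$ to $u'$, I define $a$ to be the last vertex on $\gamma_3$ lying on $\gamma_1$ and $b$ the first vertex thereafter lying on $\gamma_2$. Then $\gamma_1$ splits at $a$ into disjoint open subpaths $u\to a$ and $a\to x$, $\gamma_2$ splits at $b$ into $b\to u'$ and $b\to y$, and $\gamma_3[a,b]$ is edge-disjoint from $\gamma_1\cup\gamma_2$. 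All five resulting paths are pairwise edge-disjoint and each witnesses an increasing event (the distance events $\stackrel{r}{\lrfill}$ are monotone), so BK-Reimer combined with a union bound over $a, b$ yields
$$
\prob\bigl(\{u \stackrel{n}{\lrfill} x\} \circ \{u' \stackrel{n}{\lrfill} y\} \cap \{u \stackrel{2n}{\lrfill} u'\}\bigr) \le \sum_{a,b}\prob(u \stackrel{n}{\lrfill} a)\prob(a \stackrel{n}{\lrfill} x)\prob(a \stackrel{2n}{\lrfill} b)\prob(b \stackrel{n}{\lrfill} u')\prob(b \stackrel{n}{\lrfill} y).
$$

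Summing over $x$ and $y$ extracts two factors of $\E|B(n)|$ by transitivity, so the problem reduces to bounding the triangle-over-edge sum
$$
(\E|B(n)|)^2 \sum_{(u,u')}\sum_{a,b} \prob(u \stackrel{n}{\lrfill} a)\prob(a \stackrel{2n}{\lrfill} b)\prob(b \stackrel{n}{\lrfill} u').
$$
I would split the middle connection on whether $d_{G_p}(a,b)\le\mnot$ or $d_{G_p}(a,b)>\mnot$. The short case is controlled by Lemma \ref{shorttriangle} (equivalently, condition (3) of Theorem \ref{mainthmgeneral}), contributing $O(\alpham+\eps\mnot)$ per edge, which summed over the $Vm$ edges and multiplied by $(\E|B(n)|)^2$ gives the second claimed term $\alpham Vm\eps^{-2}(1+\eps)^{2n}$. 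The long case uses Lemma \ref{uniformconnbd} to bound $\prob(a\stackrel{P[\mnot,2n]}{\lrfill} b)\le CV^{-1}\E|B(2n)|$ uniformly, and, after the length-constraint refinement described below, contributes the first term $m\eps^{-5}(1+\eps)^{4n}$ via Corollary \ref{r0ball} substitutions $\E|B(n)|=\Theta(\eps^{-1}(1+\eps)^n)$ and $\E|B(2n)|=\Theta(\eps^{-1}(1+\eps)^{2n})$.

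The main obstacle is that the five-path BK bound above loses the constraint $|u\to a|+|a\to x|\le n$ (these are both subpaths of the single path $\gamma_1$ of length $\le n$), and similarly for $\gamma_2$; treating the subpaths as independently of length $\le n$ inflates the naive estimate by a factor of $(1+\eps)^{2n}$ and exceeds the stated target. To recover, I would separate the degenerate cases $a=u$ and/or $b=u'$ (in which the corresponding $\prob(u\stackrel{n}{\lrfill} a)$ or $\prob(b\stackrel{n}{\lrfill} u')$ collapses to $1$ and the length constraint is automatic), and show that in the generic case $\gamma_3$ executes a genuine excursion off $\gamma_1\cup\gamma_2$, forcing $d_{G_p}(a,b)\ge\mnot$ (since typical intrinsic distances exceed $\mnot$), so the uniform bound $V^{-1}\E|B(2n)|$ from Lemma \ref{uniformconnbd} provides the extra $V^{-1}$ absorbing the surplus $(1+\eps)^{2n}$. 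Part (2) proceeds by the same meeting-point analysis, but now along any shortest open $x$-to-$y$ path (of length at most $4n$ via the concatenation $\gamma_1\cup\gamma_3\cup\gamma_2$) replacing $\gamma_3$; the five-path BK decomposition and the short/long triangle split produce the same bound.
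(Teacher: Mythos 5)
Your five-path decomposition and the BK--Reimer application are legitimate, and you have correctly identified the fatal obstacle: the decomposition forgets that the two legs $u\to a$ and $a\to x$ are pieces of a \emph{single} path of length at most $n$. Unfortunately the recovery you propose does not close the gap. First, the claim that in the generic case $d_{G_p}(a,b)\geq \mnot$ is false: $a$ and $b$ are just the last/first meeting points of the witnesses with the shortest $u$--$u'$ path, and nothing prevents them from being adjacent or equal; "typical intrinsic distances exceed $\mnot$" is a statement about uniformly chosen pairs, not about these particular random vertices. Indeed the short-distance case ($d_{G_p}(a,b)\leq\mnot$) is genuinely present and is exactly what the triangle condition handles in your own second paragraph, so it cannot be argued away in the third. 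Second, even granting the long-distance reduction, the factor $V^{-1}$ from Lemma \ref{uniformconnbd} is not "extra": tracking your long-case sum gives $(\E|B(n)|)^2\cdot Vm\cdot \E|B(n)|\cdot V^{-1}\E|B(2n)|\cdot\E|B(n)| = C\m\eps^{-5}(1+\eps)^{6n}$, so the $V^{-1}$ is already consumed in reaching the $\eps^{-5}$ figure and there is no second power of $V^{-1}$ left to cancel the surplus $(1+\eps)^{2n}$. Since in the application $n=2r+r_0$ with $r_0$ as in \eqref{rzerocond}, one has $(1+\eps)^{2n}\asymp \e^{4M}\alpham\eps^3V\to\infty$, so the loss is not cosmetic --- the lemma as you would prove it is too weak for Lemma \ref{errorfromlargeu} and Lemma \ref{useconnectearlyerror}.

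The paper's proof keeps the length bookkeeping by choosing the meeting points $z_1,z_2$ on the shortest open $u$--$u'$ path $\eta$ and recording the exact distances $t_1=d_\eta(u,z_1)$ and $t_2=d_\eta(z_2,u')$. The crucial observation is that, $\eta$ being a shortest path, the portion of the witness $\gamma_{x,u}$ from $z_1$ to $u$ must have length at least $t_1$, so the remaining leg $x\to z_1$ has length at most $n-t_1$ (and similarly $y\to z_2$ has length at most $n-t_2$). Summing over $x,y$ then produces $\E|B(n-t_1)|\E|B(n-t_2)|\leq C\eps^{-2}(1+\eps)^{2n-t_1-t_2}$, and the factors $(1+\eps)^{-t_1-t_2}$, summed over $t_1,t_2\leq n$, contribute only $\eps^{-2}$ rather than a second power of $(1+\eps)^{2n}$; this is precisely where the exponent $4n$ instead of $6n$ comes from. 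The price is that the events $u\stackrel{=t_1}{\lrfill}z_1$ and $d_{G_p}(u,u')\geq t_1+t_2$ are non-monotone, so they are bundled into a single event (with the closed edges as part of the witness) and decoupled by conditioning on $B_u(t_1)$ rather than by BK. If you want to salvage your route, you would have to reintroduce the split of $\gamma_1$ at $a$ into pieces of exact lengths $s$ and $\leq n-s$ and carry the index $s$ through the computation --- at which point you have essentially reconstructed the paper's argument.
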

\begin{proof} We begin by showing (1). If $\{ u \stackrel{n}{\lrfill} x \} \circ \{ u' \stackrel{n}{\lrfill} y \}$
and $u \stackrel{2n}{\lrfill} u'$, then there exists vertices $z_1,z_2$ and integers $t_1,t_2 \leq n$ such that the event
    $$
    \{u \stackrel{=t_1}{\lrfill} z_1, u' \stackrel{=t_2}{\lrfill} z_2 , d_{G_p}(u,u') \geq t_1+t_2\}
    \circ \{z_1 \stackrel{2n-t_1-t_2}{\lrfill} z_2\} \circ \{x \stackrel{n-t_1}{\lrfill} z_1\}
    \circ \{y \stackrel{n-t_2}{\lrfill} z_2\} \, ,
    $$
or the event
    $$ \{u \stackrel{=t_1}{\lrfill} z_1, u' \stackrel{=t_2}{\lrfill} z_2 ,  d_{G_p}(u,u') \geq t_1+t_2\}
    \circ \{z_1 \stackrel{2n-t_1-t_2}{\lrfill} z_2\} \circ \{x \stackrel{n-t_1}{\lrfill} z_2\}
    \circ \{y \stackrel{n-t_2}{\lrfill} z_1\} \, ,
    $$
occur.
See Figure \ref{fig.connectearly} (a). Indeed, let $\eta$ be the shortest open path of
length at most $2n$ between $u$ and $u'$ and let $\gamma_{x,u}, \gamma_{y,u'}$ be two disjoint paths
of length at most $n$ connecting $x$ to $u$ and $y$ to $u'$, respectively. We take $z_1,z_2$
to be the first vertices of $\gamma_{x,u}$ and $\gamma_{y,u'}$ which belongs to $\eta$. There are two
possible orderings of $z_1,z_2$ on $\eta$, that is, ($u,z_1,z_2,u'$) or ($u,z_2,z_1,u'$),
which give the two possible events. Assume the ordering on $\eta$ is $(u,z_1,z_2,u')$ (the two orderings give rise to identical contributions to the sum in (1)), and put $t_1,t_2$ to be the distances on $\eta$ between $u$ and $z_1$ and between $z_2$ and $u'$, respectively and write $\eta_1, \eta_2$ to be the corresponding sections of $\eta$ and $\eta_3$ is the section of $\eta$ between $z_1$ and $z_2$. The paths $\gamma_1$ and $\gamma_2$ are the sections of $\gamma_{x,u}$ and $\gamma_{y,u'}$ from $x$ to $z_1$ and from $y$ to $z_2$, respectively. The witness for the first event is $\eta_1, \eta_2$ together with all the closed edges of $G_p$ (the closed edges determine that $\eta_1, \eta_2$ are indeed shortest open paths, and that $d_{G_p}(u,u')\geq t_1+t_2$), for the second, third and fourth events, the witnesses are just $\eta_3, \gamma_1$ and $\gamma_2$, respectively.

We now apply the BK-Reimer inequality and bound the sum in (1) by
    $$
    2\sum_{x,y,z_1,z_2,(u,u'),t_1\leq n, t_2\leq n} \prob(u \stackrel{=t_1}{\lrfill} z_1, u' \stackrel{=t_2}{\lrfill} z_2, d_{G_p}(u,u')\geq t_1+t_2) \prob(z_1 \stackrel{2n-t_1-t_2}{\lrfill} z_2)
    \prob(x \stackrel{n-t_1}{\lrfill} z_1) \prob(y \stackrel{n-t_2}{\lrfill} z_2)  \, .
    $$
We first sum over $x,y$ and get a factor of $C\eps^{-2}(1+\eps)^{2n-t_1-t_2}$ by
Lemma \ref{upperball}. The event $u \stackrel{=t_1}{\lrfill} z_1, u' \stackrel{=t_2}{\lrfill} z_2, d_{G_p}(u,u')\geq t_1+t_2$ implies that $u \stackrel{=t_1}{\lrfill} z_1$ and $u' \stackrel{=t_2}{\lrfill} z_2 \off B_u(t_1)$ hence we may bound its probability by
    $$ \sum_{A \colon u \stackrel{=t_1}{\lrfill} z_1} \prob(B_u(t_1)=A) \prob(u' \stackrel{=t_2}{\lrfill} z_2 \off A) \, ,$$
and so we get an upper bound of
    \be\label{connectearly.medstep}
    C \eps^{-2} \sum_{z_1,z_2,(u,u'),t_1\leq n, t_2\leq n} (1+\eps)^{2n-t_1-t_2} \prob(u \stackrel{=t_1}{\lrfill} z_1) \max_A \prob(u' \stackrel{=t_2}{\lrfill} z_2 \off A) \prob(z_1 \stackrel{2n-t_1-t_2}{\lrfill} z_2) \, .\ee

\begin{figure}
\begin{center}
\includegraphics{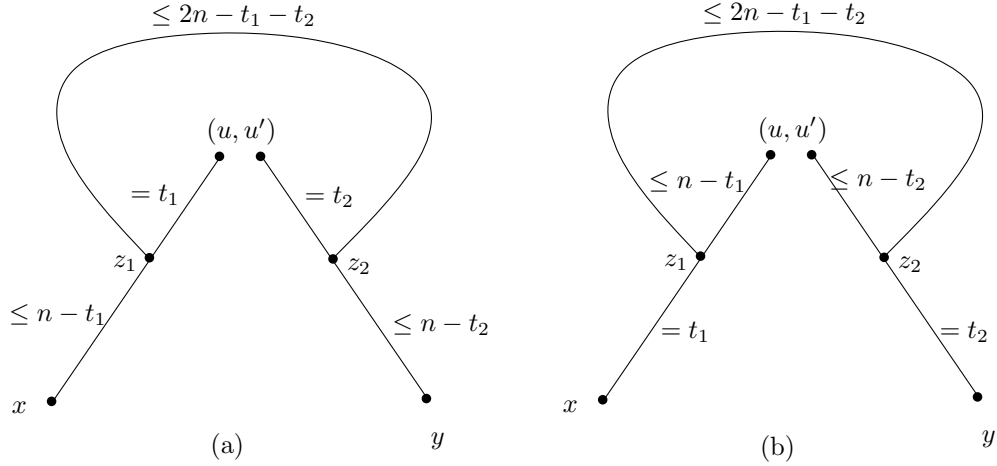}
\caption{The edge $(u,u')$ is counted in the first and
second sum of Lemma \ref{connectearlyerror}.}
\label{fig.connectearly}
\end{center}
\end{figure}

We bound this in two parts. If $t_2 \geq \mnot$, then we use
Lemma \ref{unifconnbdoff} together with Lemma \ref{upperball} to bound,
uniformly in $A$, $\prob(u' \stackrel{=t_2}{\lrfill} z_2 \off A)\leq CV^{-1}(1+\eps)^{t_2}$.
We then sum over $z_2$ and $z_1$ in that order using Lemma \ref{upperball}
and extract a $V\m$ factor from summing over $(u,u')$.
If $t_2 \leq \mnot$ and $t_1\geq \mnot$, then we use
Lemma \ref{uniformconnbd} together with Lemma \ref{upperball} to bound
$\prob(u \stackrel{=t_1}{\lrfill} z_1)\leq CV^{-1}(1+\eps)^{t_1}$.
Further, we use condition (2) in Theorem \ref{mainthmgeneral} and $\vep=o(\mnot^{-1})$
to bound, uniformly in $A$, $\prob(u' \stackrel{=t_2}{\lrfill} z_2 \off A)\leq C\p^{t_2}(u',z_2)$.
We then sum over $z_1$ and $z_2$ in that order using Lemma \ref{upperball}
and extract a $V\m$ factor from summing over $(u,u')$.
All this gives an upper bound of
    $$
    C \m \eps^{-3} (1+\eps)^{4 n} \sum_{t_1, t_2 \leq n} (1+\eps)^{-t_1-t_2}
    \leq C \m \eps^{-5} (1+\eps)^{4n} \, ,
    $$
as required. We next sum (\ref{connectearly.medstep}) over $t_1, t_2 \leq \mnot$. We first relax
$(1+\eps)^{2n-t_1-t_2} \leq (1+\eps)^{2n}$ and $\prob(z_1 \stackrel{2n-t_1-t_2}{\lrfill} z_2)
\leq \prob(z_1 \stackrel{2n}{\lrfill} z_2)$, and then sum over $t_1,t_2$ to get an upper bound of
    $$
    C \eps^{-2} (1+\eps)^{2n} \sum_{z_1,z_2, (u,u')}
    \prob(u \stackrel{\mnot}{\lrfill} z_1) \prob( u' \stackrel{\mnot}{\lrfill} z_2)
    \prob(z_1 \stackrel{2n}{\lrfill} z_2) \, .
    $$
We now sum over $z_1,z_2$ using Corollary \ref{extendedtriangle} and Lemma \ref{upperball}. We get that this is bounded by
    $$
    C V \m \eps^{-2} (1+\eps)^{2n} \Big [{\mnot^2 \eps^{-1} (1+\eps)^{2n} \over V}+\alpham\Big] \leq
    C \big [\m \eps^{-5} (1+\eps)^{4n} + \alpham V\m \eps^{-2} (1+\eps)^{2n} \big ] \, ,
    $$
since $\mnot \leq \eps^{-1}$, as required.

To bound (2) we proceed in a very similar fashion. If $\{ u \stackrel{n}{\lrfill} x \} \circ \{ u' \stackrel{n}{\lrfill} y \}$
and $x \stackrel{2n}{\lrfill} y$ then there exists vertices $z_1, z_2$ and $t_1, t_2 \leq n$
such that the event
    $$
    \{x \stackrel{=t_1}{\lrfill} z_1, y \stackrel{=t_2}{\lrfill} z_2, d_{G_p}(x,y)\geq t_1+t_2\}
    \circ \{z_1 \stackrel{2n-t_1-t_2}{\lrfill} z_2\} \circ \{u \stackrel{n-t_1}{\lrfill} z_1\}
    \circ \{u' \stackrel{n-t_2}{\lrfill} z_2\} \, ,
    $$
or the event
    $$
    \{x \stackrel{=t_1}{\lrfill} z_2, y \stackrel{=t_2}{\lrfill} z_1, d_{G_p}(x,y)\geq t_1+t_2\}
    \circ \{z_1 \stackrel{2n-t_1-t_2}{\lrfill} z_2\} \circ \{u \stackrel{n-t_1}{\lrfill} z_1\}
    \circ \{u' \stackrel{n-t_2}{\lrfill} z_2\} \, ,
    $$
occur, by the same reasoning as before, see Figure \ref{fig.connectearly} (b). Let us handle the first case only (the second leads to an identical contribution). We appeal to the BK-Reimer inequality and as before we condition on $B_x(t_1)$ and bound
$$ \prob(x \stackrel{=t_1}{\lrfill} z_1, y \stackrel{=t_2}{\lrfill} z_2, d_{G_p}(x,y)\geq t_1+t_2) \leq \sum_{A:x \stackrel{=t_1}{\lrfill} z_1} \prob(B_x(t_1)=A) \prob(y \stackrel{=t_2}{\lrfill} z_2 \off A) \, .$$
We sum over $y$ then $x$ using Lemma \ref{upperball} giving a bound of
    $$
    C \sum_{z_1,z_2,(u,u'), t_1,t_2 \leq n} (1+\eps)^{t_1+t_2} \prob(u \stackrel{n-t_1}{\lrfill} z_1)
    \prob(z_1 \stackrel{2n-t_1-t_2}{\lrfill} z_2) \prob(u' \stackrel{n-t_2}{\lrfill} z_2) \, .
    $$
An appeal to Corollary \ref{extendedtriangle} and Lemma \ref{upperball}
to sum over $z_1,z_2$ gives a bound of
    $$
    C V \m \sum_{t_1,t_2 \leq n} (1+\eps)^{t_1 + t_2}
    \Big [{\eps^{-3} (1+\eps)^{4n-2(t_1+t_2)} \over V}+\alpham\Big ]
    \leq C \big [\m \eps^{-5} (1+\eps)^{4n} + \alpham V\m \eps^{-2} (1+\eps)^{2n} \big ] \, ,
    $$
where the last inequality is a direct calculation. \end{proof}

\noindent{\bf Proof of Lemma \ref{errorfromlargeu}}. For convenience put $n=2r+r_0$.
By Markov's inequality, the expectation we need to bound is at most
    \be
    \label{errorfromlargeu.midstep}
    4\beta^{-1/2}Vm^{-1}\eps^2 (\E|B(r_0)|)^{-2} \sum_{x,y,(u,u')}
    \prob \big ( \{x \stackrel{n}{\lrfill} u\} \circ \{y \stackrel{n}{\lrfill} u'\} \, ,
    |B_u(n)| \geq \e^{20M} \eps^{-1} \E|B(r_0)| \big ) \, .
    \ee
We split the sum into
    $$
    S_1 = \sum _{x,y,(u,u')} \prob \big ( \{x \stackrel{n}{\lrfill} u\}
    \circ \{y \stackrel{n}{\lrfill} u'\} \, ,
    |B_u(n)| \geq \e^{20M} \eps^{-1} \E|B(r_0)| \and
    B_u(n) \cap B_{u'}(n) = \emptyset \big ) \, ,
    $$
and
    $$
    S_2 = \sum _{x,y,(u,u')} \prob \big ( \{x \stackrel{n}{\lrfill} u\}
    \circ \{y \stackrel{n}{\lrfill} u'\} \, , u \stackrel{2n}{\lrfill} u' \big ) \, .
    $$
We bound $S_1$ using the BK inequality
    $$
    S_1 \leq \sum_{x,y,(u,u')} \prob \big (x \stackrel{n}{\lrfill} u \, ,
    |B_u(n)|\geq \e^{20M} \eps^{-1} \E|B(r_0)| \big ) \prob(y \stackrel{n}{\lrfill} u') \, .
    $$
Summing over $y$ and $x$ and then over $(u,u')$ gives that this is at most
    $$ V\m \E |B(n)| \cdot \E|B(n)| {\bf 1}_{\{|B(n)| \geq
    \e^{20M} \eps^{-1} \E|B(r_0)| \}} | \, .
    $$
We use the Cauchy-Schwartz inequality to bound
    $$
    \E|B(n)| {\bf 1}_{\{ |B(n)| \geq \e^{20M} \eps^{-1} \E|B(r_0)| \}} |
    \leq \big [ \E |B(n)|^2 \prob(|B(n)| \geq \e^{20M} \eps^{-1} \E|B(r_0)|) \big ]^{1/2} \, .
    $$
We bound this using Lemma \ref{upperballsqaured} and the Markov inequality by
    $$
    \E|B(n)| {\bf 1}_{\{ |B(n)| \geq \e^{20M} \eps^{-1} \E|B(r_0)| \}} |
    \leq C \e^{-10M} (\E|B(n)|)^{3/2} (\E|B(r_0)|)^{-1/2} \, ,
    $$
and conclude that
    $$
    S_1 \leq C \e^{-10M} V \m (\E|B(n)|)^{5/2} (\E|B(r_0)|)^{-1/2}  \, .
    $$
We bound $S_2$ using part (1) of Lemma \ref{connectearlyerror} by
    $$
    S_2 \leq  C \big [\m \eps^{-5} (1+\eps)^{4n} + \alpham V\m \eps^{-2} (1+\eps)^{2n} \big ] \, .
    $$
We put these two back into (\ref{errorfromlargeu.midstep}) and get that we can bound this sum by
    $$
    {C V^2 \eps^2 (\E |B(n)|)^{5/2} \over \beta^{1/2} \e^{10M} (\E|B(r_0)|)^{5/2}} +
    {C V\eps^{-3} (1+\eps)^{4n} \over \beta^{1/2}  (\E |B(r_0)|)^2 } + {C \m \alpham V^2 (1+\eps)^{2n} \over  \beta^{1/2}  \m (\E|B(r_0)|)^2}
    = o(\eps^2 V^2) \, ,
    $$
by our choice of $r_0$ in \eqref{rzerocond}, $n=r_0+2r$, $r=M/\vep$, $\beta=(\log M)^{-2}$
and using Corollary \ref{r0ball}.
\qed

%
%

\subsection{Proof of Theorem \ref{largeclose.findradii}: Finding good radii}
We proceed towards the proof of Theorem \ref{largeclose.findradii}.
Recall the choice of parameters in \eqref{parameter-def-1}.

\begin{lemma}
\label{nottoosmallbdry}
For any radius $r \geq \eps^{-1}$ and any $\zeta>0$,
    $$
    \prob \Big ( |\partial B(r)|>0 \and \exists j \in [\eps^{-1}, r - \eps^{-1}]
    \with |\partial B(j)|\leq \zeta \eps^{-1} \Big ) \leq O(\zeta \eps) \, .
    $$
\end{lemma}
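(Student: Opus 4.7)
The plan is to introduce the first exceptional level and condition on the ball up to it. Precisely, let
$$J=\min\big\{j\in [\eps^{-1},r-\eps^{-1}]:|\partial B(j)|\leq \zeta \eps^{-1}\big\},$$
with the convention $J=\infty$ if no such $j$ exists. Since $\{J=j_0\}$ is determined by $B(j_0)$, I would condition on $B(j_0)=A$ with $|\partial A|\leq \zeta\eps^{-1}$ and bound the conditional probability of $|\partial B(r)|>0$. The first key observation is that whenever $|\partial B(r)|>0$, any vertex $w$ realising $d(0,w)=r$ admits a shortest path that at step $j_0$ passes through some $v\in\partial A$, and the portion from $v$ to $w$ is an open path of length exactly $r-j_0$ which cannot re-enter $A$ (otherwise one could shortcut through $A$ and contradict $d(0,w)=r$). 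Hence on $\{B(j_0)=A\}$ the event $\{|\partial B(r)|>0\}$ implies $\partial B_v(r-j_0;A\setminus\{v\})\neq\emptyset$ for some $v\in\partial A$, and $r-j_0\geq\eps^{-1}$.

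Next, I would use that conditionally on $B(j_0)=A$ the statuses of all edges not touching $A\setminus\{v\}$ (which include every edge in the ``off $A\setminus\{v\}$'' measure) are independent Bernoulli$(p)$, so that
$$\prob\big(\partial B_v(r-j_0;A\setminus\{v\})\neq\emptyset \,\big|\, B(j_0)=A\big)\leq \prob_{\text{off }A\setminus\{v\}}\big(\partial B_v(\eps^{-1})\neq\emptyset\big)=O(\eps),$$
by Corollary \ref{corrlength}(1), where the first inequality uses that $\partial B_v(r-j_0;\cdot)\neq\emptyset$ forces a vertex at every intermediate distance, in particular at distance $\eps^{-1}$. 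A union bound over $v\in\partial A$ then yields
$$\prob(|\partial B(r)|>0\mid B(j_0)=A)\leq |\partial A|\cdot O(\eps),$$
so on $\{J=j_0\}$ this conditional probability is $O(\zeta)$.

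Putting the pieces together, I would write
\begin{align*}
\prob(J<\infty,\,|\partial B(r)|>0)
&=\sum_{j_0}\E\!\left[\indic{J=j_0}\prob(|\partial B(r)|>0\mid B(j_0))\right]\\
&\leq O(\eps)\,\E\!\left[\indic{J<\infty}|\partial B(J)|\right]\\
&\leq O(\eps)\cdot\frac{\zeta}{\eps}\cdot\prob(J<\infty,\,\partial B(J)\neq\emptyset),
\end{align*}
using that $|\partial B(J)|\leq \zeta\eps^{-1}$ on $\{J<\infty\}$. Finally, on $\{J<\infty,\partial B(J)\neq\emptyset\}$ the shortest-path argument gives $\partial B(j)\neq\emptyset$ for every $j\leq J$, hence in particular $\partial B(\eps^{-1})\neq\emptyset$. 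By Corollary \ref{corrlength}(1) the latter probability is $O(\eps)$, producing the required bound $O(\zeta\eps)$.

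The only genuinely delicate step is the conditional probability computation: one has to verify that conditioning on $B(j_0)=A$ leaves the edges incident to $v$ and going into $V\setminus A$ untouched (they are independent Bernoulli$(p)$), so that the ``off $A\setminus\{v\}$'' description truly governs the continuation from $v$. Once this is set up, the Corollary \ref{corrlength}(1) estimate at the correlation length $\eps^{-1}$ does all the work, and the rest is a disjoint-level summation.
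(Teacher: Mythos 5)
Your proof is correct and follows essentially the same route as the paper's: introduce the first level $J$ at which the boundary is small, condition on $B(J)$, union-bound over the at most $\zeta\eps^{-1}$ boundary vertices using the $O(\eps)$ survival estimate off $B(J)$ from Corollary \ref{corrlength}, and multiply by the $O(\eps)$ probability of surviving to level $\eps^{-1}$ in the first place. The only difference is that you spell out the details (the no-shortcut argument and the fact that the conditioning leaves the outgoing edges from $\partial B(J)$ unexplored) that the paper's three-line proof leaves implicit.
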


\begin{proof} Assume that the event holds, and let $J$
be the first radius $j$ with $j \in [\eps^{-1},r-\eps^{-1}]$ which has
$|\partial B(j)|\leq \zeta \eps^{-1}$. Conditioned on $J$ and $B(J)$, for
$|\partial B(r)|>0$ to occur, one of the vertices on the boundary of $B(J)$
needs to reach level $r$. Since $r - j \geq \eps^{-1}$, Corollary \ref{corrlength}
and the union bound gives that this probability is at most $C\zeta$. This together
with the fact that the probability of $|\partial B(j)|>0$ is at most $C\eps$, by Corollary \ref{corrlength}, concludes the proof.
\end{proof}

In the lemma below, we write $\pa(\cdot)=\prob( \cdot \off A \mid B_x(j_x)=A)$
and let $\ea$ be the corresponding expectation.

\begin{lemma}
\label{largeclose.findlarge}
There exists $c>0$ such that for any radius $j_x\in [r,2r]$ the following statement holds.
Let the set $A$ be such that $x$ is $(\delta, \beta, j_x, k\eps^{-1})$-fit
and $|\partial B_x(j_x)| \geq \zeta \eps^{-1}$ when $B_x(j_x)=A$. Then,
    $$
    \pa \big ( |\partial B_x(j_x+k\eps^{-1}/2)| \geq \eps^{-1} \e^{k/4} \big ) \geq c\zeta \, .
    $$
\end{lemma}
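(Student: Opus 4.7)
The plan is a Paley--Zygmund \eqref{X>a-prob-bd} argument applied to a non-negative random variable $Y$ with $Y\leq|\partial B_x(j_x+r_1/2)|$, where $r_1=k\eps^{-1}$. I aim to show $\ea Y\geq c\zeta\eps^{-1}\e^{k/2}$ and $\ea Y^{2}\leq C(\ea Y)^{2}/\zeta$; since $\zeta\e^{k/4}\to\infty$ by the parameter choices \eqref{parameter-def-1}, Paley--Zygmund will then yield the claimed $\pa(|\partial B_x(j_x+r_1/2)|\geq\eps^{-1}\e^{k/4})\geq c'\zeta$.

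By the fitness hypothesis with $\delta=\zeta/2$, at least $\tfrac12\zeta\eps^{-1}$ vertices of $\partial A$ are $(\beta,j_x,r_1)$-regenerative; call this set $R$. For each $a\in R$, the definition of regenerative and Corollary \ref{r0ball} give
\[
\ea\big|B_a(r_1;A\setminus\{a\})\big|\;\geq\;G_{r_1,j_x}(a,x;A)\;\geq\;(1-\beta)\E|B(r_1)|\;=\;\Theta(\eps^{-1}\e^{k}).
\]
I then transfer this to the half-depth boundary: splitting the off-set ball as $B_a(r_1;A\setminus\{a\})=B_a(r_1/2;A\setminus\{a\})\sqcup B_a((r_1/2,r_1];A\setminus\{a\})$, conditioning on $B_a(r_1/2;A\setminus\{a\})$, and using Lemma \ref{upperball} to bound the further expected growth from each boundary vertex by $\E|B(r_1/2)|=O(\eps^{-1}\e^{k/2})$, combined with the trivial bound $\ea|B_a(r_1/2;A\setminus\{a\})|\leq\E|B(r_1/2)|$, I obtain
\[
\ea\big|\partial B_a(r_1/2;A\setminus\{a\})\big|\;\geq\;c\,\e^{k/2}.
\]

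Now set
\[
Y=\sum_{a\in R}\Big|\big\{v:\,a\stackrel{=r_1/2}{\lrfill} v\text{ off }A\setminus\{a\},\;a\in\Piv(\{x\stackrel{j_x+r_1/2}{\lrfill} v\})\big\}\Big|.
\]
Every shortest path from $x$ of length $\leq j_x+r_1/2$ crosses $\partial A$ at a unique vertex at intrinsic distance $j_x$ from $x$, so no $v$ admits two distinct pivotal $a\in\partial A$; hence $Y\leq|\partial B_x(j_x+r_1/2)|$. A computation paralleling Lemma \ref{lowerboundfinal}, with $r_1$ replaced by $r_1/2$ and the outer sum restricted to $R$, absorbs the pivotality correction via Corollary \ref{extendedtriangle} and shows $\ea Y\geq c\zeta\eps^{-1}\e^{k/2}$. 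For $\ea Y^{2}$ I expand into diagonal and off-diagonal pieces: the diagonal, bounded via BK--Reimer and a $\partial B$-analog of Lemma \ref{upperballsqaured} (provable by the same tree-graph argument), contributes $O(|R|\cdot\E|\partial B(r_1/2)|^{2})=O(\zeta\eps^{-2}\e^{k})$; the off-diagonal terms factor, up to vanishing corrections from Lemma \ref{squarediagram} and Lemma \ref{uniformconnbd}, as $(\ea Y)^{2}=O(\zeta^{2}\eps^{-2}\e^{k})$. Hence $\ea Y^{2}\leq C(\ea Y)^{2}/\zeta$, and Paley--Zygmund yields $\pa(Y\geq\tfrac12\ea Y)\geq c'\zeta$; since $\tfrac12\ea Y\geq c\zeta\eps^{-1}\e^{k/2}\geq\eps^{-1}\e^{k/4}$ for all large $m$, the lemma follows. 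The hard part will be the $\partial B$-variant of Lemma \ref{upperballsqaured} and the control of the off-diagonal pivotality corrections, both of which reduce to careful applications of the square-diagram and uniform-connection estimates already developed above.
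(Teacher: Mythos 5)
Your overall strategy (a Paley--Zygmund argument producing a $c\zeta$ lower bound, with the fitness hypothesis feeding the first moment) matches the paper, and your numerology is consistent with \eqref{parameter-def-1}. But the variable you run it on is different from the paper's, and the step on which your whole reduction rests --- the claim $Y\leq|\partial B_x(j_x+r_1/2)|$ --- is not justified and is the real gap. The conditions $\{a\stackrel{=r_1/2}{\lrfill}v \off A\setminus\{a\}\}\cap\{a\in\Piv(x\stackrel{j_x+r_1/2}{\lrfill}v)\}$ guarantee that the distinct $v$'s you count lie in $B_x(j_x+r_1/2)$, but not that $d_{G_p}(x,v)$ equals $j_x+r_1/2$ exactly. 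The distance from $a$ to $v$ is realized as $r_1/2$ only in the graph with $A\setminus\{a\}$ deleted; in the full configuration there may be a shorter route from $a$ to $v$ that re-enters $A$, and pivotality does not exclude it (a path $x\to w\to v$ with $w\in A\setminus\{a\}$ can still pass through $a$ on its initial segment inside $A$, so $a$ remains pivotal while $d_{G_p}(x,v)<j_x+r_1/2$). So $Y$ lower-bounds a ball volume, not the sphere $|\partial B_x(j_x+r_1/2)|$, and the conclusion does not follow. A secondary concern is that you defer the ``$\partial B$-variant of Lemma \ref{upperballsqaured}'' and the off-diagonal pivotality corrections: second moments of exact-sphere counts are delicate precisely because $\{v\in\partial B_a(r)\}$ is non-monotone, which is why the paper systematically avoids them.

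The paper sidesteps both problems by never counting sphere vertices directly. It runs the second moment on the \emph{annulus volume} $|B_x([j_x+k\eps^{-1}/2,\,j_x+k\eps^{-1}])|$: the fitness hypothesis gives $\ea|B_x([j_x,j_x+k\eps^{-1}])|\geq(|\partial A|-\delta\eps^{-1})(1-\beta)\E|B(k\eps^{-1})|$, monotonicity caps the inner half by $|\partial A|\,\E|B(k\eps^{-1}/2)|$, and a tree-graph second moment bounded by $C(\E|B(k\eps^{-1})|)^2[|\partial A|^2+|\partial A|\eps^{-1}]$ yields, via \eqref{X>a-prob-bd}, that the annulus has volume $\geq c\zeta\eps^{-2}\e^{k}$ with probability $\geq c\zeta$. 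The conversion to the boundary is then a separate, soft step that your proposal is missing: every vertex of the annulus is reached through $\partial B_x(j_x+k\eps^{-1}/2)$, so the annulus volume is at most $\sum_{v\in\partial B_x(j_x+k\eps^{-1}/2)}|B_v(k\eps^{-1}/2;A)|$; if the sphere had size $\leq\eps^{-1}\e^{k/4}$, Markov plus $\E|B_v(k\eps^{-1}/2;A)|\leq C\eps^{-1}\e^{k/2}$ would bound the probability of a large annulus by $O(\zeta^{-1}\e^{-k/4})=o(\zeta)$. If you want to salvage your route, replace the claim ``$Y$ counts sphere vertices'' by this volume-to-boundary Markov argument; at that point you are essentially reproducing the paper's proof, and the pivotality bookkeeping becomes unnecessary.
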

\begin{proof} We perform a second moment argument on $|B_x([j_x+k\eps^{-1}/2,j_x+k\eps^{-1}]|$
rather than on the required random variable. Since $x$ is $(\delta, \beta, j_x, k\eps^{-1})$-fit
    $$
    \ea |B_x([j_x,j_x+k\eps^{-1}])| \geq (|\partial A| - \delta\eps^{-1}) (1-\beta) \E|B(k\eps^{-1})| \, .
    $$
Furthermore,
    $$ \ea |B_x([j_x,j_x+k\eps^{-1}/2])| \leq |\partial A| \E|B(k\eps^{-1}/2)| \, ,
    $$
by monotonicity. Since $|\partial A| \geq 2\delta \eps^{-1}$ by our choice of $\zeta$ and $\delta$, and $\beta=o(1)$
(recall \eqref{parameter-def-1}), Corollary \ref{r0ball} now gives us a lower bound on the first moment
    $$
    \ea |B_x([j_x+k\eps^{-1}/2,j_x+k\eps^{-1}])| \geq {1 \over 4} |\partial A| \E|B(k\eps^{-1})| \, ,
    $$
To calculate the second moment, if $u,v$ are counted in $|B([j_x,j_x+k\eps^{-1}])|$, then
either there exists two vertices $a_1, a_2$ in $\partial A$ such that
    $$
    \{ a_1 \stackrel{k\eps^{-1}}{\lrfill} u \off A\} \circ \{ a_2 \stackrel{k\eps^{-1}}{\lrfill} v \off A\} \, ,
    $$
or there exists $a \in \partial A$, a vertex $z$ and $t \leq k\eps^{-1}$ such that
    $$
    \{a \stackrel{=t}{\lrfill} z \off A\} \circ \{z \stackrel{k\eps^{-1}-t}{\lrfill} u \off A\}
    \circ \{z \stackrel{k\eps^{-1}-t}{\lrfill} v \off A\} \, .
    $$
We apply the BK-Reimer inequality and sum over $u,v$. We get
    $$
    \ea |B_x([j_x,j_x+k\eps^{-1}])|^2 \leq |\partial A|^2 (\E|B(k\eps^{-1})|)^2 +
    \sum_{a\in\partial A,z,t\leq k\eps^{-1}} \pa(a \stackrel{=t}{\lrfill} z \off A)
    (\E|B(k\eps^{-1}-t)|)^2 \, .
    $$
We first sum over $z$ using Lemma \ref{upperball}, then appeal again to
Corollary \ref{r0ball} to get that
    $$
    \ea |B_x([j_x,j_x+k\eps^{-1}])|^2 \leq C(\E|B(k\eps^{-1})|)^2 \big [ |\partial A|^2 + |\partial A| \eps^{-1} \big ] \, .
    $$
By \eqref{X>a-prob-bd},
    $$
    \pa \big (|B_x([j_x+k\eps^{-1}/2,j_x+k\eps^{-1}])| \geq \tfrac{1}{8} |\partial A| \E|B(k\eps^{-1})| \big ) \geq { c|\partial A|^2 \over  |\partial A|^2 + |\partial A| \eps^{-1} } \geq c\zeta \, ,
    $$
where the last inequality is since $|\partial A| \geq \zeta \eps^{-1}$.
By Theorem \ref{lowerball}, we can write this as
    \be\label{findlarge.secmoment}
    \pa \big (|B_x([j_x+k\eps^{-1}/2,j_x+k\eps^{-1}])| \geq c \zeta \eps^{-2} \e^k \big ) \geq c\zeta \, ,
    \ee
for some constant $c>0$. Now, if $|B_x([j_x+k\eps^{-1}/2,j_x+k\eps^{-1}])| \geq c \zeta \eps^{-2} \e^k$
and $|\partial B_x(j_x+k\eps^{-1}/2)| \leq \eps^{-1} \e^{k/4}$ occurs, then
    $$
    |\partial B_x(j_x+k\eps^{-1}/2)| \leq \eps^{-1} \e^{k/4}\qquad \and
    \qquad \sum_{v \in \partial B_x(j_x+k\eps^{-1}/2)} |B_v(k\eps^{-1}/2; A)| \geq c \zeta \eps^{-2} \e^k \, ,
    $$
must both occur. By the Markov inequality and Lemma \ref{upperball}, the probability of this event is at most
    $$
    {\eps^{-2} \e^{3k/4} \over c \zeta \eps^{-2} \e^k} = O(\zeta^{-1} \e^{-k/4}) = o(\zeta) \, ,
    $$
by our choice of $\zeta$ and $k$ in \eqref{parameter-def-1}.
Putting this together with (\ref{findlarge.secmoment})
yields the assertion of the lemma.
\end{proof}

\begin{lemma}[Finding good radii]
\label{findradii} There exists radii $k_1, \ldots, k_{\ell}$ in $[r,2r]$ such that
    $$
    k_{i+1} - k_i \geq k\eps^{-1} \, ,
    $$
for all $i=1,\ldots, \ell$ and
\begin{eqnarray*}
\prob(x \is (\delta,\beta,k_i,k\eps^{-1})\hbox{{\rm -fit}})
&=& ( 1+O(\omegam^{1/5}) )\prob(\partial B_x(k_i) \neq \emptyset) \, ,\\
\prob(x \is (1,\beta,k_i,r_0) \hbox{{\rm -fit}})
&=& ( 1+O(\omegam^{1/5}) ) \prob(\partial B_x(k_i) \neq \emptyset) \, , \\
\prob(x \is (\delta,\beta,k_i+k\eps^{-1}/2, r_0)\hbox{{\rm -fit}})
&=& ( 1+O(\omegam^{1/5}) ) \prob(\partial B_x(k_i+k\eps^{-1}/2) \neq \emptyset) \, .
\end{eqnarray*}
\end{lemma}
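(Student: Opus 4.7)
The plan is to apply the intrinsic regularity theorem (Theorem~\ref{mostsetsaregood}) three times, once per fitness condition, to produce three large sets of good radii in $[r,2r]$, intersect them, and then extract $\ell$ radii at mutual spacing $\geq k\eps^{-1}$.

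First I would apply Theorem~\ref{mostsetsaregood} with parameters $(\delta,\beta)$ from \eqref{parameter-def-1} and $r_1=k\eps^{-1}$ to obtain a set $J_1\subseteq[r,2r]$ of at least $(1-O(\omegam^{1/4}))r$ radii satisfying the first fit estimate. Here $r_1=k\eps^{-1}\in[\eps^{-1},r_0]$, since $k=M/\log M\to\infty$ while $k\eps^{-1}\ll r_0=\tfrac12\eps^{-1}\log(\alpham\eps^3V)$ because $M\leq\log\log\log(\eps^3V\wedge\alpham^{-1})$. Next, apply the same theorem with $(1,\beta)$ and $r_1=r_0$ to obtain $J_2\subseteq[r,2r]$ of the same cardinality bound. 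For the third condition, apply the theorem on the translated interval $[r+k\eps^{-1}/2,\,2r+k\eps^{-1}/2]$: its proof is translation invariant, since Lemmas~\ref{lowerbound}, \ref{lowerboundfinal}, \ref{converse} use only that every radius is $\gg\eps^{-1}$ and $\gg\mnot$, both of which are preserved under a shift by $k\eps^{-1}/2=r/(2\log M)=o(r)$. Translating back yields $J_3'\subseteq[r-k\eps^{-1}/2,\,2r-k\eps^{-1}/2]$ of size $(1-O(\omegam^{1/4}))r$ consisting of $k_i$'s for which $k_i+k\eps^{-1}/2$ satisfies the third fit bound. Set $J=J_1\cap J_2\cap J_3'\cap[r,\,2r-k\eps^{-1}/2]$; since we lose only an $O(1/\log M)$ fraction by clipping the endpoint, $|J|\geq(1-O(\omegam^{1/4}))r$.

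Next I would verify that the multiplicative error $O(\delta^{-1}\beta^{-2}e^{2M}\omegam^{1/4})$ furnished by Theorem~\ref{mostsetsaregood} tightens to $O(\omegam^{1/5})$ under our parameter choices. With $\beta=(\log M)^{-2}$ and $\delta=(\log M)^{-1/8}/2$ we have $\delta^{-1}\beta^{-2}=(\log M)^{4+1/8}$, while $M=\log\log\log(\eps^3V\wedge\alpham^{-1}\wedge(\eps\mnot)^{-1})$ gives $e^{2M}=(\log\log(\cdots))^2$, which is only polylogarithmic in $\omegam^{-1}$. Thus $\delta^{-1}\beta^{-2}e^{2M}=\omegam^{-o(1)}=o(\omegam^{-1/20})$ and the error in each of the three fit estimates is $O(\omegam^{1/4-1/20})=O(\omegam^{1/5})$ as required.

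Finally I would partition $[r,2r]$ into $\log M$ consecutive subintervals each of length $k\eps^{-1}=r/\log M$. Since $|[r,2r]\setminus J|=O(\omegam^{1/4}r)$, at most $O(\omegam^{1/4}\log M)=o(\log M)$ subintervals can miss $J$ entirely, so at least $(1-o(1))\log M$ of them contain an element of $J$. Because $\ell=(\log M)^{1/4}\ll\log M$, we may choose $k_1<\cdots<k_\ell$ from $J$, one from each of $\ell$ distinct subintervals; consecutive choices then satisfy $k_{i+1}-k_i\geq k\eps^{-1}$, and each $k_i$ inherits all three fit bounds from membership in $J$.

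The main obstacle will be confirming the translation-invariance used to apply Theorem~\ref{mostsetsaregood} on the shifted interval in the third step; the point is that the BK-Reimer pivot argument of Lemma~\ref{lowerboundfinal} and the matching upper bound of Lemma~\ref{converse} depend on the interval only through its length $r$ and through the lower bound $\mnot$ on all radii involved, both of which are preserved by a shift of order $k\eps^{-1}=o(r)$, so the bookkeeping goes through verbatim with no new estimates needed.
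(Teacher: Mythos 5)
Your proof is correct and follows essentially the same route as the paper's: three appeals to Theorem \ref{mostsetsaregood} (handling the half-shift needed for the radius $k_i+k\eps^{-1}/2$ by clipping an $o(r)$ boundary piece), intersection of the three good sets, verification that $\delta^{-1}\beta^{-2}\e^{2M}\omegam^{1/4}=O(\omegam^{1/5})$, and extraction of $\ell$ separated radii using $\ell k=o(M)$ and $r=M\eps^{-1}$. The only nit is in the final extraction: picking one radius from each of $\ell$ \emph{consecutive} length-$k\eps^{-1}$ subintervals does not force spacing $\geq k\eps^{-1}$ (picks in adjacent subintervals can be arbitrarily close), but selecting from pairwise non-adjacent subintervals, of which there are still $(1-o(1))(\log M)/2\gg\ell$, fixes this immediately.
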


\begin{proof} This is the only place where we use Theorem \ref{mostsetsaregood}.
Indeed, say a radius $j\in[r,2r]$ is {\em good} if it satisfies the three assertions
of the proposition with $k_i$ replaced by $j$. Three appeals to
Theorem \ref{mostsetsaregood} give that at least $(1-o(1))r$ radii $j\in[r,2r]$
are good by our choice of $\delta$ and $\beta$. Now, since $\ell k = o(M)$ and
$r=M\eps^{-1}$ it is immediate that there exist $\ell$ good radii which are $k\eps^{-1}$
separated from each other.
\end{proof}

\begin{lemma}
\label{useconnectearlyerror}
For at least $(1-o(1))V^2$ pairs $x,y$ and for any $j_x,j_y \in [r,2r]$,
    $$
    \E \Big[S_{j_x,j_y,r_0} {\bf 1}_{\{ x \stackrel{2r_0+4r}{\lrlong} y \}}\Big]\leq
     V^{-1} \m (\E|B(r_0)|)^2 \alpham^{3/4} \, .
    $$
\end{lemma}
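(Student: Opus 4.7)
The natural plan is to show that whenever an edge $(u,u')$ is counted by $S_{j_x,j_y,r_0}(x,y)$, the two connection events $\{x \stackrel{n}{\lrfill} u\}$ and $\{y \stackrel{n}{\lrfill} u'\}$ (with $n := 2r+r_0$) admit \emph{edge-disjoint} witnesses, then apply Lemma~\ref{connectearlyerror}(2) to the triple sum $\sum_{x,y,(u,u')}$, and finally pass to a per-pair statement via Markov's inequality.

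For the geometric step I would argue as follows.  Any open path realising $x \stackrel{=j_x}{\lrfill} a$, concatenated with a path from $a$ to $u$ of length at most $r_0$, gives an open path of length at most $j_x+r_0 \leq n$ inside $B_x(j_x+r_0)$; in particular all its edges have both endpoints in $B_x(j_x+r_0)$.  By contrast, any realisation of $y \stackrel{=j_y}{\lrfill} b$ followed by a path of length at most $r_0$ from $b$ to $u'$ \emph{off} $B_x(j_x+r_0)$ uses only edges with neither endpoint in $B_x(j_x+r_0)$, by the definition of ``off''.  These two edge sets are therefore disjoint, which yields the inclusion
\[
\{(u,u')\text{ counted in }S_{j_x,j_y,r_0}(x,y)\}\cap\{x \stackrel{4r+2r_0}{\lrfill} y\}\;\subseteq\;\{x\stackrel{n}{\lrfill}u\}\circ\{y\stackrel{n}{\lrfill}u'\}\cap\{x\stackrel{2n}{\lrfill}y\}.
\]

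Summing this inclusion over $(u,u')$ and $(x,y)$ and applying Lemma~\ref{connectearlyerror}(2) gives, for each fixed $j_x,j_y\in[r,2r]$,
\[
\sum_{x,y}\E\!\left[S_{j_x,j_y,r_0}(x,y)\,\mathbf{1}_{\{x\stackrel{4r+2r_0}{\lrfill}y\}}\right]\;\leq\;C\left[\m \eps^{-5}(1+\eps)^{4n}+\alpham V\m\eps^{-2}(1+\eps)^{2n}\right].
\]
Substituting $r=M/\eps$ and $r_0=\tfrac{1}{2}\eps^{-1}\log(\alpham\eps^3 V)$ gives $(1+\eps)^{2n}\leq e^{4M}\alpham\eps^3V$ and $(1+\eps)^{4n}\leq e^{8M}\alpham^2\eps^6 V^2$, so the right-hand side is $O(V^2\m\eps\alpham^2 e^{8M})$.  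By Corollary~\ref{r0ball}, $(\E|B(r_0)|)^2 = \Theta(\alpham\eps V)$, so the target $V^{-1}\m(\E|B(r_0)|)^2\alpham^{3/4}$ is of order $\m\eps\alpham^{7/4}$, and the ratio of the average per-pair expectation to the target equals $\alpham^{1/4}e^{8M}$.

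Markov's inequality then shows that for each fixed $(j_x,j_y)$ the number of pairs $(x,y)$ violating the target is at most $O(V^2\alpham^{1/4}e^{8M})$; writing $X=\eps^3 V\wedge\alpham^{-1}\wedge(\eps\mnot)^{-1}$, we have $\alpham\leq X^{-1}$, so $\alpham^{1/4}e^{8M}\leq X^{-1/4}(\log\log X)^8=o(1)$ as $X\to\infty$.  The main obstacle I foresee is promoting this to a set of pairs that works \emph{simultaneously} for all $(j_x,j_y)\in[r,2r]^2$: a naive union bound costs a factor of $r^2=(M/\eps)^2$, which is too large for the ratio $\alpham^{1/4}e^{8M}$ to absorb in general.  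I expect that one either exploits monotonicity in $(j_x,j_y)$ to reduce the effective number of radii to something polylogarithmic in $M$, or replaces the crude Markov step by a direct bound on $\max_{j_x,j_y}\E[\,\cdot\,]$ using structural properties of the events defining $S_{j_x,j_y,r_0}$.
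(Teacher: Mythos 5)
Your proposal follows the paper's own route: dominate $S_{j_x,j_y,r_0}(x,y)\,\mathbf{1}_{\{x \stackrel{2n}{\lrfill}y\}}$ (with $n=2r+r_0$) by the quantity controlled in Lemma~\ref{connectearlyerror}(2), sum over $x,y,(u,u')$, and pass to a per-pair statement by a counting (Markov) argument. Your arithmetic --- $(1+\eps)^{2n}\leq \e^{4M}\alpham\eps^3V$, total sum $O(\m\eps V^2\alpham^2\e^{8M})$, ratio to the target $\alpham^{1/4}\e^{8M}=o(1)$ --- is exactly the paper's ``straightforward calculation''. Two remarks. First, the ``main obstacle'' you flag is not one, and your own write-up already contains its resolution: the dominating quantity $f(x,y):=\sum_{(u,u')}\prob\big(\{x\stackrel{n}{\lrfill}u\}\circ\{y\stackrel{n}{\lrfill}u'\},\,x\stackrel{2n}{\lrfill}y\big)$ does not depend on $j_x,j_y$ at all, because in your inclusion you already relaxed $j_x+r_0$ and $j_y+r_0$ to $n$. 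Hence $\max_{j_x,j_y\in[r,2r]}\E[S_{j_x,j_y,r_0}\mathbf{1}]\leq f(x,y)$ pointwise, and a single application of Markov's inequality to the $j$-independent function $f$ produces one exceptional set of $O(V^2\alpham^{1/4}\e^{8M})=o(V^2)$ pairs outside of which the bound holds for \emph{every} choice of radii simultaneously; no union bound over the $r^2$ radii is needed. (For the paper's application one only ever needs $\ell^2=O(\sqrt{\log M})$ specific radii anyway, but the uniform statement comes for free.)

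Second, your disjointness step is stated slightly incorrectly: the path realising $y\stackrel{=j_y}{\lrfill}b$ lives in $B_y(j_y)$, which the event $\A(x,y,j_x,j_y)$ makes disjoint from $B_x(j_x)$ but \emph{not} from $B_x(j_x+r_0)$, so it is not true that the entire $y$-witness ``uses only edges with neither endpoint in $B_x(j_x+r_0)$''. The repair is to read requirement (4) of $S_{j_x,j_y,r_0}$ in the ``off'' formalism: the event $\{b\lrefmpy u'\}$ off $B_x(j_x+r_0)$ includes the pivotality of $b$ for $y\stackrel{j_y+r_0}{\lrfill}u'$ in the configuration in which all edges touching $B_x(j_x+r_0)$ are closed; in particular there exists an open path from $y$ to $u'$ of length at most $j_y+r_0\leq n$ none of whose edges touches $B_x(j_x+r_0)$. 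That path, together with the concatenation $x\to a\to u$ (every edge of which has both endpoints in $B_x(j_x+r_0)$), supplies the edge-disjoint witnesses for $\{x\stackrel{n}{\lrfill}u\}\circ\{y\stackrel{n}{\lrfill}u'\}$. With these two points settled, your argument is complete and coincides with the paper's.
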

\begin{proof} Part (2) of Lemma \ref{connectearlyerror} with $n=r_0+2r$ and a straightforward
calculation with Theorem \ref{lowerball} and our choice of parameters shows that
    $$
    \sum_{x,y} \E \big [ S_{j_x,j_y,r_0} {\bf 1}_{\{ x \stackrel{2r_0+j_x+j_y}{\lrlong} y \}} \big ] \leq CV\m (\E|B(r_0)|)^2
     [ \alpham \e^{8M} + \alpham\e^{4M} ] \, ,
    $$
which gives the result since $C \alpham \e^{8M} \leq \alpham^{3/4}$ by our choice of $M$ in (\ref{Mchoice}).
\end{proof}

\noindent{\bf Proof of Theorem \ref{largeclose.findradii}.}
Recall the requirements (1)-(5) in the definition of $\A(x,y,j_x,j_y, r_0,\beta,k)$.
We apply Lemma \ref{findradii} and let $k_1, \ldots, k_\ell$ be the
corresponding radii. We prove the theorem with radii $\{j_1, \ldots, j_\ell\}$ defined by
    $$
    j_i = k_i + k\eps^{-1}/2 \, ,
    $$
for $i=1,\ldots, \ell$ and assume $x,y$ are such that the assertion of
Lemma \ref{useconnectearlyerror} holds. We will prove that for these pairs $x,y$
    \eqn{
    \label{aim-i}
    \prob\Big(\A(x,y,2r,2r)\and \bigcap_{j_x,j_y\in \{j_1, \ldots, j_\ell\}}\{\mbox{(q) does not hold
    for $j_x,j_y$}\}\Big)
    =o(\vep^2) \, ,
    }
for $q \in \{1,2,3,4,5\}$. We do this in the order (1), (2), (4), (5) and (3).
Since $\A(x,y,2r,2r)\subseteq \A(x,y,j_x,j_y)$
when $j_x,j_y\leq 2r$, \eqref{aim-i} holds trivially for $q=1$
and all $x,y$, $j_x,j_y\leq 2r$.

For any $j_x \in \{j_1,\ldots, j_\ell\}$,
    $$
    \prob \big (\A(x,y,j_x,j_y) \and |B_x(j_x)| \geq \eps^{-2} (1+\eps)^{3r} \big )
    \leq C\eps^2 (1+\eps)^{-r} = O(\e^{-M} \eps^2) \, ,
    $$
by the Markov inequality, Lemma \ref{upperball}, the BK-Reimer inequality
and Corollary \ref{corrlength}. This implies that
    $$
    \prob \big ( \A(x,y,2r,2r) \and \exists j_x \in \{j_1,\ldots, j_\ell\} \,\,
    \mbox{ such that }|B_x(j_x)| \geq \eps^{-2} (1+\eps)^{3r} \big ) = o(\eps^2) \, ,
    $$
since $\ell = o(\e^M)$. Similarly,
    $$
    \prob \big (\A(x,y,j_x,j_y) \and |\partial B_x(j_x)| \geq \eps^{-1} (1+\eps)^{3r} \big )
    \leq C\eps^2 (1+\eps)^{-r} = O(\e^{-M} \eps^2) \, ,
    $$
leading to the same bound. This proves \eqref{aim-i} for $q=2$.

Next, we wish to show that for any $j_x \in \{j_1,\ldots, j_\ell\}$,
    \be
    \label{findradii.midstep}
    \prob \big (\A(x,y,j_x,j_y) \and x \hbox{ {\rm is not} }
    (1, \beta, j_x, r_0)\hbox{{\rm -fit}} \big )
    = O(\eps^2\omegam^{1/5}) \, .
    \ee
It is tempting to use the BK-Reimer inequality here, however, we cannot
claim that the event in \eqref{findradii.midstep} implies that $\partial B_y(j_y) \neq \emptyset$
occurs disjointly from the event $x \hbox{ {\rm is not} } (1, \beta, j_x, r_0)\hbox{{\rm -fit}}$,
since they are both non-monotone events and the corresponding witnesses may share closed edges.
Instead, we condition $B_x(j_x)=A$ and get that
    $$
    \prob \big (\A(x,y,j_x,j_y) \and x \hbox{ {\rm is not} } (1, \beta, j_x, r_0)\hbox{{\rm -fit}} \big ) = \sum_{A \colon x \hbox{ {\rm is not} } (1, \beta, j_x, r_0)\hbox{{\rm -fit}}} \prob(B_x(j_x)=A) \prob( \partial B_y(j_y) \neq \emptyset \off A) \, ,
    $$
since $(1, \beta, j_x, r_0)\hbox{{\rm -fit}}$ is determined by the status of the edges
touching $B_x(j_x)$. We use Corollary \ref{corrlength} to bound
$\prob( \partial B_y(j_y) \neq \emptyset \off A) = O(\eps)$ and
    \eqan{
    \prob(\partial B_x(j_x)\neq \varnothing \and x \hbox{ {\rm is not} }
    (1, \beta, j_x, r_0)\hbox{{\rm -fit}})
    &=\prob(\partial B_x(j_x)\neq \varnothing)-
    \prob(\partial B_x(j_x)\neq \varnothing \and x \hbox{ {\rm is } }
    (1, \beta, j_x, r_0)\hbox{{\rm -fit}})\nn\\
    &\leq \prob(\partial B_x(j_x)\neq \varnothing)O(\omegam^{1/5})\nn,
    }
by our choice of radii in Lemma \ref{findradii}, so Corollary \ref{corrlength} gives (\ref{findradii.midstep}).
Therefore,
    $$
    \prob \big ( \A(x,y,2r,2r) \and \exists j_x \in \{j_1,\ldots, j_\ell\}
    \text{ such that } x \hbox{ {\rm is not} }
    (1, \beta, j_x, r_0)\hbox{{\rm -fit}} \big )
    =O(\ell\eps^2\omegam^{1/5})= o(\eps^2) \, ,
    $$
by our choice of $\ell$ in \eqref{parameter-def-1}, of $M$ in \eqref{Mchoice},
and of $\omegam$ in \eqref{omegam-def}. This proves \eqref{aim-i} for $q=4$.

Similarly, by Lemma \ref{useconnectearlyerror} and Markov's inequality, for any $j_x,j_y$,
    $$
    \prob \big ( \A(x,y,j_x,j_y) \and \hbox{{\rm (5) does not hold}}\big )
    \leq C\eps^2 \alpham^{1/4} \, .
    $$
The union bound implies that
    $$
    \prob \big ( \A(x,y,2r,2r) \and \exists j_x,j_y \in \{j_1,\ldots, j_\ell\} \,\,
    \hbox{{\rm (5) does not hold}} \big ) = O(\ell^2 \alpham^{1/4} \eps^2) = o(\eps^2) \, ,
    $$
by our choice of $\ell$. Therefore, \eqref{aim-i} holds for $q=5$.

Thus, it remains to prove \eqref{aim-i} for $q=3$.
This is the difficult requirement in which we only prove
that one of the radii in $\{j_1,\ldots, j_\ell\}$ satisfies
it (in fact, all radii satisfy it, but that is harder to prove,
and we refrain from doing so). In the same way as in the proof
of \eqref{aim-i} for $q=4$, using Corollary \ref{corrlength},
it is enough to show that
    \be
    \label{largeclose.findradii.toprove}
    \prob \big ( \partial B_x(2r) \neq \emptyset \and |\partial B_x(j_x)|
    \leq \e^{k/4}\eps^{-1} \quad \forall \, j_x \in \{j_1,\ldots, j_\ell\} \big )
    = o(\eps) \, .
    \ee
For $i\in\{1, \ldots, \ell\}$, we write $\A_i$ for the event that
$x$ is $(\delta,\beta,k_i,k\eps^{-1})$-fit and $\B_i$ for the event that
$|\partial B_x(j_i)| \leq \eps^{-1} \e^{k/4}$ and $\D_i$ for the event
    $$
    \D_i = \big \{|\partial B_x(k_t)| \geq \zeta \eps^{-1} \quad \forall \, t \in \{1,\ldots, i\} \big \},
    $$
so that
    $$
    \prob \big ( \partial B_x(2r) \neq \emptyset \and |\partial B_x(j_x)|
    \leq \prob(\D_\ell \cap \B_1 \cap \cdots \cap \B_{\ell} )
    +\prob(\{\partial B_x(2r) \neq \emptyset\} \cap \D_{\ell}^c).
    $$
Then we can split
    $$
    \prob(\D_\ell \cap \B_1 \cap \cdots \cap \B_{\ell} )
    \leq \prob(\D_\ell \cap \A_{\ell}^c)
    + \prob ( \D_\ell \cap \B_1 \cap \cdots \cap \B_{\ell} \cap \A_{\ell}) \, .
    $$
By our choice of $k_i$ in Lemma \ref{findradii} and Corollary \ref{corrlength}
we have that $\prob(\D_\ell\cap \A_\ell^c) \leq \eps \omegam^{1/5}$, so that
    \begin{eqnarray*}
    \prob( \D_\ell \cap \B_1 \cap \cdots \cap \B_{\ell} )
    \leq {\eps \omegam^{1/5}} +
    \prob ( \B_{\ell} \mid \D_\ell \cap \B_1\cap  \cdots \cap \B_{\ell-1} \cap \A_{\ell} )
    \prob(\D_{\ell-1} \cap \B_1 \cap \cdots \cap \B_{\ell-1}) \, .
    \end{eqnarray*}
Thus, by Lemma \ref{largeclose.findlarge},
    \begin{eqnarray*}
    \prob( \D_\ell \cap \B_1 \cap \cdots \cap \B_{\ell} )
    \leq {\eps \omegam^{1/5}} + (1-c\zeta)
    \prob( \D_{\ell-1}\cap \B_1 \cap \cdots \cap \B_{\ell-1}) \, ,
    \end{eqnarray*}
By iterating this we obtain
    $$
    \prob( \D_\ell \cap\B_1 \cap \cdots \cap \B_{\ell} )
    \leq { \eps \ell \omegam^{1/5}}  + C \eps (1-c\zeta)^\ell
    = o(\eps) \, ,
    $$
since $\ell \omegam^{1/5}=o(1)$ and
$\zeta^{-1} = o(\ell)$ (recall \eqref{parameter-def-1}),
and $\prob(\D_1) \leq C\eps$ by Corollary \ref{corrlength}.
Lastly, Lemma \ref{nottoosmallbdry} shows that
    $$
    \prob(\{\partial B_x(2r)\neq \emptyset\}\cap \D_{\ell}^c) = o(\eps) \, ,
    $$
showing (\ref{largeclose.findradii.toprove}) and thus concluding the proof of \eqref{aim-i}
for $q=3$ and the proof of Theorem \ref{largeclose.findradii}.
\qed

\subsection{Proof of Theorem \ref{largeclose.getsedge}:
Conditional second moment}
We now set the stage for the proof of Theorem \ref{largeclose.getsedge}.
We perform this by a conditional second moment argument on
$S_{j_x,j_y,r_0}(x,y)$. We will be conditioning on
$B_x(j_x)=A$ and $B_y(j_y)=B$ where $A$ and $B$ are
such that the event $\A(x,y,j_x,j_y,r_0,\beta,k)$
holds. We abuse notation, as before, and treat $A,B$ as sets of vertices
but our conditioning is on the status of all edges touching $B_x(j_x-1)$
and $B_y(j_y-1)$. Thus, while $A$ and $B$ are disjoint sets of vertices,
they may be sharing closed edges. With this in mind, we generalize the
notation just before Lemma \ref{largeclose.findlarge}, and write $\pa$,
$\pb$ and $\pab$ for the measures
    \begin{eqnarray*}
    \pa(\cdot) &=& \prob( \cdot \off A \mid B_x(j_x)=A) \, , \\
    \pb(\cdot) &=& \prob( \cdot \off B \mid B_y(j_y)=B) \, , \\
    \pab(\cdot) &=& \prob( \cdot \off A \cup B \mid B_x(j_x)=A, B_y(j_y)=B) \, .
    \end{eqnarray*}
We start by proving five preparatory lemmas.

\begin{lemma}
\label{sedgefirstmommain}
Assume that $A,B$ are such that $x,y$ are $(1,\beta,j_x,r_0)$-fit and
$(1,\beta,j_y,r_0)$-fit, respectively. Then,
    $$
    \sum_{a,b\in \partial A \times \partial B} \sum_{(u,u')}
    \pa(a \lrefmpx u)\pb(b \lrefmpy u') \geq (1-8\beta^{1/2}) V^{-1}
    (\E|B(r_0)|)^2 m (|\partial A| - \eps^{-1})(|\partial B| - \eps^{-1}) \, .
    $$
\end{lemma}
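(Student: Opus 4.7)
The plan is to leverage the two complementary inputs already encoded in the fitness hypothesis: (i) most vertices $a\in\partial A$ are \emph{regenerative}, so that their expected forward cluster to depth $r_0$ is essentially $\E|B(r_0)|$; and (ii) for any $a$ and any target $u$ the pointwise probability $\pa(a \lrefmpx u)$ is uniformly of order $V^{-1}\E|B(r_0)|$ by the non-backtracking walk estimate in Lemma~\ref{unifconnbdoff}. Input (i) alone controls a total (summing over $u$) but not its spatial distribution, while input (ii) alone gives pointwise control with too weak a mean; combining them through an extremal inequality converts the $m$-regularity of $G$ into the desired lower bound on the edge-sum.

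First I would restrict the double sum to regenerative vertices. Write $\partial A_{\rm reg}$ and $\partial B_{\rm reg}$ for the $(\beta,j_x,r_0)$- and $(\beta,j_y,r_0)$-regenerative subsets of $\partial A$ and $\partial B$, and let $\alpha_r:=|\partial A_{\rm reg}|$, $\beta_r':=|\partial B_{\rm reg}|$; the fitness assumption (applied with $\delta=1$) gives $\alpha_r\geq |\partial A|-\eps^{-1}$ and $\beta_r'\geq|\partial B|-\eps^{-1}$. Dropping the contributions from non-regenerative $a$'s and $b$'s only decreases the sum, so it suffices to lower bound $T:=\sum_{(u,u')} f(u)g(u')$ where $f(u)=\sum_{a\in\partial A_{\rm reg}} \pa(a \lrefmpx u)$, $g(u')=\sum_{b\in\partial B_{\rm reg}} \pb(b \lrefmpy u')$, and the outer sum ranges over directed edges of $G$. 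The defining inequality $G_{r_0,j_x}(a,x;A)\geq(1-\beta)\E|B(r_0)|$ for regenerative $a$ then gives $\sum_u f(u)\geq(1-\beta)\alpha_r\E|B(r_0)|$ and likewise $\sum_{u'}g(u')\geq(1-\beta)\beta_r'\E|B(r_0)|$.

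Next I would invoke input (ii): dropping the pivotal constraint from $\lrefmpx$ yields the larger event $\{a \lrefmp u\}$, so by Lemma~\ref{unifconnbdoff} and monotonicity, $\pa(a \lrefmpx u)\leq \pa(a \lrefmp u)\leq (1+O(\omegam))V^{-1}\E|B(r_0;A)|\leq (1+O(\omegam))V^{-1}\E|B(r_0)|$. Summing over $a\in\partial A_{\rm reg}$ gives $f(u)\leq f_{\max}:=(1+O(\omegam))\alpha_r V^{-1}\E|B(r_0)|$, and similarly $g(u')\leq g_{\max}$. Using $m$-regularity of $G$, $\sum_u\sum_{u'\sim u} g(u')=m\sum_{u'}g(u')$, and the non-negativity of $f_{\max}-f(u)$ and of $mg_{\max}-\sum_{u'\sim u}g(u')$, expanding
\[
0 \leq \sum_u \bigl(f_{\max}-f(u)\bigr)\Bigl(mg_{\max}-\sum_{u'\sim u}g(u')\Bigr)
\]
rearranges to $T\geq m f_{\max}\sum_{u'}g(u')+mg_{\max}\sum_u f(u)-Vm f_{\max} g_{\max}$. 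Substituting the estimates above, the right-hand side equals $mV^{-1}\alpha_r\beta_r'(\E|B(r_0)|)^2 (1+O(\omegam))\bigl[2(1-\beta)-(1+O(\omegam))\bigr]=mV^{-1}\alpha_r\beta_r'(\E|B(r_0)|)^2[1-2\beta+O(\omegam)]$.

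Finally, since $\beta=(\log M)^{-2}$ and $\omegam\ll \beta^{1/2}$ by our choice of parameters ($M$ grows very slowly and $\omegam=\alpham^{1/2}+\eps\mnot$), the bracket exceeds $1-8\beta^{1/2}$ for $m$ large; combined with $\alpha_r\beta_r'\geq(|\partial A|-\eps^{-1})(|\partial B|-\eps^{-1})$ (both factors being non-negative, otherwise the lemma is trivial), this yields the claim. I do not foresee a serious obstacle: the algebraic step is routine once the two uniform inputs are in place, and the pivotal constraint only shrinks the event, so Lemma~\ref{unifconnbdoff} applies to the pivotal-free version without modification. The only small subtlety is the observation $\E|B(r_0;A)|\leq \E|B(r_0)|$ by monotonicity, which makes the pointwise bound uniform in the conditioning set.
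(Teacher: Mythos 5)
Your proof is correct, and the key step is genuinely different from the paper's. Both arguments run on the same two inputs --- the regenerative lower bound $\sum_u \pa(a \lrefmpx u)\geq(1-\beta)\E|B(r_0)|$ and the uniform pointwise upper bound $\pa(a \lrefmpx u)\leq(1+O(\omegam))V^{-1}\E|B(r_0)|$ from Lemma \ref{unifconnbdoff} --- but they convert these into a bound on the edge sum differently. The paper works pair by pair: for each regenerative $(a,b)$ it runs a level-set (Markov-type) argument showing that the set of targets $u$ with $\pa(a \lrefmpx u)\leq(1-\beta^{1/2})V^{-1}\E|B(r_0)|$ has size at most $2\beta^{1/2}V$, intersects the two good sets, and then counts the at least $(1-8\beta^{1/2})Vm$ directed edges with both endpoints good. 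You instead aggregate over $a$ and $b$ first and lower-bound the bilinear form in one stroke via the extremal inequality $\sum_u(f_{\max}-f(u))\bigl(mg_{\max}-\sum_{u'\sim u}g(u')\bigr)\geq 0$, exploiting $m$-regularity through $\sum_u\sum_{u'\sim u}g(u')=m\sum_{u'}g(u')$. Your route is cleaner and in fact sharper: it produces the factor $1-2\beta-O(\omegam)$ rather than $1-8\beta^{1/2}$, avoiding the square-root loss (harmless here, since any $1-o(1)$ factor of this form suffices downstream). Two cosmetic points: your aside that the lemma is ``trivial'' when a factor $|\partial A|-\eps^{-1}$ is negative fails if \emph{both} factors are negative (the product is then positive), but the paper's own proof carries the identical blemish and the lemma is only invoked when both boundaries exceed $\e^{k/4}\eps^{-1}$; and the identification of $\sum_u\pa(a \lrefmpx u)$ with $G_{r_0,j_x}(a,x;A)$ glosses over ``off $A$'' versus ``off $A\setminus\{a\}$'' exactly as the paper does, so no new gap is introduced.
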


\begin{proof} Let $a \in \partial A$ be a $(\beta,j_x,r_0)$-regenerative vertex.
Then, by definition,
    $$
    \sum_{u} \pa(a \lrefmpx u) \geq (1-\beta) \E |B(r_0)| \, .
    $$
Denote by $U$ the set of vertices
    $$
    U = \big\{ u \colon \pa(a \lrefmpx u) \leq (1- \beta^{1/2}) V^{-1} \E |B(r_0)|\big\} \, ,
    $$
and recall that Lemma \ref{unifconnbdoff} guarantees that
    $$
    \pa\big(a \lrefmpx u\big) \leq {(1+o(\beta)) \E |B(r_0)| \over V} \, ,
    $$
by our choice of $\beta$ in \eqref{parameter-def-1}, so that
    $$
    (1-\beta) \E|B(r_0)| \leq \sum_{u} \pa(a \lrefmpx u)
    \leq |U| (1- \beta^{1/2}) V^{-1} \E |B(r_0)| + (V-|U|) (1+o(\beta)) V^{-1} \E |B(r_0)| \, ,
    $$
and we deduce that $|U| \leq 2\beta^{1/2} V$. In other words,
for at least $(1-2\beta^{1/2})V$ vertices $u$,
    $$
    \pa\big(a \lrefmpx u\big) \geq {(1-\beta^{1/2}) \E |B(r_0)| \over V} \, .
    $$
Similarly, for any $b\in \partial B$ which is $(\beta,j_y,r_0)$-regenerative,
there exist at least $(1-2\beta^{1/2})V$ vertices $u$ such that
    $$
    \pb\big(b \lrefmpy u\big) \geq {(1-\beta^{1/2}) \E |B(r_0)| \over V} \, .
    $$
Thus, for such $a$ and $b$, at least $(1-4\beta^{1/2})V$
vertices $u$ satisfy both inequalities. Write $D$ for this set of
vertices so that $|D^c| \leq 4\beta^{1/2} V$. Since the degree of each
vertex is $\m$, the number of edges having at least one side
in $D^c$ is at most $4\beta^{1/2} V \m$. Thus, at least $(1-8\beta^{1/2})V\m$
directed edges $(u,u')$ are such that $u$ and $u'$ both satisfy the
above inequalities. Hence
    $$
    \sum_{(u,u')} \pa\big(a \lrefmpx u\big) \pb\big(b \lrefmpy u'\big)
    \geq {(1-8\beta^{1/2}) (\E|B(r_0)|)^2 \m \over V} \, .
    $$
Since $x$ is $(1,\beta,j_x,r_0)$-fit and $y$ is $(1,\beta,j_y,r_0)$-fit the number of such pairs $a,b$ is at least
    $$
    (|\partial A| - \eps^{-1})(|\partial B| - \eps^{-1}) \, ,
    $$
and the lemma follows.
\end{proof}

\begin{lemma} \label{sedgefirstmomerror1}
The following bounds hold:
    \eqan{
    &\sum_{\substack{(a,b) \in \partial A \times \partial B \\ (u,u'), z_1, t_1 \in [\mnot,r_0]} }
     \pab\big(a \lrefmp u, a \lref z_1\big) \pb(b \stackrel{=t_1}{\lrfill} z_1)
     \pb(z_1 \stackrel{r_0-t_1}{\lrfill} u')
     \leq \alpham^{1/2} |\partial A||\partial B|V^{-1} \m (\E|B(r_0)|)^2 \, ,\nn
    }
and
    \eqan{
    &\sum_{\substack{(a,b) \in \partial A \times \partial B \\ (u,u'), z_1, t_1 \in [\mnot,r_0]} }
     \pab(a \lrefmp u, a \lref z_1) \pb(b \stackrel{r_0-t_1}{\lrfill} z_1) \pb(z_1 \stackrel{=t_1}{\lrfill} u')
    \leq \alpham^{1/2} |\partial A||\partial B|V^{-1} \m (\E|B(r_0)|)^2 \, .\nn
    }
\end{lemma}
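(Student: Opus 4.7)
The two bounds are structurally identical after relabelling (they differ only in which of the connections between $b$ and $u'$ through $z_1$ carries the exact length $t_1$), so we outline only the first. The plan is to decompose the joint event $\{a \lrefmp u\} \cap \{a \lref z_1\}$ via the tree-graph inequality, apply the uniform connection bound of Lemma \ref{unifconnbdoff} to every long connection, and control the remainder by a short triangle diagram.

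Both subevents $\{a \lrefmp u\}$ and $\{a \lref z_1\}$ emanate from $a$, so the standard tree-graph decomposition produces a branching vertex $w$ and an integer $s\leq r_0$ such that
\begin{align*}
\{a \stackrel{=s}{\lrfill} w\} \circ \{w \stackrel{r_0-s}{\lrfill} u\} \circ \{w \stackrel{r_0-s}{\lrfill} z_1\}
\end{align*}
occurs under $\pab$; the requirement that the $a$-to-$u$ path have length at least $2\mnot$ forces either $s \geq \mnot$ or the $w \to u$ segment to have length at least $\mnot$. BK-Reimer then bounds $\pab(a \lrefmp u, a \lref z_1)$ by the corresponding sum of products. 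We next extract two factors of $V^{-1}$: first by applying Lemma \ref{unifconnbdoff} to the always-long connection $\pb(b \stackrel{=t_1}{\lrfill} z_1)$, yielding $(2 + O(\omegam))V^{-1} \E|\partial B(t_1-\mnot;B)|$ uniformly in $z_1$ and producing a factor $|\partial B| V^{-1} \E|B(r_0)|$ after summing over $b$ and $t_1$; and second by applying the same lemma to whichever of the two tree-graph segments $a\to w$ or $w\to u$ is long, producing an additional factor $V^{-1} \E|B(r_0)|$.

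The residual involves the remaining short segment of the tree-graph decomposition, the factor $\pb(z_1 \stackrel{r_0-t_1}{\lrfill} u')$ summed over $u'$ adjacent to $u$ (which contributes a factor $m$), and the sum over $a \in \partial A$ (contributing $|\partial A|$). What is left is a short triangle diagram on the surviving vertices, which by Lemma \ref{shorttriangle}, Corollary \ref{extendedtriangle} and the asymptotics $\E|B(r_0)| = \Theta(\sqrt{\alpham \vep V})$ of Corollary \ref{r0ball} is of order $\omegam = \alpham^{1/2} + \vep\mnot$. Multiplying all factors yields the claimed bound $\alpham^{1/2}|\partial A||\partial B|V^{-1}\m(\E|B(r_0)|)^2$, where the $\vep\mnot$ portion of $\omegam$ is absorbed into $\alpham^{1/2}$ thanks to the hypothesis $\alpham \geq (\vep^3 V)^{-1/2}$ from \eqref{alphamcond} together with the size of $\E|B(r_0)|$. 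The main obstacle is the case analysis for short versus long connections in the tree-graph decomposition: we must verify that in every case we can simultaneously extract two uniform-bound factors of $V^{-1}$ \emph{and} retain a triangle-condition saving of order $\omegam$, and that all ``off''-set conditionings are absorbed by the uniform connection bounds (which hold uniformly in the removed set).
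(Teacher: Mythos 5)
Your proposal follows essentially the same route as the paper's proof: a tree-graph decomposition of the joint event at $a$ with a branching vertex, a long/short case split on the segment emanating from $a$, uniform connection bounds (Lemma \ref{unifconnbdoff} / Lemma \ref{uniformconnbd}) applied to the guaranteed-long connections to extract two factors of $V^{-1}$, and control of the remainder via Corollary \ref{extendedtriangle} together with the choice of $r_0$ through Corollary \ref{r0ball}. The only bookkeeping caveat is that in the short-branch case the residual is not a triangle diagram but a product of expected volumes carrying an extra $\mnot r_0$, whose smallness comes from $\vep \mnot = o(1)$ and $V^{-1}(\E|B(r_0)|)^2 = \Theta(\alpham\vep)$ rather than from a triangle-condition saving of order $\omegam$; this is exactly how the paper's term $S^{\sss(b)}$ is handled, and your estimate closes the same way.
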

\begin{proof} The proof of the second assertion is identical to the first, so we only prove the first. If $a \lrefmp u$ and $a \lref z_1$, then there exists $z_2$ and $t_2 \in [\mnot,r_0]$ such that
    $$
    \{a \stackrel{=t_2}{\lrfill} z_2\} \circ \{z_2 \stackrel{r_0-t_2}{\lrfill} u\} \circ \{z_2 \stackrel{r_0-t_2}{\lrfill} z_1\}
    $$
or there exists $z_2$ such that
    $$
    \{a \stackrel{\mnot}{\lrfill} z_2\} \circ \{z_2 \lrefmonep u\} \circ \{z_2\lrefmonep z_1\} \, .
    $$
To see this, let $\eta$ be the lexicographically first shortest open path between $a$ and $z_1$ so
that $|\eta|\leq r_0$ and let $\gamma$ be an open path between $a$ and $u$
such that $|\gamma| \in [2\mnot,r_0]$. Let $z_2$ be the last vertex (according
to the ordering induced by $\gamma$) on $\gamma$ belonging to $\eta$ (that is,
the part of $\gamma$ after $z_2$ is disjoint from $\eta$). Let $t_2$ be the
distance between $a$ and $z_2$ along $\eta$. If $t_2 \geq \mnot$, then the
first event occurs: the first witness is the first $t_2$ open edges of $\eta$
together with all the closed edges in the graph, the second
witness is the set of open edges of $\gamma$ between $z_2$ to $u$ (note that
there are no more than $r_0-t_2$ edges since the part of $\gamma$ between $a$
to $z_2$ is of length at least $t_2$) and the third witness is the set of
open edges of $\eta$ between $z_2$ and $u$.  If $t_2 \leq \mnot$ occurs, then
the second event occurs by a similar reasoning.

This leads to two different calculations, we use the BK-Reimer inequality and
get that the required sum is at most $S^{\sss(a)} + S^{\sss(b)}$, where
    $$
    S^{\sss (a)} = \sum_{\substack{(a,b) \in \partial A \times \partial B,(u,u'), \\ z_1,z_2,t_1,t_2 \in [\mnot,r_0]} } \pab(a \stackrel{=t_2}{\lrfill} z_2)\pab(z_2 \stackrel{r_0-t_2}{\lrfill} u) \pab(z_2 \stackrel{r_0-t_2}{\lrfill}z_1) \pb(b \stackrel{=t_1}{\lrfill} z_1) \pb(z_1 \stackrel{r_0-t_1}{\lrfill} u') \, ,$$
and
    $$
    S^{\sss (b)} = \sum_{\substack{(a,b) \in \partial A \times \partial B, (u,u'), \\ z_1,z_2,t_1 \in [\mnot,r_0]} } \pab(a \stackrel{\mnot}{\lrfill} z_2) \pab(z_2 \lrefmonep u)\pab(z_2\lrefmonep z_1)  \pb(b \stackrel{=t_1}{\lrfill} z_1) \pb(z_1 \stackrel{r_0-t_1}{\lrfill} u') \, .
    $$
We use Lemma \ref{unifconnbdoff} together with Lemma \ref{upperball} to bound the terms $\pab(a \stackrel{=t_2}{\lrfill} z_2)$ and $\pb(b \stackrel{=t_1}{\lrfill} z_1)$ in $S^{\sss(a)}$ by $CV^{-1}(1+\eps)^{t_2}$ and $CV^{-1}(1+\eps)^{t_1}$, respectively. This gives
    $$
    S^{\sss(a)} \leq CV^{-2} \sum_{\substack{(a,b) \in \partial A \times \partial B,(u,u'), \\ t_1,t_2 \in [\mnot,r_0]} } (1+\eps)^{t_1+t_2} \sum_{z_1,z_2} \pab(z_2 \stackrel{r_0-t_2}{\lrfill} u)\pab(z_2 \stackrel{r_0-t_2}{\lrfill}z_1) \pab(z_1 \stackrel{r_0-t_1}{\lrfill} u') \, .
    $$
We sum over $z_1,z_2$ using Corollary \ref{extendedtriangle} together with Lemma \ref{upperball} to get that
    \begin{eqnarray*}
    S^{\sss(a)} &\leq& C V^{-2} \sum_{\substack{(a,b) \in \partial A \times \partial B,(u,u'), \\ t_1,t_2 \in [\mnot,r_0]} } (1+\eps)^{t_1+t_2} \Big [ \alpham + {\eps^{-3}(1+\eps)^{3r_0-t_1-2t_2} \over V} \Big ] \\ &\leq& C \alpham\m |\partial A||\partial B| V^{-1} \eps^{-2}(1+\eps)^{2r_0} + C|\partial A||\partial B|V^{-2} m \eps^{-4}(1+\eps)^{3r_0} r_0  \, ,
    \end{eqnarray*}
where the last inequality is an immediate calculation. By Theorem \ref{lowerball} the first term is at most
    $$
    C\alpham |\partial A||\partial B|V^{-1} \m (\E|B(r_0)|)^2 \, ,
    $$
and by our choice of $r_0$ in (\ref{rzerocond}), the second term is at most
    $$
    {\alpham^{1/2}\log(\eps^3 V) \over \sqrt{\eps^3 V}} |\partial A||\partial B|V^{-1}\m(\E|B(r_0)|)^2 \, .
    $$
This gives an upper bound on $S^{\sss(a)}$ fitting the error in the assertion of the lemma. To estimate $S^{\sss(b)}$ we use Lemma \ref{uniformconnbd} to bound $\pab(z_2 \lrefmonep u)$ and $\pab(z_2\lrefmonep z_1)$. This gives
    $$
    S^{\sss(b)} \leq C {(\E|B(r_0)|)^2 \over V^2} \sum_{\substack{(a,b) \in \partial A \times \partial B, (u,u'), \\ z_1,z_2,t_1 \in [\mnot,r_0]} }\pab(a \stackrel{\mnot}{\lrfill} z_2) \pb(b \stackrel{=t_1}{\lrfill} z_1) \pb(z_1 \stackrel{r_0-t_1}{\lrfill} u') \, .$$
We now sum over $(u,u')$, $z_1, z_2$ and $t_1$ using Lemma \ref{upperball}. We get that
    \begin{eqnarray*}
    S^{\sss(b)} &\leq& C |\partial A||\partial B| V^{-2} m {(\E|B(r_0)|)^2 } r_0 \mnot \eps^{-1} (1+\eps)^{r_0} \leq {C \mnot \vep \alpham^{1/2} \over \sqrt{\eps^3 V}} C |\partial A||\partial B| V^{-1} \m {(\E|B(r_0)|)^2 } \, ,
    \end{eqnarray*}
by Theorem \ref{lowerball} and (\ref{rzerocond}), concluding our proof since $\mnot =o(\eps^{-1})$.
\end{proof}

\begin{lemma} \label{sedgefirstmomerror3}
The following bounds hold:
    \eqan{
    &\sum_{\substack{(a,b) \in \partial A \times \partial B, a' \in \partial A \\ (u,u'), z_1, t_1 \in [\mnot,r_0]} }
     \!\!\!\!\!\!\!\! \!\!\!\!  \pab (a \lrefmp u) \pab(a' \lref z_1) \pb(b \stackrel{=t_1}{\lrfill} z_1)
     \pb(z_1 \stackrel{r_0-t_1}{\lrfill} u')
     \leq \alpham^{1/2} \eps |\partial A|^2|\partial B| V^{-1} m (\E|B(r_0)|)^2 \, ,\nn
    }
and
    \eqan{
    &\sum_{\substack{(a,b) \in \partial A \times \partial B, a' \in \partial A \\ (u,u'), z_1, t_1 \in [\mnot,r_0]} }
     \!\!\!\! \!\!\!\! \!\!\!\!  \pab(a \lrefmp u) \pab(a' \lref z_1) \pb(b \stackrel{r_0-t_1}{\lrfill} z_1) \pb(z_1 \stackrel{=t_1}{\lrfill} u')
    \leq  \alpham^{1/2} \eps |\partial A|^2 |\partial B| V^{-1} m (\E|B(r_0)|)^2 \, . \nn
    }
\end{lemma}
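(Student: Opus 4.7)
Both sums have the same structure as Lemma \ref{sedgefirstmomerror1} but with the joint event $\{a \lrefmp u, a \lref z_1\}$ replaced by the factorized product $\pab(a \lrefmp u)\pab(a' \lref z_1)$. My plan is to avoid the BK-triangle decomposition used in Lemma \ref{sedgefirstmomerror1} and instead bound each factor directly by the uniform connection bounds of Lemma \ref{unifconnbdoff} together with the expected ball estimates of Lemma \ref{upperball} and Corollary \ref{r0ball}.

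For the first inequality, I apply $\pab(a \lrefmp u) \leq CV^{-1}\E|B(r_0)|$ uniformly in $u$ (valid for the path-length event $\lrefmp$ via Lemma \ref{unifconnbdoff} since $2\mnot \leq r_0$), and $\pb(b \stackrel{=t_1}{\lrfill} z_1) \leq CV^{-1}(1+\eps)^{t_1-\mnot}$ uniformly in $z_1$ (valid for $t_1 \geq \mnot$). Summing in the order $u$ (the edge constraint contributes a factor $m$), then $u'$ using $\sum_{u'}\pb(z_1 \stackrel{r_0-t_1}{\lrfill} u') \leq C\eps^{-1}(1+\eps)^{r_0-t_1}$ from Lemma \ref{upperball}, then $z_1$ using $\sum_{z_1}\pab(a' \lref z_1) \leq C\E|B(r_0)|$, and finally $t_1 \in [\mnot, r_0]$, the exponents of $(1+\eps)$ combine to $(1+\eps)^{r_0-\mnot}$ and a combinatorial factor $r_0$ emerges. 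Including $|\partial A|^2|\partial B|$ and invoking Corollary \ref{r0ball} together with the choice of $r_0$ in \eqref{rzerocond}, the ratio against the target is $r_0 V^{-1}\E|B(r_0)|/(\alpham^{1/2}\eps) = O(\log(\eps^3 V)/\sqrt{\eps^3 V})$, which is $o(1)$ since $\eps^3 V \to \infty$.

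The second inequality is bounded analogously, with the uniform bound now applied to $\pb(z_1 \stackrel{=t_1}{\lrfill} u') \leq CV^{-1}(1+\eps)^{t_1-\mnot}$ uniformly in $u'$, and the remaining sum $\sum_{z_1}\pab(a' \lref z_1)\pb(b \stackrel{r_0-t_1}{\lrfill} z_1)$ handled by splitting both distance events into their short ($d \leq \mnot$) and long ($d \in [\mnot, r_0]$) parts. The long parts are controlled by Lemma \ref{unifconnbdoff} and give the same $o(\alpham^{1/2}\eps)$ contribution as in the first inequality. The short parts are bounded by $\pab(a' \stackrel{=s}{\lrfill} z_1) \leq C\p^s(a', z_1)$ and $\pb(b \stackrel{=s}{\lrfill} z_1) \leq C\p^s(b, z_1)$ for $s \leq \mnot$ (a consequence of condition (2) of Theorem \ref{mainthmgeneral} applied to the enumeration of simple paths), then processed by Claim \ref{reallysmalltriangle} and condition (3) of Theorem \ref{mainthmgeneral} to extract the $\alpham$-factor, in the same spirit as the triangle-condition proof in Section \ref{sec-nbwtriangle}. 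The main technical obstacle is precisely this short-distance interaction in the second inequality: the naive estimate $\sum_{z_1}\pab(a' \stackrel{\mnot}{\lrfill} z_1)\pb(b \stackrel{\mnot}{\lrfill} z_1) \leq C\mnot$ obtained from trivial probability bounds is too weak, and the $\alpham^{1/2}$ factor requires the non-backtracking walk triangle estimate to replace it with a contribution absorbing into the target $\alpham^{1/2}\eps$.
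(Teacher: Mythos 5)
Your treatment of the first inequality is correct and is essentially the paper's own argument: two uniform connection bounds from Lemma \ref{unifconnbdoff} supply a factor $V^{-2}$, the remaining legs are summed one variable at a time via Lemma \ref{upperball}, and the resulting bound $C|\partial A|^2|\partial B|V^{-2}\m r_0(\E|B(r_0)|)^3$ beats the target because $r_0\E|B(r_0)|V^{-1}/(\alpham^{1/2}\eps)=O(\log(\eps^3V)/\sqrt{\eps^3V})=o(1)$; no triangle diagram is needed there.

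The gap is in the second inequality. You are right that it is \emph{not} symmetric to the first: after the uniform bound on the exact-distance leg $\pb(z_1 \stackrel{=t_1}{\lrfill} u')$, the variable $z_1$ is still shared by $\pab(a'\lref z_1)$ and $\pb(b \stackrel{r_0-t_1}{\lrfill} z_1)$, so the $z_1$-sum becomes an open bubble between $a'$ and $b$. Your long-distance parts are fine, but your short--short part is bounded per pair $(a',b)$ by $O(\alpham)$ via Claim \ref{reallysmalltriangle} and condition (3), which after reinstating the remaining factors yields a contribution of order $\alpham\,|\partial A|^2|\partial B|V^{-1}\m(\E|B(r_0)|)^2$. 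The asserted absorption into the target requires $\alpham\leq \alpham^{1/2}\eps$, i.e.\ $\eps\geq\alpham^{1/2}$, and this fails throughout the regime of Theorem \ref{alphapairs}: since $\alpham\geq 1/\m$ while $\eps=o(\mnot^{-1})$ forces $\eps^2=o(1/\m^2)\ll 1/\m\leq\alpham$ (on the hypercube $\alpham^{1/2}\asymp\sqrt{\log\m/\m}$ while $\eps$ may be exponentially small in $\m$). This is not an artifact of loose bookkeeping on your side: already the single term $z_1=a'$ with $b$ a graph-neighbour of $a'$ contributes $\pb(b\stackrel{r_0-t_1}{\lrfill}a')\geq p\asymp 1/\m\gg\alpham^{1/2}\eps$ to the bubble, so no worst-case per-pair estimate of $\sum_{z_1}\pab(a'\lref z_1)\pb(b\stackrel{r_0-t_1}{\lrfill}z_1)$ can produce the stated prefactor; any correct argument must exploit the outer sums over $a'\in\partial A$ and $b\in\partial B$ (for instance $\sum_{b\in\partial B}\p^{s'}(z_1,b)\leq 1$, or $\sum_{z_1}\pb(b\stackrel{r_0-t_1}{\lrfill}z_1)\leq C\eps^{-1}(1+\eps)^{r_0-t_1}$) so that only a negligible fraction of pairs sees a large bubble. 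For what it is worth, the paper itself disposes of the second inequality with the single sentence that it is ``identical to the first,'' so your diagnosis that something extra is needed is a genuine observation --- but the fix you propose does not close the estimate, and the final comparison you skip is exactly where it breaks.
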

\begin{proof} The proof of the second assertion is identical to the first, so we only prove the first. We use Lemma \ref{unifconnbdoff} to bound $\pab (a \lrefmp u) \leq CV^{-1} \E|B(r_0)|$ and $\pb(b \stackrel{=t_1}{\lrfill} z_1) \leq C V^{-1} (1+\eps)^{t_1}$. We then sum $\pb(z_1 \stackrel{r_0-t_1}{\lrfill} u')$ over $(u,u')$ to obtain a factor of $C m \eps^{-1} (1+\eps)^{r_0-t_1}$ by Lemma \ref{upperball}. We now sum $\pab(a' \lref z_1)$ over $z_1$ and get another $\E|B(r_0)|$ factor and now sum all this over $a,b,a',t_1$. This gives a contribution of
$$ C|\partial A|^2 |\partial B| V^{-2} m r_0 (\E|B(r_0)|)^3 \, ,$$
by Theorem \ref{lowerball}. This is at most
$$ {C\eps |\partial A| \alpham^{1/2} \log(\eps^3 V) \over \sqrt{\eps^3 V}} |\partial A||\partial B| V^{-1} m (\E|B(r_0)|)^2 \, ,$$
by our choice of $r_0$ in \eqref{rzerocond} and Lemma \ref{upperball}, concluding our proof.
\end{proof}

\begin{lemma}\label{sedgefirstmomerror2}
The following bounds hold:
     $$
    \sum_{\substack{a ,b\in \partial A \times \partial B \\ (u,u'), z_1 \in B, t_1 \in [\mnot,r_0]}} \pa(a \stackrel{=t_1}{\lrfill} z_1)\pa(z_1 \stackrel{r_0-t_1}{\lrfill} u) \pb(b \lrefmp u') \leq C |\partial A||\partial B||B| V^{-2} \m r_0 (\E|B(r_0)|)^2 \, ,
    $$
and
    $$
    \sum_{\substack{a ,b\in \partial A \times \partial B \\ (u,u'), z_1 \in B, t_1 \in [\mnot,r_0]}} \pa(a \stackrel{r_0-t_1}{\lrfill} z_1)\pa(z_1 \stackrel{=t_1}{\lrfill} u) \pb(b \lrefmp u') \leq C |\partial A||\partial B||B| V^{-2} \m r_0 (\E|B(r_0)|)^2 \, .
    $$
\end{lemma}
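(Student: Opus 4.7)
The plan is to prove both bounds by applying the uniform connection estimates of Lemma \ref{unifconnbdoff} to two of the three probability factors, while handling the remaining factor via the volume estimates of Lemma \ref{upperball} and Theorem \ref{lowerball}. The two assertions are nearly symmetric in form, the only difference being on which of the two $\pa$-segments the exact-distance restriction sits.

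For the first bound, the plan is as follows. Since $t_1\in[\mnot,r_0]$, Lemma \ref{unifconnbdoff} yields $\pa(a\stackrel{=t_1}{\lrfill}z_1)\leq CV^{-1}\E|\partial B(t_1-\mnot;A)|\leq CV^{-1}(1+\eps)^{t_1}$ uniformly in $z_1$, and likewise $\pb(b\lrefmp u')\leq CV^{-1}\E|B(r_0)|$ uniformly in $u'$. These two pointwise estimates produce the factors $V^{-2}$ and $\E|B(r_0)|$ appearing in the target bound, and the uniformity lets us extract the combinatorial factor $|B|$ from $\sum_{z_1\in B}$. The remaining probability $\pa(z_1\stackrel{r_0-t_1}{\lrfill}u)$ is summed freely over $u$, giving $\E|B_{z_1}(r_0-t_1;A)|\leq C\eps^{-1}(1+\eps)^{r_0-t_1}$ by Lemma \ref{upperball}, and the sum over $(u,u')$ contributes an extra $m$ from the neighbor count. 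The telescoping $(1+\eps)^{t_1}(1+\eps)^{r_0-t_1}=(1+\eps)^{r_0}$ collapses the $t_1$-sum to $r_0(1+\eps)^{r_0}$, and Theorem \ref{lowerball} replaces $\eps^{-1}(1+\eps)^{r_0}$ by $\E|B(r_0)|$ to produce a second copy of $\E|B(r_0)|$, matching the bound.

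For the second bound the same recipe is applied with the roles of the two segments swapped: we bound $\pa(z_1\stackrel{=t_1}{\lrfill}u)\leq CV^{-1}(1+\eps)^{t_1}$ uniformly in $u$ and $\pb(b\lrefmp u')\leq CV^{-1}\E|B(r_0)|$ uniformly in $u'$, then the sum over the edge $(u,u')$ contributes $m$. The remaining factor $\pa(a\stackrel{r_0-t_1}{\lrfill}z_1)$ is summed over $z_1\in B$. Here the event does not have a distance at least $\mnot$ built in, so we decompose $\{a\stackrel{r_0-t_1}{\lrfill}z_1\}=\{a\stackrel{\mnot-1}{\lrfill}z_1\}\cup\{a\stackrel{[\mnot,r_0-t_1]}{\lrfill}z_1\}$. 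The long piece admits the uniform bound $CV^{-1}\E|B(r_0-t_1;A)|\leq CV^{-1}\eps^{-1}(1+\eps)^{r_0-t_1}$ from Lemmas \ref{unifconnbdoff} and \ref{upperball}, which gives the $|B|$ factor when summed over $z_1\in B$ and telescopes as in the first assertion to produce the main term.

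The main obstacle lies in the short-distance piece of the second bound: $\sum_{z_1\in B}\pa(a\stackrel{\mnot-1}{\lrfill}z_1)$ does not benefit from the uniform bound and contributes at most $\E|B_a(\mnot-1;A)|\leq C\mnot$ by Corollary \ref{corrlength}, which lacks the $|B|V^{-1}$ that the target requires. This error term will ultimately be controlled using $\mnot=o(\vep^{-1})$ together with the lower bound $|\partial B|\geq \e^{k/4}\vep^{-1}$ built into the event $\A(x,y,j_x,j_y,r_0,\beta,k)$ on which this lemma will be invoked, so that the short-distance contribution is dominated by the main term in the second-moment application; this is the only delicate bookkeeping step of the proof.
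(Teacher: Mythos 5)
Your treatment of the first inequality is correct and is precisely the paper's argument: the two uniform connection bounds $\pa(a \stackrel{=t_1}{\lrfill} z_1)\leq CV^{-1}(1+\eps)^{t_1}$ and $\pb(b\lrefmp u')\leq CV^{-1}\E|B(r_0)|$ supply the $|B|V^{-2}$ and one factor of $\E|B(r_0)|$, the free sum $\sum_u\pa(z_1\stackrel{r_0-t_1}{\lrfill}u)\leq C\eps^{-1}(1+\eps)^{r_0-t_1}$ telescopes against $(1+\eps)^{t_1}$, and Theorem \ref{lowerball} converts $\eps^{-1}(1+\eps)^{r_0}$ into the second $\E|B(r_0)|$.

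For the second inequality the paper simply declares the proof ``identical,'' and you have correctly put your finger on why it is not: the factor $|B|V^{-2}$ can only be harvested by applying a uniform $V^{-1}$ bound to the segment attached to $z_1\in B$ coming from $a$, and in the second sum that segment is $\pa(a\stackrel{r_0-t_1}{\lrfill}z_1)$, which carries no lower bound on its length. Routing the estimates the ``symmetric'' way (uniform bound on $\pa(z_1\stackrel{=t_1}{\lrfill}u)$, volume bound on the $a$--$z_1$ segment) only yields $C|\partial A||\partial B|V^{-1}\m r_0(\E|B(r_0)|)^2$, which misses the crucial $|B|V^{-1}$. Your diagnosis here is sharper than the paper's own one-line proof.

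However, your proposed resolution does not prove the stated lemma, and the concluding claim that the short-range remainder ``is dominated by the main term in the second-moment application'' is false as computed. The short piece contributes $\sum_{z_1\in B}\pa(a\stackrel{\mnot-1}{\lrfill}z_1)\leq \E|B_a(\mnot-1;A)|\leq C\mnot$, and after multiplying by the remaining factors and summing one obtains a term of order $C\mnot\,|\partial A||\partial B|V^{-1}\m(\E|B(r_0)|)^2$. Compared with the right-hand side of the lemma this is larger by the factor $\mnot V/(|B|r_0)$, which is enormous since $|B|\leq \eps^{-2}(1+\eps)^{3r}$ and $r_0=O(\eps^{-1}\log(\eps^3V))$ while $V$ is exponentially large; and compared with the main term $V^{-1}\m(\E|B(r_0)|)^2|\partial A||\partial B|$ of $\eab S_{j_x,j_y,r_0}(x,y)$ it is larger by a factor $\mnot\to\infty$, so it cannot be absorbed there either, nor does the condition $|\partial B|\geq \e^{k/4}\eps^{-1}$ help. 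The crude bound $\leq \E|B_a(\mnot-1;A)|$ throws away the restriction $z_1\in B$ entirely; to close the gap you must exploit that $B$ is a small set, e.g.\ by writing $\pa(a\stackrel{=s}{\lrfill}z_1)\leq Cp^s\m(\m-1)^{s-1}\p^s(a,z_1)$ for $s<\mnot$ and bounding $\sum_{z_1\in B}\sum_{1\leq s<\mnot}\p^s(a,z_1)$ using pointwise decay of the non-backtracking kernel (note $a\notin B$ kills $s=0$), or by reorganising the decomposition in Lemma \ref{sedgefirstmom} so that a short $a$--$z_1$ hop is always paired with a long segment that absorbs it. As it stands, the second display is left unproven.
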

\begin{proof} The proof of the second assertion is identical to the first so we only prove the first. We use Lemma \ref{uniformconnbd} to bound
    $$
    \pb(b \lrefmp u') \leq (1+o(1))V^{-1} \E|B(r_0)| \, ,
    $$
and, as before, we use Lemma \ref{moreunifconnbd} together with Lemma \ref{upperball} to bound
    $$
    \pa(a \stackrel{=t_1}{\lrfill} z_1) \leq CV^{-1}(1+\eps)^{t_1} \, ,
    $$
sum over $u'$ such that $(u,u')\in E(G)$, and finally use Lemma \ref{upperball} to bound
    $$
    \sum_{u} \pa(z_1 \stackrel{r_0-t_1}{\lrfill} u) \leq C\eps^{-1}(1+\eps)^{r_0-t_1} \, .
    $$
Altogether, after summing over $a\in \partial A, b\in \partial B, z_1\in B, t_1\leq r_0$,
this gives the bound of
    $$
    C|\partial A||\partial B||B| V^{-2} \m r_0 \vep^{-1} (1+\vep)^{r_0}\E|B(r_0)|
    \leq C |\partial A||\partial B||B| V^{-2} \m r_0 (\E|B(r_0)|)^2 \, ,
    $$
where we have used Theorem \ref{lowerball}.
\end{proof}

\begin{lemma}
\label{sedgefirstmom} For any positive $\delta>0$ and $\beta>0$,
    $$
    \eab S_{j_x,j_y,r_0}(x,y) \geq (1-8\beta^{1/2}) V^{-1} \m (\E|B(r_0)|)^2(|\partial A| - \eps^{-1})(|\partial B| - \eps^{-1}) - \hbox{{\rm Err}} \, ,
    $$
where
    $$
    \hbox{{\rm Err}} \leq C |\partial A||\partial B| V^{-1} \m (\E|B(r_0)|)^2 \Big[
    r_0(|A|+|B|)V^{-1}+ \alpham^{1/2}(1+ \eps |\partial A|)  \Big]\, .
    $$
\end{lemma}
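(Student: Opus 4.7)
The plan is as follows. Fix $A,B$ admissible: $A\cap B=\emptyset$, $\partial A,\partial B\neq\emptyset$, and $x,y$ are $(1,\beta,j_x,r_0)$- and $(1,\beta,j_y,r_0)$-fit, respectively. Writing $N_{u,u'}$ for the number of pairs $(a,b)\in\partial A\times\partial B$ witnessing that the edge $(u,u')$ is counted in $S_{j_x,j_y,r_0}(x,y)$, we have
\[
\eab S_{j_x,j_y,r_0}(x,y) \;=\; \sum_{(u,u')}\pab(N_{u,u'}\geq 1).
\]
The Bonferroni inequality $\mathbf{1}\{N\geq 1\}\geq N-\binom{N}{2}$, valid for any nonnegative integer $N$, yields
\[
\eab S_{j_x,j_y,r_0}(x,y) \;\geq\; \sum_{a,b,(u,u')}\pab(E_{a,b,u,u'})\;-\;\tfrac{1}{2}\sum_{(u,u')}\sum_{(a,b)\neq(a',b')}\pab\bigl(E_{a,b,u,u'}\cap E_{a',b',u,u'}\bigr),
\]
where $E_{a,b,u,u'}$ is the event $\{a\lrefmpx u\}\cap\{b\lrefmpy u'\text{ off }B_x(j_x+r_0)\}$.

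For the main (first) sum, the goal is to replace $\pab(E_{a,b,u,u'})$ by $\pa(a\lrefmpx u)\,\pb(b\lrefmpy u')$ up to controlled errors. Conditionally on $B_x(j_x)=A,B_y(j_y)=B$, the event $\{a\lrefmpx u\}$ depends only on edges off $A$, while $\{b\lrefmpy u'\text{ off }B_x(j_x+r_0)\}$ depends only on edges off $A\cup B\cup B_x(j_x+r_0)$; on the good event that $b$'s path genuinely avoids $B_x(j_x+r_0)$ and $a$'s path avoids $B$, the two events factorize. The mismatch decomposes into: (a) $b$'s witnessing path meets $B_x(j_x+r_0)\setminus B$ at some $z_1$, so that there is a vertex $a'\in\partial A$ with $a'\lref z_1$, while $b\lref z_1\lref u'$; by extracting disjoint open paths and applying BK-Reimer this is majorized exactly by Lemma~\ref{sedgefirstmomerror1}, giving the $\alpham^{1/2}$ contribution to $\mathrm{Err}$; (b) $a$'s witnessing path meets $B$ at some $z_1$, so $a\lref z_1\lref u$ and $b\in\partial B$ reaches $u'$ through $z_1$; this is majorized by Lemma~\ref{sedgefirstmomerror2}, producing the $r_0|B|/V$ term, while the symmetric case (with the roles of $A$ and $B$ reversed) yields $r_0|A|/V$. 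Applying Lemma~\ref{sedgefirstmommain} to the factorized main sum then gives the principal term $(1-8\beta^{1/2})V^{-1}\m(\E|B(r_0)|)^2(|\partial A|-\eps^{-1})(|\partial B|-\eps^{-1})$.

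For the Bonferroni overcounting (second) sum, note that $(a,b)\neq(a',b')$ both witnessing $(u,u')$ forces either $a\neq a'$ or $b\neq b'$. In the first case, both $a,a'\in\partial A$ reach $u$ via disjoint $[2\mnot,r_0]$-paths while $b\lrefmpy u'$; the BK-Reimer inequality applied to these disjoint witnesses shows the event is dominated by the configuration bounded in Lemma~\ref{sedgefirstmomerror3}, contributing the $\alpham^{1/2}\eps|\partial A|$ error term, and the $b\neq b'$ case is symmetric. Collecting all pieces gives the stated bound on $\mathrm{Err}$. The main obstacle will be the careful extraction of disjoint witnesses in each failure mode so that the resulting multi-sum is exactly the one estimated in Lemmas~\ref{sedgefirstmomerror1}--\ref{sedgefirstmomerror3}; once this bookkeeping is settled, those four lemmas combine mechanically to give the claim.
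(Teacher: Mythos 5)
Your overall strategy is the paper's: reduce the conditional first moment to the factorized sum $\sum_{a,b,(u,u')}\pa(a\lrefmpx u)\pb(b\lrefmpy u')$, apply Lemma \ref{sedgefirstmommain} for the main term, and charge the failure of factorization to the auxiliary Lemmas \ref{sedgefirstmomerror1}--\ref{sedgefirstmomerror2}. Two bookkeeping points need attention, one cosmetic and one substantive.

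First, the Bonferroni step is unnecessary: the pivotality requirement built into $\{a\lrefmpx u\}$ forces $N_{u,u'}\le 1$. Indeed, if $a\neq a''$ were both in $\partial B_x(j_x)$ and both pivotal for $\{x\stackrel{j_x+r_0}{\lrfill}u\}$, then every witnessing path would contain both; taking a shortest open path of length $j_x$ from $x$ to whichever of them appears second on such a witness (this shortest path cannot contain the other, since both are at intrinsic distance exactly $j_x$ from $x$) and concatenating with the tail of the witness produces a connection of length at most $j_x+r_0$ avoiding the first, contradicting its pivotality. Hence $\eab S_{j_x,j_y,r_0}(x,y)$ \emph{equals} the first sum and your $\binom{N}{2}$ term vanishes. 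This is precisely why the pivotality was inserted into the definition.

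Second, and more seriously, you have misallocated Lemma \ref{sedgefirstmomerror3}. In your case (a), the vertex $z_1\in B_x(j_x+r_0)\setminus A$ on $b$'s witnessing path is reached from \emph{some} $a'\in\partial A$, and this splits into two sub-configurations: either the same $a$ connects to both $u$ and $z_1$ (possibly sharing edges), which is the quantity $\pab(a\lrefmp u,\,a\lref z_1)$ bounded in Lemma \ref{sedgefirstmomerror1}; or a \emph{different} $a'\in\partial A$ reaches $z_1$ disjointly from $\{a\lrefmp u\}$, which is exactly the sum $\pab(a\lrefmp u)\pab(a'\lref z_1)\cdots$ bounded in Lemma \ref{sedgefirstmomerror3} (note its summand carries an extra sum over $a'\in\partial A$, whence the $\eps|\partial A|$ factor in Err). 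As written, you assign Lemma \ref{sedgefirstmomerror3} only to the Bonferroni term, which is zero, so the $a'\neq a$ configuration in case (a) is left unbounded. Since the Bonferroni term disappears, the fix is simply to move Lemma \ref{sedgefirstmomerror3} to case (a). Finally, in both correction cases you should separate out the possibility that the offending connection ($a$ to $u$, or $b$ to $u'$) is realized by a path of length at most $2\mnot$, for which the $z_1$-decomposition with $t_1\in[\mnot,r_0]$ is unavailable; these short connections are bounded directly via Lemma \ref{uniformconnbd} and contribute $C\mnot V^{-1}\E|B(r_0)|$ per pair $(a,b)$ and edge, which fits inside the $\alpham^{1/2}$ budget of Err.
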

\begin{proof} We have that
    \be\label{firstmom.expand}
    \eab S_{j_x,j_y,r_0}(x,y) = \sum_{(a,b) \in \partial A \times \partial B}
    \sum_{(u,u')} \pab \big (\{a \lrefmpx u \} \and \{b \lrefmpy u' \off B_x(j_x+r_0)\} \big ) \, ,
    \ee
because the additional requirement that $a$ and $b$ are pivotals in the
definitions of $\{a \lrefmpx u\}$ and $\{b \lrefmpy u'\}$ implies that they are unique in
$\partial A \times \partial B$, so no pair $(a,b)$ is overcounted in the sum. We define $B_{\partial A}(r_0; A \cup B) = \cup_{a' \in \partial A} B_{a'}(r_0; A \cup B)$.
We condition on $B_{\partial A}(r_0; A \cup B)=H$ for an admissible $H$ (that is, any
$H$ that has positive probability and $a \lrefmpx u$ off $B$ occurs in it). Each summand in (\ref{firstmom.expand}) equals
    $$
    \sum_{H} \pab\big( B_{\partial A}(r_0; A\cup B)=H\big) \pab\Big(b \lrefmpy u' \off H \mid B_{\partial A}(r_0; A\cup B)=H\Big) \, ,
    $$
and we have
    $$
    \pab\Big(b \lrefmpy u' \off H \mid B_{\partial A}(r_0; A\cup B)=H\Big) = \pb\big(b \lrefmpy u' \off A \cup H\big) \, ,
    $$
because in both sides the status of the edges touching $A \cup H$ cannot change the occurrence of the event.
This gives that
    $$
    \eab S_{j_x,j_y,r_0}(x,y) = \sum_{(a,b) \in \partial A \times \partial B}
    \sum_{(u,u')} \sum_{H} \pab(B_{\partial A}(r_0; A \cup B)=H)
    \pb (b \lrefmpy u' \off A \cup H)\, .
    $$
Now, by Claim \ref{offmethod},
    \begin{eqnarray*}
    \pb (b \lrefmpy u' \off A\cup H)
    &\geq& \pb(b \lrefmpy u') - \pb(b \lrefmp u' \onlyon A\cup H) \, ,
    \end{eqnarray*}
where in the last term we have dropped the requirement that $y$ is
pivotal (which only increases the probability).
Hence by summing on $H$ we get,
    $$
    \eab S_{j_x,j_y,r_0}(x,y) \geq \sum_{(a,b)\in \partial A \times \partial B} \sum_{(u,u')}
    \pab(a \lrefmpx u) \pb(b \lrefmpy u') - S_2 \, ,
    $$
where
    \be\label{defines2}
    S_2 = \sum_{(a,b) \in \partial A \times \partial B} \sum_{(u,u')} \sum_{H}
    \pab(B_{\partial A}(r_0; A \cup B)=H) \pb(b \lrefmp u' \onlyon A \cup H) \, .
    \ee
As before,
    $$
    \pab(a \lrefmpx u) = \pa(a \lrefmpx u \off B) \, ,
    $$
since the status of the edges touching $A \cup B$ in both sides does not matter.
Claim \ref{offmethod} again gives that
    \begin{eqnarray*}
    \pa(a \lrefmpx u \off B)
    &\geq& \pa(a \lrefmpx u) - \pa(a \lrefmp u \onlyon B) \, ,
    \end{eqnarray*}
and so we may further expand
    $$
    \eab S_{j_x,j_y,r_0}(x,y) \geq S_1 - S_2 - S_3 \, ,
    $$
with
    $$
    S_1 = \sum_{(a,b)\in \partial A \times \partial B} \sum_{(u,u')} \pa(a \lrefmpx u)\pb(b \lrefmpy u') \, ,
    $$
    $$
    S_3 = \sum_{(a,b)\in \partial A \times \partial B} \sum_{(u,u')} \pa(a \lrefmp u \onlyon B) \pb(b \lrefmp u') \, , $$
and $S_2$ is defined in (\ref{defines2}). Lemma \ref{sedgefirstmommain} gives the required lower bound on $S_1$ which yields the positive contribution in the assertion of this lemma. We now bound $S_2$ and $S_3$ from above, starting with $S_3$. If $a \lrefmp u \onlyon B$, then either $a \stackrel{2\mnot}{\lrfill} u$ or there exists $z_1 \in B$ and $t_1 \in [\mnot, r_0]$ such that
    $$
    \{a \stackrel{=t_1}{\lrfill} z_1\} \circ \{z_1 \stackrel{r_0-t_1}{\lrfill} u\} \quad \hbox{{\rm or}} \quad \{ a \stackrel{r_0-t_1}{\lrfill} z_1\} \circ \{z_1 \stackrel{=t_1}{\lrfill} u\} \, ,
    $$
Indeed, let $\gamma$ be the lexicographically first shortest path between $a$ and $u$. If $|\gamma| \leq 2\mnot$, then $a \stackrel{2\mnot}{\lrfill} u$, otherwise $|\gamma| \in [2\mnot,r_0]$ and we take $z_1$ to be the first vertex in $B$ visited by $\gamma$ and $t$ is such that
$\gamma(t)=z_1$. If $t \geq \mnot$, then we put $t_1=t$ and otherwise we put
$t_1 = |\gamma|-t$. In any case $t_1 \in [\mnot, r_0]$. When $t\geq \mnot$, the witness for $a \stackrel{=t_1}{\lrfill} z_1$
is the set of open edges of the path $\gamma[0,t]$ together with all the closed edges of the graph and the witness for $z_1 \stackrel{r_0-t_1}{\lrfill} u$
are the open edges of $\gamma[t,|\gamma|]$. The case $t \leq \mnot$ is done similarly.
We get that
    \begin{eqnarray*}
    S_3 &\leq& \sum_{\substack{ (a,b) \in \partial A \times \partial B \\ (u,u')}} \pa(a \stackrel{2\mnot}{\lrfill} u) \pb(b \lrefmp u') \\ &\ & \quad + \sum_{\substack{(a,b)\in \partial A \times \partial B \\ (u,u'), z_1 \in B, t_1 \in [\mnot,r_0]}} \pa(a \stackrel{=t_1}{\lrfill} z_1)\pa(z_1 \stackrel{r_0-t_1}{\lrfill} u) \pb(b \lrefmp u') \\
    &\ & \quad + \sum_{\substack{(a,b)\in \partial A \times \partial B \\ (u,u'), z_1 \in B, t_1 \in [\mnot,r_0]}} \pa(a \stackrel{r_0-t_1}{\lrfill} z_1)\pa(z_1 \stackrel{=t_1}{\lrfill} u) \pb(b \lrefmp u') \, .
    \end{eqnarray*}
For the first term we bound $\pb(b \lrefmp u')\leq CV^{-1} \E|B(r_0)|$ by Lemma \ref{uniformconnbd} and sum over everything to get a contribution bounded by
\begin{eqnarray} \label{annoyingterm} C |\partial A| |\partial B| m V^{-1} \E|B(r_0)| \mnot &\leq& C |\partial A||\partial B| V^{-1} m (\E|B(r_0)|)^2 \big [ \mnot (\E|B(r_0)|)^{-1} \big ]
\nn \\  &\leq& C \alpham^{1/2} |\partial A||\partial B| V^{-1} m (\E|B(r_0)|)^2 \, ,\end{eqnarray}
by our choice of $r_0$ in (\ref{rzerocond}), our assumptions $\alpham\geq (\eps^3 V)^{-1/2}$ in (\ref{alphamcond}) and $\mnot=o(\eps^{-1})$ and Corollary \ref{r0ball}. This fits in the second term of Err in the assertion of the lemma. We bound the second and third terms using Lemma \ref{sedgefirstmomerror2} giving an upper bound of
    $$
        C |\partial A||\partial B||B| V^{-2} \m r_0 (\E|B(r_0)|)^2 \, ,
    $$
which fits in the first term of Err in the assertion of the lemma.

We proceed to bound $S_2$ in \eqref{defines2} from above. As before, if $b \lrefmp u' \onlyon H\cup A$, then either $b \stackrel{2\mnot}{\lrfill} u'$ or there exists $z_1\in H\cup A$ and $t_1\in[\mnot,r_0]$ such that
    \be\label{disjointz1}
    \{b \stackrel{=t_1}{\lrfill} z_1\} \circ \{z_1 \stackrel{r_0-t_1}{\lrfill} u'\} \quad \hbox{{\rm or}} \quad \{ b \stackrel{r_0-t_1}{\lrfill} z_1\} \circ \{z_1 \stackrel{=t_1}{\lrfill} u'\} \, .
    \ee
The case $b \stackrel{2\mnot}{\lrfill} u'$ is handled as before and gives a contribution of $C |\partial A| |\partial B| m V^{-1} \E|B(r_0)| \mnot$ which by (\ref{annoyingterm}) again fits the second term of Err. To handle the other cases, let us first sum the contribution to $S_2$ due to \eqref{disjointz1} over $z_1 \in H$. We use the BK-Reimer inequality and change order of summation to bound this contribution to $S_2$ by
    \begin{eqnarray*}
    && \sum_{\substack{(a,b) \in \partial A \times \partial B \\ (u,u'),z_1,t_1 \in [\mnot,r_0]} }
     \pab(a \lrefmp u,  \exists \, a' \in \partial A \hbox{ such that } a' \lref z_1) \pb(b \stackrel{=t_1}{\lrfill} z_1) \pb(z_1 \stackrel{r_0-t_1}{\lrfill} u') \\
     &&\quad+ \sum_{\substack{(a,b) \in \partial A \times \partial B \\ (u,u'),z_1,t_1 \leq r_0} }
     \pab(a \lrefmp u, \exists \, a' \in \partial A \hbox{ such that } a' \lref z_1) \pb(b \stackrel{r_0-t_1}{\lrfill} z_1) \pb(z_1 \stackrel{=t_1}{\lrfill} u') \, .
    \end{eqnarray*}
Now, if $a \lrefmp u$ and there exists $a' \in \partial A$ with $a' \lref z_1$, then either $a \lrefmp u, a \lref z_1$ or there exists $a' \in \partial A$ such that $\{a \lrefmp u\} \circ \{a' \lref z_1\}$. Hence we may bound this from above by $(I) + (II)$ where
    \begin{eqnarray*} (I) &=& \sum_{\substack{(a,b) \in \partial A \times \partial B \\ (u,u'),z_1,t_1 \in [\mnot,r_0]} }
     \pab(a \lrefmp u, a \lref z_1) \pb(b \stackrel{=t_1}{\lrfill} z_1) \pb(z_1 \stackrel{r_0-t_1}{\lrfill} u') \\
     &&\quad+ \sum_{\substack{(a,b) \in \partial A \times \partial B \\ (u,u'),z_1,t_1 \leq r_0} }
     \pab(a \lrefmp u, a \lref z_1) \pb(b \stackrel{r_0-t_1}{\lrfill} z_1) \pb(z_1 \stackrel{=t_1}{\lrfill} u') \, ,
     \end{eqnarray*}
and
    \begin{eqnarray*} (II) &=& \sum_{\substack{(a,b) \in \partial A \times \partial B, a' \in \partial A \\ (u,u'),z_1,t_1 \in [\mnot,r_0]} }
     \pab(a \lrefmp u) \pab(a' \lref z_1) \pb(b \stackrel{=t_1}{\lrfill} z_1) \pb(z_1 \stackrel{r_0-t_1}{\lrfill} u') \\
     &&\quad+ \sum_{\substack{(a,b) \in \partial A \times \partial B, a' \in \partial A \\ (u,u'),z_1,t_1 \leq r_0} }
     \pab(a \lrefmp u) \pab(a' \lref z_1) \pb(b \stackrel{r_0-t_1}{\lrfill} z_1) \pb(z_1 \stackrel{=t_1}{\lrfill} u') \, .
    \end{eqnarray*}
Lemma \ref{sedgefirstmomerror1} readily gives that $(I)   \leq \alpham^{1/2} |\partial A||\partial B| V^{-1} \m (\E|B(r_0)|)^2$ which fits into the second term of Err. Lemma \ref{sedgefirstmomerror3} gives that
$ (II) \leq  \alpham^{1/2} \eps |\partial A|^2|\partial B| V^{-1} m (\E|B(r_0)|)^2$
which fits in the second term of Err. We sum the contribution to $S_2$ due to \eqref{disjointz1} over $z_1 \in A$ and bound it from above by
    \eqan{
    &\sum_{\substack{ (a,b) \in \partial A \times \partial B \\ (u,u'), z_1\in A, t_1 \in [\mnot,r_0]}}
    \pab(a \lrefmp u) \pb(b \stackrel{=t_1}{\lrfill} z_1)\pb(z_1 \stackrel{r_0-t_1}{\lrfill} u')
    \leq C
    |\partial A||\partial B||A| V^{-2} \m r_0 (\E|B(r_0)|)^2 \, ,\nn
    }
by an appeal to Lemma \ref{sedgefirstmomerror2}. This fits in the first term of Err and concludes our proof.
\end{proof}

%
%

\begin{lemma} \label{sedgesecondmom}
The following bound holds:
    $$
    \eab S_{j_x,j_y,r_0}(x,y)^2  {\bf 1}_{\{ \partial A \stackrel{2r_0}{\not \lr} \partial B \}} \leq Q_1 + Q_2 + Q_3 \, ,$$
    where
    \begin{eqnarray*}
    Q_1 &=& \big (1 +O(\alpham+ \eps \mnot)\big ) V^{-2}\m^2 (\E|B(r_0)|)^4 |\partial A|^2 |\partial B|^2 \, ,\\
    Q_2 &=& C V^{-2}\m^2 \eps^{-1} (\E|B(r_0)|)^4 |\partial A| |\partial B|(|\partial A|  +|\partial B|) \, , \\
    Q_3 &=& CV^{-2}\m^2\eps^{-2}(\E|B(r_0)|)^4 |\partial A||\partial B| \, .
    \end{eqnarray*}
\end{lemma}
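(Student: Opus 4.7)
The plan is to expand the second moment over the witnesses $a_1,a_2 \in \partial A$ and $b_1,b_2 \in \partial B$ for the two counted edges, and then exploit the conditioning event $\{\partial A \not\stackrel{2r_0}{\lr} \partial B\}$ to fully decouple the $A$-side from the $B$-side. I would write
\[
\eab\bigl[S_{j_x,j_y,r_0}^2 \mathbb{1}_{\{\partial A \not\stackrel{2r_0}{\lr} \partial B\}}\bigr]
\leq \sum_{\substack{a_1,a_2 \in \partial A\\ b_1,b_2 \in \partial B}}
\sum_{(u_1,u_1'),(u_2,u_2')}
\pab\bigl(a_i \lr u_i,\, b_i \lr u_i'\text{ for }i=1,2\bigr),
\]
where the summand is taken on the disjointness event. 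Since any open path from $\partial A$ to some $u_i$ and any open path from $\partial B$ to some $u_i'$ both have length at most $r_0$, the disjointness event forces them to be vertex-disjoint (otherwise their concatenation would witness $\partial A \stackrel{2r_0}{\lr} \partial B$). Thus the $A$-side and $B$-side events are measurable with respect to disjoint edge sets, and the joint probability factors as
\[
\pa(a_1 \lr u_1 \circ a_2 \lr u_2)\cdot \pb(b_1 \lr u_1' \circ b_2 \lr u_2').
\]

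I would then split into three cases according to whether $a_1=a_2$ and whether $b_1=b_2$. When both pairs are distinct, BK-Reimer splits each side into a product of single connection probabilities; each is bounded by $(1+O(\alpham+\eps\mnot))V^{-1}\E|B(r_0)|$ using Lemma \ref{uniformconnbd}, the sum over $(u_1,u_1'), (u_2,u_2')$ produces factors $\m V$ from the edges, and summing over the $|\partial A|^2|\partial B|^2$ choices of witnesses yields $Q_1$. When exactly one pair coincides, say $a_1=a_2=a$, BK still applies to the $b$-side (giving $m^2 V^{-2}(\E|B(r_0)|)^2$ after the uniform bound and the edge-constrained sum), and on the $a$-side I would use Lemma \ref{upperballsqaured} to estimate
\[
\sum_{u_1,u_2} \pa(a \lr u_1, a \lr u_2) \le \E|B(r_0)|^2 \le C\eps^{-1}(\E|B(r_0)|)^2;
\]
summing over $(a,b_1,b_2)$ gives the $\eps^{-1}|\partial A||\partial B|^2$ portion of $Q_2$, and the symmetric case gives the $|\partial A|^2|\partial B|$ portion.

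The main difficulty is Case C, where $a_1=a_2=a$ and $b_1=b_2=b$ and no side admits a BK factorisation. The idea is to invoke Lemma \ref{upperballsqaured} on \emph{both} sides to produce the $\eps^{-2}$ in $Q_3$, while simultaneously absorbing the incidence constraints $u_i'\sim u_i$ as a factor $m^2/V^2$ using the pointwise uniformity of cluster membership coming from Lemma \ref{uniformconnbd}. Concretely I would apply the tree-graph inequality on each side, use the uniform bound on the $\pa(a\lr z_1)$ and $\pb(b\lr z_2)$ factors, and then sum the resulting four-connection ``square'' over $z_1,z_2,(u_1,u_1'),(u_2,u_2')$ using Lemma \ref{squarediagram}; the error term in that lemma contributes a correction proportional to $\mnot\alpham$ that is subsumed in the constants. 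The hardest point will be arranging the estimates so that the two-point sums $\sum_{u_1,u_2}\pa(a \lr u_1, a \lr u_2)$ and $\sum_{u_1',u_2'}\pb(b \lr u_1', b \lr u_2')$ each contribute a full $\eps^{-1}(\E|B(r_0)|)^2$ while the edge constraints only cost $m^2V^{-2}$, rather than losing an extra factor of $\E|B(r_0)|$ through pointwise bounds; this is where the interplay between Lemmas \ref{uniformconnbd}, \ref{upperballsqaured}, and \ref{squarediagram} must be used carefully, and where I expect the main technical obstacle to lie.
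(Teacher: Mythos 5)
Your proposal is correct and follows essentially the same route as the paper: the case split on whether $a_1=a_2$ and $b_1=b_2$ is exactly the paper's decomposition into its three events, the decoupling of the two sides via the event $\{\partial A \stackrel{2r_0}{\not\lr} \partial B\}$ is carried out there by producing disjoint witnesses and applying BK, and the three cases are handled with the same tools (Lemma \ref{uniformconnbd} for $Q_1$, the tree-graph bound behind Lemma \ref{upperballsqaured} for $Q_2$, and Lemma \ref{squarediagram} for $Q_3$). The final technical point you flag is resolved precisely as you suggest, by the square-diagram estimate of Lemma \ref{squarediagram} after pulling out the uniform bounds on the branch-vertex connections.
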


\begin{proof} Assume that $(u_1,u_1')$ and $(u_2,u_2')$ are two edges and let $a,a_1,a_2$ be vertices in $\partial A$ and $b,b_1,b_2$ vertices in $\partial B$. Define
    \eqn{
    \T(u_1,u_2,a_1,a_2)=\{a_1 \lrefmp u_1\}\circ \{a_2 \lrefmp u_2\},
    \quad
    \T(u_1,u_2,a)=\{ a \lrefmp u_1 \} \cap \{a \lrefmp u_2\} \, .
    }
We define $\T(u_1',u_2',b_1,b_2)$ and $\T(u_1',u_2',b)$ in a similar fashion.

Now, if $(u_1,u_1')$ and $(u_2,u_2')$ are counted in $S_{j_x,j_y,r_0}(x,y)^2  {\bf 1}_{\{ \partial A \stackrel{2r_0}{\not \lr} \partial B \}}$, then one of the following events must occur off $A\cup B$:
\begin{enumerate}
\item There exists $a_1,a_2,b_1,b_2$ such that $\T(u_1,u_2,a_1,a_2) \circ \T(u_1',u_2',b_1,b_2)$ occurs,
\item There exists $a_1,a_2,b$ such that $\T(u_1,u_2,a_1,a_2) \circ \T(u_1',u_2',b)$ occurs, or the symmetric case $\T(u_1,u_2,a) \circ \T(u_1',u_2',b_1,b_2)$.
\item There exists $a,b$ such that $\T(u_1,u_2,a) \circ \T(u_1',u_2',b)$ occurs.
\end{enumerate}
See Figure \ref{fig.secondmom}. Observe that the disjoint occurrence of the events
is implied since $\partial A \stackrel{2r_0}{\not \lr} \partial B$.
We now sum the probability of these events over $(u_1,u_1'),(u_2,u_2')$ and this
gives us three terms which we will bound by $Q_1, Q_2$ and $Q_3$, respectively. By Lemma \ref{uniformconnbd} and the BK inequality,
    \be
    \label{sedgesecondmom.bound1}
    \pab(\T(u_1,u_2,a_1,a_2)) \leq {\big (1 +O(\alpham+ \eps \mnot)\big )
    (\E|B(r_0)|)^2 \over V^2} \, ,
    \ee
whence
    $$
    \sum_{\substack{a_1,a_2,b_1,b_2,\\ (u_1,u_1'), (u_2,u_2')}} \pab(\T(u_1,u_2,a_1,a_2) \circ \T(u_1',u_2',b_1,b_2))
    \leq \big (1 +O(\alpham+\eps \mnot)\big ) V^{-2} \m^2 (\E|B(r_0)|)^4 |\partial A|^2 |\partial B|^2 \, ,
    $$
which equals $Q_1$. To bound
the probability of (2), if  $\T(u_1',u_2',b)$ occurs, then, as before, there exists a vertex
$z_1$ and $t_1 \in [\mnot,r_0]$ such that
    $$
    \{b \stackrel{=t_1}{\lrfill} z_1\} \circ \{z_1 \stackrel{r_0-t_1}{\lrfill} u_1'\} \circ \{z_1 \stackrel{r_0-t_1}{\lrfill} u_2'\} \, ,
    $$
or there exists $z_1$ such that
    $$
    \{b \stackrel{\mnot}{\lrfill} z_1\} \circ \{z_1 \lrefmonep u_1'\} \circ \{z_1 \lrefmonep u_2'\} \, .
    $$
\begin{figure}
\begin{center}
\includegraphics{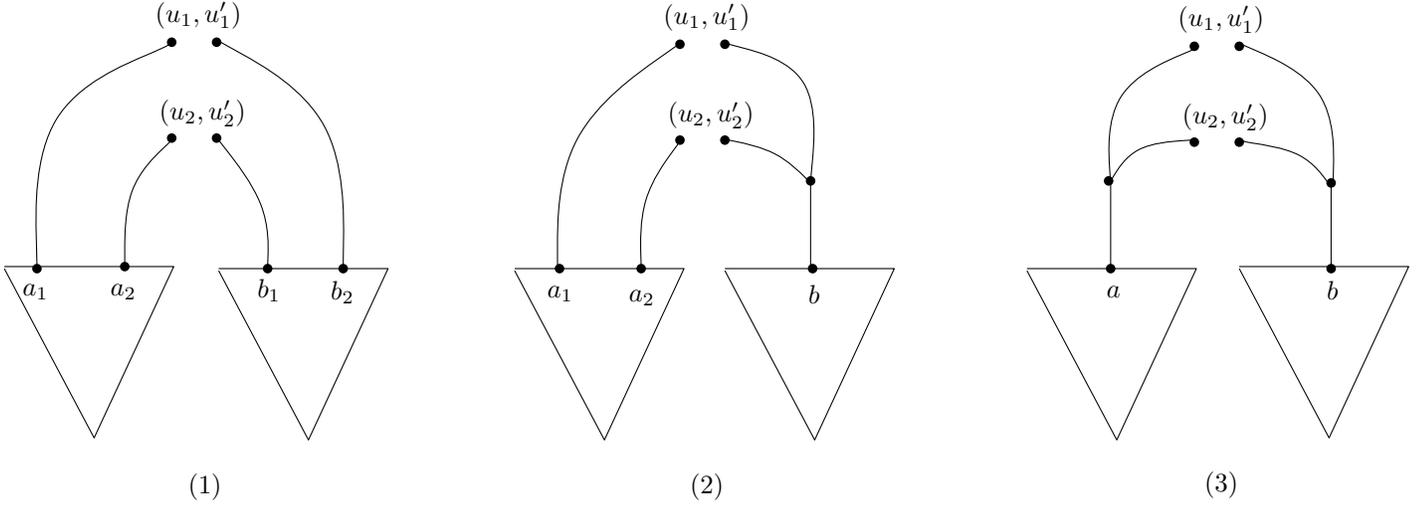}
\caption{The three contributions to the second moment of $S_{j_x,j_y,r_0}(x,y)$. The main contribution comes from (1).}
\label{fig.secondmom}
\end{center}
\end{figure}
Hence, the BK-Reimer inequality gives that
    \begin{eqnarray*} \sum_{u_1',u_2'} \pab(\T(u_1',u_2',b)) &\leq& \sum_{\substack{u_1',u_2', z_1,\\ t_1 \in [\mnot,r_0]}} \pab(b \stackrel{=t_1}{\lrfill} z_1) \pab(z_1 \stackrel{r_0-t_1}{\lrfill} u_1') \pab(z_1 \stackrel{r_0-t_1}{\lrfill} u_2') \\ &&\qquad +\sum_{u_1',u_2',z_1} \pab(b \stackrel{\mnot}{\lrfill} z_1)\pab(z_1 \lrefmonep u_1')\pab(z_1 \lrefmonep u_2') \, .
    \end{eqnarray*}
We estimate the first sum by summing on $u_2', u_1'$ then on $z_1,t_1$ using Lemma \ref{upperball} to get a bound of
    $$
    C \eps^{-3}(1+\eps)^{2r_0} \leq C \eps^{-1} (\E |B(r_0)|)^2 \, ,
    $$
by Theorem \ref{lowerball}, and the second sum is bounded by $C\mnot (\E |B(r_0)|)^2$
which of lower order since $\eps \mnot = o(1)$ by (\ref{epscond}). We use the BK
inequality
and (\ref{sedgesecondmom.bound1}) to bound the contribution due to the first event in (2) by
    $$
    \sum_{\substack{a_1,a_2,b, (u_1,u_1'), \\ (u_2,u_2')}} \pab(\T(u_1,u_2,a_1,a_2)) \pab(\T(u_1',u_2',b)) \leq C V^{-2} m^2 \eps^{-1} (\E|B(r_0)|)^4 |\partial A|^2 |\partial B| \, .
    $$
The symmetric in (2) obeys the same bound with the roles of $|\partial A|$ and $|\partial B|$ reversed. This contribution equals $Q_2$. To bound the contribution due to (3),
we note that
    $$
    \sum_{a,b, (u_1,u_1'), (u_2,u_2')} \pab(\T(u_1,u_2,a)) \pab(\T(u_1',u_2',b)) \, ,
    $$
is bounded using the BK-Reimer inequality by the three sums
    \begin{align*}
    \sum_{\substack{a,b, (u_1,u_1'), (u_2,u_2') \\ z_1, z_2, t_1, t_2 \in [m_0,r_0]}} \!\!\!\!\!\!\!\!\!\!\!\!  \pab(a \stackrel{=t_1}{\lrfill} z_1) \pab(z_1 \stackrel{r_0-t_1}{\lrfill} u_1) \pab(z_1 \stackrel{r_0-t_1}{\lrfill} u_2) \pab(b \stackrel{=t_2}{\lrfill} z_2) \pab(z_2 \stackrel{r_0-t_2}{\lrfill} u_1') \pab(z_2 \stackrel{r_0-t_2}{\lrfill} u_2') \, ,
    \end{align*}
    \begin{align*} \sum_{\substack{a,b, (u_1,u_1'), (u_2,u_2') \\ z_1, z_2, t_1 \in [m_0,r_0]}} \!\!\!\!\!\!\!\!\!\!\!\!   \pab(a \stackrel{=t_1}{\lrfill} z_1) \pab(z_1 \stackrel{r_0-t_1}{\lrfill} u_1) \pab(z_1 \stackrel{r_0-t_1}{\lrfill} u_2) \pab(b \stackrel{\mnot}{\lrfill} z_2) \pab(z_2 \lrefmonep u_1') \pab(z_2 \lrefmonep u_2') \, ,
    \end{align*}
and
    \begin{align*}
    \sum_{\substack{a,b, (u_1,u_1')\\ (u_2,u_2'), z_1, z_2}} \!\!\!\!\!\!\!\!\!\!\!\! \pab(a \stackrel{\mnot}{\lrfill} z_1) \pab(z_1 \lrefmonep u_1) \pab(z_1 \lrefmonep u_2) \pab(b \stackrel{\mnot}{\lrfill} z_2) \pab(z_2 \lrefmonep u_1') \pab(z_2 \lrefmonep u_2') \, .
    \end{align*}
To bound the first sum, we use Lemma \ref{moreunifconnbd} and Lemma \ref{upperball}
to bound $\pab(a \stackrel{=t_1}{\lrfill} z_1) \leq CV^{-1}(1+\eps)^{t_1}$ and $\pab(b \stackrel{=t_2}{\lrfill} z_2)
\leq CV^{-1}(1+\eps)^{t_2}$. We then use Lemma \ref{squarediagram} and Lemma \ref{upperball}
to sum over $z_1,z_2,(u_1,u_1'),(u_2,u_2')$. This gives us an upper bound of
    $$
    CV^{-2}|\partial A||\partial B| m^2 \eps^{-6} (1+\eps)^{4r_0} + C|\partial A||\partial B|V^{-1} m^2 \mnot \alpham
    \sum_{t_1,t_2 \in  [m_0,r_0]} (1+\eps)^{t_1+t_2}
    $$
    $$\leq CV^{-2} |\partial A||\partial B| m^2  \eps^{-2} (\E|B(r_0)|)^4 \, ,
    $$
where the last inequality is due to Theorem \ref{lowerball} and our choice of $r_0$ in (\ref{rzerocond}). This is contained in $Q_3$. To bound the second sum, we use Lemma \ref{unifconnbdoff} to bound each of the last two terms by $C V^{-1} \E|B(r_0)|$. We then sum over $(u_1,u_1')$ and $(u_2,u_2')$ using Lemma \ref{upperball}. We then sum over $z_1,z_2$ using Lemma \ref{upperball} and finally over $a,b,t_1$ to get that this sum is at most
$$ C |\partial A| |\partial B| V^{-2} m^2 (\E|B(r_0)|)^4 \eps^{-1} \mnot \, ,$$
which is contained in $Q_3$ since $\eps \mnot = o(1)$. For the third sum we use Lemma \ref{unifconnbdoff} four times, and then sum over everything to get a bound of
$$ C |\partial A||\partial B| V^{-2} m^2 (\E|B(r_0)|)^4 \mnot^2 \, ,$$
which is also contained in $Q_3$, concluding our proof.
\end{proof}


\noindent {\bf Proof of Theorem \ref{largeclose.getsedge}.}
Instead of conditioning on $J(x)=j_x, J(y)=j_y$ we condition on $B_x(j_x)=A$ and $B_y(j_y)=B$ such that the event $\A(x,y,j_x,j_y,r_0,\beta,k)$ holds. This is a stronger conditioning and implies the assertion of the theorem.

By requirement (2) of $\A(x,y,j_x,j_y,r_0,\beta,k)$ and our choice of parameters
$$ r_0(|A|+|B|)V^{-1} \leq V^{-1} \eps^{-3} \log(\eps^3 V) \e^{3M} \leq (\log \eps^3 V)^{-1} \, , $$
and
$$ {\alpham^{1/2} \eps |\partial A|} \leq {\e^{3M} \alpham^{1/2}} \leq \alpham^{1/4} \, .$$
Hence the error term in Lemma \ref{sedgefirstmom} is at most
    $$
    \hbox{{\rm Err}} \leq  C \big [ (\log \eps^3 V)^{-1} + \alpham^{1/4} \big ] |\partial A||\partial B|V^{-1} m (\E|B(r_0)|)^2 \, .
    $$
Lemma \ref{sedgefirstmom} together with requirement (5) in the definition of $\A(x,y,j_x,j_y,r_0,\beta,k)$ and our choice of $\beta$ in \eqref{parameter-def-1} (in particular, that $\beta \ll \alpham^{1/4} \wedge (\log \eps^3 V)^{-1}$ by \eqref{Mchoice}) give
    $$
    \eab \Big [  S_{j_x,j_y,r_0}(x,y) {\bf 1}_{\{ \partial A \stackrel{2r_0}{\not\lr} \partial B  \}} \Big ]
    \geq (1-C\beta^{1/2}) V^{-1} \m (\E|B(r_0)|)^2 |\partial A||\partial B| \, .
    $$
Since $|\partial A|$ and $|\partial B|$ are at least $\e^{k/4}\eps^{-1}$,
    $$
    \eps^{-1}|\partial A|^2|\partial B| +
    \eps^{-1}|\partial A|^2|\partial B| + \eps^{-2} |\partial A||\partial B| \leq C \e^{-k/4} |\partial A|^2|\partial B|^2 \, ,
    $$
hence, by Lemma \ref{sedgesecondmom} and our choice of parameters,
    $$ \eab \Big [  S_{j_x,j_y,r_0}(x,y)^2 {\bf 1}_{\{ \partial A \stackrel{2r_0}{\not\lr} \partial B  \}} \big ] \leq \big ( 1+ O(\e^{-k/4}) \big ) V^{-2} m^{2} (\E|B(r_0)|)^4 |\partial A|^2|\partial B|^2 \, .$$
We conclude that
    $$
    \pab \big ( S_{j_x,j_y,r_0}(x,y) \geq 2\beta^{1/2} V^{-1} \m (\E|B(r_0)|)^2 |\partial A||\partial B| \big ) \geq 1 - O(\beta^{1/2}) \, ,
    $$
where we used the fact that $\e^{-k/4} = o(\beta)$ and \eqref{X>a-prob-bd}. This concludes our proof.
\qed

\section{Proofs of main theorems} \label{sec-sprinkling}

\subsection{Proof of Theorem \ref{mainthmgeneral}} \label{sec-improved-sprinkling-performed} In Section \ref{sec-unif-conn-NBW} we already proved Theorem \ref{mainthmgeneral}(a) hence we may assume that the finite triangle
condition (\ref{finitetriangle}) holds and focus on part (b) of the theorem. Since $|\C_1|\leq k_0+Z_{\geq k_0}$ where $k_0$ is from Theorem \ref{clustertail}, Lemma \ref{lem-conc-Z-I} immediately gives that $|\C_1| \leq (2+o(1)) \eps V$
\whp, showing the required upper bound on $|\C_1|$ --- note
that this argument only uses the finite triangle condition hence it is valid
for any $\epsm$ satisfying $\epsm \gg V^{-1/3}$ and $\epsm = o(1)$. For the lower
bound we will additionally assume, as part (b) requires, that $\epsm = o(\mnot^{-1})$ and show that
    \be
    \label{mainthm.toshow}
    \prob_p \big( |\C_1| \geq (2-o(1)) \eps V\big) = 1-o(1) \, .
    \ee
This establishes part (b) of Theorem \ref{mainthmgeneral}. Recall that
$p=p_c(1+\eps)$ is our percolation probability, let $\theta>0$ be an
arbitrary small constant and put $p_2, p_1$ to satisfy
    $$
    p_2 = \theta \eps /m \, , \qquad p_c(1+\eps) = p_1 + (1-p_1)p_2 \, ,
    $$
so that $p_c(1+(1-\theta)\eps) \leq p_1 \leq p_c(1+\eps)$ since $p_c \geq 1/\m$. Denote
by $G_{p_1}$ and $G_{p_2}$ two independent percolation instances of $G$ with
parameters $p_1$ and $p_2$, respectively. The sprinkling procedure relies
on the fact that $G_p$ is distributed as
$G_{p_1} \cup G_{p_2}$.  We first apply Theorem \ref{alphapairs} to $G_{p_1}$ and deduce
that for $M,r$ defined in (\ref{Mchoice}) and $r_0$ defined in (\ref{rzerocond}),
    \be
    \label{mainthm.alphapairsapp}
    \prob _{p_1} \big (\Park \geq (1-3\theta) 4\eps^2 V^2 \big ) \geq 1-o(1) \, .
    \ee
%
%
Now we wish to show that when we ``sprinkle'' this configuration in $G_{p_1}$,
that is, when we add to the configuration independent $p_2$-open edges,
most of these vertices join together to form one cluster of size roughly $2 \eps V$.
To make this formal, given $G_{p_1}$, we construct an auxiliary simple graph $H$ with vertex set
    $$
    V(H) = \big \{ x\in G_{p_1} \colon |\C(x)|\geq (\eps^3 V)^{1/4} \eps^{-2} \big \} \, ,
    $$
and edge set
    $$
    E(H) = \big \{ (x,y) \in V(H)^2 \colon x,y \hbox{ are $(r,r_0)$-good} \big \} \, .
    $$
Thus, using Lemma \ref{lem-conc-Z-I} with $k_0=\eps^{-2} (\eps^3 V)^{1/4}$ and (\ref{mainthm.alphapairsapp}), with probability at least $1-o(1)$,
    \be\label{auxgraph}
        |V(H)| =  (2+o(1)) \eps V \, , \quad\qquad |E(H)| \geq (1-3\theta) 4 \eps^2 V^2 \, .
    \ee

Denote $v=|V(H)|$ and write $x_1, \ldots, x_v$ for the vertices in $G_{p_1}$ corresponding
to those of $H$. Given $G_{p_1}$ for which the event in (\ref{auxgraph}) occurs,
we will show that \whp\ in $G_{p_1} \cup G_{p_2}$ there is no way to partition the set of vertices into  $M_1 \uplus M_2 = \{x_1,\ldots, x_v\}$ with $|M_1|\geq 3\theta v$ and $|M_2| \geq 3\theta v$ such that there is no open path in
$G_{p_1} \cup G_{p_2}$ connecting a vertex in $M_1$ with a vertex in $M_2$.
This implies that \whp\ the largest connected component in $G_{p_1} \cup G_{p_2}$ is of size at least $(1-3\theta)v$.

To show this, we first claim that the number of
such partitions is at most $2^{3 (\eps^3 V)^{3/4}}$
since $|\C(x_i)| \geq (\eps^3 V)^{1/4} \eps^{-2}$.
Secondly, given such a partition, we
claim that the number of edges $(u,u')$ such that $u \in M_1$ and $u'\in M_2$ (note that, by definition,
these edges must be $p_1$-closed) is at least $\e^{-40M}(\log M)^{-1}\theta \eps^2 V m$.
To see this, we consider the set of edges in $H$ for which both sides lie in either
$M_1$ or $M_2$ (more precisely, the vertices of $H$ corresponding to $M_1$ and $M_2$). This number is at most
    $$
    {M_1^2 + M_2^2} \leq (3\theta v)^2 + (1-3\theta)^2 v^2 \leq {(1-5\theta)v^2},
    $$
where we used the fact that $\theta>0$ is a small enough constant, $M_1+M_2=v$ and both $M_1$ and $M_2$ are in $[3\theta v, (1-3\theta)v]$.
By \eqref{auxgraph}, the number of edges in $H$ such that one end is in $M_1$ and the other in $M_2$ is at least $\theta \eps^2 V^2$. In other words, there are at least $\theta \eps^2 V^2$ pairs $(x,y)\in M_1\times M_2$ such that
$S_{2r+r_0}(x,y) \geq  (\log M)^{-1} V^{-1}\m \eps^{-2} (\E|B(r_0)|)^2$. Note that is a large number due to our condition (\ref{rzerocond}). In total, we counted at least
$\theta \eps^2 V^2 \cdot (\log M)^{-1} V^{-1} \m \eps^{-2} (\E|B(r_0)|)^2$ edges $(u,u')$ and no edge is counted more than
$|B_u(2r+r_0)|\cdot|B_{u'}(2r+r_0)|$ times, which is at most $\e^{40M} \eps^{-2} (\E|B(r_0)|)^2$
by the definition of $S_{2r+r_0}(x,y)$ and the second claim follows.

Hence, if $|\C_1| \leq (1-3\theta)v$, then there exists such a partition in which all of the
above edges $(u,u')$ are $p_2$-closed. By the two claims above, the probability of this is at most
    $$
    2^{3 (\eps^3 V)^{3/4}} (1-p_2)^{\e^{-40M} (\log M)^{-1} \theta \m \eps^2 V} \leq 2^{3 (\eps^3 V)^{3/4}}
    \e^{-\e^{-40M} (\log M)^{-1} \theta^2 \eps^3 V} = o(1) \, ,
    $$
since $p_2 = \theta \eps / \m$ and by our choice of parameters in (\ref{Mchoice}) and (\ref{epscond}). This concludes the proof of (\ref{mainthm.toshow}) since $\theta>0$ was arbitrary and establishes the required estimate on $|\C_1|$ of Theorem \ref{mainthmgeneral} (b).

We now use \eqref{mainthm.toshow} to show the required bounds on $\expec|\C(0)|$ and $|\C_2|$.
The upper bound $\E|\C(0)|\leq (4+o(1))\eps^2 V$ is stated in Lemma \ref{lem-conc-Z-I} and the
lower bound follows immediately from our estimate on $\C_1$. Indeed, write $\C_j$ for the
$j$th largest component. Then
    $$
    \E |\C(0)| = V^{-1} \sum_{v \in V(G)} \E |\C(v)|
    = V^{-1} \sum_{j \geq 1} \E |\C_j|^2 \geq V^{-1} \E|\C_1|^2 \geq (4-o(1))\eps^2 V \, ,
    $$
where the first equality is by transitivity, the second equality is because each
component $\C_j$ is counted $|\C_j|$ times in the sum on the left and the last
inequality is due to (\ref{mainthm.toshow}). Furthermore, by this inequality and
Lemma \ref{lem-conc-Z-I}, we deduce that
    $$
    \sum_{j \geq 2} \E|\C_j|^2 = o(\eps^2 V^2) \, ,
    $$
and hence $|\C_2| = o(\eps V)$ \whp. This concludes the proof of Theorem \ref{mainthmgeneral}.
\qed \\

\subsection{Proof of Theorem \ref{mainthm}}

In this section we restrict our attention to the hypercube and prove Theorem \ref{mainthm}. We begin by showing that $\mnot$, defined in Theorem \ref{mainthmgeneral} with $\alpham = \m^{-1} \log \m$, satisfies $\mnot = O(\m \log m)$. See Lemma \ref{lem-NBW-high-d-tori}. The proof of Theorem \ref{mainthm} is then split into two cases. In the first case we assume that $\epsm \leq 1/\m^2$ so that $\eps = o(\mnot^{-1})$ and appeal to Theorem \ref{mainthmgeneral}. In the second case we perform the classical sprinkling argument for the case $\eps \geq 1/\m^2$, as done in \cite{BCHSS3}.

\begin{lemma}[NBW estimates]
\label{lem-NBW-high-d-tori}
On the hypercube $\{0,1\}^{\m}$
    $$
    \Tm(\m^{-1}\log{\m}) =O(\m \log \m)\, ,
    $$
and for any integer $L \geq 1$
    \eqn{
    \label{NBW-triangle-hyper}
    \sup_{x,y}
    \sum_{u,v} \sum_{\substack{t_1,t_2,t_3=0\\t_1+t_2+t_3\geq 3}}^{L} \p^{t_1}(x,u) \p^{t_2}(u,v) \p^{t_3}(v,y) \leq O(1/\m^2)+O(L^3/V).
    }
\end{lemma}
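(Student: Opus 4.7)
The plan is to diagonalize the NBW transition matrix via Fourier analysis on $\mathbb{F}_2^m$, whose characters $\chi_S(x) = (-1)^{\sum_{i \in S} x_i}$ are the natural coordinates for the hypercube viewed as a Cayley graph. By translation invariance, the NBW on the state space $\mathbb{F}_2^m \times [m]$ of position-direction pairs splits into $m \times m$ blocks $B_S$ indexed by $S \subseteq [m]$. A direct computation shows that $B_S$ preserves the two-dimensional span of $\mathbf{1}$ and the sign vector $\epsilon$ (with $\epsilon_i = -1$ for $i \in S$ and $+1$ otherwise), on which it has characteristic polynomial $(m-1)\mu^2 - (m-2k)\mu + 1$ with $k = |S|$; the orthogonal complement contributes only the eigenvalues $\pm 1/(m-1)$. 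Writing $\hat{\p}^t(S) = \sum_z \p^t(0,z)\chi_S(z)$, one obtains $\hat{\p}^t(S) = c_+(k)(\mu_k^+)^t + c_-(k)(\mu_k^-)^t$, where $\mu_k^\pm$ are the two roots and $c_\pm(k)$ are determined by $\hat{\p}^0(S) = 1 - 2k/m$ and $\hat{\p}^1(S) = ((m-2k)^2 - m)/(m(m-1))$. The eigenvalues obey $|\mu_k^\pm| \leq 1 - ck/m$ for $|m-2k| \geq 2\sqrt{m-1}$ (real regime) and $|\mu_k^\pm| = 1/\sqrt{m-1}$ otherwise (complex regime); these spectral formulas are essentially the content of Fitzner--van der Hofstad \cite{FitHof11a}.

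For part (1), Fourier inversion yields
$$\bigl| \tfrac{1}{2}(\p^t(0,z) + \p^{t+1}(0,z)) - \tfrac{1}{V}\bigr| \leq \frac{1}{V}\sum_{k=1}^m\binom{m}{k}\bigl(|\hat{\p}^t(S_k)| + |\hat{\p}^{t+1}(S_k)|\bigr),$$
where $S_k$ denotes any set of size $k$. In the real regime, $\binom{m}{k}|\mu_k^\pm|^t \leq \frac{1}{k!}(me^{-ckt/m})^k$, whose geometric sum over $k \geq 1$ is at most $O(me^{-ct/m})$, giving $\leq \log m / m$ once $t \geq Cm\log m$ for $C$ sufficiently large. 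The contributions from $k$ near $m/2$ (complex regime) and $k$ near $m$ are negligible: since $|\mu_k^\pm| \leq 1/\sqrt{m-1}$, the summand is at most $\binom{m}{k}/(m-1)^{t/2}$, which beats $V$ doubly-exponentially at $t = O(m\log m)$. This yields $\mnot = O(m \log m)$ at $\alpham = m^{-1}\log m$.

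For part (2), translation invariance implies $\sum_u \p^{t_1}(x,u)\p^{t_2}(u,v) = (\p^{t_1}(0,\cdot) * \p^{t_2}(0,\cdot))(v-x)$, the convolution on $\mathbb{F}_2^m$, diagonalized by $\chi_S$. Iterating gives
$$\sum_{u,v} \p^{t_1}(x,u)\p^{t_2}(u,v)\p^{t_3}(v,y) = \frac{1}{V}\sum_S \chi_S(y-x)\prod_{i=1}^3 \hat{\p}^{t_i}(S).$$
Summing over triples $(t_1,t_2,t_3)$ with $t_1+t_2+t_3 \geq 3$ and each $t_i \leq L$, the $S = \emptyset$ contribution is $[(L+1)^3 - 10]/V = O(L^3/V)$ since $\hat{\p}^t(\emptyset) = 1$. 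For $S \neq \emptyset$, one bounds $\bigl|\sum_{t=0}^L \hat{\p}^t(S)\bigr|^3$ using the explicit formula $\hat{\p}^t(S) = c_+(k)(\mu_k^+)^t + c_-(k)(\mu_k^-)^t$, crucially exploiting the cancellation between the two branches $\mu_k^\pm$ in the complex regime (for instance, at $k = m/2$ one has $\sum_t \hat{\p}^t(S) = -1/m$ rather than the naive $O(1/\sqrt{m})$). Multiplying by $\binom{m}{k}$ and summing yields a total of $O(1/m^2)$. The main technical obstacle lies precisely in this last estimate in the complex regime: naive triangle inequalities lose a $\sqrt{m}$ factor and give only $O(1/m^{3/2})$; recovering the full $O(1/m^2)$ requires the explicit cancellation between $\mu_k^+$ and $\mu_k^-$ and the smallness of $c_\pm(k) = O(1/\sqrt{m})$ in the complex regime, which is the substantive content imported from \cite{FitHof11a}.
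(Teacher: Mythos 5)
Your overall strategy for \eqref{NBW-triangle-hyper} --- Fourier analysis on $\{0,1\}^{\m}$ with the spectral information on the NBW blocks imported from \cite{FitHof11a} --- is the same as the paper's, but the way you propose to close the estimate has a genuine gap, and you have misdiagnosed where the difficulty lies. You reduce the problem to bounding $\frac{1}{V}\sum_{S\neq \emptyset}\bigl|\sum_{t=0}^L \hat{\p}^t(S)\bigr|^3$ and claim that the cancellation between the two eigenvalue branches $\mu_k^{\pm}$ recovers $O(1/\m^2)$. It does not. Even computing $F_k=\sum_{t\geq 0}\hat{\p}^t(S)$ \emph{exactly} (via the recursion $\hat{\p}^{t+1}=\frac{\m}{\m-1}\hat{D}\,\hat{\p}^t-\frac{1}{\m-1}\hat{\p}^{t-1}$ one gets $F_k=\frac{(\m-2k)(\m-1)-\m}{2k\m}$, which indeed equals $-1/\m$ at $k=\m/2$), for $k=\m/2+j$ one has $|F_k|\approx (2|j|+1)/\m$. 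The binomial weights concentrate on $|j|\asymp\sqrt{\m}$, where $|F_k|\asymp \m^{-1/2}$, so
$$\frac{1}{V}\sum_{k}\binom{\m}{k}|F_k|^3\;\asymp\;\frac{\E\big[|2X-\m|^3\big]}{\m^3}\;\asymp\;\m^{-3/2},$$
with $X\sim\mathrm{Bin}(\m,1/2)$. This is a \emph{lower} bound for any argument that takes absolute values term by term in $S$, so your route cannot do better than $O(\m^{-3/2})$, which fails the required $O(1/\m^2)$ (and hence fails condition (3) of Theorem \ref{mainthmgeneral}, which needs $O(\alpham/\log V)=O(\log\m/\m^2)$). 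The cancellation you would actually need is not between $\mu_k^+$ and $\mu_k^-$ but between the full cube $F_S^3$ and the excluded low-order triples $t_1+t_2+t_3\leq 2$; equivalently, the constraint $t_1+t_2+t_3\geq 3$ must be exploited \emph{inside} the Fourier sum.

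The paper's fix is exactly this: treat $t_1+t_2+t_3=3$ separately in physical space (each such term is a short-walk return probability, $O(1/\m^2)$), and for $t_1+t_2+t_3\geq 4$ peel off one simple-random-walk step from each of four NBW factors via $\p^t\leq\frac{\m}{\m-1}(D*\p^{t-1})$, so that the Fourier integrand acquires a factor $\hat{D}(k)^4$. Bounding the remaining factors by $\max(|\hat{D}(k)|,(\m-1)^{-1/2})^{(s-1)_+}$ and summing the geometric series leaves $\frac{1}{V}\sum_k \hat{D}(k)^4\big[1-\max(|\hat{D}(k)|,(\m-1)^{-1/2})\big]^{-3}$, and now the fourth moment $\E[(2X-\m)^4]=O(\m^2)$ delivers $O(1/\m^2)$: the numerator $(\m-2k)^4/\m^4$ vanishes to higher order than the denominator blows up near $k=\m/2$. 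You would need to incorporate this (or an equivalent exact bookkeeping of the subtracted triples with signs) to make your argument work. A second, smaller point: in your part (1), the claim $|\mu_k^{\pm}|\leq 1/\sqrt{\m-1}$ for $k$ near $\m$ is false --- at $k=\m$ one root is $-1$ and for $k$ close to $\m$ one root has modulus $1-O((\m-k)/\m)$; this is exactly why $\Tm$ is defined via the average of $\p^t$ and $\p^{t+1}$, and the defect is repaired by that averaging (or by the $k\leftrightarrow \m-k$ antisymmetry), but as written your justification there is incorrect. The paper sidesteps this entirely by citing \cite[Theorem 3.5]{FitHof11a} for the mixing-time bound.
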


\begin{proof}
We make use of the results in \cite{FitHof11a}, as we explain now.
The bound on $\Tm(\m^{-1}\log{\m}) =O(\m \log \m)$ is
\cite[Theorem 3.5]{FitHof11a}. We next explain how to
prove \eqref{NBW-triangle-hyper}, which will give
condition (3) in Theorem \ref{mainthmgeneral} for $L=A\m\log{\m}$
and an appropriate $A>0$.

Let $D\colon \{0,1\}^{\m} \to [0,1]$ be the simple random walk transition
probability on the hypercube. Our proof of \eqref{NBW-triangle-hyper}
relies on Fourier theory. For convenience,
we take the Fourier dual of $\{0,1\}^{\m}$
to be $\{0,1\}^{\m}$. Then, the Fourier transform $\hat{f}(k)$
of $f\colon  \{0,1\}^{\m} \to {\mathbb R}$ is given by
    \eqn{
    \hat{f}(k)=\sum_{x\in \{0,1\}^{\m}} (-1)^{x\cdot k}f(x),
    }
with inverse Fourier transform
    \eqn{
    f(x)=\frac{1}{V}\sum_{k\in \{0,1\}^{\m}} (-1)^{x\cdot k}\hat{f}(k).
    }
For the hypercube, $\hat{D}(k)$ takes the appealingly simple form
    \eqn{
    \label{Dhat-hyper}
    \hat{D}(k)=1-2a(k)/\m,
    }
where $a(k)$ is the number of non-zero coordinates of $k$.

In \cite[Theorem 3.5]{FitHof11a} it is proved that,
when $\m\geq 2$ and $t\geq 1$, with $\hat{\p}^t(k)$
denoting the Fourier transform of $x\mapsto \p^t(0,x)$,
    \eqn{
    \label{pt-bd}
    |\hat{\p}^t(k)|\leq \max\big(|\hat{D}(k)|, 1/\sqrt{\m-1}\big)^{t-1},
    }
and $\hat{\p}^0(k)=1$. This gives us all the necessary bounds
to prove the NBW triangle condition \eqref{NBW-triangle-hyper}.

Denote the sum in \eqref{NBW-triangle-hyper} by $S$.
The contribution to $S$ where $t_1+t_2+t_3=3$ equals $O(1/\m^2)$.
Thus, we are left to bound the contribution
due to $t_1,t_2,t_3$ with $t_1+t_2+t_3\geq 4$.
For any $t\geq 1$,
    \eqn{
    \p^t(x,y)\leq \frac{\m}{\m-1} (D*\p^{t-1})(x,y),
    }
where, for $f,g\colon \{0,1\}^{\m}\to \R$, we define the convolution $f*g$ by
    \eqn{
    \label{f*g-hyper}
    (f*g)(x)=\sum_{y\in \{0,1\}^{\m}} f(y)g(x-y).
    }
Therefore,
    \eqn{
    S\leq C/\m^2+3^4 \big(\frac{\m}{\m-1}\big)^4
    \sup_{x,y}\sum_{\substack{s_1,s_2,s_3=0\\s_1+s_2+s_3\geq 0}}^{L}
    (D^{*4}*\p^{s_1}*\p^{s_2}*\p^{s_3})(x,y),
    }
where $3^4$ is an upper bound on the number of ways we can add
$4$ to the coordinates of $(s_1,s_2,s_3)$ to get $(t_1,t_2,t_3)$
with $t_1+t_2+t+3\geq 4$. The above can be bounded
in terms of Fourier transforms as
    \eqan{
    S&\leq C/\m^2+\frac{C}{V} \sup_{x,y}\sum_{k\in \{0,1\}^{\m}}(-1)^{k\cdot (y-x)}
    \sum_{\substack{s_1,s_2,s_3=0\\s_1+s_2+s_3\geq 0}}^{L}
    \hat{D}(k)^{4}\hat{\p}^{s_1}(k)\hat{\p}^{s_2}(k)\hat{\p}^{s_3}(k)\\
    &\leq C/\m^2+\frac{C}{V} \sum_{k\in \{0,1\}^{\m}}
    \sum_{\substack{s_1,s_2,s_3=0\\s_1+s_2+s_3\geq 0}}^{L}
    \hat{D}(k)^{4}|\hat{\p}^{s_1}(k)||\hat{\p}^{s_2}(k)||\hat{\p}^{s_3}(k)|.\nn
    }
The contribution to $k=0$ equals $L^3/V$ since $\hat{D}(0)=\hat{\p}^t(0)=1$,
and the contribution due to $k=1$ (where 1 denotes the
all 1 vector) obeys the same bound.
It is not hard to adapt the proof of \cite[Proposition 1.2]{BCHSS2}
to show that the sum over $k\neq 0,1$ is $O(1/\m^2)$. We perform the
details of this computation now.

Writing $x_+=\max(x,0)$ for $x\in \R$, and noting that there are at most
$2$ values of $s$ for which $(s-1)_{+}=t$, we obtain
    \eqan{
    S&\leq C/\m^2+2L^3/V+\frac{C}{V} \sum_{k\in \{0,1\}^{\m}\colon k\neq 0,1}
    \sum_{\substack{s_1,s_2,s_3=0}}^{L}
    \hat{D}(k)^{4}\max\big(|\hat{D}(k)|, 1/\sqrt{\m-1}\big)^{(s_1-1)_++(s_2-1)_++(s_3-1)_+}\nn\\
    &\leq C/\m^2+2L^3/V+\frac{C2^3}{V} \sum_{k\in \{0,1\}^{\m}\colon k\neq 0}
    \hat{D}(k)^{4}\sum_{s_1,s_2,s_3=0}^{\infty}
    \max\big(|\hat{D}(k)|, 1/\sqrt{\m-1}\big)^{s_1+s_2+s_3}\nn\\
    &=C/\m^2 +2L^3/V+\frac{C2^3}{V} \sum_{k\in \{0,1\}^{\m}\colon k\neq 0,1}
    \frac{\hat{D}(k)^{4}}{\big[1-\max\big(|\hat{D}(k)|, 1/\sqrt{\m-1}\big)\big]^3}.
    \label{S-bd-almost}
    }
We bound
    \eqan{
    &\frac{1}{V} \sum_{k\in \{0,1\}^{\m}\colon k\neq 0,1}
    \frac{\hat{D}(k)^{4}}{\big[1-\max\big(|\hat{D}(k)|, 1/\sqrt{\m-1}\big)\big]^3}\\
    &\qquad\leq \frac{1}{V} \sum_{k\in \{0,1\}^{\m}\colon k\neq 0,1}
    \hat{D}(k)^{4}\Big[\frac{1}{[1-|\hat{D}(k)|]^3}+\frac{1}{[1-1/\sqrt{\m-1}]^3}\Big].\nn
    }
We next use the fact that $\frac{1}{V} \sum_{k\in \{0,1\}^{\m}}
\hat{D}(k)^{4}$ is the probability that a four-step simple random walk
on the hypercube returns to its starting point, which is $O(1/\m^2)$.
Alternatively, and more useful for the proof that follows,
we can write
    \eqn{
    \label{4-loop-hyper}
    \frac{1}{V} \sum_{k\in \{0,1\}^{\m}}\hat{D}(k)^{4}
    =2^{-\m}\sum_{j=0}^{\m} {{\m}\choose {j}}(1-2j/\m)^4
    =\m^{-4} \expec[(2X-\m)^4]=O(1/\m^2),
    }
where $X$ has a binomial distribution with parameters $1/2$ and $\m$,
and we use that $\expec[(2X-\m)^4]=O(\m^2)$.
We use similar ideas to deal with the contribution
involving $[1-|\hat{D}(k)|]^{-3}$, which we rewrite as
    \eqn{
    \frac{1}{V} \sum_{k\in \{0,1\}^{\m}\colon k\neq 0,1}
    \frac{\hat{D}(k)^{4}}{[1-|\hat{D}(k)|]^3}
    =2^{-\m} \sum_{j=1}^{\m-1} {{\m}\choose {j}} \frac{(1-2j/\m)^4}{[(2j/\m)\wedge (2-2j/\m)]^3}.
    }
The sum $2^{-\m} \sum_{j\not\in[\m/4,3\m/4]} {{\m}\choose {j}}$ is exponentially small
in $\m$
by either Stirling's formula or large deviation bounds on the binomial distribution with parameters
$\m$ and $1/2$.
When $j\in[\m/4,3\m/4]$, we can bound $1/[(2j/\m) \wedge (2-2j/\m)]^3\leq 8$
to bound the above sum by $O(1/\m^2)$ in the same way as in \eqref{4-loop-hyper}.
Together with \eqref{S-bd-almost}, this completes the proof of \eqref{NBW-triangle-hyper}.
\end{proof}

\noindent {\bf Proof of Theorem \ref{mainthm}}. We start by proving the theorem in the case $\epsm \leq 1/\m^2$. We take $\alpham=\m^{-1} \log \m $ and Lemma \ref{lem-NBW-high-d-tori} shows that $\mnot = O(\m \log \m)$ and that condition (3) of Theorem \ref{mainthmgeneral} holds. Condition (2) of Theorem \ref{mainthmgeneral} holds by (\ref{pcest}). Condition (1) is fulfilled automatically, therefore, in this case Theorem \ref{mainthm} follows from Theorem \ref{mainthmgeneral}.

We now handle the case $\eps \geq 1/ \m^2$ and $\eps = o(1)$. We start by proving
(\ref{mainthm.toshow}) in this case. In \cite{BCHSS3}, it is proven that $|\C_1| \geq c \eps V$ \whp\ in this case, and
the argument used there is based on isoperimetry together with Lemma \ref{lem-conc-Z-I}
and suffices to prove the required $2\eps V$ estimate in our setting as well,
as we show now.

Let $\theta>0$ be a small arbitrary constant. As before, fix the sprinkling
probability $p_2=\theta \eps/\m$
and take $p_1$ such that $p=p_c(1+\vep)=p_1+(1-p_1)p_2$ so that
$p_1=p_c(1+(1-\theta+o(1))\eps)$. By Lemma \ref{lem-conc-Z-I}, \whp\ in $G_{p_1}$,
    $$
    2(1-2\theta)\eps V\leq Z_{\sss \geq k_0}\leq 2(1+\theta)\eps V \, ,
    $$
for $k_0=\eps^{-2} (\eps^3 V)^{1/4}$.
As a result, there are at most $2(1+\theta)\eps V/k_0=2(1+\theta) (\eps^3 V)^{3/4}$
clusters of size at least $k_0$. Denote these clusters by $(\D_i)_{i\in I}$,
so that $|I|\leq 2(1+\theta) (\eps^3 V)^{3/4}$.

As before, we now perform sprinkling and add the edges of $G_{p_2}$. We bound
the probability that after the sprinkling there is a partition of the clusters
$(\D_i)_{i\in I}$ into two sets $S, T$ both containing at least $\theta \eps V$
vertices such that there is no path in $G_{p_2}$ connecting them. If there is no
such partition, then the largest component in $G_{p_1} \cup G_{p_2}$ has size at
least $(2-3\theta)\eps V$ and we conclude the proof. We follow
\cite[Proof of Proposition 2.5]{BCHSS3}.

Since $|I| \leq 2(1+\theta) (\eps^3 V)^{3/4}$ the number of such partitions
is at most $2^{2(1+\theta) (\eps^3 V)^{3/4}}$. We bound the probability
that given such a partition there is no $p_2$-open path connecting them. By
\cite[Lemma 2.4]{BCHSS3}, whenever $\Delta \geq 1$ satisfies
    \be \label{epslarge}
    \e^{-\Delta^2/2\m}\leq \theta \eps/2,
    \ee
there is a collection of at least $\frac 12 \theta \eps \m^{-2\Delta}V$
edge disjoint paths connecting $S$ and $T$, each of length at most $\Delta$.
This is where the isoperimetric inequality on the hypercube is being used. Note
that $\Delta$ needs to be large, in fact, we put $\Delta=\m^{2/3}$ and use the
fact that $\eps \geq \m^{-2}$ so that (\ref{epslarge}) holds. The probability
that a path of length $\Delta$ has a $p_2$-closed edge in it is $1-p_2^{\Delta}$. Since the
paths are disjoint, these events are independent, and we learn that the probability
that they all have a $p_2$-closed edge in them is at most
    \eqn{
    \Big(1-p_2^{\Delta}\Big)^{\frac 12 \theta \eps \m^{-2\Delta}V}
    \leq \e^{-cp_2^{\Delta}\theta \eps \m^{-2\Delta}V}
    =\e^{-c\theta^{\Delta}\eps^{\Delta}\m^{-3\Delta}V} .
    }
Thus, the total probability that sprinkling fails is at most
    \eqn{
    2^{2(1+\theta) (\eps^3 V)^{1-\alpha}}
    \e^{-c\theta^{\Delta}\eps^{\Delta}\m^{-3\Delta}V} = \e^{-c 2^{(1-o(1)) \m}},
    }
since $\alpha>0$ and $\eps \geq \m^{-2}$ (in fact, this argument works as
long as $\eps \geq \e^{-c\m^{1/3}}$). The proof of (\ref{mainthm.toshow}) when $\eps \gg V^{-1/3}$ and $\eps = o(1)$ is now completed.

The remaining bounds on $|\C_1|$, $\E|\C(0)|$ and $|\C_2|$ only rely on \eqref{mainthm.toshow} and Lemma \ref{lem-conc-Z-I} and are performed exactly as in the conclusion of the proof of Theorem \ref{mainthmgeneral}. This completes the proof of Theorem \ref{mainthm}. \qed


\subsection{Proof of Theorem \ref{expander}} \label{sec-examples}

Our expansion and girth assumption of the theorem allows us to deduce some crude yet sufficient bounds on $\p^t(\cdot,\cdot)$, namely, that there exists some constant $q > 0$ so that
\begin{center}
\begin{table}[h]
\begin{tabular}{ l  l }

	$
	\p^t(0,0) \leq
	\begin{cases}               V^{-q} & t \leq C \log{V}\, ,\\
                                           CV^{-1} & t \geq C \log{V} \, ,
        \end{cases}
	$ \quad \and \quad
  & $
	\p^t(x,y) \leq
	\begin{cases}
		(m-1)^{-t} & t \leq (c \log_{m-1}{V})/2 ,\\
        	V^{-q} & t \geq (c \log _{m-1} {V})/2 \, .
	\end{cases}
	$
\end{tabular}
\end{table}
\end{center}
Indeed, the second bound on $\p^t(0,0)$ comes from the classical fact that $\Tm(CV^{-1}) = O(\log V)$. See e.g. \cite[below (19)]{ABLS}. The first bound on $\p^t(0,0)$ comes from the girth assumption, indeed, the BFS tree of $G$ rooted at $0$ is a tree up to height $\lfloor g/2 \rfloor$, where $g$ is the girth of the graph. Hence, in order for the walker to return to $0$ at time $t$ it must be at distance $t-\lfloor g/2 \rfloor$ from $0$ and then take the unique path of length $\lfloor g/2 \rfloor$ to $0$ so that $q$ can be taken to be any number smaller than $c/2$. The bounds on $\p^t(x,y)$ are proved similarly.

We take $\alpham = C(\log V)^{-1}$ (which is at least $1/\m$ by our assumption that $\m \geq c\log V$) and prove that conditions (2) and (3) of Theorem \ref{mainthmgeneral} hold. Note that $\mnot = O(\log V)$. To show condition (2) we show that percolation with $p=(\m-1)^{-1} (1+\alpham/\log{V})$ has $\E_p |\C(0)| \gg V^{1/3}$, whence $p_c \leq p$ and thus condition (2) holds. To show this lower bound on $\E_p|\C(0)|$ in this regime of $p$ it is possible to use a classical sprinkling argument. However, it is quicker to use \cite[Theorem 4]{Nach09} and verify that
	\be
	\label{expander.toverify}
	 \eps^{-1} r \sum_{t=1}^{2r} [(1+\eps)^{t \wedge r} -1] \p^t(0,0) = o(1) \, ,
	\ee
where $\eps = \alpham/\log{V}$ and $r=\eps^{-1} [ \log (\eps^3 V) -3\log \log(\eps^3 V)]$. Theorem 4 of \cite{Nach09} then yields that $\prob(|\C_1| \geq b \eps V/(\log(\eps^3 V))^3) = 1 - o(1)$ for some $b>0$ which immediately gives a lower bound on $\E|\C(0)|$ since
	$$
	\E |\C(0)| \geq V^{-1} \E|\C_1|^2 \geq (1+o(1)) b^2 \eps^2 V/(\log(\eps^3 V))^6 \gg V^{1/3} \, ,
	$$
by our choice of $\eps$.
We use our bounds on $\p^t(0,0)$ above and sum (\ref{expander.toverify}) separately for $t \leq C\log V$ and $t \geq C\log V$. For $t \leq C\log V$ we bound $(1+\eps)^t - 1 = O(\eps t)$ and use our first bound $\p^t(0,0) \leq V^{-q}$ to get
	$$
	\eps^{-1} r \sum_{t=1}^{C\log V} [(1+\eps)^{t \wedge r} -1] \p^t(0,0)
	\leq r \sum_{t=1}^{C\log V} t V^{-q} = o(1) \, .
	$$
When $t \geq C\log V$ we bound
	$$
	(1+\eps)^{t\wedge r} - 1 \leq (1+\eps)^r = \eps^3 V (\log \eps^3 V)^{-3} =
	O(\eps^3 V (\log V)^{-3}) \, ,
	$$
by our choice of $\eps$. We use our second bound $\p^t(0,0) \leq CV^{-1}$ to bound
	$$
	\eps^{-1} r \sum_{t= C\log V}^{2r} [(1+\eps)^{t \wedge r} -1] \p^t(0,0)
	= O(r^2 \eps^2 (\log V)^{-3}) = o(1) \, ,
	$$
since $r \leq C (\log V)^2$. This concludes the verification of condition (2) of Theorem \ref{mainthmgeneral}. \\

To verify condition (3) we need to prove the bound
	\be
	\label{expander.toverify2}
	\sum_{u,v} \sum_{t_1,t_2,t_3 \colon t_1+t_2+t_3 \geq 3}^{C\log V}
	\p^{t_1}(x,u)\p^{t_2}(u,v)\p^{t_3}(v,y)
	= O( (\log V)^{-2}) \, .
	\ee
We first handle the special case of $(t_1,t_2,t_3)=(1,1,1)$. An immediate calculation with Lemma \ref{lem-NBW-ext} gives that (on any regular graph of degree $\m$)
	$$
	\sum_{u,v}\p^{1}(x,u)\p^{1}(u,v)\p^{1}(v,y) = O(1/\m^{2}) \, .
	$$
In all other cases of $(t_1,t_2,t_3)$ we use our bound on $\p^{t_i}(x,y)$ for $i\in {1,2,3}$ such that $t_i$ is the largest of $t_1,t_2,t_3$ (which must be at least $2$). We pull this bound out of the sum, and sum the other two terms over $u$ and $v$ to get a multiplicative contribution of precisely $1$. The sum over $(t_1,t_2,t_3)$ such that $3 \leq t_1+t_2+t_3 < 15$ is bounded by $C(\log V)^{-2}$ since the number of such triplets is bounded, and each contributes at most $C(\log V)^{-2}$ because one of the $t_i$'s is at least $2$, so that our bounds on $\p^t(x,y)$ guarantee that for this $t_i$ we have $\p^{t_i}(\cdot,\cdot)\leq O(1/\m^2)\leq O(1/(\log{V})^2)$ by the assumption that $\m\geq c\log{V}$. Similarly, the sum over triplets $(t_1,t_2,t_3)$ such that $t_1 + t_2+ t_3 \geq 15$ and $t_i \leq \mnot$ is also bounded by $C(\log V)^{-2}$ since the number of such triplets is at most $C (\log V)^3$, and each contributes at most $C (\log V)^{-5}$ because at least one of the $t_i$'s is at least $5$ and for this $t_i$ we have $\p^{t_i}(\cdot,\cdot) \leq C (\log V)^{-5}$ again by our assumption that $\m \geq c \log V$. This concludes our verification of conditions (2) and (3) of Theorem \ref{mainthmgeneral} and concludes our proof. \qed

\section{Open problems}
\label{sec-open}
\begin{enumerate}
\item In this paper we prove a law of large numbers for $|\C_1|$ above the critical window for percolation on the hypercube. Show that $|\C_1|$ satisfies a central limit theorem in this regime. In $G(n,p)$ this and much more was established by Pittel and Wormald \cite{PitWor05}.\\

\item Show that $|\C_2| = (2+o(1)) \eps^{-2} \log(\eps^3 2^m)$ when $p=p_c(1+\eps)$ and that $|\C_1| = (2+o(1)) \eps^{-2} \log(\eps^3 2^m)$ when $p=p_c(1-\eps)$ for $\eps \gg V^{-1/3}$ and $\eps = o(1)$. This is the content of \cite[Conjectures 3.1 and 3.3]{BCHSS3}. In \cite{BolKohLuc92} this is proved for $\eps\geq 60 (\log{n})^3/n$ in the supercritical regime, and for $\eps\geq (\log{n})^2/(n^{1/2}\log\log{n})$
    in the subcritical regime. In $G(n,p)$ these results are proved in
    \cite{PitWor05} and \cite[Theorem 5.6]{JLR00}.\\

\item Show that $(|\C_j|2^{-2m/3})_{j\geq 1}$ converges in distribution when $p=p_c(1+t 2^{-\m/3})$ and $t \in  {\mathbb R}$ is fixed and identify the limit distribution. Up to a time change, this should be the limiting distribution of $(|\C_j|n^{-2/3})_{j\geq 1}$ in $G(n,p)$ with $p=(1+ t n^{-1/3})/n$ identified by Aldous \cite{Aldo97}. \\

\item Consider percolation on the nearest-neighbor torus $\Z_n^d$ where $d$ is a large fixed constant and $n\to \infty$ with $p=p_c(1+\eps)$ such that $\eps \gg n^{-d/3}$ and $\eps = o(1)$. Show that $|\C_1|/(\eps n^d)$ converges to a constant. Does this constant equal the limit as $\vep\downarrow 0$ of $\eps^{-1} \theta_{\Z^d} (p_c(1+\eps))$? Here $\theta_{\Z^d}(p)$ denotes the probability that the cluster of the origin is infinite at $p$-bond percolation on the infinite lattice $\Z^d$. The techniques of this paper are not sufficient to show this mainly because condition (2) of Theorem \ref{mainthmgeneral} does not hold in $\Z_n^d$ (in fact, it is easy to see that $p_c - (2d-1)^{-1} \geq c > 0$ for some positive constant $c=c(d)$ --- this is always the case when our underlying transitive graph has constant degree and short cycles). The critical regime of this graph is well understood by the works \cite{BCHSS1, BCHSS2, HH, HH2}. \\

\item Show that the finite triangle condition (\ref{finitetriangle}) holds on any family of expander graphs. \\

\item Let $\delta>0$ be a fixed constant and consider the giant component $\C_1$ obtained by performing percolation on the hypercube with $p= (1+\delta)/\m$. Show that \whp\ the mixing time of the simple random walk on $\C_1$ is polynomial in $\m$. Is this mixing time of order $\m^2$? This is what one expects by the analogous question on $G(n,p)$, see \cite{BKW06, FR08}. Further analogy with the near-critical $G(n,p)$ (see \cite{DLP}) suggests that \whp\ the mixing time on $\C_1$ when $p=p_c(1+\eps)$ with the usual condition that $\eps \gg 2^{-\m/3}$ and $\eps = o(1)$ is of order $\eps^{-3} \log(\eps^3 2^\m)$.

\end{enumerate}

\appendix
\section{Asymptotics of the super-critical cluster tail}
\label{sec-clustertail}
Our goal in this section is to prove Theorem \ref{clustertail}. In
\cite{BCHSS1}, Theorem \ref{clustertail} is proved without the precise constant
$2$. Here we sharpen this proof to get this constant. We assume that $G$ is a general
transitive graph having degree $\cn$ and volume $V$ satisfying the finite triangle condition (\ref{finitetriangle}).
In order to stay close to the notation in \cite{BCHSS1}, we define
$$ \nabla^{\rm {max}}_p = \sup_{x\neq y} \nabla_p(x,y) \, ,$$
and
    \eqn{
    \lbeq{tau-def}
    \tau_p(x)=\prob_p(0\conn x).
    }

\begin{prop}[Upper bound on the cluster tail]
\label{prop-upperbd-cluster-tail-gen}
Let $G$ be a finite transitive graph of degree $m$ on $V$ vertices such that the finite
triangle condition (\ref{finitetriangle}) holds and put $p=p_c(1+\eps)$
where $\eps = o(1)$ and $\eps \gg V^{-1/3}$.
Then, for every $k=k_\vep$ satisfying
$k_\vep \geq \vep^{-2}$,
    \eqn{
    \prob_p(|\Ccal(0)|\geq k)\leq 2\vep (1+O(\vep+(\vep^3V)^{-1}+(\vep^2k)^{-1/4}+\alpham)).
    }
\end{prop}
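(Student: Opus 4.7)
The natural object to study is the \emph{magnetization}
$$M_p(\gamma) = 1 - \E_p\big[(1-\gamma)^{|\C(0)|}\big], \qquad \gamma \in [0,1],$$
which smooths the cluster tail and is amenable to Russo-type differential calculus. The starting point is the Markov-like inequality
$$\prob_p(|\C(0)| \geq k) \leq \frac{M_p(\gamma)}{1-(1-\gamma)^k},$$
valid for every $\gamma \in (0,1]$. I plan to choose $\gamma$ of order $1/k$ (or slightly larger) so that the denominator is bounded away from $0$, and then control $M_p(\gamma)$ itself via the Aizenman--Barsky machinery. The factor $2$ in the theorem reflects the fact that $M_{p_c(1+\vep)}(0^+) = 2\vep(1+o(1))$ for the supercritical branching process with mean $1+\vep$, and the whole point of the argument is to transport this heuristic to the finite graph setting with explicit, sharp error terms.

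The technical heart is to refine the finite-volume Aizenman--Barsky differential inequalities of \cite{BA91,BCHSS1}. Following their approach, I would derive a pair of inequalities of the shape
$$M \;\leq\; \gamma\,\frac{\partial M}{\partial \gamma} \;+\; (p-p_c)\,\frac{\partial M}{\partial p}\,E_1 \;+\; M^2\, E_2,$$
where $E_1,E_2$ are error terms controlled by the triangle diagram and vanish under \eqref{finitetriangle}. Using the finite triangle condition from Theorem~\ref{thm-BCHSS2} I would bound $E_1,E_2$ by $1+O(\alpham + \nabla_p^{\rm max} - 1)$, with the extra care (over \cite{BCHSS1}) that constants in the $O(\cdot)$ terms are explicit. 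A key auxiliary bound is that the susceptibility $\chi_p$ and its derivative satisfy $\chi_p^{-1} \geq c(p_c - p)$ in the corresponding subcritical picture, which translates, via integration of the differential inequality along level curves of $M$, into the sharp lower boundary $M_{p_c(1+\vep)}(\gamma) \leq 2\vep\bigl(1+O(\vep + \alpham)\bigr) + O(\gamma/\vep)$ in a suitable range of $\gamma$.

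Finally, I would optimize the free parameter $\gamma$. Writing $\gamma = \mu/k$ with $\mu$ a parameter to be chosen, the denominator $1-(1-\gamma)^k$ equals $1 - e^{-\mu} + O(1/k)$, so
$$\prob_p(|\C(0)| \geq k) \;\leq\; \frac{M_p(\mu/k)}{1-e^{-\mu}}\,(1+O(1/k)) \;\leq\; \frac{2\vep + O(\mu/(\vep k))}{1-e^{-\mu}}\bigl(1 + O(\vep + \alpham)\bigr).$$
Balancing the remainder $\mu/(\vep^2 k)$ against the penalty $1/(1-e^{-\mu}) - 1 \asymp e^{-\mu}$, the optimal choice gives $\mu \asymp \log(\vep^2 k)$, but writing the $(1-e^{-\mu})^{-1}$ penalty as $1 + O(e^{-\mu})$ and combining with a Cauchy--Schwarz step (which is what produces the exponent $1/4$ rather than $1/2$) yields the stated error $(\vep^2 k)^{-1/4}$. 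The term $(\vep^3 V)^{-1}$ then enters as the price for replacing the infinite-volume critical point by the finite-volume $p_c$ defined in \eqref{pcdef}.

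\textbf{Main obstacle.} The hard part is not the differential inequality itself, which is classical, but rather squeezing out the precise constant~$2$. This forces one to keep track of all lower-order corrections in the triangle bounds (both in the derivation of the inequality and in its integration), to handle the discrepancy between the finite-volume $p_c$ and its infinite-volume counterpart (which contributes the $(\vep^3V)^{-1}$ error), and to carry out the $\gamma$-optimization with uniform control over the prefactor $(1-(1-\gamma)^k)^{-1}$. The $(\vep^2 k)^{-1/4}$ error term in the final bound is the signature of this optimization.
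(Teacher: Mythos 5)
Your overall strategy is the right one and is the same as the paper's: bound the tail by the magnetization via $\prob_p(|\Ccal(0)|\geq k)\leq [1-(1-\gamma)^k]^{-1}M(p,\gamma)$, control $M(p_c(1+\vep),\gamma)$ by sharpened Aizenman--Barsky differential inequalities under the finite triangle condition, and then choose $\gamma$. You also correctly identify that the whole difficulty is the constant $2$. However, as written the plan has two genuine gaps. First, the one differential inequality you display, $M\leq \gamma\,\partial_\gamma M+(p-p_c)\,\partial_p M\,E_1+M^2E_2$, points in the wrong direction for an \emph{upper} bound on $M$: integrating an inequality of that shape produces a \emph{lower} bound on the supercritical magnetization (it is the ingredient for Proposition \ref{prop-lowerbd-cluster-tail}, i.e.\ \refeq{ineq2}). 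What the upper bound actually requires is (i) a \emph{lower} bound of the form $M\geq \big(\gamma+\tfrac12 M^2\big)\partial_\gamma M$ up to $1+o(1)$ factors, where the factor $\tfrac12$ comes from a refined lower bound on $\prob_{p,\gamma}(0\dbc\Gcal)$ counting unordered pairs of distinct bonds at the origin; integrating this iteratively gives the sharp subcritical bound $M(p_c(1-\eta),\gamma)\leq\sqrt{2\gamma}\,(1+o(1))$ (the constant $\sqrt 2$ here is exactly what becomes the constant $2$ in the tail); and (ii) the extrapolation inequality $(1-p)\partial_p M\leq \cn(1-\gamma)M\,\partial_\gamma M$ to transport this bound across $p_c$, yielding $M(p_c(1+\vep),\gamma)\leq\big(\vep+\sqrt{2\gamma+\vep^2}\big)(1+O(\vep+\sqrt\gamma+(\vep^3V)^{-c}+\alpham))$. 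Your "integration along level curves" remark gestures at (ii), but without (i) and without the combinatorial source of the $\tfrac12$, the constant $2$ does not come out.

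Second, your account of the error term is not right. There is no Cauchy--Schwarz step, and your claimed intermediate bound $M\leq 2\vep+O(\gamma/\vep)$ together with $\mu\asymp\log(\vep^2k)$ would give a relative error of order $\log(\vep^2k)/(\vep^2k)$, far smaller than the stated $(\vep^2k)^{-1/4}$ --- a sign that the bookkeeping does not match the bound the inequalities actually deliver. The $1/4$ arises from the $\sqrt\gamma$ (mean-field $\delta=2$) shape of the magnetization: one takes $a=\gamma k=(\vep^2k)^{1/2}$, so $\gamma=\vep^2(\vep^2k)^{-1/2}$, whence $\vep+\sqrt{2\gamma+\vep^2}\leq 2\vep\big(1+O((\vep^2k)^{-1/4})\big)$ and $1-\e^{-a}=1+o((\vep^2k)^{-1/4})$. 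So the exponent $1/4$ is the fingerprint of the square-root behavior of $\gamma\mapsto M(p,\gamma)$ evaluated at the chosen $\gamma$, not of the optimization over the prefactor. To complete the proof along your lines you would need to (a) state and prove the two inequalities \refeq{rdi} and \refeq{ineq1} with their precise constants, (b) carry out the iterative integration giving $M\leq\sqrt{2\gamma}$ at $p_c(1-\eta)$ and the level-curve extrapolation to $p_c(1+\vep)$, and (c) redo the final optimization with the correct $\vep+\sqrt{2\gamma+\vep^2}$ form.
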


\begin{prop}[Lower bound on the cluster tail]
\label{prop-lowerbd-cluster-tail}
Let $G$ be a finite transitive graph of degree $m$ on $V$ vertices such that the finite
triangle condition (\ref{finitetriangle}) holds and put $p=p_c(1+\eps)$
where $\eps = o(1)$ and $\eps \gg V^{-1/3}$.
Then, for every $\alpha\in (0,1/3)$, there exists a $c=c(\alpha)>0$ such that
    \eqn{
    \prob_p\Big(|\Ccal(0)|\geq \eps^{-2} (\eps^3 V)^{\alpha}\Big)\geq
    2\vep (1+O(\vep+(\vep^3V)^{-c}+\alpham)).
    }
\end{prop}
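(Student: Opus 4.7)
The proof adapts the Aizenman--Barsky differential inequality framework \cite{BA91}, in the finite-graph form developed in \cite{BCHSS2}, sharpened to extract the leading constant $2$ rather than merely the order of magnitude. The underlying intuition is the classical branching-process calculation: a Galton--Watson process with offspring mean $1+\eps$ and variance $\sigma^2 = 1 + O(\eps + \alpham)$ has survival probability
$$\theta = \frac{2\eps}{\sigma^2}(1 + O(\eps)) = 2\eps (1 + O(\eps + \alpham)),$$
arising from the non-trivial solution of $G(q) = q$ via the expansion $G(1-x) = (1-x) + \eps x - (\sigma^2/2) x^2 + O(x^3)$. In the graph setting, percolation at $p = p_c(1+\eps)$ produces Binomial$(\m, p)$ offspring whose mean and variance match the above, once the easy estimate $p_c = 1/(\m-1) + O(\m^{-3})$ is invoked.

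Introduce the magnetization $M_p(\gamma) = 1 - \E_p[\exp(-\gamma|\Ccal(0)|)]$ for $\gamma \in (0,1]$. The trivial splitting
$$M_p(\gamma) \leq \gamma \E_p[|\Ccal(0)|\1_{|\Ccal(0)| \leq k}] + \prob_p(|\Ccal(0)| \geq k),$$
with $k = k_0 = \eps^{-2}(\eps^3 V)^{\alpha}$ and $\gamma = 1/k_0$, lets us bound the truncated expectation by $O(\eps^{-1})$ via Proposition \ref{prop-upperbd-cluster-tail-gen} together with the finite triangle condition \eqref{finitetriangle}; the resulting correction to $\prob_p(|\Ccal(0)|\geq k_0)$ is $O(\eps(\eps^3 V)^{-\alpha})$, absorbed into the stated error. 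It therefore suffices to establish
$$M_p(1/k_0) \geq 2\eps\big(1 + O(\eps + (\eps^3 V)^{-c} + \alpham)\big).$$

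To obtain this lower bound on $M_p$, derive a sharpened Aizenman--Barsky inequality. Applying Russo's formula and a BK--Reimer style decomposition of pivotal events, together with the strong triangle condition \eqref{finitetriangle}, gives a bound of the schematic form
$$\partial_p M_p(\gamma) \geq \m \big(1 - M_p(\gamma)\big) \chi_p(\gamma)\big(1 - O(\nabla^{\rm max}_p - 1) - O(\alpham)\big),$$
where $\chi_p(\gamma) = -\partial_\gamma M_p(\gamma) = \E_p[|\Ccal(0)|\exp(-\gamma|\Ccal(0)|)]$ is the finite-field susceptibility; coupling this with the matching upper bound $\partial_p M_p \leq \m \chi_p$ inherited from the cluster generating function, and integrating from $p_c$ to $p = p_c(1+\eps)$, yields a quadratic inequality whose positive root is precisely $M_p(1/k_0) \geq 2\eps(1 - o(1))$. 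The initial data $M_{p_c}(1/k_0)$ at the critical point is controlled by the triangle condition by $O(\sqrt{\chi(p_c)/k_0}) = O(\sqrt{\lambda V^{1/3}/k_0})$; the condition $\alpha < 1/3$ ensures that this is $o(\eps)$, so the critical boundary data does not spoil the leading behavior.

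The main obstacle is the sharpening itself: whereas \cite{BCHSS2} establishes only $M_p \geq c\eps$ for some unspecified $c > 0$, we must retain the coefficient $2$ throughout. This amounts to (i) keeping the precise branching-process constant in the Taylor expansions of the generating functions underlying $M_p$ and $\chi_p$, (ii) verifying that the corrections coming from the strong triangle condition \eqref{finitetriangle}, the ghost-field truncation, and the finite-volume boundary data at $p_c$ all contribute only errors of the advertised order $O(\eps + (\eps^3 V)^{-c} + \alpham)$, and (iii) ensuring that the supercritical-to-critical integration picks up the correct combinatorial factor $2$ rather than an unspecified constant. The arguments introduce no conceptually new ideas beyond those already in \cite{BA91, BCHSS2}, but the bookkeeping is delicate, and this is where the bulk of the appendix is devoted.
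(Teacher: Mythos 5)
Your overall strategy --- sharpening the Aizenman--Barsky differential inequalities for the magnetization so as to retain the branching-process constant $2$ under the finite triangle condition --- is indeed the route the paper takes, but two of your steps have genuine gaps. The first is the conversion from the magnetization to the cluster tail. Your splitting $M(p,\gamma)\leq \gamma\,\E_p\big[|\Ccal(0)|{\bf 1}_{\{|\Ccal(0)|\leq k_0\}}\big]+\prob_p(|\Ccal(0)|\geq k_0)$ with $\gamma=1/k_0$ does not give an $o(\eps)$ correction: the truncated expectation is not $O(\eps^{-1})$, since the contribution of cluster sizes $j\in[\eps^{-2},k_0]$ is a priori only bounded by $k_0\,\prob_p(|\Ccal(0)|\geq \eps^{-2})=O(\eps k_0)$, so after multiplying by $\gamma=1/k_0$ the error is of order $\eps$ --- the same order as the main term, which destroys the constant. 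Improving this to $o(\eps)$ would require $\prob_p(\eps^{-2}\leq|\Ccal(0)|\leq k_0)=o(\eps)$, i.e.\ precisely the flatness of the tail the proposition is meant to establish, so the argument is circular. The paper circumvents this with the two-parameter inequality $\prob_p(|\Ccal(0)|\geq k)\geq M(p,\gamma_1)-(\gamma_1/\gamma_0)\e^{\gamma_0 k}M(p,\gamma_0)$, taking $\gamma_0=\eps^2(\eps^3V)^{-\alpha}\ll \gamma_1=\eps^2(\eps^3V)^{-\alpha'}$ and $k=1/\gamma_0$; the correction is then $(\eps^3V)^{\alpha-\alpha'}\cdot O(\eps)=o(\eps)$ because $M(p,\gamma_0)\leq 2\eps(1+o(1))$ is supplied by the \emph{upper} bound on the supercritical magnetization.

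The second gap is in the lower bound $M(p,\gamma)\geq 2\eps(1-o(1))$ itself. The schematic pair $\partial_p M\geq \m(1-M)\,\partial_\gamma M\,(1-o(1))$ and $\partial_p M\leq \m\,\partial_\gamma M$ is essentially self-consistent and yields no quadratic inequality for $M$; a one-dimensional integration in $p$ from $p_c$ cannot close. The paper instead integrates the inequality $M\leq \gamma\partial_\gamma M+[\tfrac12\m pM^2+\gamma M]+[\tfrac12\m pM+\gamma]p\,\partial_p M$ over a \emph{rectangle} $[\gamma_0,\gamma_1]\times[p_0,p_1]$ (the extrapolation argument of Grimmett/Aizenman--Barsky), and this needs two ingredients you omit: a crude a priori bound $M(p_0,\gamma_0)\geq c\,\delta\eps$ at the slightly less supercritical $p_0=p_c(1+\delta\eps)$ to control the boundary term $\log\big(M(p_1,\gamma_1)/M(p_0,\gamma_0)\big)$, and the choice $\gamma_1/\gamma_0=\e^{(\log(1/\delta))^a}$ with $a>1$ that renders this boundary term negligible. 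Finally, your bound on the critical initial data, $O(\sqrt{\chi(p_c)/k_0})=O(\sqrt{\lambda V^{1/3}/k_0})$, is \emph{not} $o(\eps)$ for $\alpha<1/3$ (it equals $\eps^{1/2}(\eps^3V)^{(1/3-\alpha)/2}\gg\eps$); the usable statement is $M(p_c,\gamma)\leq \sqrt{2\gamma}\,(1+o(1))$, which does give $o(\eps)$ at $\gamma=1/k_0$.
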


\noindent
{\bf Remark.} The above propositions apply also
to infinite transitive graphs (where $(\eps^3V)^{-c}$ is replaced by
0), assuming that (\ref{finitetriangle}) holds with $\chi(p)^3/V$ replaced by 0.\\

\noindent {\bf Proof of Theorem \ref{clustertail}.} This proof follows immediately from the above propositions.
\qed

\subsection{Differential inequalities}
\label{sec-diff-ineq}
We follow \cite[Section 5]{BCHSS1}. For $p,\gamma\in [0,1]$, we
define the \emph{magnetization} by
    \eqn{
    \lbeq{Mdef}
    M(p,\gamma)
    =\sum_{k=1}^{V} [1-(1-\gamma)^{k}] \prob_{p}(|\Ccal(0)|=k).
    }
For fixed $p$, the function $\gamma \mapsto M(p,\gamma)$ is strictly
increasing, with $M(p,0)=0$ and $M(p,1)=1$. When we color all vertices
independently \emph{green} with probability $\gamma$, and we let $\Gcal$
denote the set of green vertices, then \refeq{Mdef} has the appealing
probabilistic interpretation of
    \eqn{
    \lbeq{Mdef-rep}
    M(p,\gamma)=\prob_{p,\gamma}(0\conn \Gcal),
    }
where $\prob_{p,\gamma}$ is the probability measure of the
joint bond and site percolation model, where bonds and sites
have an independent status. This representation is important for the derivation of useful differential inequalities involving the magnetization.

%

\begin{lemma}[Differential inequalities for the magnetization] \label{lem-diff-ineq} Let $G$ be a finite transitive graph on $V$ vertices and degree $m$. Then at any $p, \gamma \in (0,1)$
        \eqn{
        \lbeq{ineq1}
        (1-p)\frac{\partial M}{\partial p}
        \leq \cn (1-\gamma) M \frac{\partial M}{\partial \gamma},
        }
        \eqn{
        \lbeq{ineq2}
        M \leq \gamma \frac{\partial M}{\partial \gamma}
        +\big[\frac{1}{2}\cn p M^2+\gamma M\big]+ \big[\frac{1}{2}\cn p M +\gamma\big]p\frac{\partial M}{\partial p},
        }
and
        \eqn{
        \lbeq{rdi}
         M \geq
        \cn p\big[\gamma + (1-\gamma)\frac{1}{2} \cn(\cn-1) p^2 \alpha(p) M^2\big]\frac{\partial M}{\partial\gamma},
        }
where
    \eqn{
    \lbeq{alpha(p)-def}
    \alpha(p)=(1-2p)^2-(1+\cn p+2(\cn p)^2)\nabla^{\rm max}_p-
    \cn p M-(\cn p)^2 M^2.
    }
\end{lemma}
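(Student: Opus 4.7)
The plan is to interpret $M(p,\gamma)=\prob_{p,\gamma}(0\conn \Gcal)$ via the joint bond-site percolation representation \refeq{Mdef-rep}, in which each vertex is green independently with probability $\gamma$, and then to apply Russo's formula in each of the two variables together with the BK inequality, following Aizenman--Barsky \cite{AB87} but tracking constants with the precision that is not kept in \cite{BCHSS1}.

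For \refeq{ineq1}, differentiating \refeq{Mdef} in $\gamma$ gives $(1-\gamma)\frac{\partial M}{\partial\gamma}=\sum_v \prob_{p,\gamma}(0\conn v,\,\Gcal\cap \Ccal(0)=\emptyset)$, while Russo's formula applied to the increasing event $\{0\conn \Gcal\}$ yields $(1-p)\frac{\partial M}{\partial p}=\sum_{b=\{u,v\}}\prob_{p,\gamma}(b\text{ is closed and pivotal for }\{0\conn \Gcal\})$. If $b=\{u,v\}$ is closed and pivotal, then the disjoint-occurrence event $\{0\conn u,\,\Gcal\cap \Ccal(0)=\emptyset\}\circ\{v\conn \Gcal\}$ holds (or the version with $u,v$ swapped); by BK in the joint measure, and since transitivity of $G$ gives $\prob_{p,\gamma}(v\conn \Gcal)=M$ for every $v$, summing over bonds (each vertex has degree $\m$) produces exactly $\m\, M\cdot(1-\gamma)\frac{\partial M}{\partial\gamma}$, proving \refeq{ineq1}.

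For \refeq{ineq2}, I split $M=\gamma+\prob_{p,\gamma}(0\notin \Gcal,\,0\conn \Gcal)$ and analyze the second term by conditioning on the cluster $\Ccal(0)$ and writing the event as a disjoint union over the ``first'' pivotal structure producing the connection. The contribution from $0$ itself being green produces the $\gamma \frac{\partial M}{\partial\gamma}$ term; the contribution from a pivotal open bond $b=\{u,v\}$ incident to $\Ccal(0)$, together with a green vertex attached further downstream, is controlled by BK and identified with $[\tfrac12 \m pM+\gamma]\,p\frac{\partial M}{\partial p}$ after reusing the Russo-formula identity from \refeq{ineq1}; and the overcounts arising when the pivotal bond shares both endpoints with $\Ccal(0)$, or when $0\in\Gcal$ as well, produce the correction terms $\tfrac12\m p M^2$ and $\gamma M$, giving \refeq{ineq2}.

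The reverse inequality \refeq{rdi} is the main obstacle and requires the triangle condition to control every error term. The strategy is to produce a lower bound on $M$ by first lower-bounding the probability that $0\conn\Gcal$ through a single pivotal bond $\{0,u\}$ whose other endpoint $u$ is green, which yields the main term $\m p\gamma\frac{\partial M}{\partial\gamma}$. To produce the $\tfrac12 \m(\m-1)p^2 \alpha(p) M^2\frac{\partial M}{\partial\gamma}$ contribution one considers configurations in which $0$ reaches $\Gcal$ through \emph{two} disjoint bonds incident to $0$ and uses inclusion-exclusion to subtract the overcount. Every overcount in this inclusion-exclusion is controlled by a diagrammatic bound involving $\nabla_p^{\max}$ together with a factor of either $M$ or $M^2$; the four contributions $(1-2p)^2$, $(1+\m p+2(\m p)^2)\nabla_p^{\max}$, $\m p M$ and $(\m p)^2 M^2$ appearing in \refeq{alpha(p)-def} correspond, respectively, to the leading tree-level estimate, the triangle-diagram bound on the two possible non-disjoint configurations of the two incident bonds, and the correction coming from the possibility that the ``second'' connection already goes through $\Gcal$. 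The main difficulty is ensuring that these error bounds combine to give $\alpha(p_c)\to 1$ in the appropriate limit, which is precisely what yields the sharp constant $2$ in Propositions \ref{prop-upperbd-cluster-tail-gen} and \ref{prop-lowerbd-cluster-tail}; this sharpness is the quantitative gain over \cite{BCHSS1}.
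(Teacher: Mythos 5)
Your overall strategy---the green-vertex representation \refeq{Mdef-rep}, Russo's formula plus the BK inequality for \refeq{ineq1}, a pivotal decomposition for \refeq{ineq2}, and inclusion--exclusion over pairs of bonds at the origin controlled by the triangle condition for \refeq{rdi}---is the right one, and your argument for \refeq{ineq1} is the standard Aizenman--Barsky proof (which the paper does not reproduce; it simply cites \cite{AB87}). There is, however, a genuine gap in your treatment of \refeq{ineq2}, and it concerns precisely the point of the lemma. The classical inequality reads $M\leq \gamma\frac{\partial M}{\partial\gamma}+M^2+pM\frac{\partial M}{\partial p}$; what \refeq{ineq2} buys is the factor $\frac12$ in front of $\cn pM^2$ and of $\cn pM\cdot p\frac{\partial M}{\partial p}$, without which the constant $2$ in Theorem \ref{clustertail} is unattainable. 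Your sketch never explains where this factor comes from. The mechanism is the following: starting from $M=\prob_{p,\gamma}(|\Ccal(0)\cap\Gcal|=1)+\prob_{p,\gamma}(|\Ccal(0)\cap\Gcal|\geq2)$ (note that $\gamma\frac{\partial M}{\partial\gamma}$ equals the probability that \emph{exactly one} vertex of $\Ccal(0)$ is green, not, as you write, the contribution of $0$ itself being green---your split $M=\gamma+\prob_{p,\gamma}(0\notin\Gcal,\,0\conn\Gcal)$ does not produce this term), one observes that on the event $\{A_0\circ A_0\}\cap\{0\notin\Gcal\}$ there must be \emph{at least two} distinct neighbours $e$ of the origin for which $A_e\circ A_0\circ\{(0,e)\hbox{ occupied}\}$ occurs; writing $N\geq2$ for the number of such neighbours and bounding the indicator by $N^{-1}$ times the sum over $e$ of the corresponding indicators, before applying BK, converts the naive bound $\cn pM^2$ into $\tfrac12\cn pM^2$. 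The same two-neighbour count is applied a second time to the double connection $A_y\circ A_y$ off $\Ccal(0)$ in the pivotal-edge case, which is how $\tfrac12\cn pM$ (rather than $\cn pM$) ends up multiplying $p\frac{\partial M}{\partial p}$. Your attribution of the $\tfrac12\cn pM^2$ term to ``the pivotal bond sharing both endpoints with $\Ccal(0)$'' does not correspond to any step of a correct derivation.

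For \refeq{rdi} your outline is closer to the actual argument, but as written the identifications are asserted rather than derived. You should make explicit the intermediate inequality $M\geq \cn p\,\frac{\partial M}{\partial\gamma}\,\prob_{p,\gamma}(0\dbc\Gcal)-X_2-X_3$ (with $X_2,X_3$ the error terms bounded by $p^2\cn M^2\frac{\partial M}{\partial\gamma}$ and $\nabla^{\rm max}_p p\cn M^2\frac{\partial M}{\partial\gamma}$), which reduces everything to a lower bound on the double-connection probability $\prob_{p,\gamma}(0\dbc\Gcal)$. The Bonferroni step is then carried out for the events $E_{e,f}$ indexed by unordered pairs of distinct bonds at the origin (whence the combinatorial factor $\tfrac12\cn(\cn-1)$), the term $(1-2p)^2$ in $\alpha(p)$ arises specifically from the bound $M^-(x)\geq(1-2p)M$ for the magnetization in the graph with $e$ and $f$ deleted (applied twice), one $\nabla^{\rm max}_p$ comes from the BK bound on $\prob^-(\te\conn y,\;\te\conn\Gcal)$, and the terms $\cn pM$, $(\cn p)^2M^2$ and the remaining multiples of $\nabla^{\rm max}_p$ come from the two Bonferroni error sums (over triples and quadruples of bonds at the origin). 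Filling in these diagrammatic estimates is where the triangle condition actually enters, and none of it is present in your proposal beyond the level of a plan.
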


The inequality \refeq{ineq1} is proved in \cite{AB87}, where it was used
to prove the sharpness of the percolation phase transition on $\Z^d$,
and was first stated in the context of finite graphs in \cite[(5.14)]{BCHSS1}.
The differential inequality in \refeq{ineq2} is an adaptation of
another differential inequality proved and used in \cite{AB87},
which is improved here in order to obtain sharp constants in our bounds.
The bound in \refeq{rdi} is an adaptation of \cite[(5.16)]{BCHSS1},
which was used there in order to prove an upper bound on $M(p,\gamma)$.
Again, the inequality is adapted in order to obtain the optimal constants. We will first use Lemma \ref{lem-diff-ineq} to obtain Propositions \ref{prop-upperbd-cluster-tail-gen} and \ref{prop-lowerbd-cluster-tail}.

\subsection{The magnetization for subcritical $p$}
\label{sec-mag-pc}
We take $p=p_c(1-\vep)$ with $\vep=o(1)$ and $\vep^3 V\gg 1$,
and we take $\gamma=o(1)$. Then, \cite[Lemma 5.3]{BCHSS1} shows that
$M(p,\gamma)=O(\sqrt{\gamma})$. The main aim of
this section is to improve upon this bound, using
the improved differential inequality in \refeq{rdi}.

We have that $M(p,\gamma)=O(\sqrt{\gamma})$ and $\chi(p)=O(1/\vep)$ by \cite[Theorem 1.5]{BCHSS1}. Further more, assumption (\ref{finitetriangle}) gives that $\nabla^{\rm max}_p = O(\alpham + (\eps^3 V)^{-1})$ and \cite[(1.30)]{BCHSS1} then implies that $mp \leq 1+O(\alpham)$. Putting all this into \refeq{alpha(p)-def} yields
    \eqn{
    \lbeq{alpha(p)-bd}
    \alpha(p)\geq 1+O\big(\sqrt{\gamma}+(\vep^3V)^{-1}+\alpham \big).
    }
Substituting \refeq{alpha(p)-bd} into
\refeq{rdi} in turn gives that
    \eqn{
    \lbeq{diff-ineq-mag-rep}
    M \geq \big(1+O\big(\sqrt{\gamma}+(\vep^3V)^{-1}+\alpham \big)\big)
    \big[\gamma +\frac{1}{2}M^2\big]\frac{\partial M}{\partial\gamma}.
    }
We now use this to prove the following lemma:

\begin{lemma}[Upper bound on the slightly subcritical magnetization]
\label{lem-ub-crit-mag}
Let $G$ be a finite transitive graph of degree $m$ on $V$ vertices such that the finite
triangle condition (\ref{finitetriangle}) holds.
Let $\gamma=o(1)$ and put $p=p_c(1-\vep)$ with $\vep=o(1)$ and $\vep^3V\gg 1$. Then,
    \eqn{
    M(p,\gamma)\leq \sqrt{2\gamma}\big(1+O\big(\sqrt{\gamma}+(\vep^3V)^{-1}+\alpham \big)\big).
    }
\end{lemma}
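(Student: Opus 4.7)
The idea is to compare the perturbed differential inequality \refeq{diff-ineq-mag-rep} to the unperturbed autonomous ODE obtained by ignoring the error. Setting $y:=M^2$ and multiplying both sides of \refeq{diff-ineq-mag-rep} by $M$ (using $MM'=y'/2$) rewrites the bound as
\begin{equation*}
(1-\bar\delta_\gamma)(2\gamma+y)\,y'\ \leq\ 4y,
\end{equation*}
where $\bar\delta_\gamma=O(\sqrt{\gamma}+(\vep^3V)^{-1}+\alpham)$. Up to inflating the constant, we may take $\bar\delta_\gamma=C_0(\sqrt{\gamma}+(\vep^3V)^{-1}+\alpham)$, which is nondecreasing in $\gamma$. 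The key observation is that the unperturbed ODE $(2\gamma+y)y'=4y$ admits the explicit particular solution $y=2\gamma$, which is exactly the bound we want to prove.

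Next I would introduce the normalized overshoot $\eta(\gamma):=y(\gamma)/(2\gamma)-1$, so that $M^2=2\gamma(1+\eta)$ and the desired conclusion becomes $\eta\leq C\bar\delta_\gamma$ for some constant $C$. Substituting $y=2\gamma(1+\eta)$ and $y'=2(1+\eta)+2\gamma\eta'$ into the displayed inequality and simplifying yields
\begin{equation*}
(1-\bar\delta_\gamma)(2+\eta)\,\gamma\eta'\ \leq\ -\eta(1+\eta)+\bar\delta_\gamma(2+3\eta+\eta^2).
\end{equation*}
For $|\eta|\leq 1$ and $C$ large enough, the right-hand side is strictly negative whenever $\eta\geq C\bar\delta_\gamma$, so at any such point $\eta'<0$. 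I would then run a standard barrier argument: let $T:=\inf\{\gamma>0:\eta(\gamma)>C\bar\delta_\gamma\}$. Since $M(p,\gamma)=\chi(p)\gamma+O(\gamma^2)$ with $\chi(p)=O(\vep^{-1})$, we have $\eta(\gamma)\to -1$ as $\gamma\downarrow 0$, so $T>0$. If $T$ were finite (in the range where $\eta$ and $\bar\delta_\gamma$ are small), continuity would force $\eta(T)=C\bar\delta_T$, whence $\eta'(T)<0$; since $\bar\delta_\gamma$ is nondecreasing we also have $(C\bar\delta_\gamma)'|_T\geq 0$, so $(\eta-C\bar\delta_\gamma)'|_T<0$, contradicting the definition of $T$. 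Therefore $\eta(\gamma)\leq C\bar\delta_\gamma$ throughout the range of interest, and taking square roots in $M^2\leq 2\gamma(1+C\bar\delta_\gamma)$ produces the claimed bound.

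The main technical obstacle is the behavior near the singular boundary $\gamma\to 0^+$: the coefficient $1/\gamma$ in the ODE blows up there, so the barrier argument must genuinely rely on the fact that $\eta$ enters the picture well below the barrier. This is supplied by the elementary expansion $M(p,\gamma)=\chi(p)\gamma+O(\gamma^2)$, but one must check that it remains valid uniformly in the relevant regime of $\gamma$ and that $C\bar\delta_\gamma$ is strictly positive even at $\gamma=0$ (it is, because of the $(\vep^3V)^{-1}+\alpham$ term). A secondary housekeeping point is ensuring that the monotone majorant $\bar\delta_\gamma$ does not worsen the stated order $O(\sqrt\gamma+(\vep^3V)^{-1}+\alpham)$, which is automatic since this order is already of the form constant plus $\sqrt\gamma$.
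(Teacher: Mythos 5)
Your proof is correct, and it takes a genuinely different route from the paper's. The paper iterates the crude consequence $M^2\leq 4\gamma/B$ of \refeq{diff-ineq-mag-rep} back into the differential inequality: each pass improves the constant, producing $M^2\leq 2\gamma/A_k$ with $A_k=\sum_{j=1}^k(B/2)^j$, and letting $k\to\infty$ sums a geometric series to give $M^2\leq 2(2-B)\gamma/B=2\gamma\big(1+O(\sqrt{\gamma}+(\vep^3V)^{-1}+\alpham)\big)$. You instead recognize $y=2\gamma$ as an exact solution of the unperturbed equation $(2\gamma+y)y'=4y$ and run a barrier argument on the normalized overshoot $\eta=M^2/(2\gamma)-1$. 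Both arguments are sound. Yours is more conceptual: it explains where the constant $2$ comes from (it is the exact particular solution of the limiting ODE) and delivers the error bound in one pass; the price is the extra bookkeeping at the singular endpoint $\gamma=0$, which you correctly discharge via $M(p,\gamma)\leq \gamma\chi(p)$ (so that $\eta\to-1$ as $\gamma\downarrow 0$) and the strict positivity of the barrier $C\bar\delta_\gamma$ at $\gamma=0$. The paper's iteration avoids any appeal to continuity, to the behaviour of $M$ near $\gamma=0^+$, or to monotonicity of the majorant (its base case $M^2\leq 4\gamma/B$ is obtained by integrating from $0$ using only $M(p,0)=0$), at the cost of being less transparent about why the limiting constant is $2$. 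Two small points worth recording in a write-up of your version: the step from $M\geq B(\gamma+\tfrac12 M^2)\,\partial M/\partial\gamma$ to $(1-\bar\delta_\gamma)(2\gamma+y)y'\leq 4y$ uses $y'\geq 0$ (monotonicity of $\gamma\mapsto M$) when replacing $B$ by its lower bound $1-\bar\delta_\gamma$; and the smoothness of $\eta$ needed for the barrier argument holds because $\gamma\mapsto M(p,\gamma)$ is a polynomial on a finite graph.
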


A similar bound as in Lemma \ref{lem-ub-crit-mag} was proved in \cite[Lemma 5.3]{BCHSS1},
whose proof we adapt here, with $\sqrt{2\gamma}$ replaced with $\sqrt{12 \gamma}$,
and a less precise error bound. The precise constant $\sqrt{2}$ is important for us here as it relates to the constant $2$ for the $2\vep(1+o(1))$ survival probability.

\proof We note that \refeq{diff-ineq-mag-rep} implies that
    \eqn{
    M \geq
    \frac{B}{2}M^2\frac{\partial M}{\partial\gamma},
    }
where we abbreviate $B=1+O\big(\sqrt{\gamma}+(\vep^3V)^{-1}+\alpham \big)$. Therefore,
    \eqn{
    \frac{\partial [M^2]}{\partial\gamma}\leq 4/B.
    }
Integrating between $0$ and $\gamma$, and using that $M(p,0)=0$ yields that
    \eqn{
    M^2 \leq 4\gamma/B,
    }
so that $M\leq \sqrt{\gamma}(2/\sqrt{B}).$
Now, when we have this inequality, we can further bound
    \eqn{
    \gamma \geq \frac{B}{4} M^2,
    }
so that by \refeq{diff-ineq-mag-rep} we get
    \eqn{
    M \geq B\big[1/2+B/4\big]M^2\frac{\partial M}{\partial\gamma}.
    }
Performing the same integration steps, we arrive at
    \eqn{
    M^2\leq \frac{2}{B/2+B^2/4}\gamma.
    }
Therefore, the constant has become a little better (recall that $B$ is close to $1$). Iterating these steps yields that, for every $k\geq 1$,
    \eqn{
    \lbeq{M2-ind}
    M^2\leq \frac{2}{\sum_{l=1}^k (B/2)^j}\gamma.
    }
We prove \refeq{M2-ind} by induction on $k$, the initialization for
$k=1,2$ having been proved above. To advance the induction hypothesis,
suppose that \refeq{M2-ind} holds for $k\geq 1$. Define
    \eqn{
    A_k=\sum_{l=1}^k (B/2)^j,
    }
so that \refeq{M2-ind} is equivalent to $M^2\leq 2\gamma/A_k$.
In turn, this yields that $\gamma\geq A_k M^2/2$,
so that
    \eqn{
    M \geq B\Big[A_k/2+1/2\Big]M^2\frac{\partial M}{\partial\gamma},
    }
which in turn yields that
    \eqn{
    M^2 \leq \frac{2}{B[1+A_k]/2} \gamma.
    }
Note that
    \eqn{
    B[1+A_k]/2=A_{k+1},
    }
which advances the induction. By \refeq{M2-ind}, we obtain that
    \eqn{
    M^2\leq \frac{2}{\sum_{l=1}^{\infty} (B/2)^j}\gamma
    = 2[2-B]\gamma/B.
    }
Finally, the fact that
    \eqn{
    [2-B]/B=1+O\big(\sqrt{\gamma}+(\vep^3V)^{-1}+\alpham \big)
    }
completes the proof.
\qed

\subsection{The magnetization for supercritical $p$}
\label{sec-mag-p}
In this section, we use \emph{extrapolation inequalities} to obtain a
bound on the supercritical magnetization from the subcritical one derived in
Lemma \ref{lem-ub-crit-mag}. Our precise result is the following:

\begin{lemma}[Upper bound on the slightly supercritical magnetization]
\label{lem-ub-supercrit-mag} Let $G$ be a finite transitive graph of degree $m$ on $V$ vertices such that the finite
triangle condition (\ref{finitetriangle}) holds and put $p=p_c(1+\eps)$
where $\eps = o(1)$ and $\eps \gg V^{-1/3}$.
Then for any $c\in (0,1/3)$,
    \eqn{
    \lbeq{ub-super-crit-mag}
    M(p,\gamma)\leq \Big(\vep + \sqrt{2\gamma+\vep^2}\Big)
    \big(1+O\big(\vep+\sqrt{\gamma}+(\vep^3V)^{-c}+\alpham \big)\big).
    }
\end{lemma}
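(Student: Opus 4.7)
The plan is to extrapolate the subcritical bound of Lemma \ref{lem-ub-crit-mag} into the slightly supercritical regime by running along the characteristic curves of the Aizenman--Barsky differential inequality \refeq{ineq1}.

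First I would recast \refeq{ineq1} in characteristic form. Define a curve $(p(t),\gamma(t))$ by the ODE system $\dot p=1-p$, $\dot\gamma=-\cn (1-\gamma)M(p,\gamma)$. Along such a curve,
\[
\frac{d}{dt}M(p(t),\gamma(t))=(1-p)\partial_p M-\cn (1-\gamma) M\,\partial_\gamma M\le 0
\]
by \refeq{ineq1}, so $M$ is non-increasing along these characteristics (this requires the elementary fact that $p\mapsto M(p,\gamma)$ and $\gamma\mapsto M(p,\gamma)$ are smooth in the relevant range, which follows from \refeq{Mdef} since $G$ is finite). Equivalently, re-parametrizing by $p$, the function $\gamma(p)$ satisfying $\frac{d\gamma}{dp}=-\cn(1-\gamma)M(p,\gamma)/(1-p)$ has the property that $p\mapsto M(p,\gamma(p))$ is non-increasing.

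Next I would pick the starting point. Fix the target $(p,\gamma)$ with $p=p_c(1+\eps)$ and set $\eps_0=\eps\,(\eps^3V)^{-\alpha}$ for a parameter $\alpha\in(0,1/3)$ to be chosen at the end (so $\eps_0\ll\eps$ but still $\eps_0^3V\to\infty$). Let $p_0=p_c(1-\eps_0)$ and run the characteristic backwards from $(p,\gamma)$ until it hits $p=p_0$, arriving at some $(p_0,\gamma_0)$ with $\gamma_0\ge\gamma$; the monotonicity above gives $M(p,\gamma)\le M(p_0,\gamma_0)=:h$. Since $M$ is non-increasing along the characteristic, $M(p',\gamma(p'))\le h$ for all $p'\in[p_0,p]$, so integrating the ODE for $\gamma$ and using $\gamma=o(1)$, $1-p_c=1-O(1/\cn)$, and the bound $\cn p_c=1+O(\alpham)$ that follows from the finite triangle condition (this is \cite[(1.30)]{BCHSS1}), one obtains
\[
\gamma_0-\gamma\;\le\;\int_{p_0}^{p}\frac{\cn(1-\gamma(p'))M(p',\gamma(p'))}{1-p'}\,dp'\;\le\;\cn p_c(\eps+\eps_0)\,h\,(1+o(1))\;=\;h(\eps+\eps_0)(1+o(1)).
\]

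The third step is to apply Lemma \ref{lem-ub-crit-mag} at the subcritical pair $(p_0,\gamma_0)$: provided $\gamma_0=o(1)$ and $\eps_0^3V\to\infty$ (both of which will be confirmed a posteriori),
\[
h\;\le\;\sqrt{2\gamma_0}\bigl(1+O(\sqrt{\gamma_0}+(\eps_0^3V)^{-1}+\alpham)\bigr).
\]
Squaring and substituting the bound on $\gamma_0$ gives the quadratic inequality
\[
h^2\;\le\;2\bigl(\gamma+h(\eps+\eps_0)\bigr)\,(1+E),\qquad E=O\bigl(\sqrt{\gamma_0}+(\eps_0^3V)^{-1}+\alpham\bigr),
\]
whose larger root yields
\[
h\;\le\;\Bigl((\eps+\eps_0)+\sqrt{(\eps+\eps_0)^2+2\gamma}\Bigr)(1+O(E)).
\]

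Finally I would optimize $\eps_0$: choosing $\alpha$ so that $1-3\alpha=c$ (i.e.\ $\alpha=(1-c)/3$, which lies in $(0,1/3)$ for any $c\in(0,1/3)$) gives $\eps+\eps_0=\eps(1+(\eps^3V)^{-\alpha})$ and $(\eps_0^3V)^{-1}=(\eps^3V)^{-c}$. Absorbing the multiplicative $(1+(\eps^3V)^{-\alpha})$ into the error prefactor and verifying that $\sqrt{\gamma_0}=O(\sqrt{\gamma}+\sqrt{h\eps})=O(\sqrt{\gamma}+\eps)$ (using the bound $h=O(\eps+\sqrt{\gamma})$ we have just derived, plugged back in) yields the advertised inequality \refeq{ub-super-crit-mag}, since $M(p,\gamma)\le h$.

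The main obstacle will be the careful bookkeeping of error terms through the quadratic solution, and in particular ensuring that the crude upper bound $M\le h$ used inside the characteristic integral is enough to recover the \emph{sharp} constant $\sqrt 2$ (so that the leading term is $\eps+\sqrt{2\gamma+\eps^2}$ rather than a spurious multiple). The other technical point is justifying that the hypotheses of Lemma \ref{lem-ub-crit-mag} actually apply at $(p_0,\gamma_0)$, i.e.\ $\gamma_0=o(1)$; this is seen by noting that the statement we want to prove is trivial unless $\gamma=o(\eps^2)$ or is itself $o(1)$, in which case $\gamma_0\le\gamma+O(\eps\cdot(\eps+\sqrt{\gamma}))=o(1)$.
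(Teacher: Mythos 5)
Your proposal is correct and follows essentially the same route as the paper: both transport the subcritical bound of Lemma \ref{lem-ub-crit-mag} across $p_c$ by integrating the Aizenman--Barsky inequality \refeq{ineq1} from a slightly subcritical point $p_c(1-\eta)$ with $\eta=\vep(\vep^3V)^{-\Theta(1)}$, and then solve the resulting quadratic inequality in the magnetization to extract the root $\vep+\sqrt{2\gamma+\vep^2}$. The only (cosmetic) difference is that you integrate \refeq{ineq1} along its exact characteristic curves, whereas the paper invokes the straight-line extrapolation inequality $m_2\geq m_1$ of \cite[(5.46)]{BCHSS1} in the variables $(p,h)$ with $h=-\log(1-\gamma)$; both implementations produce the same quadratic and the same error bookkeeping.
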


\proof We follow the proof in \cite[Section 5.3]{BCHSS1}, paying special attention to
the constants and error terms. Indeed, we use  \refeq{ineq1} and the chain rule to deduce that, with $A=(1-2p_c)^{-1}$,
and $\tilde M(p,h)=M(p,1-\e^{-h})$,
    \eqn{
    \frac{\partial \tilde M}{\partial p}
    \leq \cn A \tilde M \frac{\partial \tilde M}{\partial h}.
    }
Take $P_1=(p_c(1+\vep), h)$ and write $m_1=\tilde M(P_1)$. Further, take
$\eta=\vep (\vep^3V)^{-c}$ for some $c\in (0,1/3)$, so that $\eta=o(\vep)$
and $\eta^3V\rightarrow \infty$, and take $P_2=(p_c(1-\eta), Am_1 \vep')$,
where
    \eqn{
    \vep'=\vep+\eta+\frac{h}{Am_1}.
    }
Then, with $m_2=\tilde M(P_2)$, we have that $m_2\geq m_1$ (see e.g.,
\cite[(5.46)]{BCHSS1}). Therefore, by Lemma \ref{lem-ub-crit-mag}
and again writing $B=1+O\big(\sqrt{\gamma}+(\vep^3V)^{-1}+\alpham \big)$
with $\gamma=1-\e^{-h}$,
    \eqan{
    M(p,1-\e^{-h})&=m_1\leq m_2\leq \sqrt{2B(1-\e^{-Am_1\vep'})}\\
    &=(1+O(m_1\vep'))\sqrt{2ABm_1\vep'}\nn\\
    &=(1+O(m_1\vep))\sqrt{2ABm_1(\vep+\eta)+2Bh}\nn\\
    &=(1+O(\vep+(\vep^3V)^{-c}))\sqrt{2ABm_1\vep+2Bh},\nn
    }
where in the last inequality we use that $\eta=\vep (\vep^3V)^{-c}\ll \vep$ and $m_1\leq 1$.
The inequality
    \[m_1\leq \sqrt{2ABm_1\vep+2Bh}
    \]
has roots
    \eqn{
    m^{\pm} =AB\vep \pm \sqrt{2Bh +(AB\vep)^2}.
    }
Since $m_1\geq 0$ and $m_+\geq 0$ while $m_-\leq 0$, we deduce that
    \eqn{
    M(p_c+\vep/\cn, 1-\e^{-h})=m_1\leq (1+O(\vep+(\vep^3V)^{-c}))(AB\vep + \sqrt{2Bh +(AB\vep)^2}).
    }
We have that $\gamma=1-\e^{-h}=h(1+O(h))$ and $A=1+O(\alpham)$ (by \cite[(1.30)]{BCHSS1}) and
$B=1+O\big(\sqrt{\gamma}+(\vep^3V)^{-1}+\alpham \big)$. Putting all this together in the last inequality completes the proof.
\qed
\medskip

\noindent
{\bf Proof of Proposition \ref{prop-upperbd-cluster-tail-gen}.} We note that, for any $l\geq k\geq 1$ and
$a>0$,
    \eqn{
    1-\big(1-a/k\big)^l\geq 1-\e^{-a}.
    }
Therefore, by \refeq{Mdef},
    \eqn{
    \prob_p(|\Ccal(0)|\geq k)\leq [1-\e^{-a}]^{-1}M(p,a/k).
    }
Recall that $k\gg \vep^{-2}$ and take $a=(\vep^2 k)^{1/2}$
so that $a/k=\vep^2(\vep^2k)^{-1/2}=o(\vep^2)$.
We note that for $\gamma=\vep^2 (\vep^2k)^{-1/2}$,
\refeq{ub-super-crit-mag} reduces to
    \eqn{
    \lbeq{ub-super-crit-mag-rep}
    M(p,\gamma)\leq 2\vep\big(1+O\big(\vep+(\vep^3V)^{-1}+(\vep^2k)^{-1/4}+\alpham \big)\big).
    }
Then, by \refeq{ub-super-crit-mag-rep} and the fact that $1-\e^{-a}=1+o((\vep^2 k)^{-1/4})$,
    \eqn{
    M(p,a/k)\leq 2\vep\big(1+O\big(\vep+(\vep^3V)^{-1}+(\vep^2k)^{-1/4}+ \alpham \big)\big).
    }
This completes the proof of Proposition
\ref{prop-upperbd-cluster-tail-gen}.
\qed

\subsection{Lower bound on tail probabilities}
\label{sec-pf-prop-lowerbd-cluster-tail}
In the remainder of this section, we shall prove
Proposition \ref{prop-lowerbd-cluster-tail}.
Throughout this proof, we will take $p=p_c(1+\vep)$.

We shall assume that with $k_0=\eps^{-2} (\eps^3 V)^{\alpha} \gg \eps^{-2}$ and $\alpha\in (0,1/3)$,
there exists $b_{10}=b_{10}(\alpha)$ such that
    \eqn{
    \lbeq{theta-prel-bd}
    \prob_p(|\Ccal(v)|\geq \eps^{-2} (\eps^3 V)^{\alpha})\geq b_{10} \vep.
    }
The bound in \refeq{theta-prel-bd} is proved for finite graphs in
\cite[Theorem 1.6(i)]{BCHSS1} and in
\cite{BA91}, in conjunction with \cite{HS90},
on infinite lattices satisfying the triangle condition.
The proof of \refeq{theta-prel-bd} is similar to the
argument we shall give for the improved bound,
and shall be omitted here.
In turn, \refeq{theta-prel-bd} implies that, for $\gamma=1/k_0=\eps^{2} (\eps^3 V)^{-\alpha}=o(\vep^{2})$,
there exists a constant $\tilde{b}_{10}$ such that
    \eqn{
    \lbeq{M-lb-input}
    M(p,\gamma)\geq [1-[1-\gamma]^{k_0}]\prob_p(|\Ccal(v)|\geq k_0)\geq \tilde b_{10} \vep.
    }
Equation \refeq{M-lb-input} will be an essential ingredient in our proof.
We start by proving the following lemma:

\begin{lemma}[Lower bound on the magnetization]
\label{lem-lowerbd-mag} Let $G$ be a finite transitive graph of degree $m$ on $V$ vertices such that the finite
triangle condition (\ref{finitetriangle}) holds and put $p=p_c(1+\eps)$
where $\eps = o(1)$ and $\eps \gg V^{-1/3}$. Then, for $\gamma=\eps^{2} (\eps^3 V)^{-\alpha}$
with $\alpha\in (0,1/3)$ and any $c<1$,
    \eqn{
    M(p,\gamma)\geq 2\vep \big[1+O\big(\vep+(\vep^3V)^{-c}+\alpham \big)\big].
    }
\end{lemma}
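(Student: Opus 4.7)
The plan is to integrate a lower bound on $p'\partial M/\partial p'$ over $p' \in [p_c, p_c(1+\vep)]$, extracting the constant $2$ from the expansion $\log(p/p_c) = \vep - \tfrac{1}{2}\vep^2 + O(\vep^3)$. The inputs are Lemma~\ref{lem-ub-supercrit-mag}, which at each intermediate $p' = p_c(1+\vep')$ provides $M(p',\gamma) \leq (\vep' + \sqrt{2\gamma + (\vep')^2})(1+o(1))$ and hence $\tfrac{1}{2}mp'M(p',\gamma) \leq \vep' + O(\sqrt{\gamma})$, together with the differential inequalities \refeq{ineq2} and \refeq{rdi}.

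The main technical step is to derive, for each $\vep' \in [\sqrt{\gamma}, \vep]$, the lower bound
$$
p' \tfrac{\partial M}{\partial p'} \geq \frac{2\bigl(1 - \tfrac{1}{2}mp'M\bigr)}{mp'}(1-o(1)) \geq \frac{2\bigl(1 - \vep' - O(\sqrt{\gamma})\bigr)}{1+\vep' + O(\alpham)}(1-o(1)).
$$
For this, I would rearrange \refeq{rdi} as $M_\gamma \leq M/\bigl[mp'\bigl(\gamma + (1-\gamma)\tfrac{m(m-1)(p')^2\alpha(p')}{2}M^2\bigr)\bigr]$ and exploit the crucial cancellation: the finite triangle condition and the upper bound on $M$ applied in \refeq{alpha(p)-def} give $\alpha(p') \geq 1 - 2\vep' - O(\vep^2 + \alpham + (\vep^3V)^{-1})$, while $(mp')^2(1-1/m) = 1 + 2\vep' + O(\vep^2 + \alpham)$; multiplying the two yields $(1-\gamma)\tfrac{m(m-1)(p')^2\alpha(p')}{2} = \tfrac{1}{2}\bigl(1 + O(\vep^2 + \alpham + (\vep^3V)^{-1})\bigr)$. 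Because $M \geq c\sqrt{\gamma}$ throughout $\vep' \geq \sqrt{\gamma}$ (by \refeq{M-lb-input} and the fact that the mean-field scale $\sqrt{2\gamma}$ dominates in the near-critical range), one has $\gamma \ll M^2$ and hence $\gamma M_\gamma \leq 2\gamma/((mp')^2 M) = o(M)$. Substituting this into \refeq{ineq2} and rearranging to solve for $p'M_{p'}$ yields the displayed lower bound.

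Integrating over $p' \in [p_c(1+\sqrt{\gamma}), p_c(1+\vep)]$, using $M_{p'} \geq 0$ to discard the initial segment, and substituting $u = (p'-p_c)/p_c$ gives
$$
M(p,\gamma) - M(p_c, \gamma) \geq 2\int_{\sqrt{\gamma}}^\vep \frac{1-u + O(\mathrm{err})}{1+u}\,du = 2\vep + O\bigl(\vep^2 + \sqrt{\gamma} + \vep\alpham + \vep(\vep^3V)^{-c}\bigr),
$$
and combining with $M(p_c, \gamma) \geq 0$ and $\sqrt{\gamma} = \vep(\vep^3V)^{-\alpha/2}$ proves the lemma for $c$ sufficiently small depending on $\alpha$. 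The main obstacle is the cancellation between $(1+2\vep')$ from $(mp')^2$ and $(1-2\vep')$ from $\alpha(p')$: any loss of precision here propagates directly into the final constant, weakening the $2$ to a strictly smaller value. A secondary technical point is verifying that the initial range $\vep'\in[0,\sqrt\gamma]$ contributes negligibly, which follows from $M_{p'} \geq 0$ and the fact that the integration length $\sqrt{\gamma}$ is of order $o(\vep)$.
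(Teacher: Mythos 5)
Your route is genuinely different from the paper's: you try to integrate \refeq{ineq2} in $p$ alone, killing the $\gamma\,\partial M/\partial\gamma$ term by a \emph{pointwise} bound obtained from \refeq{rdi}, whereas the paper integrates \refeq{ineq2} over a two-dimensional rectangle $[p_0,p_1]\times[\gamma_0,\gamma_1]$ with $\log(\gamma_1/\gamma_0)$ chosen so large that the $\gamma$-derivative only ever enters through $\log\big(M(p_1,\gamma_1)/M(p_0,\gamma_0)\big)\big/\log(\gamma_1/\gamma_0)$, anchored at the a priori bound $M(p_0,\gamma_0)\geq \tilde b_{10}\delta\vep$ at a barely supercritical $p_0$. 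Unfortunately your shortcut has a genuine gap precisely at the step you flag as routine: the claim $\alpha(p')\geq 1-2\vep'-O(\vep^2+\alpham+(\vep^3V)^{-1})$. The bound $\nabla^{\rm max}_{p'}=O(\alpham+(\vep^3V)^{-1})$ that this requires is only available for $p'\leq p_c$ — the finite triangle condition \refeq{finitetriangle} is assumed only in that range — and it is in fact \emph{false} for supercritical $p'$. Indeed, by transitivity $\frac{1}{V^2}\sum_{x,y}\nabla_{p'}(x,y)=\chi(p')^3/V$, and at $p'=p_c(1+\vep')$ with $(\vep')^3V\to\infty$ one has $\chi(p')\asymp (\vep')^2V$, so $\nabla^{\rm max}_{p'}\geq (1-o(1))\chi(p')^3/V\asymp ((\vep')^3V)^2\to\infty$; this holds throughout your integration range, since even at the lower endpoint $\vep'=\sqrt{\gamma}$ one gets $(\gamma^{3/2}V)^2=(\vep^3V)^{2-3\alpha}\to\infty$. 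Consequently $\alpha(p')$ is hugely negative, the bracket in \refeq{rdi} gives no upper bound on $\partial M/\partial\gamma$ via the $M^2$ term, and the assertion $\gamma\,\partial M/\partial\gamma=o(M)$ is left unproved. (The statement itself is morally true — the finite-cluster susceptibility is what matters — but nothing in the available hypotheses delivers it pointwise; the only unconditional bound, $\gamma\,\partial M/\partial\gamma\leq M$, loses the entire main term.)

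This is exactly why the paper's proof of Lemma \ref{lem-lowerbd-mag} never touches \refeq{rdi}: that inequality is confined to the subcritical side (Lemma \ref{lem-ub-crit-mag}), and the supercritical lower bound is extracted from \refeq{ineq2} alone via the extrapolation over $\gamma$. If you want to salvage a one-dimensional integration in $p$, you would need an independent proof that the finite-cluster contribution to $\partial M/\partial\gamma$ at supercritical $p'$ is $O(1/\vep')$, which is essentially a dual statement of comparable difficulty to the lemma itself. Two smaller points: the denominator after the substitution $u=(p'-p_c)/p_c$ should be $(1+u)^2$ rather than $1+u$ (harmless at the stated precision), and your starting point $\vep'=\sqrt{\gamma}$ would only yield the error exponent $c=\alpha/2$ rather than the "any $c<1$" of the lemma — sufficient for Proposition \ref{prop-lowerbd-cluster-tail}, but weaker than what is claimed.
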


\proof Throughout the proof, we fix $\alpha\in (0,1/3)$.
We recall the differential inequality \refeq{ineq2}
        \eqn{
        \lbeq{ineq2-rep-2}
        M \leq \gamma \frac{\partial M}{\partial \gamma}
        +\big[\frac{1}{2}\cn p M^2+\gamma M\big]+ \big[\frac{1}{2}\cn p M +\gamma\big]p\frac{\partial M}{\partial p}.
        }
By \refeq{M-lb-input}, and the fact that $\gamma\mapsto M(p,\gamma)$
is increasing, for any $\gamma= \vep^2 (\vep^3 V)^{-\alpha}$ we have that $\gamma=O(M \vep)$.
Further, $\cn p\leq 1+O(\vep+\alpham)$, so that, for some
$A>1$ with $A=1+O(\vep+\alpham)$ we obtain
    \eqn{
    \lbeq{ineq2-simplified}
    M \leq \gamma \frac{\partial M}{\partial \gamma}
    +\frac{A}{2}M^2+ \frac{A}{2}Mp\frac{\partial M}{\partial p}.
    }
We rewrite \refeq{ineq2-simplified} as
    \eqn{
    0\leq \frac{1}{M}\frac{\partial M}{\partial \gamma}
        +\frac{1}{\gamma}\frac{\partial}{\partial p}[\frac{A}{2}pM-p],
    }
and integrate for $\gamma\in [\gamma_0,\gamma_1]$ and
$p\in [p_0, p_1]$, where $\gamma_0=(\delta\eps)^{2} (\delta^3\eps^3 V)^{-\alpha}$.
We note that \refeq{M-lb-input} holds for $p_0=p_c(1+\vep\delta)$ for any
$\delta=o(1)$ and $\gamma=\gamma_0$. We further take
    \eqn{
    p_0=p_c(1+\delta \vep),
    \qquad
    p_1=p_c(1+\vep),
    \qquad
    \gamma_1=\e^{(\log{(1/\delta)})^a} \gamma_0,
    }
where $a>1$ is chosen below.

Then, as in \cite[(5.57) and the argument below it]{Grim99}, by the fact that
$p\mapsto M(p,\gamma)$ and $\gamma\mapsto M(p,\gamma)$ are non-decreasing,
    \eqn{
    \lbeq{M-integrated-pgamma}
    0\leq (p_1-p_0)\log\frac{M(p_1,\gamma_1)}{M(p_0,\gamma_0)}
    +\log(\gamma_1/\gamma_0)[\frac{A}{2}p_1M(p_1,\gamma_1)-(p_1-p_0)].
    }
Now,
    \eqn{
    \log(\gamma_1/\gamma_0)=(\log{(1/\delta)})^a,
    }
while, by Lemma \ref{lem-ub-supercrit-mag} and \refeq{M-lb-input},
    \eqn{
    M(p_1,\gamma_1)\leq 2\vep\big(1+O\big(\vep+(\vep^3V)^{-1}\big)\big),
    \qquad
    M(p_0,\gamma_0)\geq \tilde{b}_{10} \delta \vep,
    }
so that, for $\delta>0$ sufficiently small,
    \eqn{
    \log \frac{M(p_1,\gamma_1)}{M(p_0,\gamma_0)}\leq \log(2\vep/(\tilde{b}_{10}\delta\vep))\leq 2\log(1/\delta).
    }
Dividing \refeq{M-integrated-pgamma} through by $(\log{(1/\delta)})^a$, we arrive at
    \eqn{
    \frac{A}{2}p_1M(p_1,\gamma_1)
    \geq  p_c(1-\delta)\vep\Big[1-2(\log{(1/\delta)})^{1-a}\Big].
    }
Recalling that $p_1=p_c(1+\vep)$ and that $a>1$, as well as the fact that
$A=1+O(\vep+\alpham)$, this yields
   \eqn{
   M(p,\gamma_1)
   \geq  2\vep[1+O(\vep+(\log{(1/\delta)})^{1-a}+\alpham)].
    }
Finally, note that
    \eqan{
    \gamma_1&=\e^{(\log{(1/\delta)})^a} \gamma_0
    =\e^{(\log{(1/\delta)})^a}(\delta\eps)^{2} (\delta^3\eps^3 V)^{-\alpha}\\
    &=\eps^{2} (\eps^3 V)^{-\alpha}  \big(\e^{(\log{(1/\delta)})^a}\delta^{2-3\alpha}\big)\geq \eps^{2} (\eps^3 V)^{-\alpha},\nn
    }
when we take $\delta=\e^{-(\vep^3V)^{1/a}}$ for any $a>1$. Indeed, then $\e^{(\log{(1/\delta)})^a}\delta^{2-3\alpha}
\rightarrow \infty$ as $\delta \to 0$.
Since $\gamma\mapsto M(p,\gamma)$ is
increasing, this implies that
    \eqn{
    M(p,\gamma)\geq M(p,\gamma_1)
   \geq  2\vep[1+O(\vep+(\log{(1/\delta)})^{1-a}+\alpham)]
   =2\vep\big[1+O\big(\vep+(\vep^3V)^{1/a-1}+\alpham \big)\big].
   }
Denoting $c=1-1/a$, this proves the claim.
\qed
\medskip

\noindent
{\bf Proof of Proposition \ref{prop-lowerbd-cluster-tail}.}
We use \cite[(6.5) in Lemma 6.1]{BCHSS1},
which states that, for any $0\leq \gamma_0,\gamma_1\leq 1$,
    \eqn{
    \lbeq{PgeqM}
    \prob_p(|\cluster(0)|\geq k)
    \geq
    M(p,\gamma_1)
    -  \frac {\gamma_1}{\gamma_0}\,\e^{\gamma_0 k}
    M(p,\gamma_0).
    }
Now we take $\gamma_1=\eps^{2} (\eps^3 V)^{-\alpha'}$ with $\alpha'\in (0,1/3)$
taken as in Lemma \ref{lem-lowerbd-mag},
$\gamma_0=\eps^{2} (\eps^3 V)^{-\alpha}$ with $\alpha<\alpha'$, and $k=1/\gamma_0$.
Then $\e^{\gamma_0 k}=\e$,
while, by Lemma \ref{lem-ub-supercrit-mag} and the fact that
$\gamma_0=o(\vep^2)$, we obtain that
    \eqn{
    M(p,\gamma_0)\leq 2\vep(1+o(1)).
    }
Therefore, by Lemma \ref{lem-lowerbd-mag} and \refeq{PgeqM},
taking $c=1/2$ in Lemma \ref{lem-lowerbd-mag},
    \eqn{
    \lbeq{PgeqM-rep}
    \prob_p(|\cluster(0)|\geq k)
    \geq
    2\vep(1+O(\vep+(\vep^3V)^{-1/2}+\alpham))
    -(\eps^3 V)^{\alpha'-\alpha}O(\vep).
    }
We obtain that
    \eqn{
    \lbeq{PgeqM-rep2}
    \prob_p(|\cluster(0)|\geq \eps^{-2} (\eps^3 V)^{\alpha})
    \geq
    2\vep(1+O(\vep+(\vep^3V)^{-1/2}+(\eps^3 V)^{\alpha-\alpha'}+\alpham)).
    }
This proves the claim in Proposition \ref{prop-lowerbd-cluster-tail}
with $c=\alpha-\alpha'\in(0,1/3)$.
\qed




\subsection{Derivation of \refeq{rdi}}
\label{sec-ineq2}
We follow the proof in \cite[Appendix A.2]{BCHSS1} as closely as possible,
deviating in one essential inequality. Indeed, in \cite[(A.23-A.32)]{BCHSS1},
it is proved that
    \eqn{
    M(p,\gamma) \geq
        p\cn \frac{\partial M}{\partial\gamma}(p,\gamma) \prob_{p,\gamma}(0\dbc \Gcal)
        -X_2-X_3.
    }
We copy the bounds on $X_2$ and $X_3$ in \cite[(A.46)]{BCHSS1}
and \cite[(A.53)]{BCHSS1} respectively, which prove that
    \eqn{
    X_2\leq p^2 \cn M^2(p,\gamma) \frac{\partial M}{\partial\gamma}(p,\gamma),
    \qquad
    X_3\leq \nabla^{\rm max}_p p\cn M^2(p,\gamma) \frac{\partial M}{\partial\gamma}(p,\gamma),
    }
and we improve upon the lower bound on $\prob_{p,\gamma}(0\dbc \Gcal)$ only.
Our precise results is contained in the following lemma:

\begin{lemma}[Improved lower bound on the double connection]
\label{lem-lb-dbc}
For all $p, \gamma\in [0,1]$,
    \eqn{
    \lbeq{double-conn-G}
    \prob_{p,\gamma}(0\dbc \Gcal)
    \geq \gamma + (1-\gamma)\frac{1}{2} \cn(\cn-1) p^2 \alpha(p) M^2(p,\gamma),
    }
where
    \eqn{
    \alpha(p)=(1-2p)^2-(1+\cn p+2(\cn p)^2)\nabla^{\rm max}_p-
    \cn p M(p,\gamma)-(\cn p)^2 M(p,\gamma)^2.\nn
    }
\end{lemma}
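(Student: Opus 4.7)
My plan is to decompose the event $\{0 \dbc \Gcal\}$ according to whether the origin itself is green. If $0 \in \Gcal$, which occurs with probability $\gamma$ independently of all edge statuses, the origin is (trivially) doubly connected to $\Gcal$, producing the $\gamma$ term in \refeq{double-conn-G}. The remaining task is to show that on the complementary event $\{0\notin\Gcal\}$ (probability $1-\gamma$), two edge-disjoint open paths from $0$ to $\Gcal$ exist with probability at least $\tfrac12 \cn(\cn-1) p^2 \alpha(p) M^2$.

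For this, I would sum over unordered pairs $\{x,y\}$ of distinct neighbors of $0$ and consider the event $E_{x,y}$ that (i) both edges $\{0,x\}$ and $\{0,y\}$ are open, and (ii) there exist edge-disjoint open paths from $x$ to $\Gcal$ and from $y$ to $\Gcal$ neither of which uses the edges $\{0,x\},\{0,y\}$ (so that these two paths, together with the two edges, witness a double connection from $0$). A Bonferroni-type bound gives
\[
\prob\Big(\bigcup_{\{x,y\}} E_{x,y}\Big)\ \geq\ \sum_{\{x,y\}} \prob(E_{x,y})\ -\ \sum_{\{x,y\}\neq\{x',y'\}}\prob(E_{x,y}\cap E_{x',y'}),
\]
where the overcounting sum should be controlled by the triangle-diagram terms in $\alpha(p)$, since distinct pairs forcing a double connection at $0$ require an extra coincidence of open connections feeding the common endpoints, a classical $\nabla^{\rm max}_p$ bound.

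For a single pair, using independence of the two edges at $0$ from all other edges,
\[
\prob(E_{x,y}) \ =\ p^2\cdot \prob\big(\{x\lr\Gcal\}\circ\{y\lr\Gcal\}\ \text{off}\ 0\big),
\]
and I would lower bound the disjoint-occurrence probability by $M^2$ minus correction terms. The BK--Reimer inequality gives only the corresponding upper bound, so I would instead use Harris/FKG to get $\prob(\{x\lr\Gcal\}\cap\{y\lr\Gcal\})\geq M^2$ and then subtract the probability that every such pair of connections is forced to share an edge or a vertex; this sharing probability is bounded by a sum over a common pivotal vertex $z$ of $\prob(x\lr z)\prob(z\lr y)\prob(z\lr\Gcal)$, yielding the $\nabla^{\rm max}_p M$ and $\cn p M$ contributions. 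The sum $\tfrac12\cn(\cn-1)p^2 M^2$ over pairs is then what produces the leading term.

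The main obstacle will be obtaining the sharp coefficient $(1-2p)^2$ inside $\alpha(p)$, rather than the cruder constant that a naive implementation of the above would give. I expect this sharper factor to emerge by performing the decomposition in a signed/algebraic way: rewriting $\mathbf{1}\{e\ \text{open}\}=p+(\mathbf{1}\{e\ \text{open}\}-p)$ for each of the two edges at $0$ and extracting a cross term whose expectation carries the $(1-2p)$ prefactor per edge, giving $(1-2p)^2$ after squaring. The residual terms of this expansion would need to be absorbed into the $\nabla^{\rm max}_p$, $\cn p M$, and $(\cn p)^2 M^2$ contributions via the same triangle manipulations used above; keeping track that no error is larger than these controlled quantities, so that $\alpha(p)$ remains close to $1$ under \eqref{finitetriangle}, is the delicate point of the argument.
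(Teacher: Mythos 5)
Your overall architecture matches the paper's: split on whether $0\in\Gcal$, sum over unordered pairs of edges incident to the origin, apply Bonferroni, and control the overcounting with triangle diagrams. The genuine gap is in the mechanism you propose for the factor $(1-2p)^2$, which is the whole point of the ``improved'' bound. In the paper this factor has nothing to do with a signed decomposition of the edge indicators. It comes from comparing the magnetization in the reduced graph $G^-$ (the graph with the two chosen edges $e,f$ at the origin deleted) to the full magnetization: writing $M^-(x)=\prob_{p,\gamma}^-(x\conn\Gcal)$, one has
$$
M^-(x)=M(p,\gamma)-\prob_{p,\gamma}(e\mbox{ or }f\mbox{ is occupied and pivotal for }x\conn\Gcal)\geq M(p,\gamma)(1-2p)
$$
by the BK inequality, and one factor of $(1-2p)$ is paid for each of the two endpoints $\te,\tf$, since each of their connections to $\Gcal$ must be realized off the deleted edges. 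Your proposed route --- expanding ${\bf 1}_{\{e\ {\rm open}\}}=p+({\bf 1}_{\{e\ {\rm open}\}}-p)$ and ``squaring'' --- does not produce $1-2p$: the centered term has mean zero and variance $p(1-p)$, and no cross term with expectation proportional to $1-2p$ appears. Relatedly, your FKG step $\prob(\{x\lr\Gcal\}\cap\{y\lr\Gcal\})\geq M^2$ ignores precisely this cost: the two connections in your $E_{x,y}$ are required to avoid the edges $\{0,x\},\{0,y\}$, so the correct product lower bound is $M^-(\te)M^-(\tf)\geq(1-2p)^2M^2$, not $M^2$.

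Two further points. First, rather than FKG-plus-subtraction, the paper makes the disjointness requirement precise by demanding $\Ccal(\te)\cap\Ccal(\tf)=\emptyset$ in $G^-$ and then conditioning on $\Ccal(\te)=A$; the correction $\prob_{p,\gamma}^-(\tf\conn\Gcal\onlyon A)\leq\sum_{y\in A}\tau_p(\tf,y)M^-(y)$ then yields the $\nabla^{\rm max}_p$ term after a tree--graph bound on $\prob_{p,\gamma}^-(\te\conn y,\ \te\conn\Gcal)$. Your ``common pivotal vertex $z$'' heuristic has the right shape, but you still need this conditioning (or an equivalent device) to turn it into a valid \emph{lower} bound, since BK only gives inequalities in one direction. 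Second, the Bonferroni remainder must be split according to whether the two pairs of edges involve three or four distinct edges; this case analysis is what produces the $\cn p M$ and $(\cn p)^2M^2$ terms (and the remaining $\nabla^{\rm max}_p$ contributions) in $\alpha(p)$ with their specific coefficients, and it cannot be dismissed as ``a classical $\nabla^{\rm max}_p$ bound.''
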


\proof Note that if $0\in \Gcal$, then $0\dbc \Gcal$ occurs.
Therefore, we obtain
    \eqn{
    \prob_{p,\gamma}(0\dbc \Gcal)=\gamma + (1-\gamma)\prob_{p,\gamma}(0\dbc \Gcal\mid 0\not\in \Gcal).
    }
Thus, we are left to obtain a lower bound on $\prob_{p,\gamma}(0\dbc \Gcal\mid 0\not\in \Gcal)$.
For this, we follow the original argument in \cite[Section A.2]{BCHSS1},
adapting it when necessary.

For a directed bond $b=(x,y)$, we write $\bb=x$ and $\tb=y$
for its top and bottom. Let $e,f$ be two distinct bonds with $\bbe=\bbf=0$, and let
$E_{e,f}$ be the event that the bonds $e$ and
$f$ are occupied, and that in the reduced graph
$G^-=(V^-, E^-)$
obtained by removing the bonds $e$ and $f$, the
following three events occur:  $\te \conn \Gcal$, $\tf \conn
\Gcal$, and $\Ccal(\te) \cap \Ccal(\tf) = \emptyset$.

Let $\prob_{p,\gamma}^-$ denote the joint bond/vertex measure on
$G^-$.  We note that the
event $\{0\dbc \Gcal\}$ contains the event $\cup_{e,f}E_{e,f}$,
where the (non-disjoint) union is over unordered pairs of bonds $e,f$ incident to the
origin. Then, by Bonferroni's inequality and since $E_{e,f}$ is independent of $0\not\in \Gcal$, we get
    \eqan{
    \prob_{p,\gamma}(0 \dbc \Gcal\mid 0\not\in \Gcal)
    &
    \geq \prob_{p,\gamma} \big(\cup_{e,f} E_{e,f}\mid 0\not\in \Gcal\big)
    \geq
    \sum_{\{e,f\}}\prob_{p,\gamma}(E_{e,f})
    -Y_1
    \lbeq{pdbG1}
    \\
    &= p^2\sum_{e,f}
    \prob_{p,\gamma}^-(\te \conn \Gcal, \;\; \tf \conn \Gcal, \;\;
    \Ccal(\te) \cap \Ccal(\tf) = \emptyset)-Y_1,\nn
    }
where
    \eqn{
    Y_1=\frac{1}{2}\sum_{\{e_1,f_1\}\neq \{e_2,f_2\}}\prob_{p,\gamma}(E_{e_1,f_1}\cap E_{e_2,f_2}\mid 0\not\in \Gcal).
    }
We first bound $Y_1$. For this, we note that there are two contributions to $Y_1$, depending
on the number of distinct elements in $\{e_1,f_1,e_2,f_2\}$, which can be 3 or 4,
and whose contributions we denote by $Y_{1,3}$ and
$Y_{1,4}$, respectively.

We start by bounding $Y_{1,3}$. The number of pairs of pairs of edges $\{e_1,f_1\} \neq \{e_2,f_2\}$ such that $|\{e_1,f_1,e_2,f_2\}|=3$ is $m(m-1)(m-2)$. For such a pair, let $x_1,x_2,x_3$ denote the distinct elements of $\{\te_1,\tf_1,\te_2,\tf_2\}$ such that $x_1$ corresponds to the end of the edge that appears twice in $\{e_1,f_1,e_2,f_2\}$.
If $E_{e_1,f_1}\cap E_{e_2,f_2}$ occurs, then either
    $$
    \{(0,x_1)\text{ occ.}\}\circ\{(0,x_2)\text{ occ.}\}\circ
    \{(0,x_3)\text{ occ.}\}\circ\{x_1\conn \Gcal\}\circ \{x_2\conn \Gcal\}\circ \{x_3\conn \Gcal\}
    $$
occurs, or there exists a $z$ such that
    \eqan{
    &\{(0,x_1)\text{ occ.}\}\circ\{(0,x_2)\text{ occ.}\}\circ
    \{(0,x_3)\text{ occ.}\}\circ \{x_1\conn \Gcal\}\nn\\
    &\qquad \circ \{x_2\conn z\}\circ \{x_3\conn z\}\circ \{z\conn \Gcal\}\nn
    }
occurs. Therefore,
    \eqn{
    Y_{1,3}\leq (1-\gamma)
    \frac{1}{2}\cn(\cn-1)(\cn-2) p^3 M(p,\gamma)^2 [M(p,\gamma)+\nabla_p^{\rm max}] \, ,
    }
where we bounded
$$ \sum_{z} \prob_p (x_2\conn z) \prob_p(x_3\conn z) \leq \nabla_p^{\rm max} \, ,$$
which is wasteful, but sufficient for our purposes.

For $Y_{1,4}$, we sum over $\{e_1,f_1\}\neq \{e_2,f_2\}$ with the constraint that
all these edges are distinct. The number of such pairs of pairs is $m(m-1)(m-2)(m-3)/4$. Then, a similar computation
as for $Y_{1,3}$ yields that
    \eqn{
    Y_{1,4}\leq (1-\gamma)
    \frac{1}{8}\cn(\cn-1)(\cn-2)(\cn-3)p^4 M(p,\gamma)^2 [M(p,\gamma)^2+8\nabla_p^{\rm max}].
    }

We continue to bound the sum over $\{e,f\}$ in \refeq{pdbG1} from below.
Let
    \eqn{
    W=W_{e,f}=\{\te \conn \Gcal, \;\; \tf \conn \Gcal, \;\;
    \Ccal(\te) \cap \Ccal(\tf) = \emptyset\},
    }
denote the event whose probability appears on the right
side of \refeq{pdbG1}.  Conditioning on the set $\Ccal(\te)=A \subset
V^-$, we see that
    \eqn{
    \prob_{p,\gamma}^-( W)
    =
    \sum_{A : \tf\not\in A} \prob_{p,\gamma}^-(\Ccal(\te)=A, \;\;
    \te \conn \Gcal, \;\; \tf \conn \Gcal, \;\;
    \Ccal(\te) \cap \Ccal(\tf) = \emptyset).
    }
This can be rewritten as
    \eqan{
    \lbeq{P-W}
    \prob_{p,\gamma}^-(W)
    & =
    \sum_{A\colon  \tf\not\in A} \prob_{p,\gamma}^-( \Ccal(\te)=A, \;\;
    \te \conn \Gcal , \;\; \tf \conn \Gcal
    \mbox{  in  } V^-\setminus A),
    }
where $\{\tf \conn \Gcal$ in $V^- \setminus A\}$ is the event that
there exists $x\in \Gcal$ such that $\tf \conn x$ in $V^- \setminus A$.
The intersection of the first two events on the right hand side
of \refeq{P-W} is independent of the third event,
and hence
    \eqn{
    \lbeq{PW}
    \!\!\!\prob_{p,\gamma}^-(W)
    =
    \sum_{A\colon \te\not\in A} \prob_{p,\gamma}^-( \Ccal(\te)=A, \;\;
    \te \conn \Gcal) \; \prob_{p,\gamma}^-(\tf \conn \Gcal
    \mbox{  in  } V^-\setminus A).
    }
Let $M^-(x) = \prob_{p,\gamma}^-(x \conn \Gcal)$, for $x \in V^-$.
Then, by the BK inequality and the fact that the two-point function on
$G^-$ is bounded above by the two-point function on $G$,
    \eqan{
    \lbeq{PWlb}
    \prob_{p,\gamma}^-(\tf \conn \Gcal \mbox{  in  } V^-\setminus A)
    &= M^-(\tf) - \prob_{p,\gamma}^-(\tf \conn \Gcal \onlyon A)\\
    &\geq
    M^-(\tf) - \sum_{y \in A}\tau_p(\tf,y)M^-(y).\nn
    }
By definition and the BK inequality,
    \eqan{
    \lbeq{M*}
    M^-(x) &= M(p,\gamma) - \prob_{p,\gamma}(\mbox{$e$ or $f$ is occ.\ and piv.\ for }x \conn \Gcal)\\
    &\geq
    M(p,\gamma)(1-2p).\nn
    }

It follows from \refeq{PW}--\refeq{M*} and the upper bound
$M^-(x) \leq M(p,\gamma)$ that
    \eqan{
    \prob_{p,\gamma}^-( W)
    & \geq
    M(p,\gamma) \sum_{A : \te\not\in A}
    \prob_{p,\gamma}^-( \Ccal(\te)=A, \;\;
    \te \conn \Gcal)
    \big[(1-2p)-\sum_{y \in A}\tau_p(\tf,y) \big]
    \nn\\
    \lbeq{M**}
    & =
    M(p,\gamma) \big[ M^-(\te) (1-2p)
    - \sum_{y \in V^-} \tau_p(\tf,y)
    \prob_{p,\gamma}^-( \te\conn y, \; \te \conn \Gcal)\big].
    }
It is not difficult to show, using the BK inequality, that
    \eqn{
    \prob_{p,\gamma}^-( \te\conn y, \; \te \conn \Gcal)
    \leq
    \sum_{w \in V^-} \tau_p(\te,w) \tau_p(w,y) M^-(w),
    }
and hence, by \refeq{M*}--\refeq{M**},
    \eqan{
    \prob_{p,\gamma}^-(W)
    & \geq
    M(p,\gamma) \big[ M^-(\te)(1-2p)
    - \sum_{y,w \in V^-} \tau_p(\tf,y)
    \tau_p(\te,w) \tau_p(w,y) M^-(w) \big]
    \nn\\
    & \geq
    M^2(p,\gamma)\big[(1-2p)^2-\nabla_p^{\rm max}\big].\nn
    }
This completes the proof of \refeq{double-conn-G}.
\qed

\subsection{Derivation of \refeq{ineq2}}
\label{sec-rdi}
In this section, we prove \refeq{ineq2}, which is an adaptation of the proof of the
related inequality
        \eqn{
        \lbeq{ineq2-simple}
        M \leq \gamma \frac{\partial M}{\partial \gamma}
        +M^2+ pM\frac{\partial M}{\partial p},
        }
which is proved in \cite{AB87} (see also \cite[Lemma (5.53)]{Grim99}).
The main difference between \refeq{ineq2-simple} and \refeq{ineq2} is in the precise
constants. Indeed, we have that $p\cn\approx 1$ and $M\gg \gamma$, so that,
\refeq{ineq2} is morally equivalent to
    \eqn{
    M \leq \gamma \frac{\partial M}{\partial \gamma}
        +\frac 12 M^2+ \frac 12 pM\frac{\partial M}{\partial p},
        }
i.e., in the inequality in \refeq{ineq2-simple} the last two terms are multiplied by $1/2$.

We follow the proof of \cite[Lemma (5.53)]{Grim99} as closely as possible,
deviating only when necessary. Indeed,
    \be \label{M-split}
    M(p,\gamma)=\prob_{p,\gamma}(\cluster(0)\cap \Gcal\neq \emptyset)
    =\prob_{p,\gamma}(|\cluster(0)\cap \Gcal|=1)
    +\prob_{p,\gamma}(|\cluster(0)\cap \Gcal|\geq 2).
    \ee
The first term on the r.h.s.\ of (\ref{M-split}) equals
$\gamma \frac{\partial M}{\partial \gamma}$, as derived in
\cite[(5.69)]{Grim99}. For the second term, we define
$A_x$ to be the event that either $x\in \Gcal$ or that
$x$ is connected by an occupied path to a vertex $g\in \Gcal$.
Then,
    \eqan{
    \prob_{p,\gamma}(|\cluster(0)\cap \Gcal|\geq 2)
    &=\prob_{p,\gamma}(A_0\circ A_0)\\
    &\qquad+\prob_{p,\gamma}(|\cluster(0)\cap \Gcal|\geq 2, A_0\circ A_0
    \text{ does not occur}).\nn
    }
In the derivation of \refeq{ineq2-simple}, we simply apply the BK-inequality to obtain
    \eqn{
    \prob_{p,\gamma}(A_0\circ A_0)
    \leq \prob_{p,\gamma}(A_0)^2= M(p,\gamma)^2,
    }
leading to the second term in \refeq{ineq2-simple}. Instead, we split,
depending on whether $0\in \Gcal$ or not. If
$0\in \Gcal$, then $0\in \Gcal$ occurs disjointly from
$A_0$, so that the BK-inequality yields
    \eqn{
    \prob_{p,\gamma}(A_0\circ A_0, 0\in \Gcal)
    \leq \prob_{p,\gamma}(A_0\circ \{0\in \Gcal\})
    \leq \gamma \prob(A_0)=\gamma M(p,\gamma).
    }
When, instead, $0\not\in \Gcal$,  there must be \emph{at least two} neighbors $e$ of the origin
for which the event
    \eqn{
    \lbeq{AeA0-event}
    A_e\circ A_0\circ \{(0,e) \text{ occ.}\}
    }
occurs. Therefore, we can bound, with $N$ denoting the number of
neighbors $e$ for which the event in \refeq{AeA0-event} occurs,
so that $N\geq 2$ a.s.\ and Markov's inequality yields
    \eqan{
    \prob_{p,\gamma}(A_0\circ A_0, 0\not\in \Gcal)
    &\leq \sum_{e\sim 0} \expec_{p,\gamma}\Big[\frac{1}{N}\indic{A_e\circ A_0\circ \{(0,e) \text{ occ.}\}}
    \Big]\nn\\
    &\leq \frac 12 \sum_{e\sim 0} \prob_{p,\gamma}(A_e\circ A_0\circ \{(0,e) \text{ occ.}\}).
    \lbeq{factor-half}
    }
Therefore, again by
the BK-inequality,
    $$
    \prob_{p,\gamma}(A_0\circ A_0, 0\not\in \Gcal)\leq
    \frac{1}{2} \sum_{e}\prob_{p,\gamma}(A_e)\prob_{p,\gamma}(A_0) p=
    \frac{1}{2} p\cn M(p,\gamma)^2,
    $$
so that
    \eqn{
    \lbeq{A0circ-bd}
    \prob_{p,\gamma}(A_0\circ A_0)
    \leq \frac{1}{2} p\cn M(p,\gamma)^2+\gamma M(p,\gamma),
    }
which yields the second term in \refeq{ineq2}.

We move on to the bound on the probability of the event that
$|\cluster(0)\cap \Gcal|\geq 2$, but that $A_0\circ A_0$ does not occur.
This event is equivalent to the existence of an
edge $b=(x,y)$ for which the following occurs:

\begin{enumerate}
\item[(i)] the edge $b$ is occupied; and

\item[(ii)] in the subgraph of $G$ obtained by deleting $b$, the following events occur:\\
(a) no vertex of $\Gcal$ is joined to the origin by an open path;\\
(b) $x$ is joined to 0 by an occupied path;\\
(c) the event $A_y\circ A_y$ occurs.
\end{enumerate}

The events in (ii) are independent of the occupation status of the edge $b=(x,y)$ so that
    \eqan{
    &\prob_{p,\gamma}(|\cluster(0)\cap \Gcal|\geq 2, A_0\circ A_0
    \text{ does not occur})\\
    &\qquad=\frac{p}{1-p} \sum_{x\sim y}\prob_{p,\gamma}((x,y) \text{ closed}, x\in \cluster(0),
    \cluster(0)\cap \Gcal=\emptyset,
    A_y\circ A_y)\nn\\
    &\qquad \leq \frac{p}{1-p} \sum_{x\sim y}\prob_{p,\gamma}(x\in \cluster(0),
    \cluster(0)\cap \Gcal=\emptyset,
    A_y\circ A_y),\nn
    }
where we write $x\sim y$ to denote that $(x,y)$ is a bond.
We condition on $\cluster(0)$ to obtain
    \eqan{
    &\prob_{p,\gamma}(\cluster(0)\cap \Gcal=\emptyset,
    A_y\circ A_y)\\
    &\qquad=\frac{p}{1-p} \sum_{x\sim y}\sum_{A}\prob_{p}(\cluster(0)=A)
    \prob_{p,\gamma}(\cluster(0)\cap \Gcal=\emptyset,
    A_y\circ A_y\mid \cluster(0)=A),\nn
    }
where the sum over $A$ is over all sets of vertices which contain $0$ and $x$
but not $y$. Conditionally on $\cluster(0)=A$, the events
$\cluster(0)\cap \Gcal=\emptyset$ and $A_y\circ A_y$ are \emph{independent},
since $\cluster(0)\cap \Gcal=\emptyset$ is defined on the vertices in
$A$, while $A_y\circ A_y$ depends on the vertices in $A^c$ and the edges between
them. Thus,
    \eqan{
    &\prob_{p,\gamma}(\cluster(0)\cap \Gcal=\emptyset,
    A_y\circ A_y\mid \cluster(0)=A)\\
    &\quad=\prob_{p,\gamma}(\cluster(0)\cap \Gcal=\emptyset\mid \cluster(0)=A)
    \prob_{p,\gamma}(A_y\circ A_y\text{ off }A),\nn
    }
where we write $\{A_y\circ A_y\text{ off }A\}$ for the event that
$A_y\circ A_y$ occurs in the graph where all edges with at least one
endpoint in the set $A$ are removed. So far, the derivation equals
that in the proof of \cite[Lemma (5.53)]{Grim99}.
Now we shall deviate. We split, depending on whether
$y\in \Gcal$ or not, to obtain
    $$
    \prob_{p,\gamma}(A_y\circ A_y\text{ off }A)
    =\prob_{p,\gamma}(A_y\circ A_y\text{ off }A, y\in \Gcal)
    +\prob_{p,\gamma}(A_y\circ A_y\text{ off }A, y\not\in\Gcal).
    $$
When $y\in \Gcal$,
    \eqn{
    \{A_y\circ A_y\text{ off }A, y\in \Gcal\}
    = \{(\{y\in \Gcal\}\circ A_y)\text{ off }A\},
    }
so that, by the BK-inequality,
    \eqn{
    \prob_{p,\gamma}(A_y\circ A_y\text{ off }A, y\in \Gcal)
    \leq \gamma \prob_{p,\gamma}(A_y\text{ off }A).
    }
As a result,
    \eqan{
    &\frac{p}{1-p} \sum_{x\sim y}\prob_{p,\gamma}(x\in \cluster(0),
    \cluster(0)\cap \Gcal=\emptyset,
    A_y\circ A_y, y\in \Gcal)\lbeq{first-term-ineq2}\\
    &\quad \leq \frac{\gamma p}{1-p} \sum_{x\sim y} \sum_{A}\prob_{p}(\cluster(0)=A)\prob_{p,\gamma}(\cluster(0)\cap \Gcal=\emptyset\mid \cluster(0)=A)\prob_{p,\gamma}(A_y\text{ off }A)\nn\\
    &\quad = \frac{\gamma p}{1-p} \sum_{x\sim y} \sum_{A}\prob_{p}(\cluster(0)=A)\prob_{p,\gamma}(\cluster(0)\cap \Gcal=\emptyset, A_y\mid \cluster(0)=A)\nn\\
    &\quad =\frac{\gamma p}{1-p} \sum_{x\sim y} \prob_{p,\gamma}(x\in \cluster(0),\cluster(0)\cap \Gcal=\emptyset, \cluster(y)\cap \Gcal \neq \emptyset)=\gamma p \frac{\partial M}{\partial p},\nn
    }
where the first equality follows again by conditional independence, and the last equality by
the fact that (see \cite[(5.67)]{Grim99})
    \eqn{
    \lbeq{partialMp-form}
    (1-p)\frac{\partial M}{\partial p}=\sum_{x\sim y} \prob_{p,\gamma}(x\in \cluster(0),\cluster(0)\cap \Gcal=\emptyset, \cluster(y)\cap \Gcal \neq \emptyset).
    }

We are left to bound the contribution where $y\not\in \Gcal$.
For this, we note that when $A_y\circ A_y$ occurs off $A$ and
$y\not \in \Gcal$, then there must be \emph{at least two} neighbors
$z$ of $y$ for which the event
    \eqn{
    \lbeq{AeA0-event-rep}
    \Big\{\big(A_z\circ A_y\circ \{(y,z) \text{ occ.}\}\big) \text{ off }A\Big\}
    }
occurs. Therefore, by a similar argument as in \refeq{factor-half},
    \eqn{
    \prob_{p,\gamma}(A_y\circ A_y\text{ off }A, y\not\in\Gcal)
    \leq \frac 12 \sum_{z\sim y} \prob_{p,\gamma}\Big((A_z\circ A_y\circ \{(y,z) \text{ occ.}\}) \text{ off }A\Big).
    }
By the BK-inequality,
    \eqan{
    \prob_{p,\gamma}\Big((A_z\circ A_y\circ \{(y,z) \text{ occ.}\}) \text{ off }A\Big)
    &\leq p \prob_{p,\gamma}(A_z \text{ off }A)\prob_{p,\gamma}(A_y \text{ off }A)\nn\\
    &\leq p M(p,\gamma) \prob_{p,\gamma}(A_y \text{ off }A).\nn
    }
Repeating the steps in \refeq{first-term-ineq2}, we thus arrive at
    \eqan{
    &\frac{p}{1-p} \sum_{x\sim y}\prob_{p,\gamma}(x\in \cluster(0),
    \cluster(0)\cap \Gcal=\emptyset,
    A_y\circ A_y, y\not\in \Gcal)
    \leq
    \frac{1}{2} \cn p^2 M(p,\gamma) \frac{\partial M}{\partial p}.
    \lbeq{second-term-ineq2}
    }
Therefore, summing the two bounds in \refeq{first-term-ineq2} and \refeq{second-term-ineq2},
we arrive at
    \eqan{
    &\prob_{p,\gamma}(|\cluster(0)\cap \Gcal|\geq 2, A_0\circ A_0
    \text{ does not occur})
    \leq \big[\frac{1}{2} \cn p M(p,\gamma)+\gamma\big]p\frac{\partial M}{\partial p},
    \nn
    }
which is the third term in \refeq{ineq2}. This completes the proof of \refeq{ineq2}.
\qed

\bigskip

\end{document}